\newrobustcmd{\mklocalfilter}[1]{%
	\defbibfilter{#1}{%
		segment=0
		or
		segment=\therefsegment
}}
\tikzset{double line with arrow/.style args={#1,#2}{decorate,decoration={markings,%
			mark=at position 0 with {\coordinate (ta-base-1) at (0,1pt);
				\coordinate (ta-base-2) at (0,-1pt);},
			mark=at position 1 with {\draw[#1] (ta-base-1) -- (0,1pt);
				\draw[#2] (ta-base-2) -- (0,-1pt);
}}}}
\tikzset{Equal/.style={-,double line with arrow={-,-}}}
\let\ams@starttoc\@starttoc
\let\@starttoc\ams@starttoc
\patchcmd{\@starttoc}{\makeatletter}{\makeatletter\parskip\z@}{}{}
\g@addto@macro \normalsize {%
	\setlength\abovedisplayskip{5pt}%
	\setlength\belowdisplayskip{5pt}%
}
\definecolor{coolblack}{rgb}{0.0, 0.18, 0.39}
\definecolor{royalbluetraditional}{rgb}{0.0, 0.14, 0.4}
\definecolor{sapphire}{rgb}{0.03, 0.15, 0.4} 
\definecolor{mediumtealblue}{rgb}{0.0, 0.33, 0.71}
\definecolor{yaleblue}{rgb}{0.06, 0.3, 0.57}
\definecolor{smaltdarkpowderblue}{rgb}{0.0, 0.2, 0.6}
\definecolor{internationalkleinblue}{rgb}{0.0, 0.18, 0.65}
\definecolor{uablue}{rgb}{0.0, 0.2, 0.67}
\definecolor{persianblue}{rgb}{0.11, 0.22, 0.73}
\definecolor{royalazure}{rgb}{0.0, 0.22, 0.66}
\definecolor{zaffre}{rgb}{0.0, 0.08, 0.66}
\definecolor{phthaloblue}{rgb}{0.0, 0.06, 0.54}
\definecolor{navyblue}{rgb}{0.0, 0.0, 0.5}
\definecolor{ultramarine}{rgb}{0.07, 0.04, 0.56}
\definecolor{dukeblue}{rgb}{0.0, 0.0, 0.61} 
\definecolor{darkblue}{rgb}{0.0, 0.0, 0.55}
\definecolor{midnightblue}{rgb}{0.1, 0.1, 0.44}
\definecolor{stpatricksblue}{rgb}{0.14, 0.16, 0.48}
\definecolor{indigoweb}{rgb}{0.29, 0.0, 0.51}
\definecolor{darkmagenta}{rgb}{0.55, 0.0, 0.55}
\definecolor{electricindigo}{rgb}{0.44, 0.0, 1.0}
\definecolor{ao(english)}{rgb}{0.0, 0.5, 0.0}
\definecolor{armygreen}{rgb}{0.29, 0.33, 0.13}
\definecolor{britishracinggreen}{rgb}{0.0, 0.26, 0.15}
\definecolor{calpolypomonagreen}{rgb}{0.12, 0.3, 0.17}
\definecolor{darkgreen}{rgb}{0.0, 0.2, 0.13}
\definecolor{msugreen}{rgb}{0.09, 0.27, 0.23}
\definecolor{deepjunglegreen}{rgb}{0.0, 0.29, 0.29}
\definecolor{huntergreen}{rgb}{0.21, 0.37, 0.23}
\definecolor{upforestgreen}{rgb}{0.0, 0.27, 0.13}
\definecolor{sacramentostategreen}{rgb}{0.0, 0.34, 0.25}
\definecolor{forestgreentraditional}{rgb}{0.0, 0.27, 0.13} 
\definecolor{midnightgreeneaglegreen}{rgb}{0.0, 0.29, 0.33}
\definecolor{warmblack}{rgb}{0.0, 0.26, 0.26}
\definecolor{myrtle}{rgb}{0.13, 0.26, 0.12}
\newcommand{\mycolor}{dukeblue} 
\newtheoremstyle{italic} 
	{10pt plus 4pt minus 4pt} 
	{10pt plus 4pt minus 4pt} 
	{\itshape} 
	{} 
	{\bfseries} 
	{:} 
	{0.5em} 
	{} 
\newtheoremstyle{normal} 
	{10pt plus 4pt minus 4pt}
	{10pt plus 4pt minus 4pt}
	{}
	{}
	{\bfseries}
	{:}
	{0.5em}
	{}
\theoremstyle{italic}
\newtheorem{theorem}{Theorem}[subsection]
\newtheorem{proposition}[theorem]{Proposition} 
\newtheorem{lemma}[theorem]{Lemma}
\newtheorem{corollary}[theorem]{Corollary}
\newtheorem{definition}[theorem]{Definition}
\newtheorem{theorem-definition}[theorem]{Theorem-Definition}
\newtheorem{proposition-definition}[theorem]{Proposition-Definition}
\newtheorem{lemma-definition}[theorem]{Lemma-Definition}
\theoremstyle{normal}
\newtheorem*{remark}{Remark} 
\newtheorem*{remarks}{Remarks} 
\newtheorem*{example}{Example}
\newtheorem*{examples}{Examples} 
\numberwithin{equation}{section} 
\renewenvironment{proof}[1][\proofname]{\par
  \pushQED{\qed}%
  \normalfont \topsep0\p@\@plus0\p@\relax
  \trivlist
  \item[\hskip\labelsep
        \itshape
    #1\@addpunct{:}]\ignorespaces
}{%
  \popQED\endtrivlist\@endpefalse
}
\DeclareSymbolFont{largesymbolsA}{U}{esint}{m}{n}
\DeclareMathSymbol{\fintop}{\mathop}{largesymbolsA}{'037}
\tikzset{stairwayStyleLineWidth/.style={line width=0.04em}}
\tikzset{stairwayStyleSharp/.style={stairwayStyleLineWidth}}
\newcommand{\stairup}{\mathbin{
		\tikz[baseline=(stairwayanchor.base)]{
			\node (stairwayanchor) {\quad}; 
			\draw[stairwayStyleSharp] 
			($(stairwayanchor.south west) + (0.0em,0.4em)$)
			-- ++(0.20em,0) -- ++(0,0.20em)
			-- ++(0.20em,0) -- ++(0,0.20em)
			-- ++(0.20em,0) 
			;
	}}
}
\newcommand{\defeq}{\stackrel{\text{\tiny\rm def}}{=}}
\def\moverlay{\mathpalette\mov@rlay}
\def\mov@rlay#1#2{\leavevmode\vtop{%
		\baselineskip\z@skip \lineskiplimit-\maxdimen
		\ialign{\hfil$\m@th#1##$\hfil\cr#2\crcr}}}
\newcommand{\charfusion}[3][\mathord]{
	#1{\ifx#1\mathop\vphantom{#2}\fi
		\mathpalette\mov@rlay{#2\cr#3}
	}
	\ifx#1\mathop\expandafter\displaylimits\fi}
\newcommand{\bigcupdot}{\charfusion[\mathop]{\bigcup}{\cdot}}
\DeclareFontFamily{U}{mathx}{\hyphenchar\font45}
\DeclareFontShape{U}{mathx}{m}{n}{
	<5> <6> <7> <8> <9> <10>
	<10.95> <12> <14.4> <17.28> <20.74> <24.88>
	mathx10
}{}
\DeclareSymbolFont{mathx}{U}{mathx}{m}{n}
\DeclareMathAccent{\widecheck}{0}{mathx}{"71}
\DeclareMathAccent{\wideparen}{0}{mathx}{"75}
\DeclareSymbolFont{YHlargesymbols}{OMX}{yhex}{m}{n}
\DeclareMathAccent{\wparen}{\mathord}{YHlargesymbols}{"F3}
\newcommand*\rel@kern[1]{\kern#1\dimexpr\macc@kerna}
\newcommand*\widebar[1]{%
  \begingroup
  \def\mathaccent##1##2{%
    \rel@kern{0.8}%
    \overline{\rel@kern{-0.8}\macc@nucleus\rel@kern{0.2}}%
    \rel@kern{-0.2}%
  }%
  \macc@depth\@ne
  \let\math@bgroup\@empty \let\math@egroup\macc@set@skewchar
  \mathsurround\z@ \frozen@everymath{\mathgroup\macc@group\relax}%
  \macc@set@skewchar\relax
  \let\mathaccentV\macc@nested@a
  \macc@nested@a\relax111{#1}%
  \endgroup
}
\def\wbreve{\mathpalette\wide@breve}
\def\wide@breve#1#2{\sbox\z@{$#1#2$}%
	\mathop{\vbox{\m@th\ialign{##\crcr
				\kern0.08em\brevefill#1{0.8\wd\z@}\crcr\noalign{\nointerlineskip}%
				$\hss#1#2\hss$\crcr}}}\limits}
\def\brevefill#1#2{$\m@th\sbox\tw@{$#1($}%
	\hss\resizebox{#2}{\wd\tw@}{\rotatebox[origin=c]{90}{\upshape(}}\hss$}
\newcommand{\wbar}[1]{\widebar{#1}}
\newcommand{\what}[1]{\widehat{#1}}
\newcommand{\wtilde}[1]{\widetilde{#1}}
\newcommand\openbigstar[1][0.7]{%
	\scalerel*{%
		\stackinset{c}{-.125pt}{c}{}{\scalebox{#1}{\color{white}{$\bigstar$}}}{%
			$\bigstar$}%
	}{\bigstar}
}
\newcommand{\mystar}{\openbigstar[.8]}
\newcommand{\eps}{\epsilon}
\newcommand{\vareps}{\varepsilon}
\newcommand{\pd}{\partial}
\newcommand{\pdbar}{\bar{\partial}}
\renewcommand{\d}{\mathrm{d}}
\newcommand{\pdv}[2]{\frac{\pd#1}{\pd#2}}
\newcommand{\norm}[1]{\left\lVert#1\right\rVert}
\newcommand{\abs}[1]{\left\lvert#1\right\rvert}
\newcommand\notni{\mathrel{\m@th\mathpalette\canc@l\owns}}
\newcommand\canc@l[2]{{\ooalign{$\hfil#1/\mkern1mu\hfil$\crcr$#1#2$}}}
\newcommand{\rF}{\mathrm{F}}
\newcommand{\rM}{\mathrm{M}}
\newcommand{\rh}{\mathrm{h}}
\renewcommand{\AA}{\mathbb{A}}
\newcommand{\BB}{\mathbb{B}}
\newcommand{\CC}{\mathbb{C}}
\newcommand{\DD}{\mathbb{D}}
\newcommand{\GG}{\mathbb{G}}
\newcommand{\HH}{\mathbb{H}}
\newcommand{\KK}{\mathbb{K}}
\newcommand{\NN}{\mathbb{N}}
\newcommand{\OO}{\mathbb{O}}
\newcommand{\PP}{\mathbb{P}} 
\newcommand{\RR}{\mathbb{R}}
\renewcommand{\SS}{\mathbb{S}}
\newcommand{\TT}{\mathbb{T}}
\newcommand{\ZZ}{\mathbb{Z}}
\renewcommand{\a}{\mathfrak{a}}
\newcommand{\m}{\mathfrak{m}}
\newcommand{\n}{\mathfrak{n}}
\newcommand{\fa}{\mathfrak{a}}
\newcommand{\fm}{\mathfrak{m}}
\newcommand{\fn}{\mathfrak{n}}
\newcommand{\fA}{\mathfrak{A}}
\newcommand{\fB}{\mathfrak{B}}
\newcommand{\fC}{\mathfrak{C}}
\newcommand{\fF}{\mathfrak{F}}
\newcommand{\fI}{\mathfrak{I}}
\newcommand{\fM}{\mathfrak{M}}
\newcommand{\fN}{\mathfrak{N}}
\newcommand{\A}{\mathcal{A}}
\newcommand{\B}{\mathcal{B}}
\renewcommand{\L}{\mathcal{L}}
\renewcommand{\O}{\mathcal{O}}
\newcommand{\Z}{\mathcal{Z}}
\newcommand{\cA}{\mathcal{A}}
\newcommand{\cB}{\mathcal{B}}
\newcommand{\cC}{\mathcal{C}}
\newcommand{\cO}{\mathcal{O}}
\newcommand{\cZ}{\mathcal{Z}}
\newcommand{\Cc}{\mathscr{C}}
\newcommand{\tr}{\operatorname{tr}}
\renewcommand{\mod}{\operatorname{mod}}
\newcommand{\nil}{\operatorname{nil}}
\newcommand{\id}{\operatorname{id}}
\newcommand{\pr}{\operatorname{pr}}
\newcommand{\pt}{\mathrm{pt.}}
\newcommand{\im}{\operatorname{im}}
\newcommand{\rk}{\operatorname{rk}}
\newcommand{\ch}{\operatorname{char}}
\newcommand{\Spec}{\operatorname{Spec}}
\newcommand{\Spm}{\operatorname{Spm}}
\newcommand{\Ind}{\operatorname{Ind}}
\newcommand{\ind}{\operatorname{ind}}
\def\span{\mathrm{span}}
\newcommand{\GL}{\operatorname{GL}}
\newcommand{\End}{\operatorname{End}}
\newcommand{\Aut}{\operatorname{Aut}}
\newcommand{\Hom}{\mathrm{Hom}}
\newcommand{\Mor}{\operatorname{Mor}}
\newcommand{\Mat}{\operatorname{Mat}}
\newcommand{\diam}{\operatorname{diam}}
\newcommand{\const}{\mathrm{const}}
\newcommand{\Adm}{\operatorname{Adm}}
\newcommand{\Int}{\operatorname{Int}}
\newcommand{\pw}{\mathrm{pw}}
\newcommand{\sing}{\mathrm{sing}}
\newcommand{\op}{\mathrm{op}}
\renewcommand{\sp}{\mathrm{sp}}
\newcommand{\loc}{\mathrm{loc}}
\newcommand{\Vect}{\mathsf{Vect}}
\newcommand{\Alg}{\mathsf{Alg}}
\newcommand{\fdAlg}{\mathsf{fdAlg}}
\newcommand{\fdCAlg}{\mathsf{fdCAlg}}
\newcommand{\CRing}{\mathsf{CRing}}
\newcommand{\BanAlg}{\mathsf{BanAlg}}
\newcommand{\CBanAlg}{\mathsf{CBanAlg}}
\newcommand{\fdCFkth}{\mathsf{fdCFkth}}
\begin{document}
	
	\title[Functions Holomorphic over Algebras]{Functions Holomorphic over Finite-Dimensional Commutative Associative Algebras 1: One-Variable Local Theory I}
	
	\author{Marin Genov}
	\address{Faculty of Mathematics and Informatics, Sofia University, Bulgaria}
	\email{mvgenov@uni-sofia.bg, marin.genov@gmail.com}
	

	
	\subjclass[2010]{Primary: 30G35, 32A30, 13E10; Secondary: 30A05, 30B99, 32A10, 32D99, 32W99, 35E99, 35N05}
	
	\keywords{Finite-dimensional commutative associative algebras, A-holomorphic, A-differentiability, Analyticity, Cauchy Integral Formula, Generalized Cauchy-Riemann Equations}
	
	\date{\today}
	
		
	\begin{abstract}
	We study in detail the one-variable local theory of functions holomorphic over a finite-dimensional commutative associative unital $\CC$-algebra $\cA$, showing that it shares a multitude of features with the classical one-variable Complex Analysis, including the validity of the Jacobian conjecture for $\cA$-holomorphic regular maps and a generalized Homological Cauchy's Integral Formula.
	In fact, in doing so we replace $\cA$ by a morphism $\varphi: \cA \to \cB$ in the category of finite-dimensional commutative associative unital $\CC$-algebras in a natural manner, paving a way to establishing an appropriate \textit{category of Funktionentheorien}.
	We also treat the very instructive case of non-unital finite-dimensional commutative associative $\RR$-algebras as far as it serves above agenda.
	\end{abstract}
	
	\maketitle
	
	\tableofcontents
	
	\section*{Notations \& Conventions}
	

By abuse of notation, we shall denote by the same letter both a curve $\gamma:[0,1]\to X$ (as a continuous map) and its image (support as a cycle), e.g. $\int_\gamma\omega$. 
Moreover, loops $\gamma:\SS^1\to X$ will always be parametrized for notational simplicity as $\gamma(t)$ with $t\in[0,1]$.

In the presence of tensors, we will try to always denote free indices by $i,j,k,\ell$ and summation indices by $r,s,t$, unless we forget to do so.



$\mathbb{N}$: the natural numbers $\{1,2,3,\dots\}$.

$\mathbb{N}_0$: the extended natural numbers, i.e. $\mathbb{N}\cup\{0\}$.

$R$: a ring, often times commutative, with unit, and of characteristic $\ch R=0$.


$K$: arbitrary field, usually $\ch K=0$ or perfect, unless explicitly stated otherwise.

$\kappa$: arbitrary algebraically closed field, usually $\ch\kappa=0$, unless explicitly stated otherwise.

$\KK$: the field $\RR$ or $\CC$.

$\CC^{\times}$: the multiplicative group of the complex numbers.

$\CC^{+}$: the additive group of the complex numbers.

$\GG_a$: the additive (affine) group (scheme).

$\GG_m$: the multiplicative (affine) group (scheme).

$\SS^n$: the $n$-sphere.

$\TT^n\coloneqq (\SS^1)^n$: the standard $n$-torus.

%
%
%

$\Vect_K$: the category of $K$-vector spaces.

$\CRing$: the category of commutative rings with identity element.


$\fdAlg_{K}$: the category of finite-dimensional associative unital algebras over $K$.

$\fdCAlg_{K}$: the category of finite-dimensional commutative associative unital algebras over $K$.

$\BanAlg_{\KK}$: the category of Banach algebras over $\KK$.

$\CBanAlg_{\KK}$: its subcategory of commutative Banach algebras over $\KK$.

$\A$, $\B$, $\mathcal{C}$: finite-dimensional, commutative, associative, unital algebras over $R$, $K$ or $\KK$.

$\fA$, $\fB$, $\fC$: finite-dimensional commutative associative, \textit{but not necessarily unital} algebras over $R$, $K$ or $\KK$.

$\fM$, $\fN$: \textit{connected} finite-dimensional commutative associative \textit{non-unital} $\KK$-algebras, maximal ideals of some $\A$.

$\A^\times = U(\A)$: the group of units (=multiplicative group) of $\A$.

$J(\A)$: the Jacobson radical of $\A$.

$J_\KK F$: the ($\KK$-)Jacobian of a map $F$.

$\Mat_{n\times m}(R)$: $n\times m$-matrices over some ring $R$.

$\rM_n(R)\coloneqq\Mat_n(R)$: the ring of $n\times n$-matrices over a ring $R$ and an associative $R$-algebra if $R\in\CRing$.

$E_n$: the identity matrix of size $n$.

$I_k$: certain idempotent element.

$A^T$: transpose of a matrix $A$.

$\Int(\gamma)$: the interior of a closed plane curve $\gamma\subseteq\CC$.

$\DD_r(z_0)$: the open disc with radius $r$ around $z_0\in\CC$.

$\DD^{\ast}_r(z_0)$: the punctured open disc with radius $r$ around $z_0\in\CC$.

$\Delta_r(z)$: open polydisc of radius $r$ around $z\in\CC^n$.

$\widebar{\Delta}_r(z)$: closed polydisc of radius $r$ around $z\in\CC^n$.

$\BB_r(z)$: open ball of radius $r$ around $z\in\KK^n$.

$\widebar{\BB}_r(z)$: closed ball of radius $r$ around $z\in\KK^n$.

$\Z(f)$: the zero set of a function $f$ whenever that makes sense.

$C^k(U,V)$: space of $k$-times continuously differentiable functions on $U$ with values in $V$, $k\in\NN_0\cup\{\infty,\omega\}$.

$\norm{f}_K\coloneqq \sup_{x\in K}\norm{f(x)}$: the uniform norm over (usually a compactum) $K$, e.g. $\norm{f}_{\fB,\gamma}$.

$\Cc^1_\pw(\SS^1,U)$: space of piece-wise continuously differentiable curves.



$\{\pt\}$: the one-point space.

$\triangle$: a proverbial solid triangle with boundary $\pd\triangle$.

$\square$: a proverbial solid rectangle with boundary $\pd\square$.

$\mystar$: a star-shaped open set.

\textleaf: a leaf from a tree or a bush.

$\stairup$: piece-wise smooth path.

	\section{Introduction and Overview}
	
Starting with the definition of complex differentiability of a function $f:U\to\CC$ for an open $U\subseteq\CC$,
\begin{equation}
f'(z_0) \coloneqq \lim_{h\to 0}\frac{f(z_0+h)-f(z_0)}{h},
\end{equation}
given the ubiquity of Complex Analysis throughout mathematics, it is a natural question if it can be generalized to $\KK$-algebras $\A$ other than $\CC$ itself and, if yes, to what extent.
For example, if $U\subseteq\A$ is open and $f:U\to\A$ some function, one may want to define
\begin{equation}
f'(Z_0) \coloneqq \lim_{\substack{H\to 0 \\ \ H\in\A^\times}}\frac{f(Z_0+H)-f(Z_0)}{H},	
\end{equation}
whenever the limit makes sense, or
\begin{equation}\label{tmp01}
f(Z_0+H) = f(Z_0) + f'(Z_0)H + o(H) \text{ as } H\to 0,\ H\in\cA.
\end{equation}
A moment's thought shows that such a generalization and its usefulness would depend in an essential way on the basic properties of $\A$ such as associativity, commutativity, or unitality.
If $\A$ is both commutative and associative, then a function $f$ satisfying \ref{tmp01} is often called $\A$-differentiable
at $Z_0$.
To reasonably formalize such generalization attempts, borrowing from the German language, we shall henceforth call any \textit{one-variable} theory of functions that exhibits the following broadly understood properties a (generalized) \textit{Funktionentheorie}\footnote{(ger.) function theory;}:
\begin{enumerate}[(i)]
	\item presence of generalized Cauchy-Riemann equations (CR);
	\item presence of generalized Cauchy and Morera Integral Theorems (CIT \& MIT);
	\item presence of generalized Cauchy Integral Formula(s) (CIF);
	\item presence of Analyticity (AN).
\end{enumerate}

Furthermore, since we are in practice working within an entire category of algebraic objects, namely (finite-dimensional) \textit{commutative associative} (Banach) $\KK$-algebras, it makes sense to adopt a relative point of view and consider a morphism $\varphi: \fA \to \fB$ of such algebras instead of restricting ourselves to a single fixed object $\fA$.
This should serve as a first hint at the underlying categorical flavour of the theory.
Picking a morphism $\varphi$ turns $\fB$ into an $\fA$-algebra in the usual way via $\forall A \in \fA\ \forall B \in \fB: AB\coloneqq \varphi(A)B$, and the appropriate generalization of $\fA$-differentiability reads as follows: for $U\subseteq\fA$ open we define a function $f:U\to\fB$ to be $\varphi$-differentiable at $Z_0\in U$ iff
\begin{equation}
f(Z_0+H) = f(Z_0) + f'(Z_0)\varphi(H) + o(\varphi(H)) \defeq f(Z_0) + f'(Z_0) H + o(\norm{\varphi(H)}) \text{ as } H\to 0 \text{ in } \fA
\end{equation}
for some element $f'(Z_0) \in \fB$, which is then the ($\varphi$)-derivative of $f$.
For example, if $\varphi:\CC\to\CC$, $z\mapsto \bar{z}$, is the complex conjugation, then the $\varphi$-holomorphic functions are precisely the anti-holomorphic functions.
If $\cA$ is complex and unital and $\iota: \CC \hookrightarrow \cA$ is the canonical inclusion of the scalars, then $\iota$-holomorphic functions are the same as $\cA$-holomorphic functions when viewed as functions of Several Complex Variables (satisfying additional conditions).
More generally, $\varphi$-differentiable functions for some $\varphi: \fA \to \fB$ can be viewed as certain $\fB$-valued functions of Several (usually, however, non-free) $\fA$-Variables.
Finally, note that we recover $\fA$-differentiability by simply setting $\varphi \coloneqq \id_\fA$.

In the present series of papers including \cites{MG02,MG03} we study in detail and optimal generality the local theory of $\varphi$-differentiable functions in the finite-dimensional setting, having at our disposal the basic structure theory of finite-dimensional commutative associative algebras and the apparatus of (finite-dimensional) differential geometry and algebraic topology.
The two key facts we use are that these algebras, when unital, are Artinian and hence decompose into a finite direct sum of \textit{local} Artinian $\KK$-algebras with nilpotent corresponding maximal ideals and that their group of units $\cA^\times$ is path-connected iff the algebra carries a compatible complex structure, in which case they are also triangulable since $\CC$ is algebraically closed.

While the notion of $\cA$-differentiability for a (finite-dimensional) unital commutative associative (Banach) $\KK$-algebra is rather old\footnote{the first article on the subject was published already in 1893 by \textcite{Scheff}. 
While a survey on $\cA$-differentiability would be too ambitious for the scope of this introduction, we give an overview of relevant references on the subject at the end of the article.}, 
its generalization to $\varphi$-differentiability for a morphism $\varphi: \fA \to \fB$ of (not necessarily unital) finite-dimensional associative commutative $\KK$-algebras and the study of the thus resulting function theory together with the possibility of a category of \textit{Funktionentheorien} are to our best knowledge very much new.
To unpack the definition, in \cref{sec:phihol} we give a discussion of various aspects, pitfalls, and consequences of $\varphi$-differentiability, using the aforementioned algebraic structure theory.
In particular, we demonstrate that for many purposes of a local function theory as a consequence it suffices to consider morphisms of connected algebras, or even only inclusions of connected algebras.
A byproduct of this discussion is the fact that a $\varphi$-differentiable function $f:U \to \fB$ automatically extends to a $\varphi$-differentiable function $\hat{f}: \what{U} \to \fB$, where the precise meaning of $\what{U}$ is given in \cref{def:projclosure}.

Furthermore, in \cref{jacobian} we establish the relationship between the abstract $\fA$-derivative of an $\fA$-differentiable function $f$ and its Jacobian $J_\KK f$ over $\KK$ when seen as a function of several $\KK$-variables.
As a consequence of this and the structure theory of finite-dimensional commutative associative $\KK$-algebras we show in \cref{jaconj} that the Jacobian Conjecture easily follows for $\cA$-holomorphic regular maps, where $\cA$ is unital.
The validity of the Jacobian Conjecture in the case of $\cA$-holomorphic regular maps as well as the easiness of its proof serve in our opinion as a philosophical affirmation of the view that $\cA$-holomorphic functions are much closer in spirit to the function theory of one complex variable than of Several Complex Variables, both locally and globally, and hence provide a middleground between both realms that presents a suitable opportunity to test statements known in one complex dimension, but unknown or difficult in higher complex dimensions.
We finish \cref{sec:phihol} by verifying that the collection of $\varphi$-differentiable functions actually yields a sheaf $\O_\varphi$ of $\fA$-algebras with a distinguished derivation.
In \cite{MG02} we study among other things further properties of $\O_\varphi$ for $\varphi: (\cA,\fm) \to (\cB,\fn)$ a morphism of local $\CC$-algebras and show in particular that $(U,\cO_\cA|_U)$ for $U\subseteq\cA$ open is a \textit{locally ringed space}.
Later on, in \cite{MG07} we shall use $\cO_\cA$ to model global 1-dimensional analytic spaces over $\cA$, i.e. $\cA$-analytic curves with singularities.

In \cref{sec:CR} we move on to state various versions of the corresponding generalized Cauchy-Riemann Equations for $\varphi$-differentiable functions and give a discussion of the various angles of the local coordinate formulations, including answering some inverse questions such as the extent to which an abstractly given collection of $\fA$-differentiable functions determines $\fA$ uniquely.
We finish \cref{sec:CR} with the introduction of certain differential operators $\d_{ij} \coloneqq a_i \pdv{}{z^j} - a_j \pdv{}{z^i}$, where $\{a_1,\dots,a_n \}$ is a $\KK$-basis of $\fA$, that actually play the analogous role of the Wirtinger derivatives, and state the analogous critaria for $\varphi$-differentiability.
Later on we will show in \cref{closed} that $\d_{ij} f$ are in fact the coordinates of $\d (f(Z)\d Z)$ in the case $\KK=\RR$ or $\pd (f(Z)\d Z)$ in the case of $\KK=\CC$.

In \cref{sec:nonunitalfunc} we routinely but carefully verify for completeness sake that certain standard facts about integrals over closed contours from the Complex Analysis of One Variable transfer almost verbatim to the case of a morphism of \textit{not necessarily unital} commutative associative $\KK$-algebras, including the Cauchy-Goursat Integral Theorem.
These concern $\varphi$-primitives and $\varphi$-integrability of a function $f:U \to \fB$, which are the obvious analogues of the corresponding notions in the Complex Analysis of One Variable.
Most of the proofs turn out to be just like in the case of one complex variable and can therefore be skipped. 
However, some of the consequences are topologically more interesting due to the fact that we now find ourselves in three or more real dimensions.
At the heart of the matter lies the fact that for a $\varphi$-differentiable function $f$ the $\fB$-valued 1-form $\omega \coloneqq f(Z)\d Z$ is $\d$-closed, which is also central to the later theory by ensuring the homotopy-invariance of (the integrals of) $\omega$.

This is pretty much as far as one can model the classical theory of one complex variable in the case of non-unital commutative associative $\KK$-algebras.
To take the theory further, we need to consider contour integrals of $\d Z/Z$ and related 1-forms, and thus it becomes abundantly clear that one cannot dispense with the path-connectedness of $\cA^\times$, which is shown in \cref{subs:UA} to be the case if and only if $\cA$ is in fact a $\CC$-algebra. 
Triviality of $\pi_0(\cA^\times)$ turns out to be the only obstruction to obtaining all the features of a \textit{Funktionentheorie} as \cref{sec:unitalfunc} demonstrates.
Thus, from here on out we will suppose $\varphi: \cA \to \cB$ to be a morphism of \textit{complex unital} finite-dimensional commutative associative algebras.

In \cref{subs:adm} we introduce some conventions and objects of purely technical nature that will reoccur throughout and that allow us to state the integration theorems in their full generality.
A key feature governing the Funktionentheorie over finite-dimensional commutative associative unital $\CC$-algebras $\cA$ is the algebraic fact that if $\Spm \cA = \{\fM_1,\dots,\fM_M\}$ is the (maximal) (ring-theoretic) spectrum of $\cA$, then the quotient (spectral) projections $\sigma_k: \cA \twoheadrightarrow \cA/\fM_k \cong \CC$, $1\leq k\leq M$, are not only algebra epimorphisms, but also projections onto 1-complex-dimensional \textit{subspaces} of $\cA$ and moreover $\sigma(Z) \coloneqq \{\sigma_1(Z),\dots,\sigma_M(Z)\}$ is precisely the (eigenvalue) spectrum of $Z\in\cA$ as suggested by the choice of notation.

In \cref{subs:ind} we introduce $\Ind_\varphi(\Gamma,Z_0)$ for $Z_0\in U$ and $\Gamma \in Z_1(U,\ZZ)$, which is the appropriate analogue of the index of a 1-cycle around a point in the complex plane suitable for the purposes of the theory of $\varphi$-holomorphic functions.
While some ad hoc lower-dimensional cases of $\Ind_\varphi$ in disguise have already been computed explicitly in the literature, we give a general treatment and computation in full, showing that it reduces in an appropriate way to the usual $\CC$-indices of the spectral projections of the said 1-cycle in the respective complex planes.
While the generalized index depends in principle very much on a choice of an algebra structure on the underlying vector space $\cA$, it turns out to be invariant under algebra endomorphisms.
Another interesting feature of the generalized index is the fact that it is $\cB$-valued in a natural way, hinting that it is also useful to consider 1-cycles $\Gamma\in Z_1(U,R)$ with values in a commutative ring other than $\ZZ$ like $R=\cA$ or $R=\cB$.
This point of view will be pursued in detail later in \cite{MG04}.

In \cref{subs:CIF} we prove the analogue of Cauchy's Integral Formula \textit{with index} for $\varphi$-holomorphic functions.
A consequence of this, most easily stated for a morphism $\varphi: (\cA,\fm) \to (\cB,\fn)$ of local $\CC$-algebras, is that, if $f:U \to \cB$ is a $\varphi$-holomorphic function on some open $U\subseteq \cA$, then $f$ extends $\varphi$-holomorphically to cylinders $\DD_r(\sigma_\cA(Z)) \times \fm$, where $\sigma_\cA: \cA \twoheadrightarrow \cA/\fm \cong \CC$ is the canonical projection, $Z\in U$ is a point, and $r>0$ is a small enough radius such that $\DD_r(\sigma_\cA(Z)) \subseteq \sigma_\cA(U)$.
In fact, we show a little later that $f$ extends to the whole $\wtilde{U} \coloneqq \sigma_\cA(U) \times \fm$.

\cref{subs:anal} is entirely dedicated to questions of analyticity.
Let $\cA = (\cA,\fm)$ be a local $\CC$-algebra.
If $f(Z) = \sum_{n=0}^\infty A_n Z^n \in \cA\{Z\}$ is a convergent power series, we define as usual 
\begin{equation}
R\coloneqq R_f\coloneqq \frac{1}{\limsup\limits_{n\to\infty}\norm{A_n}^{1/n}}\in[0,\infty]
\end{equation}
to be its radius of convergence.
If $\rho_\cA$ denotes the \textit{spectral} radius of elements in $\cA$ (in fact, we have in this case $\rho_\cA = \abs{\cdot} \circ \sigma_\cA$), we show that the precise domain of convergence of $f$ is the spectral ball of radius $R$, that is, $f(Z)$ is convergent for all $\rho_\cA(Z)<R$ and divergent for all $\rho_\cA(Z)>R$.
While the first part is no surprise at all, the second part is specific to finite-dimensional commutative (local) $\CC$-algebras. 
If we did not suppose commutativity or finite dimensionality, divergence of $f(Z)$ outside of the spectral ball would be in general false since in that case we could have for example zero divisors with positive spectral radius.
The convergence behaviour of $f(Z)$ for a general finite-dimensional commutative $\CC$-algebra $\cA$ then follows from the Artin property of $\cA$ and the decomposition of $\cA$ into a direct sum of local Artin $\CC$-algebras.
We remark that the spectral ball of a local finite-dimensional $\CC$-algebra around $Z_0\in \cA$ is actually a spectral cylinder as it has the form $\DD_R(\sigma_\cA(Z_0)) \times \fm$.

We then go on to prove a version of the Cauchy-Transform for $\cB$-valued measures and a certain class of $\cA$-valued measurable functions satisfying ``more-width-than-depth''-type of a condition.
When applied to the generalized Cauchy's Integral Formula, this shows that $\varphi$-holomorphic functions are locally analytic of the class $\cB\{\varphi(Z-Z_0)\}$ and gives a generalized Cauchy's Integral Formula \textit{with index} for the ($\varphi$)-derivatives of $f$.
This in turn leads to generalized Cauchy's Inequalities and a generalized Liouville's Theorem specific to $\varphi$-holomorphic functions.
We remark that we study the properties of the algebras $\cA\{Z\}$ and other related objects much more closely in \cite{MG02}, while in \cite{MG05} we take a purely algebraic approach to generalized holomorphy that is applicable to a much wider class of commutative associative $R$-algebras for some $R\in\CRing$ of $\ch R=0$ by considering subalgebras $\cA[[a_1 X_1 + \cdots + a_n X_n]] \subseteq \cA[[X_1,\dots,X_n]]$ and related objects and establish properties of $\varphi$-holomorphic functions valid already on the level of formal power series.
In particular, this treatment includes \textit{arithmetic holomorphy} which concerns itself with the case of number fields and rings of integers.
For example, in this context the classical holomorphy is the one associated to the field extension $\CC/\RR$.

In the other direction, the local form of analyticity in the shape of $\cB\{\varphi(Z)\}$ has several important implications, some of which we list here.
For starters, it is immediate that, if for example $\varphi: \cA \hookrightarrow \cB$ is an inclusion ($\CC$-linear ring extension) of $\CC$-algebras, then a $\varphi$-holomorphic function $f:U\to\cB$, $U\subseteq \cA$ open, is locally a restriction of a unique $\cB$-holomorphic function, regardless of the dimension difference between $\cA$ and $\cB$.
Another important consequence is that, if $\varphi: (\cA,\fm) \to (\cB,\fn)$ is a morphism of local $\CC$-algebras, then $f$ has the canonical form
\begin{equation}
f(Z) \defeq f(z\oplus X) = \sum_{k=0}^{\nu-1}\frac{f^{(k)}(z)}{k!}\varphi(X)^k,
\end{equation}
where $z\in\CC$, $X\in\m$, $f^{(k)}|\sigma_\cA(U)\to\B$ simply are $\CC$-holomorphic functions with values in the finite-dimensional Banach space $\B$, and $\nu \coloneqq \rh(\varphi) \coloneqq \min\{\rh(\cA),\rh(\cB)\}$ is called the height of the morphism, i.e. the minimum of the degrees of nilpotency of the maximal ideals $\fm$ and $\fn$.
This in turn leads to the promised automatic extension of $f$ to $\wtilde{U}$ and goes to show by separating the scalar variable from the nilpotent variable that the theory of $\varphi$-holomorphic functions is, locally at least, quite literally a combination of the complex analysis of one variable and the multiplicative structure of finite-dimensional commutative connected $\CC$-algebras.
Conversely, above formula prescribes a way to create $\varphi$-holomorphic functions, and so we obtain a full characterization of $\varphi$-holomorphic functions for $\varphi: \cA \to \cB$ being a morphism of unital $\CC$-algebras.
It is not difficult to take this a little further and show that an $\cA$-biholomorphism $f:U \xrightarrow{\sim} V$ automatically extends to an $\cA$-biholomorphism $\tilde{f}: \wtilde{U} \xrightarrow{\sim} \wtilde{V}$.

Finally, a key consequence of this form of analyticity is the ability to \textit{derive at nilpotent elements}\footnote{for a lack of a better term;}: if $(\cA,\fm) \xrightarrow{\varphi} (\cB,\fn)$ is a morphism of local $\CC$-algebras and $Z_0\in U$ and $X\in\m=\nil\A$, then the limit
\begin{equation}
	f'_{(X)}(Z_0) \coloneqq \lim_{\substack{H\to X \\ H\in\A^\times}} \frac{f(Z_0+H)-f(Z_0)}{\varphi(H)}
\end{equation} 	
exists and gives rise to a function $\check{f}:\m\times U\to\B$, $(X,Z)\mapsto f'_{(X)}(Z)$, polynomial in $X$ and $\varphi$-analytic in $Z$.
This has in turn the important implication that the function $g:U\times U\to\B$, given by
\begin{equation}
g(Z,W) \coloneqq
\begin{cases}
\frac{f(W)-f(Z)}{\varphi(W-Z)}, &\text{if } W-Z\in\A^\times\cap U\\
f'_{(W-Z)}(Z), &\text{if } W-Z\in\m\cap U
\end{cases}
\end{equation}
is $\varphi$-holomorphic both in $Z$ and $W$ (see \cref{nilanal}).
This plays a key role in the proof of the \hyperref[homolCIT]{generalized Homological Cauchy Integral Theorem (\ref*{homolCIT})}, which we state here only for the case of local $\CC$-algebras for notational simplicity: if $\varphi: (\cA,\fm) \to (\cB,\fn)$ is a morphism of local $\CC$-algebras, $\sigma_\cA: \cA \twoheadrightarrow \cA/\fm \cong \CC$ the canonical quotient projection, $U\subseteq \cA$ path-connected and open, $f\in\cO_\varphi(U)$, and $\Gamma\in Z_1(U,\ZZ)$ a 1-cycle such that $(\sigma_\cA)_\# \Gamma \in B_1(\sigma(U),\ZZ)$, then 
\begin{equation}
f(Z)\Ind_\varphi(\Gamma,Z)=\frac{1}{2\pi i}\int_\Gamma\frac{f(W)}{\varphi(W-Z)}\d W,
\end{equation}
whenever the integral is well-defined.
A similar result has been shown by Giovanni Battista Rizza (1952) \cite{Rizza} for 1-cycles $\Gamma$ in a local finite-dimensional $\CC$-algebra $\A=(\A,\m)$ under the condition that $[\Gamma]=0$ in $H_1(U,\ZZ)$. 
Clearly, our result makes use of a much weaker topological assumption.

Thus, in summary, to every morphism of finite-dimensional complex commutative associative unital algebras one associates a \textit{Funktionentheorie}.
We mention that already in complex dimension 7 there exist infinitely many distinct isomorphism classes of local commutative associative unital $\CC$-algebras \cites[see][]{Poo,Sup,SupTu,SupTuEng}, which attests to the richness of the theory.
In \cite{MG03} we shall continue this line of thought in conjunction with the philosophy of \cite{Sch} and \cite{SchZm} towards constructing an ``analytic'' category $\fdCFkth$ of \textit{finite-dimensional commutative Funktionentheorien}.
In \cite{MG06} we will study the $\cA$-analogue of Riemann Surfaces, i.e. smooth $\cA$-curves, of which there exist many examples, towards constructing a ``smooth geometric'' version of $\fdCFkth$, while in \cite{MG07} we will also allow for singularities.

	\section{Preliminaries}
	

\subsection{The Structure Constants of a $K$-Algebra}
Let $K$ be an arbitrary field and let $\fA$ be a finite-dimensional $K$-algebra with a choice of basis $\{a_1,\dots,a_n\}$.
We shall denote by $\lambda: \fA \to \End_{\Vect}(\fA) \cong \rM_n(K)$, $a \mapsto (\lambda_a: a' \mapsto a a')$, the left regular representation of $\fA$.
Notice that, unless $\fA = \cA$ is unital, $\lambda$ is not necessarily faithful, e.g. take $\fA_0 \coloneqq \CC \vareps$ with $\vareps^2 = 0$, but one can obtain a faithful representation of $\fA$ by adjoining a unit and then removing the scalars from the regular faithful representation of the unital algebra.

\begin{definition}
	The structure tensor (structure constants) $(\alpha^i_{jk})_{1\leq i,j,k\leq n}$ of $\fA$ with respect to the choice of basis $\{a_1,\dots,a_n\}$ is given by
	\begin{equation}
	\forall 1\leq j,k\leq n: a_j a_k = \sum_{r=1}^n \alpha^r_{jk} a_r .
	\end{equation}
\end{definition}

In other words, we have
\begin{equation}
\forall 1\leq j\leq n: a_j (a_1,\dots,a_n) = (a_1,\dots,a_n)(\alpha^i_{jk})_{1\leq i,k\leq n}.
\end{equation}
Thus $\lambda(a_j) = (\alpha^i_{jk})_{1\leq i,k\leq n} \eqqcolon A_j \in\rM_n(K)$, $1\leq j\leq n$, are precisely the representation matrices of the basis vectors.
 
Furthermore, if $\fA = \cA$ is unital with unit $1_\cA$, we define
\begin{equation}\label{unitcoord}
1_\A \eqqcolon \sum_{r=1}^n \eps^r a_r 
\end{equation}
to be the coordinates of the unit.

\begin{lemma}[Basic Relations for the Structure Tensor of $\fA$]\ 
	\begin{enumerate}[topsep=-\parskip]
		\item $\fA$ is unital with unit $1_{\fA}$ if and only if $\exists (\eps^r)_{1\leq r\leq n} \in K^n \ \forall 1\leq i,k\leq n:$
		\begin{equation}\label{unit}
			\sum_{r=1}^n \eps^r \alpha^i_{rk} = \sum_{r=1}^n \eps^r \alpha^i_{kr} = \delta^i_k.
		\end{equation}
		Moreover, in the special case $a_1=1_{\cA}$, we have $\forall 1\leq i,k\leq n:$
		\begin{equation}
			\alpha^i_{1k} = \alpha^i_{k1} = \delta^i_k.
		\end{equation}
		
		 \item $\fA$ is commutative if and only if $\forall 1\leq i,j,k\leq n:$
		 \begin{equation}\label{commut}
		  	\alpha^i_{jk}=\alpha^i_{kj}.
		 \end{equation}
		 
		 \item $\fA$ is associative if and only if $\forall 1\leq i,j,k,\ell \leq n:$
		 \begin{equation}\label{ass}
		 \sum_{r=1}^n \alpha^r_{jk} \alpha^i_{r\ell} = \sum_{r=1}^n \alpha^r_{k\ell} \alpha^i_{jr}.
		 \end{equation}
	\end{enumerate}
	
\end{lemma}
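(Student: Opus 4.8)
The plan is to prove all three equivalences by the same elementary device: expand the relevant products of basis vectors using the defining relation $a_j a_k = \sum_{r} \alpha^r_{jk} a_r$, and then compare coordinates, invoking the fact that $\{a_1,\dots,a_n\}$ is a $K$-basis, so an identity among elements of $\fA$ is equivalent to the system of identities among their coordinates. In each case one direction is a direct computation on basis vectors; the converse follows because multiplication in a $K$-algebra is $K$-bilinear (and hence the iterated products are $K$-multilinear), so an identity that holds for all tuples of basis vectors propagates to all of $\fA$ by linearity.

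For part (2), writing $a_j a_k = \sum_i \alpha^i_{jk} a_i$ and $a_k a_j = \sum_i \alpha^i_{kj} a_i$, commutativity of $\fA$ forces these to agree for all $j,k$, hence $\alpha^i_{jk} = \alpha^i_{kj}$ by linear independence; conversely this symmetry gives $a_j a_k = a_k a_j$ for all basis indices, and bilinearity upgrades this to $xy = yx$ for all $x,y \in \fA$. For part (3), I would compute both bracketings by substituting the structure relation twice: the $a_i$-coordinate of $(a_j a_k)a_\ell$ is $\sum_r \alpha^r_{jk}\alpha^i_{r\ell}$, while that of $a_j(a_k a_\ell)$ is $\sum_r \alpha^r_{k\ell}\alpha^i_{jr}$, so associativity restricted to basis triples is \emph{exactly} the stated relation \eqref{ass}, and trilinearity of the maps $(x,y,z)\mapsto (xy)z$ and $(x,y,z)\mapsto x(yz)$ gives the general statement.

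For part (1), I would set $1_\fA \eqqcolon \sum_r \eps^r a_r$ as in \eqref{unitcoord} and compute $1_\fA a_k = \sum_i\bigl(\sum_r \eps^r\alpha^i_{rk}\bigr) a_i$ and $a_k 1_\fA = \sum_i\bigl(\sum_r \eps^r\alpha^i_{kr}\bigr) a_i$; comparing coordinates with $a_k = \sum_i \delta^i_k a_i$, these equal $a_k$ for every $k$ precisely when $\sum_r \eps^r\alpha^i_{rk} = \sum_r \eps^r\alpha^i_{kr} = \delta^i_k$, which is \eqref{unit}. Conversely, if some tuple $(\eps^r) \in K^n$ satisfies these equations, the same computation shows the element $e \coloneqq \sum_r \eps^r a_r$ is simultaneously a left and a right identity on every basis vector, hence — again by bilinearity — a two-sided identity, so $\fA$ is unital. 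The special case $a_1 = 1_\cA$ is then immediate, since $a_1 a_k = a_k$ and $a_k a_1 = a_k$ read off as $\alpha^i_{1k} = \alpha^i_{k1} = \delta^i_k$ upon comparing coordinates (equivalently, it is the case $\eps^1 = 1$, $\eps^r = 0$ for $r > 1$).

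I do not anticipate any genuine obstacle; the only points requiring a modicum of care are remembering that the converse directions rely on the (bi/tri)linearity of multiplication rather than merely on the basis-vector identities, and, in part (1), observing that a single candidate element $e$ must serve both as a left and as a right unit — which is precisely why the two families of equations in \eqref{unit} occur together with the \emph{same} coefficients $\eps^r$.
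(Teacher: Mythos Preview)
Your proposal is correct and follows exactly the same approach as the paper: each relation is simply the coordinate restatement of the corresponding axiom on basis vectors. The paper's proof is just a one-line remark to this effect for each part, while you have spelled out the bilinearity/trilinearity argument needed for the converse directions more carefully, but there is no substantive difference.
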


\begin{proof}
To (1): this is a coordinate restatement of $\forall 1\leq k\leq n: 1_\A a_k = a_k 1_\A = a_k$.

To (2): this is a coordinate restatement of $\forall 1\leq j,k\leq n: a_j a_k = a_k a_j$.

To (3): this is a coordinate restatement of $\forall 1\leq i,j,j\leq n: (a_i a_j) a_k = a_i (a_j a_k)$.
\end{proof}

\subsection{The Structure Constants of a Morphism of $K$-Algebras}
Let $\fA \xrightarrow{\varphi} \fB$ be a morphism of $K$-algebras with bases $\{a_1,\dots,a_n\}$ and $\{b_1,\dots,b_m\}$ and corresponding structure tensors $(\alpha^i_{jk})_{1\leq i,j,k\leq n}$ and $(\beta^i_{jk})_{1\leq i,j,k\leq m}$, respectively.
Then $\fB$ is a left $\fA$-module via $\forall a\in\fA\ b\in\fB: ab \coloneqq \varphi(a)b$.

\begin{definition}
The (left) structure constants $(\gamma^i_{jk})_{1\leq i,k\leq m, 1\leq j\leq n}$ of the morphism $\varphi$ with respect to the bases $\{a_1,\dots,a_n\}$ and $\{b_1,\dots,b_m\}$ are given by
	\begin{equation}
		\forall 1\leq j\leq n\ \forall 1\leq k\leq m: a_j b_k = \sum_{r=1}^m \gamma^r_{jk} b_r.
	\end{equation}
\end{definition}

There are many relations one could state, but for the moment we shall need only the following two:

\begin{lemma}
Let $\fB$ be associative.
Then:
	\begin{enumerate}
		\item We have $\forall 1\leq j \leq n\ \forall 1\leq i,k,\ell \leq m:$
		\begin{equation}\label{assB1}
		\sum_{r=1}^m \beta^i_{r\ell} \gamma^r_{jk} = \sum_{r=1}^m \gamma^i_{jr} \beta^r_{k\ell}.
		\end{equation}
		
		\item Multiplicativity of $\varphi$ implies $\forall 1\leq j,k\leq n\ \forall 1\leq i,\ell\leq m:$
		\begin{equation}\label{assA1}
		\sum_{r=1}^n \gamma^i_{r\ell} \alpha^r_{jk} = \sum_{s=1}^m \gamma^i_{js} \gamma^s_{k\ell}.
		\end{equation}
	\end{enumerate}
\end{lemma}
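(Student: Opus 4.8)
The plan is to obtain both identities by expanding one and the same associativity-type relation among basis vectors in two different ways and then comparing coefficients in the basis $\{b_1,\dots,b_m\}$ of $\fB$. The only conceptual point to keep in mind is that $\fA$ is \emph{not} assumed associative in this lemma, so every product written down must be legitimate: products internal to $\fB$ may be reassociated freely (this is where associativity of $\fB$ is used), a product of the form $a_j b_k$ is by definition $\varphi(a_j) b_k \in \fB$, and the only place multiplicativity of $\varphi$ will enter is to rewrite $\varphi(a_j a_k)$ as $\varphi(a_j)\varphi(a_k)$.

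For part (1), I would start from $\bigl(\varphi(a_j) b_k\bigr) b_\ell = \varphi(a_j)\bigl(b_k b_\ell\bigr)$, which is just associativity in $\fB$. Expanding the left-hand side by first substituting $\varphi(a_j) b_k = \sum_r \gamma^r_{jk} b_r$ and then $b_r b_\ell = \sum_i \beta^i_{r\ell} b_i$ yields $\sum_i \bigl(\sum_r \beta^i_{r\ell}\gamma^r_{jk}\bigr) b_i$, while expanding the right-hand side by first substituting $b_k b_\ell = \sum_r \beta^r_{k\ell} b_r$ and then $\varphi(a_j) b_r = \sum_i \gamma^i_{jr} b_i$ yields $\sum_i \bigl(\sum_r \gamma^i_{jr}\beta^r_{k\ell}\bigr) b_i$. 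Comparing the coefficient of $b_i$ gives \eqref{assB1}; note that multiplicativity of $\varphi$ is never invoked, in accordance with the phrasing of the statement.

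For part (2), I would instead compute $(a_j a_k) b_\ell$ two ways, now using multiplicativity. On one hand, $(a_j a_k) b_\ell = \varphi(a_j a_k)\, b_\ell = \bigl(\varphi(a_j)\varphi(a_k)\bigr) b_\ell$ by multiplicativity of $\varphi$, and by associativity in $\fB$ this equals $\varphi(a_j)\bigl(\varphi(a_k) b_\ell\bigr) = a_j(a_k b_\ell)$. Expanding $(a_j a_k) b_\ell$ directly via $a_j a_k = \sum_r \alpha^r_{jk} a_r$ and $a_r b_\ell = \sum_i \gamma^i_{r\ell} b_i$ gives $\sum_i \bigl(\sum_r \gamma^i_{r\ell}\alpha^r_{jk}\bigr) b_i$; expanding $a_j(a_k b_\ell)$ via $a_k b_\ell = \sum_s \gamma^s_{k\ell} b_s$ and $a_j b_s = \sum_i \gamma^i_{js} b_i$ gives $\sum_i \bigl(\sum_s \gamma^i_{js}\gamma^s_{k\ell}\bigr) b_i$. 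Comparing coefficients of $b_i$ gives \eqref{assA1}.

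I do not expect any genuine obstacle here: the argument is pure index bookkeeping, and the single subtlety worth stating explicitly in the write-up is the one already flagged, namely to avoid silently using associativity of $\fA$ (which is not available) and to use multiplicativity of $\varphi$ exactly once, in part (2), to pass from $\varphi(a_j a_k)$ to $\varphi(a_j)\varphi(a_k)$.
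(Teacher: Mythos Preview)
Your proposal is correct and matches the paper's own proof exactly: the paper simply records that (1) is the coordinate form of $(a_j b_k)b_\ell = a_j(b_k b_\ell)$ and (2) is the coordinate form of $(a_j a_k)b_\ell = a_j(a_k b_\ell)$, which is precisely the expansion-and-comparison you carry out. Your extra care in noting that associativity of $\fA$ is never used and that multiplicativity of $\varphi$ enters only once in part (2) is a welcome clarification the paper leaves implicit.
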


\begin{proof}
To (1): $\forall 1\leq j\leq n,\ \forall 1\leq k,\ell\leq m: (a_j b_k)b_\ell = a_j (b_k b_\ell)$.

To (2): $\forall 1\leq j,k\leq n,\ \forall 1\leq\ell\leq m: (a_j a_k)b_\ell = a_j(a_k b_\ell)$.
\end{proof}

\subsection{Some Commutative Algebra}

\emph{In the following subsection all rings (incl. algebras) are assumed commutative, associative, and with unit, unless explicitly stated otherwise.
Moreover, in this chapter we will be using $U(R)$ to denote the group of units of any $R\in\CRing$ instead of $R^\times$, e.g. $U(R[[T_1,\dots,T_m]])$ instead of $R[[T_1,\dots,T_m]]^\times$.}

Since commutative algebras are central in what follows, it is prudent to discuss some general facts from Commutative Algebra. 
If $\A$ is an $n$-dimensional $K$-algebra, then, by virtue of finite dimensionality, $\A$ is Artinian, that is, every descending chain of ideals stabilizes (becomes stationary). 
We summarize the most important facts about Artinian rings in the following

\begin{lemma}[Artin Rings]
	Let $A$ be an Artinian Ring. Then:
	\begin{enumerate}
		\item $A$ possesses a composition series of ideals $A\eqqcolon\a_\ell\supsetneqq\a_{\ell-1}\supsetneqq\dots\supsetneqq\a_1\supsetneqq\a_0\coloneqq 0$ of length $\ell_A(A)\coloneqq\ell<\infty$, i.e. $A$ is of finite length as an $A$-module.
		\item $A$ is Noetherian.
		\item $A$ has Krull $\dim A=0$, i.e. $\Spec A=\Spm A$ as sets, and moreover they are finite.
		\item $J(A)=\nil A$, and moreover it is a nilpotent ideal.
		\item $A$ can be written uniquely up to index permutation as a finite product of Artin local rings $A \cong \bigoplus_{j=1}^M (A_j,\fm_j)$, where we have a bijection
		\[
		\{\fm_1,\dots,\fm_M\} \leftrightarrow \Spm A \eqqcolon \{\fM_1,\dots,\fM_M\},
		\]
		given by $\fM_j = A_1 \times \cdots \times A_{j-1} \times \fm_j \times A_{j+1} \times \cdots A_M$, $1\leq j\leq M$.
	\end{enumerate}
\end{lemma}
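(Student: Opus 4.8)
The plan is to run the classical structure theory of Artinian rings (as in, e.g., Atiyah--Macdonald, Chapter~8), organizing the five assertions according to their logical dependencies rather than the order stated. First I would establish the zero-dimensionality in~(3): for $\fp\in\Spec A$ the quotient $A/\fp$ is an Artinian integral domain, so for $0\neq\bar x\in A/\fp$ the descending chain $(\bar x)\supseteq(\bar x^2)\supseteq\cdots$ stabilizes, giving $\bar x^{\,n}=\bar x^{\,n+1}\bar y$ for some $n$; cancelling $\bar x^{\,n}$ yields $\bar x\bar y=1$, hence $A/\fp$ is a field and $\fp$ is maximal, so $\Spec A=\Spm A$. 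Finiteness then follows by choosing, via the descending chain condition, a \emph{minimal} finite intersection $\fm_1\cap\cdots\cap\fm_M$ of maximal ideals: any maximal ideal $\fm$ satisfies $\fm\cap\fm_1\cap\cdots\cap\fm_M=\fm_1\cap\cdots\cap\fm_M$, hence contains $\fm_1\cdots\fm_M$, hence, being prime, equals some $\fm_i$.

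Next I would prove~(4), which I regard as the technical heart. Put $J\coloneqq J(A)=\fm_1\cap\cdots\cap\fm_M$. The chain $J\supseteq J^2\supseteq\cdots$ stabilizes at some $\fa\coloneqq J^k=J^{k+1}$ (so $\fa^2=\fa$). If $\fa\neq 0$, pick an ideal $\fb$ minimal among those with $\fb\fa\neq 0$; one checks $\fb=(x)$ for some $x$ with $x\fa\neq 0$, and since $(x\fa)\fa=x\fa\neq 0$ with $x\fa\subseteq(x)$, minimality forces $x\fa=(x)$, so $x=xy$ with $y\in\fa\subseteq J$. But then $x(1-y)=0$ with $1-y$ a unit, whence $x=0$, a contradiction; so $J^k=0$. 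In particular $J$ is a nilpotent ideal, and since $\nil A\subseteq J$ always while a nilpotent ideal lies in $\nil A$, we get $J(A)=\nil A$.

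With~(4) in hand the rest is routine. For~(1) and~(2): from $\fm_1\cdots\fm_M\subseteq J$ we get $(\fm_1\cdots\fm_M)^k=0$, and interpolating a chain $A\supseteq\fm_1\supseteq\fm_1\fm_2\supseteq\cdots\supseteq(\fm_1\cdots\fm_M)^k=0$ that removes one factor $\fm_i$ at each step exhibits every successive quotient as a module over the field $A/\fm_i$; such a quotient is Artinian (a subquotient of $A$), hence a finite-dimensional vector space, hence of finite length with simple factors $A/\fm_i$. Concatenating the resulting refinements produces a composition series of $A$, so $A$ has finite length $\ell_A(A)$ and is in particular Noetherian. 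For~(5): the $\fm_i^k$ are pairwise comaximal and $\bigcap_i\fm_i^k=\prod_i\fm_i^k=(\fm_1\cdots\fm_M)^k=0$, so the Chinese Remainder Theorem gives $A\cong\prod_{i=1}^M A/\fm_i^k$; since $\fm_i$ is the unique prime over $\fm_i^k$, each $A_j\coloneqq A/\fm_j^k$ is local Artinian with maximal ideal $\fm_j/\fm_j^k$, and the maximal ideals of the product are precisely the $A_1\times\cdots\times\fm_j\times\cdots\times A_M$, which gives the asserted bijection with $\Spm A$.

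It remains to argue uniqueness up to index permutation: a local ring has no idempotents other than $0$ and $1$ (one of $e,1-e$ lies outside the maximal ideal, hence is an idempotent unit, hence equals $1$), so each $A_j$ is indecomposable; a finite product decomposition of a commutative ring corresponds to a complete orthogonal system of primitive idempotents, which is unique, so any two decompositions into local rings coincide after reordering (equivalently, a product decomposition partitions $\Spec A$, and here the blocks are forced to be the singletons $\{\fM_j\}$). The only step requiring an actual idea rather than definition-chasing is the nilpotency of $J(A)$ in~(4) --- the passage through a minimal annihilating ideal together with the unit $1-y$; everything else is bookkeeping with chains, products of maximal ideals, and the Chinese Remainder Theorem.
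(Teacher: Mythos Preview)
Your proposal is correct and follows precisely the standard Atiyah--Macdonald Chapter~8 arguments. The paper does not give its own proof at all: it simply writes ``We refer the reader to \cite{AM}'', so you have in effect supplied the details of the very reference the paper cites, and there is nothing to compare.
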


\begin{proof}
	We refer the reader to \cite{AM}.
\end{proof}

Thus let us additionally suppose that $(\A,\m,\kappa)$ is local. 
In particular, $\m=J(\A)=\nil(\A)$ is nilpotent of order $\leq n$. 
By finite dimensionality, $\kappa/K$ is a finite field extension and $\A\cong\kappa\oplus\m$ as vector $K$-spaces, hence we have $\dim_K\A=[\kappa:K]+\dim_K\m$.

\begin{lemma}
	Let $(A,\m,\kappa)$ be a local ring and $M$ a simple $A$-module. Then $M\cong\kappa$.
\end{lemma}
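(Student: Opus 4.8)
The plan is the textbook one: realize $M$ as a cyclic module, identify it with $A$ modulo the annihilator of a generator, and then use locality to pin that annihilator down as $\m$. First I would pick any nonzero element $x \in M$, which exists since a simple module is by definition nonzero. Then $Ax$ is a nonzero $A$-submodule of $M$, so by simplicity $Ax = M$; hence the $A$-linear map $A \to M$, $a \mapsto ax$, is surjective, and passing to the quotient by its kernel $I \coloneqq \operatorname{Ann}_A(x)$ gives an $A$-module isomorphism $A/I \xrightarrow{\ \sim\ } M$.

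Next I would invoke the standard order-preserving bijection between the $A$-submodules of $A/I$ and the ideals of $A$ containing $I$. Since $M \cong A/I$ is simple, it has exactly two submodules, so $A$ has exactly two ideals containing $I$, namely $I$ and $A$; that is, $I$ is a maximal ideal of $A$. But $(A,\m,\kappa)$ is local, so its unique maximal ideal is $\m$, which forces $I = \m$. Therefore $M \cong A/I = A/\m = \kappa$, as claimed.

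There is essentially no obstacle here; the only points worth a word of care are that simplicity of $M$ guarantees $M \neq 0$ (so that a generator $x$ exists at all) and that the submodule–ideal correspondence is applied in the right direction. No Noetherian or Artinian hypothesis on $A$ is used, only locality; in the applications of interest $A$ will of course be an Artin local ring, in which case $\kappa$ is moreover the residue field appearing in the structure theory recalled above.
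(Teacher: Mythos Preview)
Your proof is correct and follows essentially the same approach as the paper: pick a nonzero $x\in M$, use simplicity to get surjectivity of $a\mapsto ax$, pass to $A/\ker$, and invoke the ideal correspondence plus locality to identify the kernel with $\m$. The paper's proof is a bit terser but the logic is identical.
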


\begin{proof}
	Let $0\neq x\in M$ and define a homomorphism of $A$-modules $f:A\to M$, $a\mapsto ax$. 
	Then $f\neq 0$ since $f(1)=x\neq 0$. 
	Hence $\im f\subseteq M$ is a non-trivial submodule. 
	By simplicity of $M$, it follows that $f$ is surjective, thus $M\cong A/\ker f$, which is then also simple and therefore $\ker f=\m$ by the correspondence for ideals of quotient rings. 
	In other words, $M\cong A/\m=\kappa$.
\end{proof}

\begin{corollary}
	Let $(\A,\m,\kappa)$ be a local $n$-dimensional $\kappa$-algebra. 
	Then $\ell_\A(\A)=\dim_\kappa\A \defeq n$.
\end{corollary}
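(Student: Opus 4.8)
The plan is to compare two additive invariants attached to finite-length $\A$-modules, namely the length $\ell_\A(-)$ and the $\kappa$-vector space dimension $\dim_\kappa(-)$, and to check that they agree on the module ${}_\A\A$. Since $\A$ is finite-dimensional over $\kappa$ it is Artinian, so by the Lemma on Artin Rings it admits a composition series of $\A$-submodules
\begin{equation}
\A = \a_\ell \supsetneqq \a_{\ell-1} \supsetneqq \cdots \supsetneqq \a_1 \supsetneqq \a_0 = 0, \qquad \ell = \ell_\A(\A) < \infty,
\end{equation}
with each successive quotient $\a_i/\a_{i-1}$ a simple $\A$-module.

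By the preceding Lemma, every simple module over the local ring $(\A,\m,\kappa)$ is isomorphic to $\kappa$; in particular $\a_i/\a_{i-1} \cong \kappa$ as $\A$-modules for each $i$. Since $\A$ is a $\kappa$-algebra with residue field $\kappa$, the composite of the structure map $\kappa \to \A$ with the residue projection $\A \twoheadrightarrow \A/\m \cong \kappa$ is a $\kappa$-algebra endomorphism of $\kappa$, hence the identity; thus $\kappa$ sits inside $\A$ as a subfield mapping isomorphically onto the residue field, the $\A$-module $\kappa$ is $1$-dimensional as a $\kappa$-vector space, and the $\kappa$-vector-space structure on each $\a_i/\a_{i-1}$ inherited from $\A$ coincides with the one coming from the isomorphism with $\kappa$. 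Hence $\dim_\kappa(\a_i/\a_{i-1}) = 1$ for every $i$.

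Finally, $\dim_\kappa$ is additive on short exact sequences of finite-dimensional $\kappa$-vector spaces, so telescoping the filtration yields
\begin{equation}
\dim_\kappa \A = \sum_{i=1}^{\ell} \dim_\kappa(\a_i/\a_{i-1}) = \sum_{i=1}^\ell 1 = \ell = \ell_\A(\A),
\end{equation}
which is exactly the asserted equality (and it equals $n$ by the standing hypothesis $\dim_\kappa \A = n$). The only step that calls for a moment of care — and the sole conceivable obstacle — is the identification of $\kappa$-structures in the middle paragraph; one can sidestep it entirely by invoking Jordan–Hölder uniqueness of composition factors together with the observation that $\ell_\A(-)$ and $\dim_\kappa(-)$ are both additive functions on finite-length $\A$-modules agreeing on the unique simple object $\kappa$, and therefore agree on all such modules.
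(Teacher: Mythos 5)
Your proof is correct and follows essentially the same route as the paper: take a composition series of the Artinian ring $\A$, identify each simple composition factor with $\kappa$ via the preceding lemma, and count $\kappa$-dimensions additively along the filtration. The extra care you take with the two $\kappa$-structures on the composition factors is sound but not needed beyond what the paper already assumes, since $\A$ being a $\kappa$-algebra with residue field $\kappa$ makes the structure map compose to an isomorphism onto $\A/\m$.
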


\begin{proof}
	Since $\A$ is Artinian, take a composition series of ideals $\A\eqqcolon\a_\ell\supsetneqq\m\eqqcolon\a_{\ell-1}\supsetneqq\dots\supsetneqq\a_1\supsetneqq\a_0\coloneqq 0$. 
	Then by definition $\a_i/\a_{i+1}$ is a simple $\A$-module and therefore a $1$-dimensional $\kappa$-vector space. 
	In particular, so is $\a_1$.
\end{proof}

\begin{lemma}\label{structext}
	Let $K$ be a perfect field and let $(\A,\m,\kappa)$ be a finite-dimensional local $K$-algebra. 
	Then $\A$ is automatically a $\kappa$-algebra, extending the $K$-algebra structure.
\end{lemma}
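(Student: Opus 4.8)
The plan is to produce a \emph{coefficient field}: a $K$-subalgebra $\kappa' \subseteq \cA$ that the quotient map $\pi: \cA \twoheadrightarrow \cA/\m = \kappa$ maps isomorphically onto $\kappa$. Once such a $\kappa'$ is at hand, the composite $\kappa \xrightarrow{\ \sim\ } \kappa' \hookrightarrow \cA$ endows $\cA$ with the structure of a $\kappa$-algebra, and since this composite is a $K$-algebra map, it restricts on the image of $K$ to the original structure morphism $K \to \cA$; thus the $\kappa$-algebra structure extends the given $K$-algebra structure, as required.

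First I would exploit perfectness of $K$: the finite extension $\kappa/K$ is then separable, so by the primitive element theorem $\kappa = K(\bar\theta)$ for some $\bar\theta\in\kappa$ whose minimal polynomial $p\in K[X]$ is separable, with $\deg p = [\kappa:K]$. The heart of the argument is to lift $\bar\theta$ to a genuine root of $p$ in $\cA$ by Newton's method, using that $\m$ is nilpotent, say $\m^N = 0$. Choose any $\theta_0\in\cA$ with $\pi(\theta_0) = \bar\theta$; then $p(\theta_0)\in\m$, and since $p$ is separable we have $p'(\bar\theta)\neq 0$, so $p'(\theta_0)\notin\m$, i.e.\ $p'(\theta_0)\in\cA^\times$ because $\cA$ is local. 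Define inductively $\theta_{s+1} \coloneqq \theta_s - p(\theta_s)\,p'(\theta_s)^{-1}$. From the polynomial identity $p(Y-\delta) = p(Y) - p'(Y)\delta + \delta^2 q(Y,\delta)$ over $\ZZ[Y,\delta]$ one reads off $p(\theta_{s+1}) \in \bigl(p(\theta_s)\bigr)^2\cA$, whence by induction $p(\theta_s)\in\m^{2^s}$; moreover $\pi(\theta_{s+1}) = \pi(\theta_s) = \bar\theta$, so $p'(\theta_{s+1})\in\cA^\times$ as well. Choosing $s$ with $2^s\geq N$ gives $\theta\coloneqq\theta_s$ satisfying $p(\theta) = 0$ and $\pi(\theta) = \bar\theta$.

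Finally, $\kappa'\coloneqq K[\theta]\subseteq\cA$ is the desired coefficient field. Indeed, the substitution map $K[X]\to\cA$, $X\mapsto\theta$, annihilates $p$, hence factors as a $K$-algebra homomorphism $j\colon \kappa \cong K[X]/(p) \to \cA$ with $\im j = \kappa'$; since $\kappa$ is a field and $j(1) = 1\neq 0$, $j$ is injective. The composite $\pi\circ j\colon\kappa\to\kappa$ sends $\bar\theta = j^{-1}(\theta)$ to $\pi(\theta) = \bar\theta$ and fixes $K$, hence is the identity; therefore $\pi|_{\kappa'}\colon\kappa'\xrightarrow{\ \sim\ }\kappa$ with inverse $j$, and $\cA$ becomes a $\kappa$-algebra via $j$ extending its $K$-structure.

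The only step needing genuine care is the Newton iteration — specifically the bookkeeping that $p(\theta_s)$ lands in successively higher powers of $\m$ while $p'(\theta_s)$ remains a unit — but this is purely formal once separability (the sole place perfectness of $K$ enters) secures $p'(\theta_0)\in\cA^\times$, and nilpotence of $\m$ guarantees termination after $\lceil\log_2 N\rceil$ steps. Alternatively one could invoke Hensel's Lemma for the complete (indeed $\m$-adically discrete) local ring $\cA$, or cite Cohen's structure theorem, but the explicit iteration keeps the proof self-contained.
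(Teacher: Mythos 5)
Your proof is correct, and it is in fact more complete than the argument the paper itself gives. Both proofs start identically: perfectness of $K$ yields separability of $\kappa/K$, and the primitive element theorem gives $\kappa=K[\bar\theta]$ with separable minimal polynomial $p$. The paper then simply picks an \emph{arbitrary} unit representative $\theta$ of $\bar\theta$ and declares $\bar\theta\cdot x\coloneqq\theta x$ to define a $\kappa$-structure on $\m$; but for this to be a well-defined action of $\kappa\cong K[X]/(p)$ one needs $p(\theta)$ to annihilate $\m$, and for an arbitrary lift one only knows $p(\theta)\in\m$, so $p(\theta)\m\subseteq\m^2$ need not vanish. Your Newton iteration supplies exactly the missing ingredient: it upgrades the arbitrary lift to an honest root of $p$ in $\cA$, after which $K[\theta]$ is a genuine coefficient field and the induced $\kappa$-algebra structure is unambiguous. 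The bookkeeping is sound --- the integral Taylor identity gives $p(\theta_{s+1})\in\bigl(p(\theta_s)\bigr)^2\cA$, the invariance $\pi(\theta_s)=\bar\theta$ keeps $p'(\theta_s)$ a unit, and nilpotence of $\m$ (which holds because $\cA$ is Artinian, as recorded earlier in the paper) terminates the process --- and the final verification that $\pi\circ j=\id_\kappa$ is exactly what identifies $K[\theta]$ with $\kappa$. Citing Hensel or Cohen would also work, as you say, but the explicit iteration keeps the argument self-contained and is, frankly, the proof the paper should have written.
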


\begin{proof}
	Since $\A=\kappa\oplus\m$ as $K$-vector spaces, it suffices to show that $\m$ is a vector space over $\kappa$, extending the $K$-linear structure. 
	Since $K$ is a perfect field, the finite extension $\kappa/K$ is separable.
	Thus, by the Primitive Element Theorem we have $\kappa=K[\bar{\theta}]$ for some $0\neq\bar{\theta}\in\A/\m=\kappa$.
	Now, pick a representative $\theta\in\bar{\theta}$, $\theta\in U(\A)$, and define a $K$-linear action $\kappa\times\m\to\m$ via $\bar{\theta}\cdot x\coloneqq \theta x$.
	This defines a $\kappa$-linear structure on $\m$, depending on $\theta$ and clearly extending the $K$-linear one.
\end{proof}



For the special case of $\kappa=\CC$, we include here a second proof\footnote{due to Jeremy Rickard, see \cite{PathUnits};} that does not require finite dimensionality of $\A$:

\begin{lemma}
	Let $(\A,\m,\CC)$ be an Artin local, not necessarily finite-dimensional, $\RR$-algebra. 
	Then $\A$ is automatically a $\CC$-algebra, extending the $\RR$-algebra structure.
\end{lemma}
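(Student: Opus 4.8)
The plan is to produce an $\RR$-algebra section $s\colon \CC \to \A$ of the quotient map $\pi\colon \A \twoheadrightarrow \A/\m \cong \CC$; equivalently, to exhibit an element $j \in \A$ with $j^2 = -1$ and $\pi(j) = i$. Granting such a $j$, I would define $s(a+bi) \coloneqq a\,1_\A + b\,j$ for $a,b\in\RR$; this is $\RR$-linear and unital by construction, and a one-line computation using $j^2 = -1$ (and centrality of $1_\A$) shows it is multiplicative, hence an $\RR$-algebra homomorphism. The assignment $z\cdot A \coloneqq s(z)A$ then makes $\A$ into a $\CC$-algebra, and since $s(r) = r\,1_\A$ for $r\in\RR$, this $\CC$-structure extends the given $\RR$-structure. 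So everything reduces to finding $j$.

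To find $j$ I would run a Newton iteration (equivalently, invoke Hensel's lemma). Since $\A$ is Artinian, the lemma on Artin rings recalled above gives that $\m = J(\A) = \nil\A$ is a nilpotent ideal, say $\m^N = 0$. Pick any lift $x_0 \in \A$ of $i$ under $\pi$; then $e_0 \coloneqq x_0^2 + 1$ lies in $\m$ (it maps to $i^2 + 1 = 0$), and $2x_0$ is a unit of $\A$, because its residue $2i$ is nonzero, so $2x_0 \notin \m$, the unique maximal ideal. Now set $x_{k+1} \coloneqq x_k - (x_k^2 + 1)(2x_k)^{-1}$. Inductively $x_{k+1} \equiv x_k \pmod{\m}$, so each $2x_k$ remains a unit and $\pi(x_k) = i$ throughout; and the elementary identity $x_{k+1}^2 + 1 = \big((x_k^2 + 1)(2x_k)^{-1}\big)^2$ shows $x_k^2 + 1 \in \m^{2^k}$. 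Choosing $k$ with $2^k \geq N$ forces $x_k^2 + 1 = 0$, and I set $j \coloneqq x_k$. (Equivalently, nilpotence of $\m$ makes $\A$ Henselian — indeed $\m$-adically complete — and the monic polynomial $T^2 + 1 \in \A[T]$ has the residual root $\bar i$, which is simple since its derivative $2T$ takes the unit value $2i$ there; Hensel's lemma then lifts $\bar i$ to a root $j \in \A$.)

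I do not anticipate a genuine obstacle here: the two substantive inputs are the nilpotence of $\m$ — which is precisely the point at which ``Artinian'' is used in place of ``finite-dimensional'' — and the fact that $2x_0$ is a unit, which is where it matters that the residue field is $\CC$ (of characteristic $\neq 2$, and already containing a square root of $-1$, so that $\bar i$ is a simple residual root). The remaining verifications — that the Newton iteration terminates in finitely many steps, and that $s$ is multiplicative — are entirely routine and can be dispatched in a few lines.
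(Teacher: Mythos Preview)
Your proof is correct. Both you and the paper pursue the same strategy: use nilpotence of $\m$ (the Artinian hypothesis) to lift the residual root $\bar i$ of $T^2+1$ to an exact root $j\in\A$, then define the $\CC$-structure via $a+bi\mapsto a\,1_\A+bj$. The only difference is the lifting mechanism. You run Newton iteration (equivalently, Hensel's lemma), using that $2x_0$ is a unit and the quadratic-convergence identity $x_{k+1}^2+1=\big((x_k^2+1)(2x_k)^{-1}\big)^2$ to land in $\m^{2^k}$. The paper instead corrects multiplicatively: writing $a^2+1=x\in\m$, it expands $(1-x)^{-1}=1+y$ as a geometric series and $(1+y)^{1/2}=1+z$ as a binomial series (both terminate since $x$ is nilpotent), and checks directly that $a'\coloneqq a(1+z)$ satisfies $a'^2=a^2(1+y)=a^2/(1-x)=-1$. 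Your Hensel formulation is a bit more conceptual and makes transparent exactly where characteristic $\neq 2$ and simplicity of the residual root enter; the paper's power-series trick is slightly more self-contained. Either way the substance is identical.
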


\begin{proof}
	Since $\A$ is Artin, we have $\nil(\A)=\m$. 
	We are going to plug nilpotent elements into formal power series to construct a root of $T^2+1$ in $\A$. 
	As $\kappa$ is algebraically closed, there exists $a\in\A$ such that $\bar{a}^2=-1$ in $\kappa$, i.e. $a^2+1\eqqcolon x\in\m$ is nilpotent. 
	Therefore, $-a^2 = 1-x$ has an inverse $1+y = 1+x+x^2+\dots$ for $y\in\m$, which in turn has a square root $1+z=1+y/2-y^2/8+y^3/16-\dots$ for some $z\in\m$. 
	Now set $a'\coloneqq a(1+z)$, and we get $a'^2 = a^2(1+z)^2 = a^2(1+y) = a^2/(1-x) = -1$ as desired.
\end{proof}


So far, for an $n$-dimensional local Artinian $K$-algebra $(\cA,\fm,\kappa)$ we have established the following important quantities:
\begin{enumerate}[(i)]
	\item the dimension of $\A$ as a $K$-vector space: $\dim_K\A=n$ (by definition);
	\item the Krull dimension of $\A$ (the supremum of the lengths of all strictly ascending chains of prime ideals of $\A$, i.e. the ``geometric'' dimension of $\A$): $\dim\A=0$;
	\item the length of $\A$ as an $\A$-module: $\ell_\A(A)=\dim_\kappa\A$.
\end{enumerate}

But there are several other important quantities one can associate to an $n$-dimensional local Artinian $K$-algebra $(\A,\m,\kappa)$ that will play a role.

\begin{definition}\ 
	\begin{enumerate}[topsep=-\parskip]
		\item Height\footnote{should not be confused with the \textit{height of a prime ideal}, which coincides with the Krull dimension of its localization;} of $\A$: $\rh(\A)$ is the smallest $\nu\in\NN$ such that $\m^\nu=0$, $\m^{\nu-1}\neq 0$, i.e. the ``order of nilpotency'' of $\m$;
		
		\item Width(s) of $\A$: $d_i\coloneqq\dim_K\m^i/\m^{i+1}$, $1\leq i\leq h-1$, in particular, $d\coloneqq d_1$ is the dimension of the Zariski cotangent space $\m/\m^2$, also called width of the local algebra $\A$;
		
		\item Define $n_r\coloneqq 2+\sum_{i=1}^{r-1} d_i$, $1\leq r\leq h$ and notice that $n_1=2$, $n_{r+1}-n_r = d_r\geq 1$, and $n_h-1 = n$.  
	\end{enumerate}
\end{definition}

\begin{definition}[Height of a morphism]
	Let $(\cA,\fm) \xrightarrow{\varphi} (\cB,\fn)$ be a morphism of local Artinian $K$-algebras.
	Then $\rh(\varphi) \coloneqq \min\{\rh(\cA),\rh(\cB)\}$ is called height of the morphism $\varphi$.
\end{definition}

\begin{remark}
	We have $\forall X\in\m: \varphi(X)^{\rh(\varphi)} = \varphi(X^{\rh(\varphi)}) = 0$ in any case.
\end{remark}

Since each $\m^i$ is a finitely generated $\A$-module, each choice of $\kappa$-basis for $\m^i/\m^{i+1}$ lifts to a generating set (over $\A$) of $\m^i$. In the case of $\m/\m^2$ such a generating set for $\m$ is also called pseudo-basis of $\m$.

\begin{lemma}\label{nicebasis}
	Let $(\A,\m,\kappa)$ be an $n$-dimensional $\kappa$-algebra. 
	Then $\A$ has a basis $\{e_1=1_\A,e_2,\dots,e_n\}$ with structure tensor $(\alpha^i_{jk})$ such that:
	\begin{enumerate}[(i)]
		\item $\forall 1\leq i,k\leq n: \alpha^i_{1k}=\alpha^i_{k1}=\delta^i_k$ and
		\item $\forall j,k\geq 2\ \forall i\leq\max\{j,k\}: \alpha^i_{jk}=\alpha^i_{kj}=0$.
	\end{enumerate}
\end{lemma}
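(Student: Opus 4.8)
The plan is to read the basis off the $\m$-adic filtration $\A = \m^0 \supsetneqq \m \supsetneqq \m^2 \supsetneqq \cdots \supsetneqq \m^h = 0$, where $h \coloneqq \rh(\A)$, choosing the basis vectors in blocks indexed by filtration degree and ordering them so that the deeper a vector lies in the filtration, the larger its index. Concretely, set $e_1 \coloneqq 1_\A$, and for each $1 \leq r \leq h-1$ pick $e_{n_r}, e_{n_r+1}, \dots, e_{n_{r+1}-1}$ inside $\m^r$ whose residues modulo $\m^{r+1}$ form a $\kappa$-basis of $\m^r/\m^{r+1}$; there are exactly $n_{r+1} - n_r = d_r = \dim_\kappa \m^r/\m^{r+1}$ of them, which is exactly the count built into the definition of the $n_r$. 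The first thing to verify is that $\{e_1, \dots, e_n\}$ is indeed a $\kappa$-basis of $\A$; this is the standard fact that a family adapted to a filtration is a basis, proved by downward induction on $r$ using the short exact sequences $0 \to \m^{r+1} \to \m^r \to \m^r/\m^{r+1} \to 0$: one shows that $\{e_{n_r}, \dots, e_n\}$ is a $\kappa$-basis of $\m^r$, the base case $r = h$ being the empty family spanning $\m^h = 0$ (recall $n_h = n+1$), and then the single vector $e_1 \notin \m$ extends the basis $\{e_2, \dots, e_n\}$ of $\m$ to one of $\A = \kappa\cdot 1_\A \oplus \m$.

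Property (i) is then immediate: since $e_1 = 1_\A$, the identities $1_\A e_k = e_k 1_\A = e_k$ read in coordinates exactly as $\alpha^i_{1k} = \alpha^i_{k1} = \delta^i_k$. For property (ii), take $j, k \geq 2$, say with $e_j$ in block $p$ (meaning $n_p \leq j \leq n_{p+1}-1$) and $e_k$ in block $q$; then $1 \leq p, q \leq h-1$. Since $e_j \in \m^p$ and $e_k \in \m^q$, we have $e_j e_k \in \m^p\m^q \subseteq \m^{p+q}$, and by the basis statement above — adopting the convention $\m^s = 0$, $n_s = n+1$ for $s \geq h$ — the subspace $\m^{p+q}$ equals $\span\{e_i : i \geq n_{p+q}\}$. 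Because $(n_r)$ is strictly increasing ($n_{r+1} - n_r = d_r \geq 1$) and $p+q$ strictly exceeds both $p$ and $q$, we obtain $n_{p+q} \geq n_{p+1} > j$ and $n_{p+q} \geq n_{q+1} > k$, hence $n_{p+q} > \max\{j,k\}$. Therefore $e_j e_k \in \span\{e_i : i > \max\{j,k\}\}$, i.e. $\alpha^i_{jk} = 0$ for every $i \leq \max\{j,k\}$; interchanging $j$ and $k$ (or invoking commutativity of $\A$) gives $\alpha^i_{kj} = 0$ as well.

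Essentially everything here is routine; the only mildly delicate point is the index bookkeeping in the last paragraph — arranging the blocks so that ``the product of block $p$ and block $q$ sits in block $p+q$ or deeper'' translates into ``a strictly larger index than either factor's'' — but once the cut-offs $n_r$ are in place this reduces to the single inequality $n_{p+q} > \max\{n_{p+1}-1, n_{q+1}-1\}$, so I do not anticipate any genuine obstacle.
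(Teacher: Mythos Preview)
Your proof is correct, but it follows a genuinely different route from the paper's. The paper refines the $\m$-adic filtration all the way down to a composition series of ideals $\A = \a_1 \supsetneqq \m = \a_2 \supsetneqq \cdots \supsetneqq \a_n \supsetneqq \a_{n+1} = 0$ of length $n$, picks one $e_i \in \a_i \setminus \a_{i+1}$ per step, and for $2 \leq j \leq k$ uses only that $\a_k$ is an ideal to get $e_j e_k \in \a_k$; this yields $\alpha^i_{jk} = 0$ merely for $i < k$, so the boundary case $i = k$ needs a separate trick: if $\alpha^k_{jk} \neq 0$ then $e_j - \alpha^k_{jk}$ is a unit (nonzero scalar plus nilpotent), whence $e_k \in \a_{k+1}$, a contradiction.

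You instead stay with the coarser $\m$-adic filtration and exploit its multiplicativity $\m^p \m^q \subseteq \m^{p+q}$: since $p,q \geq 1$ forces $p+q$ strictly past both $p$ and $q$, the product lands in a block whose starting index already exceeds $\max\{j,k\}$, so the boundary case comes for free and no unit argument is needed. In fact your construction is exactly the one the paper uses for the \emph{next} lemma (\cref{nicebasis2}), and your index inequality $n_{p+q} > \max\{j,k\}$ is precisely what shows that \cref{nicebasis2} implies \cref{nicebasis}. The trade-off: the paper's approach avoids the $n_r$-bookkeeping entirely, while yours avoids the unit trick and simultaneously delivers the sharper block-wise vanishing of \cref{nicebasis2}.
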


\begin{proof}
	Since $\A$ is an algebra over its residue field, there exists a composition series of ideals of length $n$, $\A \eqqcolon \a_1 \supsetneqq \m \eqqcolon \a_2 \supsetneqq \dots \supsetneqq \a_n \supsetneqq \a_{n+1} \coloneqq 0$. 
	Pick $e_1 \coloneqq 1_\A\in\a_1\setminus\a_2$, $e_2\in\a_2\setminus\a_3$, $e_i\in\a_i\setminus\a_{i+1}$, $1\leq i\leq n$, and notice that the choice of $e_1$ gives (i).
	
	Now without loss of generality suppose that $2\leq j\leq k$. 
	Then $e_j e_k\in\a_k$, hence $\forall i<k: \alpha^i_{jk}=0$. 
	We are left to check the case $i=k$. 
	We have
	\[
	(e_j-\alpha^k_{jk})e_k = \sum_{i=k+1}^n \alpha^i_{jk}e_i\in\a_{k+1}.
	\]
	If $\alpha^k_{jk}\neq 0$, then $e_j-\alpha^k_{jk}\in U(\A)$, therefore $e_k\in\a_{k+1}$, a contradiction. This proves (ii).
\end{proof}

\begin{remark}
	In particular, if $n\geq 2$, then $e_n^2 = 0$, and if $n\geq 3$, then $e_{n-1}e_n = 0$.
\end{remark}

One criterion for niceness of a basis choice is the number of zero structure constants. 
In this regard, the choice in \cref{nicebasis} is not necessarily the most optimal one. 
To improve on this number one might use the following

\begin{lemma}\label{nicebasis2}
	Let $(\A,\m,\kappa)$ be a local $n$-dimensional $\kappa$-algebra. 
	Then $\A$ has a $\kappa$-basis $\{e_1\coloneqq 1_\A,e_2,\dots,e_n\}$ with structure tensor $(\alpha^i_{jk})$ such that:
	\begin{enumerate}[(i)]
		\item $\forall 1\leq i,k\leq n: \alpha^i_{1k}=\alpha^i_{k1}=\delta^i_k$ and
		\item $\forall 1\leq r, s\leq h-1\ \forall j\geq n_r\ \forall k\geq n_s\ \forall 1\leq i\leq n_{\min\{r+s,h\}}-1: \alpha^i_{jk}=0$.
	\end{enumerate}
\end{lemma}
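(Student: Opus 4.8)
The plan is to build the basis so that it is adapted to the $\m$-adic filtration
$\A = \m^0 \supseteq \m \supseteq \m^2 \supseteq \cdots \supseteq \m^{h-1} \supseteq \m^h = 0$,
with one ``block'' of basis vectors for each graded piece. Concretely, I would set $e_1 \coloneqq 1_\A$ and, for each $1 \leq r \leq h-1$, choose $e_{n_r}, e_{n_r+1}, \dots, e_{n_{r+1}-1}$ — exactly $n_{r+1}-n_r = d_r = \dim_\kappa \m^r/\m^{r+1}$ of them — to be arbitrary lifts to $\m^r$ of a fixed $\kappa$-basis of $\m^r/\m^{r+1}$. This is legitimate precisely because $\A$ is assumed to be a $\kappa$-algebra, so each ideal $\m^r$, and hence each quotient $\m^r/\m^{r+1}$, is a $\kappa$-vector space; here the improvement over \cref{nicebasis} comes solely from replacing the composition series of step $1$ by the coarser $\m$-adic filtration, whose $r$-th step has size $d_r$. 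Since $n_1 = 2$ and $n_h - 1 = n$, these prescriptions produce exactly $e_1,\dots,e_n$.

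The first thing to verify is the filtration identity
$\m^r = \span_\kappa\{e_i : i \geq n_r\}$ for every $1 \leq r \leq h$,
which I would prove by downward induction on $r$. For $r = h$ the index set is empty and $\m^h = 0$. For the step, the chosen block $e_{n_r},\dots,e_{n_{r+1}-1}$ spans $\m^r/\m^{r+1}$ modulo $\m^{r+1}$, so together with the inductive hypothesis $\m^{r+1} = \span_\kappa\{e_i : i \geq n_{r+1}\}$ it spans $\m^r$; a dimension count — the number of indices $i$ with $n_r \leq i \leq n$ is $n_h - n_r = \sum_{i=r}^{h-1} d_i = \dim_\kappa \m^r$ — shows this spanning set is in fact a basis. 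Adjoining $e_1 = 1_\A$, whose residue class spans $\A/\m = \kappa$, shows $\{e_1,\dots,e_n\}$ is a $\kappa$-basis of $\A$; in particular $i \geq n_r$ forces $e_i \in \m^r$.

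Property (i) is then immediate: since $e_1 = 1_\A$, this is exactly the special case $a_1 = 1_\cA$ of the basic relations for the structure tensor, yielding $\alpha^i_{1k} = \alpha^i_{k1} = \delta^i_k$. For property (ii), fix $1 \leq r,s \leq h-1$, $j \geq n_r$, $k \geq n_s$. By the previous paragraph $e_j \in \m^r$ and $e_k \in \m^s$, whence $e_j e_k \in \m^r\m^s \subseteq \m^{r+s}$. If $r+s \geq h$ then $e_j e_k = 0$, so all $\alpha^i_{jk}$ vanish, covering the range $1 \leq i \leq n_h - 1 = n = n_{\min\{r+s,h\}}-1$. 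If $r+s < h$, then $e_j e_k \in \m^{r+s} = \span_\kappa\{e_i : i \geq n_{r+s}\}$, so in the expansion $e_j e_k = \sum_i \alpha^i_{jk} e_i$ every coefficient with $i < n_{r+s} = n_{\min\{r+s,h\}}$ vanishes, which is the asserted statement.

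There is no serious obstacle here: the only point demanding care is the bookkeeping behind $\m^r = \span_\kappa\{e_i : i \geq n_r\}$ and the matching of the index count $n_h - n_r$ with $\dim_\kappa \m^r$, together with keeping in mind that the hypothesis that $\A$ is a $\kappa$-algebra is what makes all the graded pieces honest $\kappa$-vector spaces over which bases may be chosen and lifted.
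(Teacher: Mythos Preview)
Your proposal is correct and follows essentially the same approach as the paper: choose the basis adapted to the $\m$-adic filtration so that $e_i\in\m^r$ whenever $i\geq n_r$, and then use $e_j e_k\in\m^{r+s}$ to read off the vanishing of the structure constants. The paper's proof is terser, simply asserting the existence of such a filtered basis, whereas you spell out the downward induction and dimension count establishing $\m^r=\span_\kappa\{e_i:i\geq n_r\}$; but the substance is identical.
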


\begin{proof}
	Consider the descending chain of ideals $\A \supsetneqq \m \supsetneqq \m^2 \supsetneqq \dots \supsetneqq \m^{h-1} \supsetneqq \m^h = 0$. 
	This is a decreasing chain of vector $\kappa$-subspaces of $\A$, therefore we can pick a basis $e_1\coloneqq 1_\A\in\A\setminus\m$, $e_{n_1},\dots,e_{n_2-1}\in\m\setminus\m^2$, $e_{n_2},\dots,e_{n_3-1}\in\m^2\setminus\m^3$ etc. 
	Let $r,s\geq 1$ and let $e_j\in\m^r$ and $e_k\in\m^s$, that is, $j\geq n_r$ and $k\geq n_s$. 
	Then $e_j e_k\in\m^{r+s}$, therefore $\forall 1\leq i\leq n_{\min\{r+s,h\}}-1: \alpha^i_{jk}=0$, where the $\min$ condition comes from the fact that $e_j e_k = 0$ whenever $r+s\geq h$.
\end{proof}


\begin{remarks}\ 
	\begin{enumerate}[topsep=-\parskip]
		\item We still have $e_n^2 = 0$ if $n\geq 2$, and $e_{n-1}e_n = 0$ if $n\geq 3$. In fact $\forall j\geq n_r\ \forall k\geq n_{h-r}: e_j e_k = 0$, since $e_j\in\m^r$ and $e_k\in\m^{h-r}$.
		
		\item It is not always possible to improve on the number of zero structure constants by means of \cref{nicebasis2}, e.g. if $d_1=d_2=\dots=d_{h-1} = 1$ like in the case of $\A_{n-1}\coloneqq K[t]/(t^{n-1})$.
		
	\end{enumerate}
\end{remarks}

\begin{lemma}[Triangular Form]\label{triangular}
Let $\kappa$ be algebraically closed and $\A=(\A,\m,\kappa)$ local $\kappa$-algebra of $\dim_\kappa\A=n$. 
Then $\A$ is triangulable, i.e. isomorphic to a (commutative) $\kappa$-algebra of matrices of size $n$ of the form
\[
\begin{pmatrix}
x & 	   & \ast	\\
& \ddots &  		\\
0 &        & x 		\\
\end{pmatrix}.
\]
\end{lemma}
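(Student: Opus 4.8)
The plan is to realize $\A$ directly inside $\rM_n(\kappa)$ via its left regular representation. Since $\A$ is unital, the $\kappa$-algebra homomorphism $\lambda\colon \A \to \End_\kappa(\A) \cong \rM_n(\kappa)$, $a \mapsto \lambda_a$, is faithful, because $\lambda_a(1_\A) = a$; and $\lambda(\A)$ is commutative since $\A$ is. Hence $\A \cong \lambda(\A)$ as $\kappa$-algebras, and it suffices to find a $\kappa$-basis of $\A$ with respect to which \emph{every} matrix $\lambda_a$ is upper triangular with constant diagonal.

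Such a basis is handed to us by \cref{nicebasis}: let $\{e_1 = 1_\A, e_2, \dots, e_n\}$ be the basis produced there, with structure tensor $(\alpha^i_{jk})$. Property (ii) of that lemma says that for $j \geq 2$ (equivalently $e_j \in \m$) and every $k$ one has $\alpha^i_{jk} = 0$ for all $i \leq k$, i.e. $\lambda_{e_j}(e_k) = e_j e_k \in \span\{e_{k+1}, \dots, e_n\}$, the case $k=1$ being $\lambda_{e_j}(e_1) = e_j$. Since $e_2,\dots,e_n$ span $\m$, by $\kappa$-linearity the same holds for arbitrary $m \in \m$: $\lambda_m(e_k) \in \span\{e_{k+1}, \dots, e_n\}$ for every $k$. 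Thus the commuting family $\{\lambda_m : m \in \m\}$ is simultaneously strictly triangular with respect to the flag $\span\{e_k, \dots, e_n\}$.

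Now split an arbitrary $a \in \A = \kappa\cdot 1_\A \oplus \m$ as $a = \pi(a)\,1_\A + m_a$, where $\pi\colon \A \twoheadrightarrow \A/\m \cong \kappa$ is the (here, $\kappa$-linear) residue map and $m_a \in \m$. Then $\lambda_a = \pi(a)\,E_n + \lambda_{m_a}$. Reading off matrices in the \emph{reversed} ordered basis $(e_n, e_{n-1}, \dots, e_1)$ — the reversal turns the strict lower triangularity of $\lambda_{m_a}$ coming from $\lambda_{m_a}(e_k) \in \span\{e_{k+1}, \dots, e_n\}$ into strict upper triangularity — the matrix of $\lambda_a$ is upper triangular with every diagonal entry equal to $\pi(a)$. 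Therefore $\lambda$ identifies $\A$ with a commutative subalgebra of $\rM_n(\kappa)$ consisting precisely of matrices of the displayed shape, which is the assertion.

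I do not expect a genuine obstacle here; the argument is short once the bookkeeping is fixed. The two points needing care are (i) the orientation of the flag, so that ``strictly triangular'' comes out upper rather than lower — resolved by reversing the basis order (equivalently, by transposing) — and (ii) recognizing that the substantive input, namely the simultaneous strict triangularization of the commuting nilpotent family $\lambda(\m)$, is exactly what \cref{nicebasis} (a composition-series argument, morally Engel's theorem for the nilpotent algebra $\lambda(\m)$) already provides. Algebraic closedness of $\kappa$ is not used in the computation itself; it only serves to guarantee that a finite-dimensional local $\kappa$-algebra has residue field $\kappa$, which is precisely what is encoded in the notation $(\A,\m,\kappa)$ and what makes $\pi$ land in $\kappa$.
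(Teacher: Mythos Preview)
Your proof is correct and takes a genuinely different route from the paper. The paper argues abstractly: it embeds $\A$ into $\rM_n(\kappa)$ via a faithful representation, invokes the standard linear-algebra fact (citing Hoffman--Kunze) that a commuting family of matrices over an algebraically closed field is simultaneously triangulable, and then uses that a local ring is connected (no nontrivial idempotents) to force all diagonal entries of each matrix to coincide. Your argument is instead constructive and internal to the paper: you take the regular representation and use the composition-series basis of \cref{nicebasis} to exhibit the triangularizing flag directly, reading off the constant diagonal as the residue-field projection $\pi(a)$. Your approach has the advantage of being self-contained (no external reference needed) and of making transparent that algebraic closedness enters only to ensure the residue field is $\kappa$; the paper's approach is terser but leans on a cited theorem and needs a separate step to identify the diagonal. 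Incidentally, the paper's remark immediately following the lemma --- that the regular representation of an upper-triangular algebra is the transposed lower-triangular one, with a pointer to \cref{nicebasis}(ii) --- is precisely the observation underlying your argument, so the two viewpoints are already in contact in the text.
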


\begin{proof}
Identify $\A$ with one of its faithful representations into $\rM_n(\kappa)$. 
Since $\kappa$ is algebraically closed, all elements of $\A$ are triangulable as their minimal polynomials split, and in fact they are simultaneously triangulable since they are mutually commuting \cite[see][p.199-204]{HoffKun}, hence $\A$ is triangulable. 
Next notice that any local ring is connected: if $R_1$ and $R_2$ are commutative rings together with $\m_1\in\Spm R_1$ and $\m_2\in\Spm R_2$, then $\m_1\times R_2$ and $R_1\times\m_2$ are two distinct maximal ideals of $R_1\times R_2$. 
Put together, this forces $\A$ to have the above form.
\end{proof}

\begin{remark}
	It is easily seen that the regular representation of a commutative upper-triangular algebra is the transposed, lower-triangular one (see also \cref{nicebasis} (ii)), which we will occasionally also call the stable lower-triangular representation due to the fact that it coincides with its own regular representation. 
	We will often use the lower-triangular representation as the more canonical one.
\end{remark}

\begin{corollary}
Let $\kappa$ be algebraically closed, let $\A=\bigoplus_{k=1}^M (\A_k,\m_k,\kappa)$ be a finite-dimensional $\kappa$-algebra decomposed into Artinian local $\kappa$-algebras and let $n_k\coloneqq\dim_\kappa\A_k$, $1\leq k\leq M$. 	
Identify $\A$ with its stable lower-triangular representation and let $Z=\bigoplus_{k=1}^M (z_k\oplus X_k)\in\A$, where $z_k\in \kappa$ and $X_k\in\m_k$, $1\leq k\leq M$.
Then:
	\begin{enumerate}
		\item We have $\tr(Z)=\sum_{k=1}^M n_k z_k$ and $\det(Z)=\prod_{k=1}^M z_k^{n_k}$.
		
		\item The spectrum of $Z$ is given by $\sigma(Z)\coloneqq\{z_1,\dots,z_M\}$, where each eigenvalue $z_k$ has algebraic multiplicity $n_k$.
	\end{enumerate}
\qed
\end{corollary}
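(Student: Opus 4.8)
The plan is to reduce everything to the block structure of the left regular representation together with the Triangular Form Lemma (\cref{triangular}) and the ensuing remark identifying the stable lower-triangular representation of a local factor with its regular representation. First I would recall that under the chosen identification of $\A$ with its stable lower-triangular representation $\lambda\colon \A \hookrightarrow \rM_n(\kappa)$, where $n=\sum_{k=1}^M n_k$, the decomposition $\A=\bigoplus_{k=1}^M \A_k$ induces a block-diagonal decomposition $\lambda(Z)=\diag\bigl(\lambda_1(z_1\oplus X_1),\dots,\lambda_M(z_M\oplus X_M)\bigr)$, where $\lambda_k$ denotes the regular (= stable lower-triangular) representation of the local factor $\A_k$. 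Since $\lambda_k$ is $\kappa$-linear and $\lambda_k(1_{\A_k})=E_{n_k}$, we have $\lambda_k(z_k\oplus X_k)=z_k E_{n_k}+\lambda_k(X_k)$. As $\m_k=\nil(\A_k)$, the element $X_k$ is nilpotent, so $\lambda_k(X_k)$ is a nilpotent matrix; being also lower-triangular, all its diagonal entries vanish, i.e. $\lambda_k(X_k)$ is strictly lower-triangular.

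Given this normal form, part (1) is immediate: $\tr(Z)=\tr(\lambda(Z))=\sum_{k=1}^M\bigl(\tr(z_kE_{n_k})+\tr\lambda_k(X_k)\bigr)=\sum_{k=1}^M n_k z_k$, since a strictly triangular matrix has zero trace; and $\det(Z)=\det(\lambda(Z))=\prod_{k=1}^M\det\bigl(z_kE_{n_k}+\lambda_k(X_k)\bigr)=\prod_{k=1}^M z_k^{\,n_k}$, since each $z_kE_{n_k}+\lambda_k(X_k)$ is lower-triangular with every diagonal entry equal to $z_k$. For part (2) the same triangularity computes the characteristic polynomial of $\lambda(Z)$ as $\chi_Z(T)=\det\bigl(TE_n-\lambda(Z)\bigr)=\prod_{k=1}^M\det\bigl((T-z_k)E_{n_k}-\lambda_k(X_k)\bigr)=\prod_{k=1}^M (T-z_k)^{n_k}$, whence the eigenvalues of $Z$ are precisely $z_1,\dots,z_M$, with the factor corresponding to $z_k$ contributing algebraic multiplicity $n_k$.

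I expect no real obstacle here: the statement is a bookkeeping consequence of \cref{triangular} and the remark following it, the only subtle point being that the $z_k$ need not be pairwise distinct. Thus ``each eigenvalue $z_k$ has algebraic multiplicity $n_k$'' should be read as ``$z_k$ contributes the factor $(T-z_k)^{n_k}$ to $\chi_Z$''; when $z_k=z_{k'}$ for $k\neq k'$, the total algebraic multiplicity of that common value is the sum of the corresponding $n_k$'s, consistently with $\sigma(Z)$ being the underlying set $\{z_1,\dots,z_M\}$.
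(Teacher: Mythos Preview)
Your proposal is correct and is exactly the argument the paper has in mind: the corollary is marked with \qed immediately after its statement, signalling that it is a direct consequence of the Triangular Form Lemma (\cref{triangular}) and the remark following it about the stable lower-triangular representation coinciding with the regular one. Your write-up simply makes explicit the block-diagonal decomposition $\lambda(Z)=\diag(\lambda_k(z_k\oplus X_k))_k$ with each block lower-triangular with constant diagonal $z_k$, from which trace, determinant, and characteristic polynomial are read off; your closing remark on possibly coinciding $z_k$'s is a useful clarification that the paper leaves implicit.
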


Finally, let us mention a result of Schur and Jacobson: 

\begin{lemma}[Schur-Jacobson]\label{maxrepdim}
	If $K$ is an arbitrary field, then the maximal dimension of a commutative $K$-algebra of matrices of size $n$ is $\lfloor n^2/4\rfloor+1$.
\end{lemma}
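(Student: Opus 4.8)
The plan is to prove the two inequalities separately: first exhibit a commutative subalgebra of $\rM_n(K)$ of dimension $\lfloor n^2/4\rfloor+1$, and then show that none can be larger. For the lower bound, write $n=p+q$ with $p=\lceil n/2\rceil$, $q=\lfloor n/2\rfloor$, and in the associated block decomposition of $\rM_n(K)$ take
\[
\cA_{\mathrm{ex}}\coloneqq\left\{\,\lambda E_n+\begin{pmatrix}0&B\\0&0\end{pmatrix}\ :\ \lambda\in K,\ B\in\Mat_{p\times q}(K)\,\right\}.
\]
Two matrices $N,N'$ of the strictly-block-upper-triangular shape above satisfy $NN'=0$, hence $(\lambda E_n+N)(\mu E_n+N')=\lambda\mu E_n+\lambda N'+\mu N$ is symmetric in the two factors, so $\cA_{\mathrm{ex}}$ is a unital commutative (and, being a subalgebra of $\rM_n(K)$, associative) algebra; the parametrisation is injective, so $\dim_K\cA_{\mathrm{ex}}=1+pq=1+\lceil n/2\rceil\lfloor n/2\rfloor=1+\lfloor n^2/4\rfloor$.

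For the upper bound I would argue by induction on $n$ after two harmless reductions. Replacing $\cA$ by $\cA+KE_n$ increases the dimension by at most $1$, so we may assume $E_n\in\cA$; and $\cA\otimes_K\overline{K}\subseteq\rM_n(\overline{K})$ is commutative of the same dimension, so we may assume $K$ algebraically closed. Being a finite-dimensional commutative $K$-algebra, $\cA$ is Artinian and decomposes as $\cA\cong\bigoplus_{i=1}^M(\cA_i,\m_i)$ with $\cA_i$ local, and $K^n$ decomposes compatibly as $\bigoplus_i V_i$ with $\cA_i$ acting faithfully on $V_i$ (faithfully, because $\cA$ acts faithfully on $K^n$ and the primitive idempotents are orthogonal); writing $n_i\coloneqq\dim_K V_i\geq1$ we get $\sum_i n_i=n$ and $\dim_K\cA=\sum_i\dim_K\cA_i$. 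If $M\geq2$ then each $n_i<n$, and the inductive hypothesis together with the elementary inequality $\sum_i(\lfloor n_i^2/4\rfloor+1)\leq\lfloor n^2/4\rfloor+1$ for $n_i\geq1$ with $\sum_i n_i=n$ (which reduces, via a two-term splitting, to checking $\lfloor a^2/4\rfloor+\lfloor b^2/4\rfloor+1\leq\lfloor(a+b)^2/4\rfloor$ for $a,b\geq1$) closes the step. So it suffices to treat $\cA=(\cA,\m)$ local; since $K=\overline{K}$, $\cA/\m\cong K$, hence $\cA=KE_n\oplus\m$ with $\m=\nil(\cA)$ a commutative algebra of nilpotent matrices, and the entire statement reduces to the single inequality $\dim_K\m\leq\lfloor n^2/4\rfloor$.

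To prove that, note that by \cref{triangular} we may conjugate $\cA$ into the upper-triangular matrices, whereupon every element of $\m$, being nilpotent, has all diagonal entries $0$ and so lies in the algebra $\fn_n$ of \emph{strictly} upper-triangular matrices. One is thus reduced to the purely combinatorial estimate that a commutative subalgebra of $\fn_n$ has $K$-dimension at most $\lfloor n^2/4\rfloor$ — this is the heart of the matter and the step I expect to be genuinely delicate. It is naturally approached by passing to the associated graded with respect to the superdiagonal filtration $\fn_n=\bigoplus_{d\geq1}\fn_n^{(d)}$, which turns $\m$ into a graded commutative subalgebra of the same dimension, after which the bound becomes an extremal problem governed by exactly the same balancing $p+q=n\Rightarrow pq\leq\lfloor n^2/4\rfloor$ that drove the construction; the extremal algebras are $K[x_1,\dots,x_r]/(x_1,\dots,x_r)^2$ realised on a minimal faithful module. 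For the complete combinatorial argument I would follow the classical treatments of Schur and Jacobson.
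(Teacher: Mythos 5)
The paper offers no proof of this lemma at all: it simply refers the reader to Jacobson's classical paper and to Mirzakhani's short note, so the only thing to compare your argument against is the literature those references contain. With that said, your reductions are all correct: the block example $\cA_{\mathrm{ex}}$ does realize the bound $1+\lceil n/2\rceil\lfloor n/2\rfloor=1+\lfloor n^2/4\rfloor$; enlarging $\cA$ to $\cA+KE_n$ and extending scalars to $\overline{K}$ are harmless; the Artinian splitting of $\cA$ and the compatible splitting of $K^n$ are right (including the faithfulness of each $\cA_i$ on $V_i$, which follows from orthogonality of the idempotents); and the convexity inequality $\lfloor a^2/4\rfloor+\lfloor b^2/4\rfloor+1\leq\lfloor(a+b)^2/4\rfloor$ for $a,b\geq1$ does close the induction when $M\geq2$. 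This correctly reduces the whole statement to: a commutative subalgebra of the strictly upper-triangular matrices $\fn_n$ has $K$-dimension at most $\lfloor n^2/4\rfloor$.

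But that last inequality \emph{is} Schur's theorem --- essentially all of the content of the lemma lives there, and your proposal does not prove it. The associated-graded reduction is legitimate (the superdiagonal filtration is multiplicative, $\operatorname{gr}\fn_n\cong\fn_n$ as graded algebras, and $\operatorname{gr}\m$ is a commutative graded subalgebra of the same dimension), but it does not by itself turn the bound into ``the same balancing $pq\leq\lfloor n^2/4\rfloor$'': one still has to produce the two subspaces whose dimensions get balanced, and that is exactly where Schur, Jacobson, and Mirzakhani do the real work (Mirzakhani's argument, for instance, is an induction on $n$ via a rank count on $\cA w$ for a well-chosen vector $w$, not a graded argument). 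So, read as a self-contained proof, there is a genuine gap at the decisive step; read as a proof modulo the classical references, it is fine --- and indeed strictly more detailed than the paper, which cites those references for the entire statement.
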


\begin{proof}
	We refer the reader to \cite{Jac} for the classical source or to \cite{Mrz} for a more modern and proof-efficient treatment.
\end{proof}

Let us now determine the structure of morphisms $\A\xrightarrow{\varphi}\B$ of finite-dimensional commutative $K$-algebras.
Let $\B=\bigoplus_{\ell=1}^N \B_\ell$ be the decomposition of $\B$ into Artin local $K$-algebras.
Then $\varphi$ is the same as an $N$-tuple of morphisms $\varphi_\ell: \A\to\B_\ell$, where $\varphi_\ell = \pr^\cB_\ell\circ\varphi$ and $\pr^\cB_\ell:\B\twoheadrightarrow\B_\ell$ is the canonical projection, $1\leq\ell\leq N$, i.e. $\varphi(a)=(\varphi_1(a),\dots,\varphi_N(a))$.
Therefore it suffices to assume that $\B$ is local, in particular connected.
We have:

\begin{lemma}[Ring morphisms and decomposition]\label{idempotents} Let $R\xrightarrow{\varphi}S$ be a non-zero morphism in $\CRing$. Then:
	\begin{enumerate}
		\item $R=\bigoplus_{k=1}^M R_k$ decomposes as a finite direct product (sum) of non-trivial rings ($M\geq 2$) if and only if $R$ contains a non-trivial complete system of $M$ distinct mutually orthogonal idempotents $\{e_1,\dots,e_M\}$, i.e. $\sum_{k=1}^M e_k = 1_R$ and $\forall 1\leq k,\ell\leq M: e_k\neq 0,1$, $e_k e_\ell = \delta_{k \ell} e_k$. 
		In this case we have $\forall 1\leq k\leq M: R_k = R e_k$.
		
		\item If $R=\bigoplus_{k=1}^M R_k$ and $S$ is connected (for example if $S$ is local), then there exists exactly one $1\leq k\leq M$ such that  $\varphi$ factors as $\varphi = \bar{\varphi}\circ\pr_k$ for some ring homomorphism $\bar{\varphi}:R_k\to S$, where $\pr_k$ denotes the $k$-th canonical projection.		
		That is, we have a commutative diagram
		\[
		\begin{tikzcd} 
		R \arrow[d,two heads,swap,"\pr_k"] \arrow[r,"\varphi"] & S \\
		R_k  \arrow[ur,dashed,swap,"\bar{\varphi}"] &
		\end{tikzcd}
		\]		
	\end{enumerate}
\end{lemma}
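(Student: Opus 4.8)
The plan is to pass back and forth between finite direct product decompositions of a commutative ring and complete systems of mutually orthogonal idempotents for part (1), and then for part (2) to use the single observation that a connected ring has no idempotents other than $0$ and $1$, so that any morphism out of a product ring can ``detect'' only one of the factors.

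For part (1), in the direction ($\Rightarrow$) I would let $e_k \in R = \bigoplus_{j=1}^M R_j$ be the image of $1_{R_k}$ under the canonical inclusion $R_k \hookrightarrow R$; then $\sum_{k=1}^M e_k = 1_R$ and $e_k e_\ell = \delta_{k\ell} e_k$ hold componentwise, each $e_k \neq 0$ because $R_k$ is non-trivial, and $e_k \neq 1_R$ because $M \geq 2$ (the $e_k$ has a $0$ in some slot), and moreover $R e_k$ is precisely the $k$-th factor $R_k$. In the direction ($\Leftarrow$), given such a system $\{e_1, \dots, e_M\}$ I would set $R_k \coloneqq R e_k$; since $e_k$ is idempotent and central (commutativity of $R$), $R e_k$ is an ideal which is a commutative ring with unit $e_k$, and the map $R \to \bigoplus_{k=1}^M R e_k$, $r \mapsto (re_1, \dots, re_M)$, is a ring homomorphism. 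It is injective because $r = r\cdot 1_R = \sum_k re_k$, and surjective because $\sum_k r_k e_k \mapsto (r_k e_k)_k$ by orthogonality; each $R e_k$ is non-trivial as $e_k \neq 0$, and $M \geq 2$ automatically, since a complete system consisting of a single idempotent would force that idempotent to equal $1_R$. This is all routine idempotent bookkeeping.

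For part (2), I would fix a decomposition $R = \bigoplus_{k=1}^M R_k$ with associated idempotents $e_1, \dots, e_M$ as furnished by (1), and let $\varphi \colon R \to S$ be non-zero, so that $S \neq 0$ and $\varphi(1_R) = 1_S \neq 0$. Since a ring homomorphism carries idempotents to idempotents and respects the orthogonality and completeness relations, $\{\varphi(e_1), \dots, \varphi(e_M)\}$ is a complete system of mutually orthogonal idempotents in $S$; but $S$ is connected, hence its only idempotents are $0$ and $1_S$, so each $\varphi(e_k) \in \{0, 1_S\}$. From $\sum_k \varphi(e_k) = 1_S \neq 0$ at least one of them equals $1_S$, and from $\varphi(e_k)\varphi(e_\ell) = 0$ for $k \neq \ell$ at most one does; say $\varphi(e_k) = 1_S$ and $\varphi(e_\ell) = 0$ for $\ell \neq k$. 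Then for every $r \in R$ we get $\varphi(r) = \varphi(r\cdot 1_R) = \sum_\ell \varphi(r)\varphi(e_\ell) = \varphi(r)\varphi(e_k) = \varphi(re_k)$, so $\varphi$ factors through $\pr_k \colon R \to R_k = Re_k$; taking $\bar\varphi \coloneqq \varphi|_{R_k} \colon R_k \to S$, which is a ring homomorphism sending $1_{R_k} = e_k$ to $1_S$, gives $\varphi = \bar\varphi \circ \pr_k$. Uniqueness of $k$ is immediate: if also $\varphi = \bar\varphi' \circ \pr_\ell$ with $\ell \neq k$, then $1_S = \varphi(e_k) = \bar\varphi'(\pr_\ell(e_k)) = \bar\varphi'(e_k e_\ell) = \bar\varphi'(0) = 0$, a contradiction.

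I do not expect a genuine obstacle here; the one non-formal point is the connectedness step in (2), which is what singles out the unique factor. The remaining care is purely bookkeeping: checking that $Re_k$ is unital with unit $e_k$, and reading ``non-zero morphism'' as exactly the hypothesis $S \neq 0$ (equivalently $1_S \neq 0_S$) that is needed to guarantee that not all of the $\varphi(e_k)$ vanish.
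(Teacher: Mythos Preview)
Your proposal is correct and follows essentially the same approach as the paper: for (1) you spell out the standard idempotent bookkeeping where the paper simply cites \cite[Prop.~1.10]{AM}, and for (2) both arguments push the complete orthogonal system $\{e_k\}$ through $\varphi$ and use connectedness of $S$ to force exactly one $\varphi(e_k)=1_S$, then factor through the corresponding projection. Your invocation of ``connected $\Rightarrow$ the only idempotents are $0$ and $1_S$'' is slightly more direct than the paper's phrasing (which argues by contradiction via a non-trivial subsystem), but the content is identical.
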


\begin{proof}
	To (1): This is well-known. 
	For example, it follows by induction from \cite[][Prop. 1.10]{AM}, since any sum of distinct non-trivial mutually orthogonal idempotents is again idempotent.
	
	To (2): First of all, notice that $\varphi\neq 0$ implies $S\neq 0$. 
	Put $f_k\coloneqq \varphi(e_k)$, $1\leq k\leq M$. 
	Then $\forall 1\leq k,\ell\leq M: f_k f_\ell = \varphi(e_k e_\ell) = \delta_{k\ell} f_k$ and $\sum_{k=1}^M f_k = \varphi(\sum_{k=1}^M e_k) = 1_S$. 
	Thus we only need check the conditions $f_k\in\{0,1_S\}$.
	Suppose that $\forall 1\leq k\leq M: f_k=0$, then $0=\sum_{k=1}^M f_k = 1_S$, i.e. $S=0$, a contradiction. 
	Hence not all $f_k$ are 0.
	Therefore, if no $f_k$ is $1_S$ either, we obtain a non-trivial complete subsystem of distinct mutually orthogonal idempotents, which is a contradiction to $S$ being connected.
	Thus at least one $f_k$ is $1_S$. 
	Now suppose that there are two of them, i.e. that there exist indices $k\neq\ell$ such that $f_k=f_\ell=1_S$. 
	Then $1_S = f_k f_\ell = 0$, again a contradiction.
	Therefore there is exactly one $1\leq k\leq M$ such that $f_k = 1_S$. 
	Without loss of generality let that be $f_1 = 1_S$.
	We have $\forall 2\leq k\leq M: 0 = f_1 f_k = f_k=\varphi(e_k)$, hence $e_2,\dots,e_M \in \ker\varphi$.
	Thus, if we put $I\coloneqq \sum_{k=2}^M R e_k \subseteq \ker\varphi$, then $\varphi$ factors through $ R/I \cong (\bigoplus_{k=1}^M R e_k)/\sum_{k=2}^M R e_k \cong R e_1 = R_1$ as desired.
\end{proof}

\begin{corollary}[Canonical Factorization of Homomorphisms of Artinian Rings]\label{MorFact}
	Let $A \xrightarrow{\varphi} B$ be a homomorphism of Artinian rings and let $A \cong \bigoplus_{k=1}^M (A_k,\fm_k)$ and $B \cong \bigoplus_{\ell=1}^N (B_\ell,\fn_\ell)$ be their respective decompositions into Artin local rings.
	Then up to index permutation there exists a uniquely determined mapping $\tau\coloneqq\tau_\varphi:\{1,\dots,N\} \to \{1,\dots,M\}$, $\ell\mapsto \tau(\ell)$, such that we have a commutative diagram
	\begin{equation}\label{morfact}
	\begin{tikzcd}[column sep = scriptsize]
	\bigoplus_{k=1}^M A_k \ar[r,"\varphi"] \ar[two heads,d,"\Pi",swap] & \bigoplus_{\ell=1}^N B_\ell \\
	\bigoplus_{\ell=1}^N A_{\tau(\ell)} \ar[ur,"\exists_1 \bar{\varphi}",swap,dashed]
	\end{tikzcd}
	\end{equation}
	where $\Pi\coloneqq\Pi_\tau\coloneqq (\pr^A_{\tau(1)},\dots,\pr^A_{\tau(N)}): (a_1,\dots,a_M) \mapsto (a_{\tau(1)},\dots,a_{\tau(N)})$ is the obvious ring epimorphism and $\bar{\varphi} =  \oplus_{\ell=1}^N \bar{\varphi}_\ell$ for some local homomorphisms $\bar{\varphi}_\ell: (A_{\tau(\ell)},\fm_{\tau(\ell)}) \to (B_\ell,\fn_\ell)$, $1\leq \ell\leq N$, of local Artinian rings, that is, $\varphi = (\bar{\varphi}_1 \circ \pr^A_{\tau(1)},\dots,\bar{\varphi}_N \circ \pr^A_{\tau(N)})$.
	Moreover, if $\varphi\in\Mor(\fdCAlg_K)$, then also $\bar{\varphi}\in\Mor(\fdCAlg_K)$.
\end{corollary}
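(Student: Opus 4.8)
\emph{Proof sketch.} The entire statement is a systematic, componentwise application of \cref{idempotents}(2), so the plan is largely bookkeeping. First I would reduce to the case of a local codomain: write $B \cong \bigoplus_{\ell=1}^N B_\ell$ for the given decomposition into Artin local rings (the claim being trivial when $B=0$), let $\pr^B_\ell : B \twoheadrightarrow B_\ell$ be the canonical projections, and set $\varphi_\ell \coloneqq \pr^B_\ell \circ \varphi : A \to B_\ell$. Since $\varphi$ is unital, $\varphi_\ell(1_A) = 1_{B_\ell} \neq 0$, so each $\varphi_\ell$ is a non-zero morphism in $\CRing$, and $B_\ell$ is local, hence connected. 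Applying \cref{idempotents}(2) to $\varphi_\ell$ together with the decomposition $A = \bigoplus_{k=1}^M A_k$ then yields a \emph{unique} index, call it $\tau(\ell) \in \{1,\dots,M\}$, and a ring homomorphism $\bar{\varphi}_\ell : A_{\tau(\ell)} \to B_\ell$ with $\varphi_\ell = \bar{\varphi}_\ell \circ \pr^A_{\tau(\ell)}$; as $\pr^A_{\tau(\ell)}$ is onto, $\bar{\varphi}_\ell$ is in turn uniquely pinned down by this equation.

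Collecting these data over $1 \leq \ell \leq N$ defines $\tau \coloneqq \tau_\varphi : \{1,\dots,N\} \to \{1,\dots,M\}$ and $\bar{\varphi} \coloneqq \bigoplus_{\ell=1}^N \bar{\varphi}_\ell$. With $\Pi \coloneqq \Pi_\tau = (\pr^A_{\tau(1)},\dots,\pr^A_{\tau(N)})$ as in the statement, verifying $\varphi = \bar{\varphi} \circ \Pi$ slot by slot amounts exactly to the $N$ already-established identities $\varphi_\ell = \bar{\varphi}_\ell \circ \pr^A_{\tau(\ell)}$, so \eqref{morfact} commutes. For uniqueness: with both decompositions held fixed, the index $\tau(\ell)$ is forced by \cref{idempotents}(2) and then $\bar{\varphi}_\ell$ is forced by surjectivity of $\pr^A_{\tau(\ell)}$, so the pair $(\tau,\bar{\varphi})$ is unique; the qualifier ``up to index permutation'' merely records that the decompositions $A \cong \bigoplus_k A_k$ and $B \cong \bigoplus_\ell B_\ell$ are themselves unique only up to relabelling the index sets, a relabelling on either side transporting $\tau$ in the evident way.

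Next I would check that each $\bar{\varphi}_\ell$ is \emph{local}. Recall that in an Artin local ring the maximal ideal coincides with the nilradical, so $\fm_{\tau(\ell)} = \nil A_{\tau(\ell)}$ and $\fn_\ell = \nil B_\ell$; since ring homomorphisms send nilpotents to nilpotents, $\bar{\varphi}_\ell(\fm_{\tau(\ell)}) \subseteq \fn_\ell$, which is precisely the assertion that $\bar{\varphi}_\ell : (A_{\tau(\ell)},\fm_{\tau(\ell)}) \to (B_\ell,\fn_\ell)$ is local. Finally, for the ``moreover'' clause, if $\varphi \in \Mor(\fdCAlg_K)$ then each $\varphi_\ell = \pr^B_\ell \circ \varphi$ is $K$-linear, and since $\bar{\varphi}_\ell$ is determined on all of $A_{\tau(\ell)}$ by $\bar{\varphi}_\ell(\pr^A_{\tau(\ell)}(a)) = \varphi_\ell(a)$ with $\pr^A_{\tau(\ell)}$ a surjective $K$-linear map, $\bar{\varphi}_\ell$ is automatically $K$-linear; it is unital (being a $\CRing$-morphism), and $A_{\tau(\ell)},B_\ell$ are finite-dimensional as direct summands of $A,B$. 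Hence $\bar{\varphi} = \bigoplus_\ell \bar{\varphi}_\ell \in \Mor(\fdCAlg_K)$.

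I do not expect a genuine obstacle here: all the content sits in \cref{idempotents}(2), and this corollary is simply its packaging over the $N$ local factors of $B$. The two places that need a little care are (a) spelling out exactly what the uniqueness assertion means once the decompositions are fixed, and (b) the locality of the $\bar{\varphi}_\ell$, where the input ``maximal ideal $=$ nilradical in an Artin local ring'' does the work — neither involving a new idea.
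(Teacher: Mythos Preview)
Your proposal is correct and follows essentially the same approach as the paper: apply \cref{idempotents}(2) componentwise to each $\varphi_\ell = \pr^B_\ell \circ \varphi$, then observe that locality of the $\bar{\varphi}_\ell$ is automatic because in an Artin local ring the maximal ideal is the nilradical, and finally deduce $K$-linearity from the construction. The paper's proof is terser (it simply says ``this follows from the previous discussion'' and then isolates the locality and $K$-linearity points), but your more explicit bookkeeping, including the uniqueness discussion, is a faithful unpacking of exactly that argument.
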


\begin{proof}
	This follows from the previous discussion, so we only need to show that $\bar{\varphi}_\ell$ are local, $1\leq\ell\leq N$.
	Indeed, more generally, if $\psi: (R,\fm) \to (S,\fn)$ is a homomorphism of local rings with maximal ideals of nilpotent elements, then $\psi$ is necessarily itself local: one clearly has $\psi(\m) \subseteq \n$ since $\forall x\in\m: \psi(x)$ is nilpotent.
	$K$-linearity of $\bar{\varphi}$ follows directly from the proof of \cref{idempotents} if we suppose $K$-linearity of $\varphi$ there.
\end{proof}

\begin{remarks}\ 
	\begin{enumerate}[topsep=-\parskip]
		\item Conversely, any mapping of indices $\tau:\{1,\dots,N\} \to \{1,\dots,M\}$ together with a family of ring homomorphisms $\varphi_\ell: A_{\tau(\ell)} \to B_\ell$, $1\leq\ell\leq N$, gives again a morphism $\varphi\coloneqq (\varphi_1 \circ \pr_{\tau(1)},\dots,\varphi_N \circ \pr_{\tau(N)}): A \coloneqq \bigoplus_{k=1}^M A_k \to B \coloneqq \bigoplus_{\ell=1}^N B_\ell$ of commutative rings.
		
		\item If $\fA$ is a non-unital finite-dimensional commutative associative $K$-algebra, then we recover all of the above structure theory by adjoning a unit.
		In particular, $\fA$ decomposes as $\fA=\bigoplus_{k=1}^N \fA_k$ into connected  $K$-algebras, some of which may be unital, and $\fA$ has stable lower-triangular representation. 
		Moreover, if $\fA$ is a connected non-unital $K$-algebra, then by adjoining a unit one obtains a local $\A=(\A,\m)$ with $\m=\fA$.
		
		\item If $\psi: (R,\fm,\kappa) \to (S,\fn,\lambda)$ is a morphism of local rings with maximal ideals of nilpotent elements, then $\psi$ induces a well-defined extension $\bar{\psi}:\kappa \hookrightarrow \lambda$ of residue fields, and we have a commutative diagram
		\[
		\begin{tikzcd}
		(R,\fm) \ar[r,"\psi"] \ar[d,two heads,"\pi_R",swap] & (S,\fn) \ar[d,two heads,"\pi_S"] \\
		\kappa \ar[r,hook,"\bar{\psi}"] & \lambda
		\end{tikzcd}
		\]
		where $\pi_R$ and $\pi_S$ denote the canonical quotient projections.
		In particular, if $\varphi: (\cA,\fm,K) \to (\cB,\fn,K)$ is a morphism of local $K$-algebras with maximal ideals of nilpotent elements, then we have a commutative diagram
		\begin{equation}\label{scalarcom}
		\begin{tikzcd}[column sep = tiny, row sep = scriptsize]
		(\cA,\fm,K) \ar[two heads,dr,"\sigma_\cA",swap] \ar[rr,"\varphi"] & & (\cB,\fn,K) \ar[two heads,dl,"\sigma_\cB"] \\
		& K &
		\end{tikzcd}
		\end{equation}
		where $\sigma_\cA$ and $\sigma_\cB$ denote the canonical quotient projections.
	\end{enumerate}
\end{remarks}

\subsection{The Structure of $U(\A)$}\label{subs:UA}

For any finite-dimensional associative unital $K$-algebra, $U(\A)$ is an affine algebraic group (and a rational variety). 
If $K=\KK$, then $U(\A)$ is also a Lie group, open and dense in $\A$. 
Since we are only interested in the commutative kind, we can take an elementary, ad hoc approach in determining $U(\A)$. 
If $\A$ is a finite-dimensional unital commutative associative $K$-algebra, then $\A$ decomposes as $\A \cong \A_1 \times \dots \times \A_M$ of commutative Artin local $K$-algebras, hence $U(\A)\cong U(\A_1)\times\dots\times U(\A_M)$. 
Therefore it suffices to assume that $(\A,\m,\kappa)$ is a finite-dimensional (Artin) local $K$-algebra.
Note, however, that in general the $\cA_k$-s can have different residue fields.

\begin{proposition}\label{Ustruct}
	Let $(\A,\m,\kappa)$ be a local finite-dimensional commutative associative unital $K$-algebra. 
	Then
	\[
	U(\A)\xrightarrow{\cong}\kappa^{\times}\times(\m,+),\ u+x\mapsto\left(u,\log\left(1+\frac{x}{u}\right)\right),
	\] 
	which is a rational map. \qed
\end{proposition}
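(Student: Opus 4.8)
The plan is to factor the asserted isomorphism through the multiplicative decomposition $U(\A)=\kappa^{\times}\cdot(1+\m)$, followed by the truncated $\exp/\log$ bijection between $(1+\m,\cdot)$ and $(\m,+)$. \emph{First}, since $\ch K=0$ the field $K$ is perfect, so by \cref{structext} the algebra $\A$ carries a $\kappa$-algebra structure extending the given one; the structure map $\kappa\hookrightarrow\A$ has image meeting $\m$ trivially (a nonzero element of a field is a unit, while $\m$ contains no units) and of $K$-codimension $\dim_K\m$, so that $\A=\kappa\oplus\m$ as $K$-algebras and every $a\in\A$ is uniquely $a=u+x$ with $u\in\kappa$, $x\in\m$. \emph{Next}, I would identify the units: reduction modulo $\m$ shows that $u+x\in U(\A)$ forces $u\in\kappa^{\times}$; conversely, if $u\ne 0$ then $u\in U(\A)$ and $x/u\in\m=\nil\A$, so $1+x/u$ is a unit with inverse $\sum_{i\ge 0}(-x/u)^{i}$ (a finite sum), whence $u+x=u(1+x/u)\in U(\A)$. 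Thus $U(\A)=\{\,u+x:u\in\kappa^{\times},\ x\in\m\,\}$.

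Because $\m^{\rh(\A)}=0$ and $\ch K=0$, the series
\[
\log(1+y)=\sum_{i\ge 1}\frac{(-1)^{i-1}}{i}\,y^{i},\qquad \exp(z)=\sum_{i\ge 0}\frac{z^{i}}{i!}
\]
are finite whenever $y,z\in\m$, and $\log(1+y),\ \exp(z)-1\in\m$. So I would set $\Phi\colon U(\A)\to\kappa^{\times}\times(\m,+)$, $\Phi(u+x)=(u,\log(1+x/u))$, and $\Psi\colon\kappa^{\times}\times(\m,+)\to U(\A)$, $\Psi(u,z)=u\exp(z)=u+u(\exp(z)-1)$, both well defined by the previous paragraph. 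That $\Psi\circ\Phi=\id$ and $\Phi\circ\Psi=\id$ follows from the formal identities $\exp(\log(1+T))=1+T$ and $\log(\exp T)=T$ in $\QQ[[T]]$: since $\QQ\subseteq K\subseteq\A$, substituting a nilpotent element defines a ring homomorphism $\QQ[[T]]\to\A$ (all occurring sums being finite), which transports these identities into $\A$.

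To see $\Phi$ is a group homomorphism it is equivalent to check $\Psi((u_{1},z_{1})(u_{2},z_{2}))=\Psi(u_{1},z_{1})\Psi(u_{2},z_{2})$, i.e.\ $u_{1}u_{2}\exp(z_{1}+z_{2})=(u_{1}\exp z_{1})(u_{2}\exp z_{2})$, i.e.\ $\exp(z_{1}+z_{2})=\exp(z_{1})\exp(z_{2})$; this is the formal identity in $\QQ[[S,T]]$ pushed forward along $S\mapsto z_{1}$, $T\mapsto z_{2}$, and the substitution is legitimate \emph{precisely because $z_{1}$ and $z_{2}$ commute} — the one place commutativity of $\A$ is used — and are nilpotent. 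Finally, rationality: $\Psi$ is polynomial in the coordinates of $(u,z)$ (truncated $\exp$, constant denominators $i!\in K^\times$), while $\Phi$ is the projection $u$ together with $\log(1+x/u)$, a polynomial in the coordinates of $x$ and the entries of $u^{-1}$; since $u\mapsto u^{-1}$ is rational on $\kappa^{\times}$ (denominator $N_{\kappa/K}(u)$) and $u\ne 0$ on all of $U(\A)$, the map $\Phi$ is a rational — in fact biregular — map of the affine varieties in question.

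The only genuinely delicate points are the initial one, pinning down a coefficient field so that the notation $u+x$ is meaningful, and the bookkeeping that lets one manipulate $\exp$ and $\log$ as honest (truncated) power series; once the substitution homomorphisms $\QQ[[T]]\to\A$ and $\QQ[[S,T]]\to\A$ are in hand, the rest is the standard formal calculus of $\exp$ and $\log$, with commutativity entering exactly once.
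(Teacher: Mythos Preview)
Your proof is correct and follows essentially the same route as the paper: invoke \cref{structext} to obtain the coefficient field splitting $\A=\kappa\oplus\m$, factor units as $u(1+x/u)$, and then use the truncated formal $\exp/\log$ to identify $(1+\m,\cdot)\cong(\m,+)$. The paper phrases the first step as a split short exact sequence $1\to 1+\m\to U(\A)\to\kappa^\times\to 1$ rather than writing out $\Phi,\Psi$ explicitly, and omits the verification of the homomorphism property and rationality that you spell out, but the underlying argument is identical.
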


\begin{proof}
	By \cref{structext}, the $K$-algebra structure of $\A$ extends to a $\kappa$-algebra structure and $\A\cong\kappa\oplus\m$ as vector $\kappa$-spaces. 
	In particular, every unit in $\A$ can be uniquely written as $u+x=u\left(1+\frac{x}{u}\right)$ for some  $u \in \kappa \setminus \{0\} = \kappa^{\times}$ and $x\in\m$ (nilpotent). 
	Moreover, the natural epimorphism $\sigma_\cA:\A\twoheadrightarrow\A/\m = \kappa$ induces a short exact sequence of abelian groups:
	\[
	1\rightarrow 1+\m \rightarrow U(\A) \rightarrow \kappa^{\times} \rightarrow 1,
	\]
	which splits non-canonically: we have an isomorphism of (abelian) groups $U(\A)\cong\kappa^{\times}\oplus(1+\m,\times)$, $u+x\mapsto\left(u,1+\frac{x}{u}\right)$, induced by the algebra structure. 
	Next we show that $(1+\m,\times)\cong (\m,+)$ as groups. 
	We use a little trick involving formal power series and nilpotents. 
	Recall the formal power series
	\[
	\exp(T)\defeq\sum_{k=0}^\infty \frac{T^k}{k!}\in K[[T]] \text{ and }
	\log(1+T)\defeq\sum_{k=1}^\infty \frac{(-1)^{k+1}}{k}T^k\in K[[T]],
	\]
	which are formal inverses to each other.
	Then clearly $\forall x\in\m: \log(1+x)\in\m$ and $\exp(x)\in 1+\m$ are well-defined as $x$ is nilpotent, and we get homomorphisms $\exp$ and $\log$ of abelian groups such that $\exp(\log(1+x))=1+x$.
\end{proof}

\begin{corollary}\label{UstructC}
	Let $\kappa$ be an algebraically closed field and let $\A$ be a finite-dimensional commutative associative unital $\kappa$-algebra. 
	Then $U(\A)\cong(\kappa^{\times})^M \times (\kappa^{+})^N$ for some $M\in\mathbb{N}$ and $N\in\mathbb{N}_0$, where $M=\#\Spec\A=\#\Spm\A$. \qed
\end{corollary}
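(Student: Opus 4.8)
The plan is to reduce the statement to the local case treated in \cref{Ustruct} by way of the Artinian decomposition, and then do some elementary bookkeeping. First I would invoke the Artin Rings Lemma to write $\A \cong \bigoplus_{k=1}^{M} (\A_k, \m_k, \kappa_k)$ as a finite product of local Artin $\kappa$-algebras, unique up to permutation of the factors; by the same lemma $M = \#\Spec\A = \#\Spm\A$, which already pins down the exponent $M$ in the target. Since forming the group of units commutes with finite products, $U(\A) \cong \prod_{k=1}^{M} U(\A_k)$, so everything comes down to computing each $U(\A_k)$.

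The second step is to observe that algebraic closedness of $\kappa$ collapses all the residue fields. Indeed, $\A_k$ is finite-dimensional over $\kappa$, so its residue field $\kappa_k$ is a finite field extension of $\kappa$; as $\kappa$ is algebraically closed, $\kappa_k = \kappa$. This is the single place where the hypothesis on $\kappa$ enters, and it is exactly what removes the caveat (noted just before \cref{Ustruct}) that in general the local summands can carry different residue fields. With $\kappa_k = \kappa$ in hand, \cref{Ustruct} applies to the local finite-dimensional $\kappa$-algebra $(\A_k, \m_k, \kappa)$ and yields a (rational) isomorphism $U(\A_k) \cong \kappa^{\times} \times (\m_k, +)$.

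It then remains only to identify the additive groups $(\m_k, +)$. Each $\m_k$ is a finite-dimensional $\kappa$-vector space, so as an abelian group it is $(\kappa^{+})^{d_k}$ with $d_k \coloneqq \dim_\kappa \m_k$; and from $\A_k \cong \kappa \oplus \m_k$ as $\kappa$-vector spaces we get $d_k = \dim_\kappa \A_k - 1 \geq 0$. Assembling the factors gives
\[
U(\A) \;\cong\; \prod_{k=1}^{M}\bigl(\kappa^{\times} \times (\kappa^{+})^{d_k}\bigr) \;\cong\; (\kappa^{\times})^{M} \times (\kappa^{+})^{N},
\qquad N \coloneqq \sum_{k=1}^{M} d_k = \dim_\kappa\A - M,
\]
and since each local summand has $\dim_\kappa\A_k \geq 1$ we indeed get $N \in \NN_0$. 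This is exactly the asserted form.

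As for the difficulty: there is essentially no real obstacle here, the whole argument being bookkeeping layered on top of \cref{Ustruct}. The only point that genuinely requires the hypothesis — and hence the only thing worth stating with care — is the equality $\kappa_k = \kappa$ of residue fields forced by algebraic closedness; everything else is the elementary fact that a finite-dimensional $\kappa$-vector space is, additively, a finite power of $\kappa^{+}$.
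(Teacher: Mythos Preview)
Your proposal is correct and is exactly the intended argument: the paper marks the corollary with a bare \qed, treating it as an immediate consequence of \cref{Ustruct} via the Artin decomposition, and your write-up spells out precisely that reduction together with the one genuine use of algebraic closedness (forcing $\kappa_k=\kappa$).
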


\begin{definition}
	$U(\A)$ is called for short of type $(M,N)$ if $U(\A)\cong\GG_m^M\times\GG_a^N$.
\end{definition}

Conversely, any $(M,N)$-type occurs as the group of units of some $\A\in\fdCAlg_{\kappa}$, for instance\footnote{example due to Julian Rosen, see \cite{ConnectedUnits}} 
\[
\A\coloneqq \kappa^M \times \kappa[X_1,\dots,X_N]/(X_1,\dots,X_N)^2
\]
In particular, there exist finite-dimensional local $\kappa$-algebras $(\A,\m,\kappa)$ with an arbitrary (finite) $\dim_\kappa\m$.

\begin{proposition}[Path-connectedness of $U(\A)$]\label{complex}
	Let $\A$ be a finite-dimensional commutative associative unital $\RR$-algebra.
	Then $\pi_0(U(\A))$ is trivial if and only if $\A$ has a complex structure, extending the real one.
\end{proposition}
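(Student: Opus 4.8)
The plan is to reduce immediately to the local case and then combine the structural description of $U(\A)$ from \cref{UstructC} with the second proof of the complex-structure lemma (the Rickard-style power series argument).

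First I would reduce to the connected case. Write $\A \cong \A_1 \times \dots \times \A_M$ as a product of local $\RR$-algebras $(\A_k, \m_k, \kappa_k)$ with $\kappa_k \in \{\RR, \CC\}$, since $\kappa_k/\RR$ is a finite extension. Then $U(\A) \cong \prod_k U(\A_k)$ and $\pi_0(U(\A)) \cong \prod_k \pi_0(U(\A_k))$, so $\pi_0(U(\A))$ is trivial iff $\pi_0(U(\A_k))$ is trivial for every $k$. On the other hand, $\A$ admits a complex structure extending the real one iff each $\A_k$ does: a ring endomorphism squaring to $-\id$ must respect the idempotent decomposition, hence splits as a product of such endomorphisms on the factors (and conversely). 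So it suffices to prove the equivalence when $(\A, \m, \kappa)$ is local with $\kappa \in \{\RR, \CC\}$.

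For the local case, I would argue both directions. For ($\Leftarrow$): if $\A$ carries a compatible complex structure, then $\A$ is a finite-dimensional commutative associative unital $\CC$-algebra, and by \cref{UstructC} (applied with $\kappa = \CC$, which is algebraically closed) we get $U(\A) \cong (\CC^\times)^M \times (\CC^+)^N$; since $\A$ is local, $M = 1$, so $U(\A) \cong \CC^\times \times \CC^N$, which is path-connected, hence $\pi_0(U(\A))$ is trivial. For ($\Rightarrow$): suppose $\pi_0(U(\A))$ is trivial. By \cref{Ustruct}, $U(\A) \cong \kappa^\times \times (\m, +)$, and $(\m,+) \cong \RR^{\dim_\RR \m}$ is already connected, so path-connectedness of $U(\A)$ forces $\kappa^\times$ to be connected; since $\kappa \in \{\RR, \CC\}$ and $\RR^\times$ is disconnected, we must have $\kappa = \CC$, i.e. the residue field is already $\CC$. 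Now I invoke the second proof of the complex-structure lemma verbatim: $\A$ is Artin local with $\nil(\A) = \m$ and residue field $\CC$, so there is $a \in \A$ with $\bar a^2 = -1$ in $\CC$, i.e. $a^2 + 1 \in \m$ is nilpotent; plugging the nilpotent into the formal square-root series produces $z \in \m$ with $a' := a(1+z)$ satisfying $a'^2 = -1$, and $\CC \cdot 1_\A \oplus \CC \cdot a' \hookrightarrow \A$ gives the required complex structure extending the real one, with $i \mapsto a'$ acting $\RR$-linearly by multiplication.

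The only mildly delicate point — the "main obstacle" such as it is — is the bookkeeping in the reduction step: one must check that a compatible complex structure on the product $\A = \prod_k \A_k$ is equivalent to a compatible complex structure on each factor. This follows because the primitive idempotents $e_k$ are the unique complete system of primitive orthogonal idempotents (by \cref{idempotents}), so any unital ring endomorphism $J$ of $\A$ permutes them; since $J^2 = \id$ fixes each $\A_k$ as a ring (a compatible complex structure is in particular $\RR$-linear, and an $\RR$-linear ring automorphism permuting the $e_k$ with $J^2 = \id$ must in fact fix each $e_k$, as the factors have distinct dimensions or one argues directly that $J$ preserving $1_\A$ and the $\RR$-algebra structure cannot swap a real factor with a complex one), $J$ restricts to a compatible complex structure on each $\A_k$; conversely, a product of such structures is one on $\A$. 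Everything else is a direct citation of \cref{Ustruct}, \cref{UstructC}, and the Rickard lemma already proved above.
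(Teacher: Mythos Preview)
Your approach is essentially the same as the paper's: decompose $\A$ into local pieces, use \cref{Ustruct} to see that $U(\A_k) \cong \kappa_k^\times \times \m_k$, observe that path-connectedness rules out $\kappa_k = \RR$, and then invoke one of the two lemmas in the paper (you use the Rickard argument, the paper implicitly uses \cref{structext}) to lift the residue field $\CC$ to a complex structure on each $\A_k$.

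However, your reduction-to-local bookkeeping is confused. A complex structure extending the real one is \emph{not} a ring endomorphism $J$ of $\A$ with $J^2 = -\id$ (no unital ring endomorphism can satisfy that, since it would force $1 = J(J(1)) = -1$), nor is it something with $J^2 = \id$ permuting idempotents. It is simply a unital $\RR$-algebra homomorphism $\CC \to \A$, i.e.\ an element $j \in \A$ with $j^2 = -1_\A$. With this description the reduction is a one-liner: writing $j = (j_1,\dots,j_M) \in \prod_k \A_k$, we have $j^2 = -1_\A$ iff $j_k^2 = -1_{\A_k}$ for every $k$. So drop the entire paragraph about ring endomorphisms permuting idempotents; it is both incorrect and unnecessary.
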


\begin{proof}
	“$\Leftarrow$”: Clear by \cref{UstructC}.
	
	“$\Rightarrow$”: By Artin decomposition and \cref{Ustruct} we have
	\[
	U(\A)\cong\prod_{i=1}^M U(\A_i) \cong \prod_{i=1}^M (\kappa^{\times}_i\times\m_i),
	\]
	where $\kappa_i$ is either $\RR$ or $\CC$ and the isomorphism is of topological groups (it is even given by rational functions). 
	Since $U(\A)$ is assumed path-connected, no copies of $\RR^{\times}$ can occur in above representation.
\end{proof}

%

\subsection{Matrix Analysis: Norms and Spectral Radius}

Our main reference for this chapter is the book \cite{HornJohn}. 
By a vector norm on $\rM_n(\KK)$ or any finite-dimensional associative $\KK$-algebra $\fA$ we will mean a norm $\norm{\cdot}$ that turns the underlying \emph{vector space} structure of $\fA$ into a normed space, hence a Banach space over $\KK$ by finite dimensionality.
By a matrix or an algebra norm on $\fA$ we will mean a vector norm $\norm{\cdot}$ on $\fA$ that is also submultiplicative, that is, $\forall A,B\in\fA: \norm{AB}\leq\norm{A}\norm{B}$, i.e. it turns $\fA$ into a Banach $\KK$-algebra.
%

A vector norm $\norm{\cdot}$ on $\A\in\fdAlg_\KK$ is called normalized or unital iff $\norm{1_\A}=1$.
Notice that for an arbitrary submultiplicative norm on $\A$ we only have $\norm{1_\A}\geq 1$.
Finite-dimensional associative $\KK$-algebras $\fA$ usually enjoy various (automatically topologically equivalent) submultiplicative norms, for example inherited from $\rM_n(\KK)$ along a faithful representation $\rho:\fA\hookrightarrow\rM_n(\KK)$.
Despite being topologically equivalent, some algebra norms are more equal than others, depending on the application. We list some.
One particularly close to geometric intuition is given by:

\begin{lemma-definition}[$\ell_2$-norm, Frobenius / Schur / Hilbert-Schmidt norm]
	For $A\in\rM_n(\KK)$ the expression
	\[
	\norm{A}_\mathrm{F}\coloneqq
	\bigg(\sum_{i,j=1}^{n}\abs{a_{ij}}^2\bigg)^{\frac{1}{2}}
	\]
	defines a matrix norm on $\rM_n(\KK)$, called the Frobenius norm. \qed
\end{lemma-definition}

Indeed this coincides with the Euclidean norm on the vector space $\rM_n(\KK)$ (with the usual basis). 
However, in general there is no reason to expect that it restricts to the Euclidean norm on the vector space $\fA\cong\KK^n$, e.g.
\[
\norm{
	\begin{pmatrix}
	z & w\\
	0 & z
	\end{pmatrix}
}^2_\mathrm{F} = 2\abs{z}^2+\abs{w}^2 \neq \abs{z}^2+\abs{w}^2.
\]

%

%

A sometimes technical disadvantage of ``geometric'' (read $\ell_p$-related) algebra norms is that, somewhat ironically, they are usually not normalized: for example if $E_n$ is the identity matrix of $\rM_n(\KK)$, then $\norm{E_n}_\rF=\sqrt{n}$.

\begin{lemma-definition}[Induced Operator Norm]
	Let $(V,\norm{\cdot})$ be a finite-dimensional normed $\KK$-vector space.
	Then the induced by $\norm{\cdot}$ operator norm on $\End_\KK(V)\cong\rM_n(\KK)$ given by
	\[
	\norm{A}_\op\coloneqq\sup_{v\in V}\frac{\norm{Av}}{\norm{v}}
	\]
	is a normalized submultiplicative norm. \qed
\end{lemma-definition} 

For the moment we are only concerned with the existence of normalized norms, and the above lemma grants us that.

Perhaps the most essential drawback of algebra norms is the fact that they can be only submultiplicative, which breaks many of the standard proofs in complex analysis as we shall see.

\begin{proposition}[Gelfand-Mazur]\label{gelfandmazur}
	Let $\A$ be a normed division $\CC$-algebra. 
	Then $\A\cong\CC$ isometrically. \qed
\end{proposition}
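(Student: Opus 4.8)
The plan is to use our standing convention that $\A$ is finite-dimensional over $\CC$, together with the fact that $\CC$ is algebraically closed; the norm enters only at the very end, and only for the word \emph{isometrically}. (One could instead run the classical resolvent argument combined with Liouville's theorem for a complex Banach division algebra, but in the finite-dimensional setting the purely algebraic route below is shorter and avoids the analytic machinery developed only later in the paper.) First I would fix $a\in\A$ and note that, since $\dim_\CC\A<\infty$, the powers $1_\A,a,a^2,\dots$ are $\CC$-linearly dependent, so $a$ is a root of some nonzero $p\in\CC[T]$ of degree $k\geq 1$; as $\CC$ is algebraically closed, $p(T)=c\prod_{i=1}^k(T-\lambda_i)$ with $c\in\CC^\times$ and $\lambda_i\in\CC$, whence $\prod_{i=1}^k(a-\lambda_i 1_\A)=0$ in $\A$. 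Since $\A$ is a division algebra it has no zero divisors (if $xy=0$ and $x\neq 0$ then $y=x^{-1}(xy)=0$), so a product of nonzero elements of $\A$ is nonzero; hence some factor must vanish, i.e. $a=\lambda_i 1_\A\in\CC\cdot 1_\A$ for some $i$.

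As $a$ was arbitrary, $\A=\CC\cdot 1_\A$, so $\dim_\CC\A=1$ and the unital $\CC$-algebra map $\iota\colon\CC\to\A$, $\lambda\mapsto\lambda 1_\A$, is a bijective homomorphism, hence an isomorphism of $\CC$-algebras; it is the \emph{only} candidate, since any algebra isomorphism $\CC\to\A$ must send $1\mapsto 1_\A$. Alternatively one may invoke the Artin structure theory recalled above: a division algebra has no nontrivial idempotents, so it is local, say $(\A,\m,\kappa)$; were $\m\neq 0$ it would contain a nonzero nilpotent, a non-unit, contradicting that $\A$ is a division algebra, so $\m=0$ and $\A=\kappa$, a finite extension of the algebraically closed field $\CC$, i.e. $\A\cong\CC$.

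For the isometry, homogeneity of the norm over the scalar field $\CC$ gives $\norm{\iota(\lambda)}=\norm{\lambda 1_\A}=\abs{\lambda}\,\norm{1_\A}$ for all $\lambda\in\CC$, so $\iota$ is an isometry exactly when $\norm{1_\A}=1$; submultiplicativity forces $\norm{1_\A}=\norm{1_\A\cdot 1_\A}\leq\norm{1_\A}^2$, hence $\norm{1_\A}\geq 1$, and the normalization of the norm then pins it down to $\norm{1_\A}=1$. There is essentially no obstacle in the argument; the one point requiring care is precisely this last one, because without assuming a \emph{unital} (normalized) norm the isometry assertion is literally false — e.g. $2\abs{\cdot}$ is a submultiplicative division-algebra norm on $\CC$ with $\norm{1}=2$ — so the proposition is to be read with a normalized norm, and everything else is entirely elementary.
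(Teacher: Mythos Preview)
The paper does not actually prove this proposition: it is stated with a terminal \qed\ as a well-known classical result, so there is no proof to compare your proposal against.

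Your argument is correct under the finite-dimensionality hypothesis you invoke, and you rightly flag both the classical resolvent/Liouville alternative for the general Banach setting and the normalization subtlety behind the word \emph{isometrically}. One caveat worth noting: this proposition sits in the Preliminaries, \emph{before} the paper's standing finite-dimensionality convention (which is declared only at the start of \cref{sec:phihol}), and the corollary immediately following it applies Gelfand--Mazur to an arbitrary $(\A,\norm{\cdot}_\A)\in\BanAlg_\CC$ with multiplicative norm. So strictly speaking your linear-dependence-of-powers argument does not cover the proposition's intended scope, and the resolvent route you mention in passing is in fact the one required here. You have already identified this yourself, so this is less a gap than a reminder that the shortcut is not available at this point in the paper.
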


\begin{corollary}[Most $\CC$-algebra norms are only submultiplicative]
	Let $(\A,\norm{\cdot}_\A)\in\BanAlg_\CC$ such that $\norm{\cdot}_\A$ is multiplicative. 
	Then $\A\cong\CC$ isometrically.
\end{corollary}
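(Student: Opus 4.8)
The plan is to deduce everything from the Gelfand--Mazur theorem (\cref{gelfandmazur}): it suffices to show that multiplicativity of $\norm{\cdot}_\A$ forces $\A$ to be a division algebra. First I would record the trivial normalization $\norm{1_\A}_\A = \norm{1_\A\cdot 1_\A}_\A = \norm{1_\A}_\A^2$, which gives $\norm{1_\A}_\A = 1$ (as $1_\A\neq 0$), and hence $\norm{u^{-1}}_\A = \norm{u}_\A^{-1}$ for every $u\in U(\A)$. The crucial consequence of multiplicativity is that $\A$ has no nonzero \emph{topological} divisors of zero: if $\norm{x_n}_\A = 1$ and $a x_n\to 0$, then $\norm{a}_\A = \norm{a x_n}_\A \to 0$, so $a = 0$.

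Next I would invoke the standard Banach-algebra fact that the topological boundary $\partial U(\A)$ of the group of units consists of topological divisors of zero. (Sketch of this input: given $a\in\partial U(\A)$, choose $u_n\in U(\A)$ with $u_n\to a$; were $\norm{u_n^{-1}}_\A$ bounded, then $\norm{u_n^{-1}(u_n-a)}_\A<1$ for large $n$ would make $u_n^{-1}a$, hence $a$, invertible --- a contradiction --- so $\norm{u_n^{-1}}_\A\to\infty$; then $x_n\coloneqq u_n^{-1}/\norm{u_n^{-1}}_\A$ satisfies $\norm{x_n}_\A=1$ and $a x_n=(a-u_n)x_n+1_\A/\norm{u_n^{-1}}_\A\to 0$.) Combined with the previous paragraph, $\partial U(\A)=\{0\}$. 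Since $U(\A)$ is open in $\A$, it follows that both $U(\A)$ and $(\A\setminus U(\A))\setminus\{0\}$ are open in $\A$; they are disjoint and cover the connected space $\A\setminus\{0\}$ (connected since $\dim_\RR\A\geq 2$), and $1_\A$ lies in the first, so the second is empty. Thus every nonzero element of $\A$ is a unit, i.e. $(\A,\norm{\cdot}_\A)$ is a normed division $\CC$-algebra, and \cref{gelfandmazur} yields $\A\cong\CC$ isometrically.

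(Alternatively, once one uses that the spectrum $\sigma(a)$ of every $a\in\A$ is nonempty and compact, hence has nonempty topological boundary, one can bypass Gelfand--Mazur: picking $\mu\in\partial\sigma(a)$, the element $a-\mu 1_\A$ is a non-unit that is a limit of units $a-\mu_n 1_\A$ with $\mu_n\notin\sigma(a)$, hence a topological divisor of zero, hence $0$; so $\A=\CC\,1_\A$ directly, and the obvious isomorphism is isometric.) The only genuinely nontrivial ingredient is the inclusion $\partial U(\A)\subseteq\{\text{topological divisors of zero}\}$; everything else is formal once multiplicativity has been used to collapse ``topological divisor of zero'' to ``$0$''. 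A minor point worth a line is that ``$\A\cong\CC$'' presupposes a unit, which we either build into the hypothesis on $\BanAlg_\CC$ or note at the outset.
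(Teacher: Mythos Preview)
Your proof is correct and follows essentially the same approach as the paper: both deduce from multiplicativity that $\norm{x^{-1}}_\A=\norm{x}_\A^{-1}$ for units, use this to show $\A^\times$ is closed in $\A\setminus\{0\}$ (you factor through the language of topological divisors of zero and the standard lemma $\partial U(\A)\subseteq\{\text{top.\ zero divisors}\}$, the paper argues closedness directly, but the underlying Neumann-series computation is identical), conclude $\A^\times=\A\setminus\{0\}$ by connectedness, and finish with Gelfand--Mazur. Your alternative spectral-boundary paragraph is a pleasant bonus but not needed for the comparison.
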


\begin{proof}
	This is well-known, so we only sketch the proof. 
	One shows that, if we have $\forall x\in \A^\times: \norm{x^{-1}}\leq\norm{x}^{-1}$, then $\A^\times$ is closed in $\A\setminus\{0\}$.
	But clearly it is also open in $\A\setminus\{0\}$. 
	Since $\A$ is complex, $\A\setminus\{0\}$ is connected, hence $\A^\times=\A\setminus\{0\}$, i.e. $\A$ is a division algebra and one can apply \cref{gelfandmazur}.
\end{proof}

There is also a real version of Gelfand-Mazur:

\begin{proposition}[A ``Frobenius Theorem'' for normed algebras]
	Let $(\A,\norm{\cdot}_\A)$ be a normed division $\RR$-algebra.
	Then $\A\cong\RR, \CC$, or $\HH$ isometrically. \qed
\end{proposition}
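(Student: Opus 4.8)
The plan is to reduce the assertion to the classical theorem of Frobenius, in the form that every associative \emph{algebraic} division algebra over $\RR$ is isomorphic, as an $\RR$-algebra, to $\RR$, $\CC$, or $\HH$. Thus the substance is to prove that every $a\in\A$ satisfies a polynomial relation of degree at most $2$ over $\RR$; once the $\RR$-algebra structure is identified, the metric refinement follows just as in the complex case. (If one assumes from the outset, in keeping with this paper's standing conventions, that $\A$ is finite-dimensional, the algebraic part is literally Frobenius's theorem and only the last step requires comment.)

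First I would complexify: set $\A_\CC\coloneqq\A\otimes_\RR\CC$ and equip it with any submultiplicative algebra norm extending the one on $\A$ (for instance the projective tensor norm; if $\A$ is finite-dimensional any norm does and no completion is needed), and then pass to the completion $\widehat{\A_\CC}$, a complex unital Banach algebra in which $\A$ sits as a real unital subalgebra. For $a\in\A\subseteq\widehat{\A_\CC}$ the spectrum $\sigma(a)\subseteq\CC$ is non-empty and compact by the standard resolvent argument: $\lambda\mapsto(a-\lambda 1)^{-1}$ is analytic on the resolvent set and tends to $0$ as $\abs{\lambda}\to\infty$, so an empty spectrum would make it a bounded entire $\widehat{\A_\CC}$-valued function, hence $\equiv 0$, which is absurd.

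Now fix $\lambda\in\sigma(a)$. If $\lambda\in\RR$, then $a-\lambda 1\in\A$ is not invertible in $\widehat{\A_\CC}$, hence not invertible in $\A$; since $\A$ is a division algebra this forces $a=\lambda 1\in\RR\cdot 1$. If instead $\lambda=\mu+i\nu$ with $\nu\neq 0$, consider $p\coloneqq(a-\lambda 1)(a-\bar\lambda 1)=a^2-2\mu a+(\mu^2+\nu^2)1\in\A$. The two factors commute (both are polynomials in $a$), so if $p$ were invertible in $\widehat{\A_\CC}$ with inverse $w$, then $w$ would commute with each factor and one reads off that $a-\lambda 1$ has both a right inverse and a left inverse, hence is invertible --- contradicting $\lambda\in\sigma(a)$; this one-sided bookkeeping is the one place where a little care is needed, precisely because $\widehat{\A_\CC}$ need not be commutative ($\HH$ being among the possible outcomes). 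Therefore $p$ is not invertible in $\A$ either, so $p=0$, i.e. $a$ satisfies the $\RR$-irreducible quadratic $T^2-2\mu T+(\mu^2+\nu^2)$ and $\RR[a]\cong\CC$. In all cases $a$ is algebraic of degree $\leq 2$ over $\RR$; as $a$ was arbitrary, $\A$ is an associative algebraic division $\RR$-algebra, and Frobenius's theorem yields an $\RR$-algebra isomorphism $\A\cong\RR$, $\CC$, or $\HH$.

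Finally, transporting the norm along this isomorphism reduces the metric claim to verifying that, after the harmless normalization $\norm{1_\A}=1$, a submultiplicative norm on $\RR$, $\CC$, or $\HH$ agrees with the usual Euclidean absolute value --- which is checked directly, along the lines of the Corollary to \cref{gelfandmazur} (using $\norm{1}\leq\norm{a}\,\norm{a^{-1}}$ on units together with homogeneity on the central copy of $\RR$). The main obstacle is entirely the spectral input: one must manufacture an honest complex Banach algebra containing $\A$ so that $\sigma(a)\neq\emptyset$, and then keep the left and right inverses straight in passing from $a-\lambda 1$ to $(a-\lambda 1)(a-\bar\lambda 1)$; everything downstream of ``$\A$ is quadratic over $\RR$'' is classical.
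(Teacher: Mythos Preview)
The paper does not prove this proposition; it is stated with a terminal \qed{} and treated as a known result. So there is no paper proof to compare against, and your write-up stands or falls on its own.

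Your reduction to the classical Frobenius theorem is correct and is the standard argument: complexify, use non-emptiness of the spectrum, and show every $a\in\A$ satisfies a real polynomial of degree at most $2$. The one delicate spot you flagged --- passing from invertibility of $p=(a-\lambda)(a-\bar\lambda)$ to invertibility of $a-\lambda$ --- is handled correctly: since $p$ is a polynomial in $a$, $p^{-1}$ commutes with $a$, hence with both factors, and one reads off a two-sided inverse.

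The genuine gap is in the final ``isometrically'' step. Your claim that a normalized submultiplicative norm on $\RR$, $\CC$, or $\HH$ must be the Euclidean absolute value is false. Take $\CC$ with $\norm{a+bi}\coloneqq\abs{a}+\abs{b}$: this is submultiplicative (by the triangle inequality $\abs{ac-bd}+\abs{ad+bc}\leq(\abs{a}+\abs{b})(\abs{c}+\abs{d})$), has $\norm{1}=1$, and makes $\CC$ a normed division $\RR$-algebra; but $\norm{1+i}=2\neq\sqrt{2}$, and the only $\RR$-algebra automorphisms of $\CC$ are the identity and conjugation, neither of which repairs this. So the inequality $\norm{1}\leq\norm{a}\norm{a^{-1}}$ you invoke does not force $\norm{a}=\abs{a}$. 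Either the word ``isometrically'' in the statement is intended under a stronger hypothesis (e.g.\ a multiplicative norm, as in the paragraph following this proposition in the paper), or the conclusion should be read as an isomorphism of $\RR$-algebras only. Your proof establishes the latter; the isometric upgrade, as you have written it, does not go through.
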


However, for a $(\A,\norm{\cdot}_\A)\in\BanAlg_\RR$ with a multiplicative norm the proof of the last corollary does not apply since $\A\setminus\{0\}$ need not be connected, and it no longer follows that $\A\in\{\RR,\CC,\HH\}$.
Indeed, there are infinite-dimensional real Banach algebras with multiplicative norms  (also termed ``absolute values'') \cite[see][]{UrbWri}.
On the other hand, in the finite-dimensional setting the full list of all real unital, but not necessarily associative algebras with multiplicative norms is rather short, being given by $\{\RR,\CC,\HH,\OO\}$ as shown in \cite{Alb}. 
But in that setting at least all norms are equivalent.

Often times any sufficiently abstract statement involving some algebra norm is automatically valid for all algebra norms (finite dimensionality assumed). 
With the help of the next proposition this suggests that the correct notion to consider in many instances instead is the spectral radius, even though it itself is not a norm.

\begin{definition}[Spectral Radius]
	Let $A\in\rM_n(\KK)$ and let $\sigma(A)$ denote its spectrum. 
	Then
	\[
	\rho(A)\coloneqq\max\{\abs{z}:z\in\sigma(A)\}
	\]
	is called the spectral radius of $A$.
\end{definition}

\begin{proposition}\label{normexist}
	Let $\norm{\cdot}$ denote an algebra norm on $\rM_n(\CC)$. Then: 
	\begin{enumerate}
		\item $\forall A\in\rM_n(\CC): \norm{A}\geq\rho(A)$;
		
		\item $\forall A\in\GL_n(\CC):\norm{A^{-1}}\geq\rho(A)^{-1}$;
		
		\item $\forall A\in\rM_n(\CC)\ \forall \vareps>0\ \exists\norm{\cdot}: \rho(A)\leq\norm{A}\leq\rho(A)+\vareps$; 
		
		\item $\forall A\in\rM_n(\CC): \rho(A)=\inf\{\norm{A}: \norm{\cdot} \text{matrix norm}\} =\inf\{\norm{A}_\op: \norm{\cdot}_\op \text{induced\ operator\ norm}\}$.
	\end{enumerate}
\end{proposition}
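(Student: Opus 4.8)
These four claims are the classical relations between a submultiplicative norm on $\rM_n(\CC)$ and the spectral radius; the plan is to prove (1) by an eigenvector argument, deduce (2) by applying (1) to $A^{-1}$, establish the triangular–scaling estimate (3), and then assemble (4) from (1) and (3).

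For (1) I would promote an eigenvector to a matrix. If $\lambda\in\sigma(A)$ and $Av=\lambda v$ with $v\neq 0$, let $B\in\rM_n(\CC)$ be the nonzero matrix all of whose columns equal $v$; then $AB=\lambda B$, so $\abs{\lambda}\norm{B}=\norm{AB}\leq\norm{A}\norm{B}$ and hence $\abs{\lambda}\leq\norm{A}$. Taking the maximum over $\lambda\in\sigma(A)$ gives $\rho(A)\leq\norm{A}$. For (2) I would apply (1) to $A^{-1}\in\GL_n(\CC)$: since the eigenvalues of $A^{-1}$ are the reciprocals of those of $A$ (all nonzero, as $A$ is invertible), $\rho(A^{-1})=\big(\min_{\lambda\in\sigma(A)}\abs{\lambda}\big)^{-1}\geq\rho(A)^{-1}$, so $\norm{A^{-1}}\geq\rho(A^{-1})\geq\rho(A)^{-1}$.

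Step (3) carries the only real content. Since $\CC$ is algebraically closed, the minimal polynomial of $A$ splits, so there is $S\in\GL_n(\CC)$ with $R\coloneqq S^{-1}AS$ upper triangular, the diagonal of $R$ listing the eigenvalues of $A$. Conjugating by $D_\delta\coloneqq\diag(\delta,\delta^2,\dots,\delta^n)$ with $0<\delta\leq 1$ leaves the diagonal of $R$ unchanged and multiplies the $(i,j)$-entry with $j>i$ by $\delta^{j-i}$, hence for $\delta$ small enough (depending on $R$ and $\vareps$) the maximal absolute row sum of $D_\delta^{-1}RD_\delta$ is at most $\rho(A)+\vareps$. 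Setting $T\coloneqq SD_\delta$, I would define $\norm{B}\coloneqq\norm{T^{-1}BT}_\op$, where $\norm{\cdot}_\op$ is the operator norm induced by the $\ell_\infty$-norm on $\CC^n$ (the maximal absolute row sum); because conjugation by $T$ is an algebra automorphism, $\norm{\cdot}$ is again submultiplicative, and in fact it is the operator norm induced by the vector norm $v\mapsto\norm{T^{-1}v}_\infty$, i.e. $\norm{B}=\sup_{v\neq 0}\norm{T^{-1}Bv}_\infty/\norm{T^{-1}v}_\infty$. By construction $\rho(A)\leq\norm{A}=\norm{D_\delta^{-1}RD_\delta}_\op\leq\rho(A)+\vareps$, the lower bound coming from (1).

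Finally (4) is the packaging of (1) and (3): by (1) every matrix norm — in particular every induced operator norm, which is submultiplicative by the Lemma-Definition above — satisfies $\norm{A}\geq\rho(A)$, so both infima are $\geq\rho(A)$; by (3), for each $\vareps>0$ there is an induced operator norm (hence a matrix norm) with $\norm{A}\leq\rho(A)+\vareps$, so both infima are $\leq\rho(A)$, giving equality. The main obstacle is the scaling estimate in (3); once the strictly-upper-triangular part is shrunk by the $D_\delta$-conjugation, the rest is bookkeeping. The one point I would treat carefully is that the transported norm in (3) is genuinely an \emph{induced} operator norm — which is exactly what the second equality in (4) needs — and this is precisely the displayed $\sup$-formula above.
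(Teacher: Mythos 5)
Your proof is correct; the paper itself does not argue the proposition but simply cites Theorems 5.6.9 and Lemma 5.6.10 of Horn--Johnson, and your argument (eigenvector promoted to a rank-one matrix for (1), Schur triangularization followed by conjugation with $\diag(\delta,\dots,\delta^n)$ for (3), and the observation that the transported norm is the operator norm induced by $v\mapsto\norm{T^{-1}v}_\infty$ for (4)) is precisely the standard proof given there. In particular you correctly isolate the one point that matters for the second equality in (4), namely that the $\vareps$-good norm is genuinely an induced operator norm.
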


\begin{proof}
	These are Theorem 5.6.9 and Lemma 5.6.10 in \cite{HornJohn}.
\end{proof}

Notice that $\norm{\cdot}$ in (3) depends not only on $\vareps$ but also on $A \in \rM_n(\CC)$. 
The following is a trivial consequence of (3) that we record here because it will often be used in one form or another:

\begin{corollary}[Principle of spectral comparison]
	Let $A,B\in\rM_n(\CC)$. 
	\begin{enumerate}
		\item If for all matrix norms $\norm{A}\leq\norm{B}$, then $\rho(A)\leq\rho(B)$.
		
		\item Conversely, if $\rho(A)<\rho(B)$, then there exists a matrix norm such that $\norm{A}<\norm{B}$.
	\end{enumerate}
\end{corollary}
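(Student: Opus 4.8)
The plan is to deduce both parts directly from \cref{normexist}, so the argument is short; the only points requiring a moment's care are the direction in which the infimum over norms interacts with a fixed-norm inequality, and the (trivial but necessary) observation that the norm produced by \cref{normexist}(3) may depend on $A$, whereas the lower bound $\norm{B}\geq\rho(B)$ from \cref{normexist}(1) holds for \emph{every} matrix norm and in particular for that one.

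For part (1), fix an arbitrary matrix norm $\norm{\cdot}$ on $\rM_n(\CC)$. By hypothesis $\norm{A}\leq\norm{B}$, and by \cref{normexist}(1) we have $\rho(A)\leq\norm{A}$, hence $\rho(A)\leq\norm{B}$. Since this holds for every matrix norm, taking the infimum on the right and invoking \cref{normexist}(4) gives $\rho(A)\leq\inf\{\norm{B}:\norm{\cdot}\text{ a matrix norm}\}=\rho(B)$. (One may avoid part (4) altogether: given $\vareps>0$, use \cref{normexist}(3) to pick a matrix norm with $\norm{B}\leq\rho(B)+\vareps$, so that $\rho(A)\leq\norm{A}\leq\norm{B}\leq\rho(B)+\vareps$, and let $\vareps\downarrow 0$.)

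For part (2), assume $\rho(A)<\rho(B)$ and set $\vareps\coloneqq\tfrac12\bigl(\rho(B)-\rho(A)\bigr)>0$. By \cref{normexist}(3) applied to $A$ there is a matrix norm $\norm{\cdot}$ with $\rho(A)\leq\norm{A}\leq\rho(A)+\vareps$, and for this same norm \cref{normexist}(1) yields $\norm{B}\geq\rho(B)$. Combining,
\[
\norm{A}\leq\rho(A)+\vareps=\rho(B)-\vareps<\rho(B)\leq\norm{B},
\]
so $\norm{A}<\norm{B}$, as required.

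There is essentially no obstacle here: the corollary is a bookkeeping consequence of the quoted Horn--Johnson facts, and the only thing to be careful about is not conflating the universal quantifier ``for all matrix norms'' in the hypothesis of (1) with the existential quantifier in the conclusion of (2). For completeness one might also note that part (2) is vacuous when $\rho(B)=0$.
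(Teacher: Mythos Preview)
Your proof is correct and follows essentially the same route as the paper: for (1) you bound $\rho(A)$ by an arbitrary $\norm{B}$ and pass to the infimum (the paper simply writes ``take $\inf_{\norm{\cdot}}$ on both sides''), and for (2) you pick $\vareps$ strictly between $0$ and $\rho(B)-\rho(A)$ and invoke \cref{normexist}(3) for $A$ together with \cref{normexist}(1) for $B$, exactly as the paper does.
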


\begin{proof}
	To (1): take $\inf_{\norm{\cdot}}$ on both sides.
	
	To (2): Let $\vareps>0$ such that $\rho(A)+\vareps<\rho(B)$. 
	Then by \cref{normexist} there exists an algebra norm $\norm{\cdot}$, depending on both $A$ and $\vareps$, such that $\norm{A}\leq \rho(A)+\vareps<\rho(B)\leq\norm{B}$.
\end{proof}

%
%

\begin{corollary}[Spectral radius is algebra-norm-like]
	Let $A,B\in\rM_n(\CC)$ such that $[A,B]=0$, then:
	\begin{enumerate}
		\item $\rho(E_n)=1$;
		\item $\forall z\in\CC: \rho(z A) = \abs{z}\rho(A)$;
		\item $\rho(A+B)\leq \rho(A)+\rho(B)$;
		\item $\rho(AB)\leq \rho(A)\rho(B)$; \qed
	\end{enumerate}
\end{corollary}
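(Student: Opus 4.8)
The plan is to reduce all four claims to bookkeeping with eigenvalues, using that over $\CC$ commuting matrices can be put in triangular form simultaneously.

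Parts (1) and (2) need no commutativity at all. The spectrum of $E_n$ is $\{1\}$, so $\rho(E_n)=1$; and since $\sigma(zA)=\{z\lambda : \lambda\in\sigma(A)\}$, we get $\rho(zA)=\max_{\lambda\in\sigma(A)}\abs{z\lambda}=\abs{z}\,\rho(A)$. I would dispatch each in a single line.

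For (3) and (4) I would first invoke simultaneous triangularization: since $[A,B]=0$ and $\CC$ is algebraically closed, $A$ and $B$ are simultaneously triangulable (this is exactly the fact already used in the proof of \cref{triangular}, cf. \cite[p.~199--204]{HoffKun}), so there is $P\in\GL_n(\CC)$ with $T_A\coloneqq PAP^{-1}$ and $T_B\coloneqq PBP^{-1}$ both upper triangular. Writing $(\lambda_i)_{i=1}^n$ and $(\mu_i)_{i=1}^n$ for their diagonals --- which enumerate, with multiplicity, $\sigma(A)$ and $\sigma(B)$ --- I would then observe that $P(A+B)P^{-1}=T_A+T_B$ is upper triangular with diagonal $(\lambda_i+\mu_i)_i$, while $P(AB)P^{-1}=T_A T_B$ is upper triangular (a product of upper-triangular matrices) with diagonal $(\lambda_i\mu_i)_i$. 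Since the spectral radius is a similarity invariant and an upper-triangular matrix has its diagonal entries as its eigenvalues counted with multiplicity, this yields $\sigma(A+B)=\{\lambda_i+\mu_i\}_i$ and $\sigma(AB)=\{\lambda_i\mu_i\}_i$. The asserted inequalities are then purely elementary:
\[
\rho(A+B)=\max_i\abs{\lambda_i+\mu_i}\leq\max_i\bigl(\abs{\lambda_i}+\abs{\mu_i}\bigr)\leq\max_i\abs{\lambda_i}+\max_j\abs{\mu_j}=\rho(A)+\rho(B),
\]
and similarly $\rho(AB)=\max_i\abs{\lambda_i\mu_i}=\max_i\abs{\lambda_i}\abs{\mu_i}\leq\bigl(\max_i\abs{\lambda_i}\bigr)\bigl(\max_j\abs{\mu_j}\bigr)=\rho(A)\rho(B)$.

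I do not expect a genuine obstacle; if anything, the one point deserving a word is that the pairing $\lambda_i\leftrightarrow\mu_i$ occurring in these estimates is the correct one, namely the $i$-th diagonal entries of $T_A$ and $T_B$, which is automatic precisely because both matrices are triangularized by the \emph{same} $P$. One could instead try to derive (3) and (4) from the principle of spectral comparison together with the $\vareps$-norm statement of \cref{normexist}(3), but that approach stumbles on the fact that the near-optimal matrix norms for $A$ and for $B$ need not coincide, so the triangularization route is the cleaner one.
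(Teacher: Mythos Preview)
Your proof is correct. The paper itself omits the proof entirely (the \qed is placed inside the statement), so there is no approach to compare against; your simultaneous-triangularization argument is the standard one and is in fact the same mechanism the paper invokes elsewhere (\cref{triangular}). Your closing remark is also on point: the placement of the corollary right after the principle of spectral comparison might suggest deriving (3) and (4) from $\rho=\inf_{\norm{\cdot}}\norm{\cdot}$, but as you observe, the near-optimal norms for $A$ and $B$ need not coincide, so that route does not go through without the commutativity being used in a more structural way --- which is exactly what simultaneous triangularization provides.
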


The only norm property that $\rho$ lacks here is due to the fact that $\rho(X)=0$ does not imply $X=0$.

\begin{lemma}[Spectral radius in $\fdCAlg_\CC$]
Let $\A\in\fdCAlg_{\CC}$ and identify $\A$ with its stable lower-triangular matrix representation. 
Let $\norm{\cdot}$ be an arbitrary algebra norm on $\A$ and denote $\rho_\A\coloneqq\rho|_\A$ the restriction of the spectral radius function.
We have:
	\begin{enumerate}
		\item If $\A=(\A,\m)$ is local and $\sigma_\cA:\A\twoheadrightarrow\A/\m\cong\CC$ is the canonical projection, then $\rho_\A = \abs{\cdot}\circ\sigma_\cA$. 
		
		\item If $\A=(\A,\m)$ is local, then $\m=\{Z\in\A:\rho_\cA(Z)=0\}$ and $U(\A) = \{Z\in\A:\rho_\cA(A)>0\}$. 
		In other words, $\rho_\A$ is ``pseudo-valuative''.
		
		\item Let $\A=\bigoplus_{k=1}^N\A_k$ be a decomposition of $\A$ into Artin local $\CC$-algebras and let $Z=\bigoplus_{k=1}^N (z_k\oplus X_k)\in\A$, where $z_k\in\CC$ and $X_k\in\m_k$, $1\leq k\leq N$. 
		Then $\rho_\cA(Z) = \max\limits_{1\leq k\leq N} \abs{z_k}$.
		In particular $\norm{Z}\geq\max\limits_{1\leq k\leq N}\abs{z_k}$ and $\norm{Z^{-1}}\geq\frac{1}{\min\limits_{1\leq k\leq N} \abs{z_k}}$.
	\end{enumerate}
In particular, $\rho_\A$ is multiplicative if and only if $\A$ is local.
\end{lemma}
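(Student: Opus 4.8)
The plan is to reduce everything to reading eigenvalues off the triangular form of \cref{triangular}, proving (1) first and then obtaining (2), (3), and the final equivalence as easy consequences. For (1), write the local algebra $\A=(\A,\m,\CC)$ in its stable lower-triangular representation; since the residue field of a finite-dimensional local $\CC$-algebra is a finite extension of $\CC$, hence equals $\CC$, we have $\A=\CC\cdot 1_\A\oplus\m$ as vector spaces and every $Z\in\A$ is uniquely $Z=z\cdot 1_\A+X$ with $z=\sigma_\cA(Z)$ and $X\in\m=\nil\A$. In matrix form $Z=zE_n+\lambda(X)$ with $\lambda(X)$ strictly lower-triangular and nilpotent, so $\sigma(Z)=\{z\}$ and hence $\rho_\A(Z)=\abs{z}=\abs{\sigma_\cA(Z)}$; this is just the local case of the corollary immediately following \cref{triangular}.

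Part (2) is then immediate: by (1), $\rho_\A(Z)=0$ iff $\sigma_\cA(Z)=0$ iff $Z\in\ker\sigma_\cA=\m$, and since $\A$ is local its set of non-units is exactly $\m$, so $U(\A)=\A\setminus\m=\{Z\in\A:\rho_\A(Z)>0\}$. For (3), realize the Artin decomposition $\A=\bigoplus_{k=1}^N\A_k$ block-diagonally (this is the stable lower-triangular model of $\A$); then $Z=\bigoplus_k(z_k\oplus X_k)$ has block-diagonal matrix whose $k$-th block lies in the local algebra $\A_k$, and since the spectrum of a block-diagonal matrix is the union of the block spectra, applying (1) to each $\A_k$ gives $\sigma(Z)=\{z_1,\dots,z_N\}$ and $\rho_\A(Z)=\max_{1\le k\le N}\abs{z_k}$. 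The bound $\norm{Z}\ge\rho_\A(Z)=\max_k\abs{z_k}$ is then part (1) of \cref{normexist}. For the other bound: if $Z$ is invertible then every $z_k\ne 0$ and $Z^{-1}=\bigoplus_k(z_k\oplus X_k)^{-1}$, so applying the ring homomorphism $\sigma_{\A_k}$ to the $k$-th component shows the $k$-th eigenvalue of $Z^{-1}$ is $z_k^{-1}$; hence $\rho_\A(Z^{-1})=\max_k\abs{z_k}^{-1}=1/\min_k\abs{z_k}$, and $\norm{Z^{-1}}\ge\rho_\A(Z^{-1})$ again by \cref{normexist}.

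For the concluding equivalence: if $\A$ is local then by (1) $\rho_\A=\abs{\cdot}\circ\sigma_\cA$ is the composite of a ring homomorphism with the multiplicative modulus on $\CC$, hence multiplicative. Conversely, if $\A$ is not local then its Artin decomposition has $N\ge 2$ factors, and the first two primitive orthogonal idempotents $e_1,e_2$ (whose only nonzero spectral component is $1$) satisfy $\rho_\A(e_1)=\rho_\A(e_2)=1$ but $e_1e_2=0$, so $\rho_\A(e_1e_2)=0\ne 1=\rho_\A(e_1)\rho_\A(e_2)$ and multiplicativity fails.

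I do not expect a genuine obstacle: once \cref{triangular} and its corollary are available, every clause is bookkeeping. The single point that deserves a line of care is that the eigenvalue spectrum $\sigma(Z)$ used above is representation-independent and coincides with $\{\lambda\in\CC:\lambda 1_\A-Z\notin U(\A)\}$, so that $\rho_\A$ is genuinely intrinsic to $\A$ and one may pass freely between the stable lower-triangular model (used for the local case) and the block-diagonal model (used for the direct sum).
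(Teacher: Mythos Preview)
Your proposal is correct and follows essentially the same approach as the paper: the paper's proof is the single line ``All of that follows directly from the lower-triangular representation of $\A$ and the previous discussion,'' and you have simply unpacked what that means, invoking \cref{triangular}, its corollary on eigenvalues, and \cref{normexist} exactly as intended. Your explicit treatment of the final ``multiplicative iff local'' clause via orthogonal idempotents is a nice addition that the paper leaves implicit.
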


\begin{proof}
	All of that follows directly from the lower-triangular representation of $\A$ and the previous discussion.
\end{proof}

Because of submultiplicativity of algebra norms, it is generally difficult to estimate norms of inverses from above.
The following standard fact helps:

\begin{lemma}[Upper Bound of the Norm of an Inverse]\label{norminverse}
	Let $\norm{\cdot}$ be a unital matrix norm on $\rM_n(\CC)$ and let $A\in\rM_n(\CC)$ with $\norm{A}<1$. Then:
	\begin{equation}\label{norminverseformula}
	\frac{1}{1+\norm{A}}\leq\norm{\frac{1}{1-A}}\leq\frac{1}{1-\norm{A}}.
	\end{equation}
\qed
\end{lemma}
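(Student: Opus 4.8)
The plan is to run the standard Neumann series argument, taking care to see exactly where the hypotheses ``$\norm{\cdot}$ unital'' and ``$\norm{A}<1$'' enter. First I would observe that $\rM_n(\CC)$ equipped with a matrix (i.e.\ submultiplicative) norm is a Banach $\CC$-algebra by finite dimensionality, so that absolutely convergent series converge. Since $\norm{A}<1$, submultiplicativity gives $\sum_{k=0}^\infty \norm{A^k}\leq\sum_{k=0}^\infty\norm{A}^k=\tfrac{1}{1-\norm{A}}<\infty$, so the Neumann series $S\coloneqq\sum_{k=0}^\infty A^k$ converges in $\rM_n(\CC)$. A telescoping identity $(E_n-A)\sum_{k=0}^N A^k=E_n-A^{N+1}$ (and the symmetric one), together with $A^{N+1}\to 0$, shows $S=(E_n-A)^{-1}$; in particular $1-A$ is invertible, so the middle quantity in \eqref{norminverseformula} makes sense.

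For the upper bound I would simply estimate term by term: using the triangle inequality, the unitality $\norm{E_n}=1$ for the $k=0$ summand, and submultiplicativity for the rest,
\[
\norm{\frac{1}{1-A}}=\norm{S}\leq\sum_{k=0}^\infty\norm{A^k}\leq\sum_{k=0}^\infty\norm{A}^k=\frac{1}{1-\norm{A}}.
\]

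For the lower bound I would start from the identity $E_n=(E_n-A)(E_n-A)^{-1}$ and apply submultiplicativity and the triangle inequality in the other direction:
\[
1=\norm{E_n}\leq\norm{E_n-A}\cdot\norm{\frac{1}{1-A}}\leq\bigl(\norm{E_n}+\norm{A}\bigr)\norm{\frac{1}{1-A}}=\bigl(1+\norm{A}\bigr)\norm{\frac{1}{1-A}},
\]
which rearranges to $\norm{\tfrac{1}{1-A}}\geq\tfrac{1}{1+\norm{A}}$. I do not expect any genuine obstacle here: the proof is entirely formal. The only two places that actually use the hypotheses are (a) the convergence of the Neumann series, which needs completeness (automatic from finite dimensionality) and submultiplicativity, and (b) the two appearances of $\norm{E_n}=1$, which is precisely the reason the norm is assumed \emph{unital}; for a merely submultiplicative norm one would get the weaker bounds with an extra factor of $\norm{E_n}$.
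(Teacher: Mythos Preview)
Your argument is correct and is the standard Neumann series proof. The paper does not actually give a proof of this lemma: it is stated with a terminal \qed\ as a well-known fact, so there is nothing to compare against beyond noting that your write-up is exactly the routine verification the author is implicitly invoking.
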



In fact, when $\A=(\A,\m)$ is local, we can do better:

\begin{lemma}[Upper Bound of the Norm of Inverses in local $\A$]\label{norminverselocal}
	Let $\A=(\A,\m)$ be a local $\CC$-algebra equipped with a unital submultiplicative norm $\norm{\cdot}$. 
	Write $Z\coloneqq z\oplus X\in\A^\times$ with $z\in\CC^\times$ and $X\in\m$ and let $\nu\in\NN$ be the smallest integer such that $X^\nu = 0$.
	Then: 
	\begin{equation}
	\norm{Z^{-1}} \leq \frac{1}{\abs{z}} \bigg(\norm{1_\A}+\sum_{j=1}^{\nu-1}\bigg(\frac{\norm{X}}{\abs{z}}\bigg)^j\bigg)
	\end{equation}
	In particular, if $\norm{\cdot}$ is unital, we have:
	\begin{equation}\label{norminverselocalformula}
	\begin{aligned}
	\norm{Z^{-1}} &\leq
	\frac{1}{\abs{z}}\sum_{j=0}^{\nu-1}
	\left(\frac{\norm{X}}{\abs{z}}\right)^j=
	\begin{cases}
	\frac{1-\norm{X}^\nu/\abs{z}^\nu}{\abs{z}-\norm{X}}, &\text{if } \abs{z}\neq\norm{X}\\
	\frac{\nu}{\abs{z}}, &\text{if } \abs{z}=\norm{X}
	\end{cases}
	\defeq
	\begin{cases}
	\frac{1-\norm{X}^\nu/\rho_\cA(Z)^\nu}{\rho_\cA(Z)-\norm{X}}, &\text{if } \rho_\cA(Z)\neq\norm{X}\\
	\frac{\nu}{\rho_\cA(Z)}, &\text{if } \rho_\cA(Z)=\norm{X}.
	\end{cases}
	\end{aligned}
	\end{equation}
\end{lemma}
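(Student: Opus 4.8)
The plan is to factor out the scalar part of $Z$ and exploit that the nilpotent part contributes a \emph{terminating} Neumann-type series, so no convergence argument is needed. Using the vector-space decomposition $\A = \CC\,1_\A \oplus \m$, write $Z = z\,1_\A + X = z\big(1_\A + z^{-1}X\big)$; here $Z\in\A^\times$ forces $z\neq 0$ (equivalently $\rho_\cA(Z)>0$), so the divisions by $z$ are legitimate. Since $X\in\m=\nil(\A)$ with nilpotency index $\nu$, the element $z^{-1}X$ is nilpotent with $(z^{-1}X)^\nu = z^{-\nu}X^\nu = 0$, hence $1_\A + z^{-1}X$ is invertible with the finite inverse
\[
(1_\A + z^{-1}X)^{-1} = \sum_{j=0}^{\nu-1}(-1)^j (z^{-1}X)^j ,
\]
which one checks directly by multiplying out and telescoping. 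Consequently $Z^{-1} = z^{-1}(1_\A + z^{-1}X)^{-1} = \sum_{j=0}^{\nu-1}(-1)^j z^{-j-1}X^j$.

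Next I would estimate termwise, being careful to peel off the $j=0$ summand (which equals $z^{-1}1_\A$) before using submultiplicativity, since $\norm{(z^{-1}X)^j}\le\norm{z^{-1}X}^j$ is only the bound I want for $j\ge 1$. Applying the triangle inequality, $\CC$-homogeneity of the vector norm, and submultiplicativity for $j\ge 1$ gives
\[
\norm{Z^{-1}} \;\le\; \frac{\norm{1_\A}}{\abs{z}} + \sum_{j=1}^{\nu-1}\frac{\norm{X^j}}{\abs{z}^{j+1}} \;\le\; \frac{1}{\abs{z}}\Big(\norm{1_\A} + \sum_{j=1}^{\nu-1}\Big(\frac{\norm{X}}{\abs{z}}\Big)^j\Big),
\]
which is the first claimed bound. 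For the ``in particular'' part, substituting $\norm{1_\A}=1$ absorbs the first term into the sum to yield $\norm{Z^{-1}}\le\abs{z}^{-1}\sum_{j=0}^{\nu-1}(\norm{X}/\abs{z})^j$; evaluating this finite geometric sum with ratio $r\coloneqq\norm{X}/\abs{z}$ — splitting into $r\neq 1$, where $\sum_{j=0}^{\nu-1}r^j=(1-r^\nu)/(1-r)$, and $r=1$, where it equals $\nu$ — produces exactly the two closed forms. Finally, the rewriting of $\abs{z}$ as $\rho_\cA(Z)$ is justified because for local $\A$ the earlier Lemma on the spectral radius in $\fdCAlg_\CC$ gives $\rho_\cA = \abs{\cdot}\circ\sigma_\cA$ and $\sigma_\cA(Z)=z$.

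There is no serious obstacle here; the proof is essentially bookkeeping. The only genuine points of care are (a) keeping the $j=0$ term separate so that the possibly-large constant $\norm{1_\A}$ in the non-unital case is not conflated with the submultiplicative estimate, and (b) using that $\nu$ is the nilpotency index of $X$ (not necessarily $\rh(\A)$), so that the inverse series terminates after $\nu$ terms — this is the whole reason the estimate is finite and, ultimately, sharper than the generic bound of Lemma~\ref{norminverse}.
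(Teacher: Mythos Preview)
Your proof is correct and follows essentially the same route as the paper: factor $Z = z(1_\A - (-X/z))$, use nilpotency of $X$ to write the inverse as the finite sum $Z^{-1} = z^{-1}\sum_{j=0}^{\nu-1}(-X/z)^j$, and then estimate termwise. Your write-up is in fact more explicit than the paper's, which simply displays the formula for $Z^{-1}$ and says ``from which both claims follow''; your care in isolating the $j=0$ term to accommodate a possibly non-unital norm is exactly the right bookkeeping.
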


\begin{proof}
	Since $X\in\m=\nil\A$ with $X^\nu=0$, we have:
	\[
	Z^{-1} = z^{-1}\left(1-\left(\frac{-X}{z}\right)\right)^{-1} = \frac{1}{z}\sum_{j=0}^{\nu-1}\left(\frac{-X}{z}\right)^j,
	\]
	from which both claims follow.
\end{proof}

\begin{remarks}\ 
	\begin{enumerate}[topsep=-\parskip]
		\item Note that $\nu\in\NN$ can instead be chosen uniformly for the whole $\m$, since $\m$ itself is nilpotent, though this is of course less optimal than choosing for an individual $X \in \fm$.
		
		\item Indeed the estimate in \cref{norminverselocalformula} is not only valid for $\norm{X}\geq\abs{z}$ unlike \cref{norminverseformula}, but it is also tighter than \cref{norminverseformula} for the case $\norm{X}<\abs{z}$: since $\norm{X}<\abs{z}$, we have $1-(\norm{X}/\abs{z})^\nu<1$, hence
		\[
		\frac{1-\norm{X}^\nu/\abs{z}^\nu}{\abs{z}-\norm{X}} < \frac{1}{\abs{z}-\norm{X}},
		\]
		the latter being the estimate of $\norm{Z^{-1}}$ obtained by means of \cref{norminverseformula}.
	\end{enumerate}
\end{remarks}

When the ``depth'' of $Z\in (\cA,\fm)$ is less than its ``width'', we even have:

\begin{lemma}\label{norminverselocaluniform}
	Let $\A=(\A,\m)$ be a local $\CC$-algebra and let $Z=z\oplus X\in\A^\times$.
	If $\nu\in\NN$ is the smallest integer with $X^\nu=0$ and $\norm{X}\leq\abs{z}$, then
	\begin{equation}
	\norm{Z^{-1}} \leq \frac{\nu}{\abs{z}} \defeq \frac{\nu}{\rho_\cA(Z)}.
	\end{equation}
\end{lemma}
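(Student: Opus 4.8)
The plan is to reduce the statement directly to \cref{norminverselocal}. Writing $Z = z\oplus X$ with $z\in\CC^\times$ and $X\in\m$ nilpotent of index $\nu$, the finite Neumann expansion from the proof of \cref{norminverselocal} gives $Z^{-1} = z^{-1}\sum_{j=0}^{\nu-1}(-X/z)^j$, whence the triangle inequality together with submultiplicativity of the (unital) norm yields
\[
\norm{Z^{-1}} \;\leq\; \frac{1}{\abs{z}}\sum_{j=0}^{\nu-1}\Big(\frac{\norm{X}}{\abs{z}}\Big)^j,
\]
which is precisely the first expression appearing in \cref{norminverselocalformula}. So the only task left is to bound this sum.

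The one new ingredient is the hypothesis $\norm{X}\leq\abs{z}$: it makes every ratio $\norm{X}/\abs{z}$ lie in $[0,1]$, so each of the $\nu$ summands is at most $1$ and the sum is at most $\nu$, giving $\norm{Z^{-1}}\leq\nu/\abs{z}$ immediately. (Equivalently, in the closed form of \cref{norminverselocalformula}: writing $t\coloneqq\norm{X}/\abs{z}$, for $t<1$ one has $\tfrac{1-t^\nu}{\abs{z}(1-t)}=\tfrac{1}{\abs{z}}(1+t+\cdots+t^{\nu-1})\leq\tfrac{\nu}{\abs{z}}$, and for $t=1$ the right-hand side is already $\nu/\abs{z}$.) Finally, the ``$\defeq$''-equality $\abs{z}=\rho_\cA(Z)$ appended in the statement is just part (1) of the earlier lemma on the spectral radius in $\fdCAlg_\CC$, since $\sigma_\cA(z\oplus X)=z$ and hence $\rho_\cA(Z)=\abs{\sigma_\cA(Z)}=\abs{z}$.

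I do not expect any real obstacle here: all the analytic content is already contained in \cref{norminverselocal}, and this lemma merely isolates the clean \emph{uniform} bound that becomes available precisely when the ``depth'' $\norm{X}$ of $Z$ does not exceed its ``width'' $\abs{z}$. The only point deserving a word of care is that the estimate is applied to a unital submultiplicative norm (so that the $j=0$ term contributes $1/\abs{z}$ rather than $\norm{1_\A}/\abs{z}$), which is the standing hypothesis under which \cref{norminverselocalformula} was derived; otherwise the proof is a two-line citation.
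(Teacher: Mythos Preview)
Your proof is correct, and in fact it is more direct than the paper's own argument. Both proofs start from the same estimate from \cref{norminverselocal},
\[
\norm{Z^{-1}}\leq\frac{1}{\abs{z}}\sum_{j=0}^{\nu-1}\Big(\frac{\norm{X}}{\abs{z}}\Big)^j,
\]
and both set $t\coloneqq\norm{X}/\abs{z}\in[0,1]$. You then observe that the sum $1+t+\cdots+t^{\nu-1}$ has $\nu$ terms each bounded by $1$, which immediately gives the bound $\nu$. The paper instead passes to the closed form $\tfrac{1-t^{\nu}}{1-t}$ and proves $\tfrac{1-t^{\nu}}{1-t}\leq\nu$ by rewriting it as a polynomial inequality $p(t)\geq 0$ on $[0,1]$ and analysing $p$ via calculus: it checks the endpoint values, computes $p'$, locates the critical point, distinguishes $\nu$ even and odd, and verifies that the local minimum is nonnegative.

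Your termwise bound bypasses all of that and is the natural argument; the paper's calculus detour is not needed and gains nothing. Your remark about needing the norm to be unital (so the $j=0$ term contributes $1$) is also apt and matches the standing hypothesis under which \cref{norminverselocalformula} was stated.
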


\begin{proof}
	By \cref{norminverselocal} it suffices to show that 
	\[
	\frac{1-\norm{X}^\nu/\abs{z}^\nu}{\abs{z}-\norm{X}} \leq \frac{\nu}{\abs{z}}
	\]
	whenever $\norm{X}\leq\abs{z}$.
	Indeed, putting $t\coloneqq \frac{\norm{X}}{\abs{z}}$, this is equivalent to
	\[
	p(t) \coloneqq t^\nu - t + \nu -1 \geq 0
	\]
	for $t\in [0,1]$.
	We have $p(0)=p(1)=\nu-1\geq 0$ as $\nu\geq 1$ and $p'(t) = \nu t^{\nu-1} -1$.
	If $\nu$ is even, then $t_0 \coloneqq (1/\nu)^{1/(\nu-1)}$ is the only critical point of $p(t)$, and $t_0\in (0,1)$ since $\nu\geq 2$.
	In this case $p''(t) = \nu (\nu-1) t^{\nu-2}$, hence $p''(t_0)>0$, and thus $t_0$ is a local minimum of $p(t)$.
	Moreover, we have $1/\nu \in (0,1) \Rightarrow (1/\nu)^{1/(\nu-1)} \in (0,1) \Rightarrow (1/\nu)^{1/(\nu-1)} > (1/\nu)^{\nu/(\nu-1)} \in (0,1) \Rightarrow 
	(1/\nu)^{1/(\nu-1)} - (1/\nu)^{\nu/(\nu-1)} \in (0,1) \Rightarrow p(t_0) = \nu-1 - ((1/\nu)^{1/(\nu-1)} - (1/\nu)^{\nu/(\nu-1)}) > 0$ since $\nu-1\geq 1$.
	It follows that $\forall t\in [0,1]: p(t)\geq 0$.
	
	If $\nu$ is odd, then without loss of generality $\nu\neq 1$ since for $\nu=1$ we have $p(t)\equiv\const=0$.
	For $\nu\geq 3$ odd $p'(t)$ has two roots $t_0>0$ as above and $(-t_0)$.
	But $t\in [0,1]$, hence the same discussion as above is applicable as $\nu>1$. 
\end{proof}

\begin{lemma-definition}[Direct Sum Norm]
	Let $\A\in\fdCAlg_\CC$ and let $\A=\bigoplus_{k=1}^N\A_k$ be the decomposition of $\A$ into Artin local $\CC$-algebras $(\A_k,\m_k)$, $1\leq k\leq N$.
	Let each $\A_k$ be equipped with a normalized submultiplicative norm $\norm{\cdot}_{\A_k}$, $1\leq k\leq N$ (we already know this to be possible).
	Then 
	\[
	\norm{\bigoplus_{k=1}^N Z_k}_\oplus\coloneqq
	\max\limits_{1\leq k\leq N}\norm{Z_k}_{\A_k}
	\]
	defines a normalized submultiplicative norm on the direct sum $\A$. \qed
\end{lemma-definition}

\begin{definition}[Spectral Balls and Spectral Annuli]
	Let $\A\in\BanAlg_\KK$, $Z_0\in\A$, and $R>r\geq 0$. Then:
	\begin{enumerate}
		\item Spectral ball: $\BB^\sp_\A(Z_0,R) \coloneqq \{Z\in\A: \rho_\A(Z-Z_0)<R\}$ and $\wbar{\BB}^\sp_\A(Z_0,R) \coloneqq \{Z\in\A: \rho_\A(Z-Z_0)\leq R\}$ are called the open and the closed spectral $R$-balls of $\A$ respectively.
		
		\item Spectral annulus: $\AA^\sp_\A(Z_0,r,R) \coloneqq \{Z\in\A: r<\rho_\A(Z-Z_0)<R\}$ and $\bar{\AA}^\sp_\A(Z_0,r,R) \coloneqq \{Z\in\A: r\leq\rho_\A(Z-Z_0)\leq R\}$ are called the open and the closed spectral $r$-$R$-annuli of $\A$ respectively.
	\end{enumerate}
\end{definition}

\begin{remark}
	Let $\A\in\BanAlg_\KK$ and $R>0$. 
	Then $\bar{\AA}^\sp_\A(Z_0,0,R) \defeq \wbar{\BB}_\A^\sp(Z_0,R)$.
\end{remark}
	
	\section{The Notion of $\varphi$-Holomorphy}\label{sec:phihol}
	

\textit{Unless explicitly stated otherwise, in the following all algebras are assumed finite-dimensional, commutative, and associative, \emph{but not necessarily unital}, and also equipped with some (any) submultiplicative norm. 
By a $\KK$-algebra $\fA$ we will always mean to take the maximal possible coefficient field $\KK$ for $\fA$.
Morphisms of unital algebras are automatically assumed to preserve the unit. $\KK^n$ will always have the standard basis. 
Furthermore, we make the following notational conventions: $\fA$, $\fB$, $\fC$ will always denote possibly non-unital algebras, whereas $\A$, $\B$, $\mathcal{C}$ will be reserved (strictly) for unital ones.}

Even though we are mostly interested in complex unital finite-dimensional commutative associative algebras, it is instructive to try and develop the basic theory for real, not necessarily unital finite-dimensional commutative associative algebras as far as possible in order to see where exactly unitality and the complex structure enter in a decisive manner.

Rather than working inside a single algebra $\fA$, our starting point is a morphism\footnote{following Grothendieck's point of view that one should look at morphisms instead of objects;} of $\KK$-algebras $\fA\xrightarrow{\varphi}\fB$.
Such a morphism turns $\fB$ into an $\fA$-algebra the usual way via $\forall a\in\fA\ \forall b\in\fB: ab \coloneqq \varphi(a)b$, and multiplication by $\fB$-elements on $\fA$ gives rise to a $\KK$-linear map $\fA\to\fB$ of \textit{$\KK$-vector spaces}.
Note that there exist non-trivial morphisms of finite-dimensional $\KK$-algebras: 

\begin{example}
	Let $\A\coloneqq \CC[X,Y]/(X^2+Y^3,XY^2,Y^4)$.
	Then $\A$ is a local $\CC$-algebra of $\dim_\CC\A=6$, and the quotient projection 
	\[
	q:\A\twoheadrightarrow\A/(\widebar{Y}^3)\cong\CC[X,Y]/(X^2,XY^2,Y^3)\eqqcolon\B
	\]
	with $\dim_\CC\B = 5$ is a non-trivial (necessarily local) homomorphism of local $\CC$-algebras. \qed
\end{example}

\begin{definition}[Fréchet $\varphi$-differentiability]\label{Frechet}
	Let $\fA\xrightarrow{\varphi}\fB$ be a morphism of $\KK$-algebras, $U\subseteq\fA$ open, $Z_0\in U$ a point, and $f:U\to\fB$ a map. Then: 
	\begin{enumerate}
		\item $f$ is called (Fréchet) $\varphi$-differentiable at $Z_0$ if there exists $B\in\fB$ such that
		\begin{equation}\label{frechet}
		f(Z_0+H) = f(Z_0) + B\varphi(H) + r(H),
		\end{equation} 
		where $\norm{r(H)}=o(\norm{\varphi(H)})$ as $H\to 0$.
		In this case $(Df)(Z_0)=f'(Z_0)\coloneqq B$ is called the derivative of $f$ at $Z_0$.
		
		\item $f$ is called $\varphi$-differentiable if it is $\varphi$-differentiable everywhere in $U$.
		
		\item $f$ is called $\varphi$-holomorphic at $Z_0$ if there exists an open neighbourhood $V\ni Z_0$ such that $f$ is $\varphi$-differentiable everywhere in $V$. 
		
		\item $f$ is called $\varphi$-holomorphic if it is $\varphi$-holomorphic everywhere in $U$. 
		
		\item The space of $\varphi$-holomorphic functions on $U$ will be denoted by $\O_{\varphi}(U)$.
		
		\item $f$ will also be called $\fA$-differentiable/-holomorphic (at $Z_0$) if $\varphi=\id_\fA:\fA\to\fA$. 
		In this case $\O_\fA(U)\coloneqq\O_{\id_\fA}(U)$.
	\end{enumerate}
\end{definition}

\begin{example}
	Let $Z\coloneqq z_1 + \eps z_2\in\A_2$ and define $f(Z) \coloneqq f(z_1,z_2) \coloneqq f_1(z_1,z_2) + \eps f_2(z_1,z_2)$, where
	\[
	\begin{aligned}
	f_1(z_1,z_2) &= z_1^3 - z_1^2 +1,\\
	f_2(z_1,z_2) &= 2z_1^3 + 3 z_1^2 z_2 + z_1^2 -2 z_1 z_2 + 3.
	\end{aligned}
	\]
	Then $f$ is $\A$-holomorphic: $f(Z) = (1+2\eps)Z^3 + (-1+\eps)Z^2 + (1+3\eps)$ with $f'(Z) = (3+6\eps)Z^2 + (-2+2\eps)Z$.
\end{example}

\begin{definition}[ $\varphi$-differentiability classically]\label{Classical}
	Let $\A\xrightarrow{\varphi}\B$ be a morphism of unital $\KK$-algebras, $U\subseteq\A$ open, $Z_0\in U$ a point, and $f:U\to\B$ a map. Then:	
	\begin{enumerate}
		\item $f$ is called classically $\varphi$-differentiable at $Z_0$ if the limit of the difference quotient
		\begin{equation}\label{classical}
		f'(Z_0)\coloneqq\lim_{\substack{H\to 0 \\ \ H\in\A^\times}}
		\frac{f(Z_0+H)-f(Z_0)}{\varphi(H)}
		\end{equation}
		exists.
		
		\item Classical $\varphi$- and $\A$-differentiability/-holomorphy (at $Z_0$) are defined in an analogous way as above.
	\end{enumerate}
\end{definition}

\begin{remarks}\
	\begin{enumerate}[topsep=-\parskip]
		\item Fréchet $\varphi$-differentiability is indeed a special case of Fréchet differentiability, that we also call \textit{inner} Fréchet differentiability because the linear operator $\L_B:\fA\to\fB$, $H\mapsto B\varphi(H)$, is given by inner\footnote{cfg. the inner automorphisms of a group are those given by conjugation with an element from the group;} multiplication in $\fB$. 
		In particular, the differential $Df=\L_B$ is $\fA$-linear (with respect to the $\fA$-module structure on $\fB$ induced by $\varphi$) due to the commutativity (and associativity) of the involved operations. 
		If $\fA=\A$ is unital, then all $\A$-linear maps are given that way because $\L(H)=\varphi(H)\L(1_\A)=\varphi(H)B$, where $B\coloneqq\L(1_\A)$.
		However, if $\fA$ is not unital, this is no longer the case, and inner Fréchet differentiability is stronger than mere $\fA$-linearity of $Df$: for instance, take the local algebra $\A_3\coloneqq\KK[X]/(X^3)$ with maximal ideal $\m=(\bar{X})$, put $\fA\coloneqq\m$, and define the $\m$-linear map $\L:\m\to\m$, $\bar{X}\to (1+\bar{X})\bar{X} = \bar{X}+\bar{X}^2$, given by multiplication with an element $1+\bar{X}\in\A^\times=\A\setminus\m$.
		Moreover, since $\fA$ is non-unitial, $\fA$-linearity of $Df$ does not in general guarantee $\KK$-linearity of $Df$.
		
		\item (Non-)Uniqueness of the $\varphi$-derivative\footnote{functions that have a unique derivative are sometimes called monogenic, though the term ``monogenic function'' has varying meaning throughout the literature;}: while the Fréchet derivative is always unique as a linear operator in $\Hom_\KK(\fA,\fB)$, an inner linear map between non-unital $\KK$-algebras is itself usually not uniquely represented. 
		For instance, consider the algebra $\fA_0 \coloneqq \CC \eps$ with $\eps^2 = 0$, then $0: \fA_0 \to \fA_0$ can be represented by multiplication with any $z\eps$, $z\in \CC$.
		More generally, if $\fM$ is a connected non-unital $\KK$-algebra, hence nilpotent, and $\nu\in\NN$ is the smallest integer such that $\fM^\nu = 0$, then $0: \fM \to \fM$ can be represented by multiplication with any element from $\fM^{\nu-1}$.
		On the other hand, if $\varphi: \cA \to \cB$ is a morphism of unital $\KK$-algebras, then $f'(Z) = B \in \cB$ in \cref{frechet} is uniquely determined as an element of $\cB$: if we have $f(Z+H) = f(Z) + B_{1,2} \varphi(H) + r_{1,2}(H)$ for two elements $B_1, B_2\in \cB$ and two $o(\norm{\varphi(H)})$-functions $r_1, r_2$ as $H \to 0$, then putting $B \coloneqq B_2 - B_1$ and $r(H) \coloneqq r_1(H) - r_2(H)$ we get $B \varphi(H) = r(H)$, where again $r(H) = o(\norm{\varphi(H)})$.
		Now taking $H\in\KK^\times \subseteq \cA$, $H \to 0$, we conclude that
		\[
		\norm{B} = \norm{\frac{r(H)}{\varphi(H)}} = \norm{\frac{r(H)}{H}} = \frac{o(\abs{H})}{\abs{H}} \xrightarrow{H \to 0} 0,  		
		\]
		where we have chosen $\norm{\cdot}$ to be itself normalized for convenience.
		Notice that the assumption of unitality of the algebras is essential to guarentee inclusion of the scalars and unconditional computation of the norm of the fraction.
		
		\item We remark that, since $\varphi$ is bounded (being a linear operator between finite-dimensional $\KK$-vector spaces), one also has $\norm{r(H)}=o(\norm{H})$ as $H\to 0$. 
		On the other hand, requiring only $\varphi(H)\to 0$ instead of $H\to 0$ would be insufficient, even though $\norm{r(H)}=o(\norm{\varphi(H)})$, since $\ker\varphi$ need not be trivial.
		
		
		\item If $f:U\to\fB$ is Fréchet $\varphi$-differentiable at $Z_0$, then, clearly, $f$ is totally $\KK$-differentiable at $Z_0$: even in the non-unital case, $f'(Z_0)$ being represented by an element $B\in\fB$ rather than being merely $\fA$-linear ensures that it is itself $\KK$-linear. 
		In particular, if $\KK=\CC$, then a $\varphi$-holomorphic function at $Z_0$ is also analytic at $Z_0$. 
		We are going to show later that if $\varphi: \cA\to\cB$ is a morphism of unital $\CC$-algebras, then the analytic expansion of $f$ is in fact of the special form $\B\{\varphi(Z)\}$ via an approach similar to the one used in the complex analysis of a single variable.
		
		\item Notice that the expression in \cref{classical} makes sense because $\A^\times$ is dense in $\A$. 
		If $\varphi$ is a morphism of unital $\KK$-algebras and $f$ is Fréchet $\varphi$-differentiable at $Z_0$, then restricting $H$ to $\A^\times$ in \cref{frechet} shows that $f'(Z_0)$ can be computed using \cref{classical}.
		
		\item In \cite{Water} it is claimed that $f$ is (Fréchet) $\A$-holomorphic at $Z_0$ if and only if $f$ is classically $\A$-holomorphic at $Z_0$. 
		The previous point shows that ``$\Rightarrow$'' is trivially valid already at the level of $\A$-differentiability at $Z_0$.  
		However, the converse direction ``$\Leftarrow$'' requires (classical) $\A$-differentiability in a neighbourhood.
		
		\item If $\A$ is infinite-dimensional, using \cref{Frechet} is in fact the only sensible approach (also used in \cite{Lor}) as one can no longer expect $\A^\times$ to remain dense in $\A$ \cite[see][]{DwsFnst}.
		
		\item However, since in general there also exist non-zero non-units, \cref{Classical} together with denseness of $\A^\times$ suggests that it is natural to try and extend the definition of (classical) $\varphi$-differentiability to other non-units: if $X\in\A\setminus\A^\times$, define
		\[
		f'_{(X)}(Z_0)\coloneqq\lim_{\substack{H\to X \\ \ H\in\A^\times}} \frac{f(Z_0+H)-f(Z_0)}{\varphi(H)},
		\]
		provided the limit exists. 
		It turns out that such limits exist when $f$ is $\varphi$-holomorphic at $Z_0$ and play a role in the proof of a generalized homological version of Cauchy's Integral Formula.
		
		\item Somewhat differently than the usual notions of differentiability, $\varphi$-differentiability of $f$ is entwined in both the domain and the codomain for it depends on a choice of a morphism $\varphi$. 
		This becomes apparent once we state the generalized Cauchy-Riemann equations for the morphism $\varphi$ (see \cref{CR}). 
		In contrast, the Cauchy-Riemann equations for holomorphic functions taking values in, say, any Banach space $f:\CC\to E$ are always the same.
		
		\item Essentially, the definition of $\varphi$-differentiability uses only the fact that $\fB$ is a topological $\fA$-module. 
		So, how does the algebra structure of $\fB$ come into play? An important feature of the definition is that if $f:U\to\fB$ is a $\varphi$-holomorphic function, then the derivative $f'$ is again a function $U\to\fB$. 
		This is in contrast to the situation when $\fA$ and $\fB$ are only $\KK$-vector spaces. 
		Thus one might suggest that a better analogy would be to take holomorphic functions with values in Banach spaces $f:\CC\to E$, since their derivatives retain the same domain and target $f':\CC\to E$. 
		Unfortunately, in general these cannot be composed with each other without further ado. 
		One can overcome this defi(ni)ciency by introducing holomorphic functions between Banach spaces $f:E_1\to E_2$ (via Fréchet derivatives) which then can be composed $E_1\to E_2\to E_3$, but their derivatives have targets again different than the original ones, $f':E_1\to BL(E_1,E_2)$. 
		We have come a full circle.
		
		\item It is a natural question whether the inclusion of a morphism $\varphi$ in the definition is trivial for whatever reasons. 
		There are two possibilities for this.
		Perhaps a $\varphi$-differentiable function $f:\fA\to\fB$ is always given as a composition $\varphi\circ g$ for an $\fA$-differentiable function $g:U\to\fA$ for some (open) $U\subseteq\fA$? The answer is negative: $f(Z)\coloneqq B\varphi(Z)$ for $B\notin\im\varphi$ clearly cannot be written that way. 
		The second possibility is that $f:U\to\fB$ perhaps lifts to a $\fB$-differentiable function $h:\varphi(U)\to\fB$ with $f=h\circ\varphi$. Notice that unless $\varphi$ is surjective, which requires $\dim_\KK\fB\leq\dim_\KK\fA$, $\varphi(U)$ is not open. 
		At least locally we are going to give a positive answer to the lifting question as a consequence of analyticity.
	\end{enumerate}
\end{remarks}

\begin{definition}[Canonical Projections]\ 
	\begin{enumerate}[topsep=-\parskip]
		\item Let $\fA\cong\bigoplus_{k=1}^N\fA_k$ be the decomposition of a not necessarily unital $\KK$-algebra $\fA$ into connected $\KK$-algebras. 
		Then
		\[
		\pr_k\coloneqq\pr^\fA_k:\fA\twoheadrightarrow\fA_k
		\]
		will denote the $k$-th canonical projection, $1\leq k\leq N$.
		
		\item Let $\A \cong \bigoplus_{k=1}^N\A_k$ be the decomposition of a unital $\KK$-algebra $\A$ into Artin local $\KK$-algebras $(\A_k,\m_k)$, where $\A_k=\KK_{(k)}\oplus\m_k$ as vector spaces and $\KK_{(k)}$ denotes the $k$-th copy of $\KK$. 
		Then
		\[
		\begin{tikzcd}
		\sigma_k\coloneqq\sigma^\A_k:\A \ar[two heads,r,"\pr_k"] & \A_k \ar[two heads,r,"\sigma_{\cA_k}"] & \A_k/\fm_k \cong \KK_{(k)}
		\end{tikzcd}
		\]
		will denote the $k$-the canonical (``spectral'') quotient projection.
	\end{enumerate}
\end{definition}

\begin{remark}
If $\cA$ is unital and $Z\in\A$, then $\sigma_k(Z)$ is precisely the $k$-th eigenvalue of $Z$.
Moreover, $\sigma_k$ is an epimorphism of $\KK$-algebras, projecting onto the scalars.
\end{remark}

Let us fix some further notations. 
Let $\fA \cong \bigoplus_{k=1}^M \fA_k$ and $\fB \cong \bigoplus_{\ell=1}^N \fB_\ell$ be two not necessarily unital $\KK$-algebras decomposed into connected $\KK$-algebras and let $\varphi:\fA \to \fB$ be a morphism of $\KK$-algebras.
Then by \cref{MorFact} there exists (up to index permutation) a unique mapping of indices $\tau: \{1,\dots N\} \to \{1,\dots,M\}$ and a unique morphism of $\KK$-algebras $\bar{\varphi} = \oplus_{\ell=1}^N \bar{\varphi}_\ell$ with $\bar{\varphi}_\ell: \fA_{\tau(\ell)} \to \fB_\ell$, $1\leq \ell\leq N$, such that we have a commutative diagram
\[
\begin{tikzcd}[column sep = scriptsize]
\bigoplus_{k=1}^M \fA_k \ar[r,"\varphi"] \ar[two heads,d,"\Pi_\tau",swap] & \bigoplus_{\ell=1}^N \fB_\ell \\
\bigoplus_{\ell=1}^N \fA_{\tau(\ell)} \ar[ur,"\exists_1\bar{\varphi}",swap,dashed]
\end{tikzcd}
\]
where $\Pi = \Pi_\tau = (\pr^\fA_{\tau(1)},\dots,\pr^\fA_{\tau(N)}): (Z_1,\dots,Z_M) \mapsto (Z_{\tau(1)},\dots,Z_{\tau(N)})$ is the obvious epimorphism of $\KK$-algebras.

\begin{definition}[Projection Closure and Polycylindrical Closure]\label{def:projclosure} 
Let $I \subseteq \{1,\dots,M\}$ be an index subset.
	\begin{enumerate}
		\item Projection closure of $U$ with respect to $I$: we shall denote by
		\[
		\what{U} \coloneqq \what{U}_I \coloneqq \bigcap_{k\in I} \pr_k^{-1}(\pr_k(U)) = \prod_{k=1}^M 
		\begin{cases}
		\pr_k(U), \text{ if } k\in I\\
		\fA_k, \text{ otherwise }
		\end{cases}
		\]
		the biggest subset of $\fA$ containing $U$ and satisfying $\forall k\in I: \pr_k(\what{U}_I) = \pr_k(U)$.
		
		\item Spectral polycylindrical closure of $U$ with respect to $I$: if $\fA_k = (\cA_k,\fm_k)$, $1\leq k\leq M$, and $\fB_\ell = (\cB_\ell,\fn_\ell)$, $1\leq \ell\leq N$, are unital local $\KK$-algebras, then we shall denote by
		\[
		\wtilde{U} \coloneqq \wtilde{U}_I \coloneqq \bigcap_{k\in I} \sigma_k^{-1}(\sigma_k(U)) =\prod_{k=1}^M
		\begin{cases}
		\sigma_k(U)\times\m_k, \text{ if } k\in I\\
		\cA_k, \text{ otherwise }
		\end{cases}
		\]
		the biggest subset of $\A\coloneqq\fA$ containing $U$ and satisfying $\forall k\in I: \sigma_k(\wtilde{U}_I) = \sigma_k(U)$.
		
		\item If $\fA \xrightarrow{\varphi} \fB$ is a morphism of $\KK$-algebras, we define
		\[
		\what{U} \coloneqq \what{U}_\varphi \coloneqq \what{U}_{\im\tau}.
		\]
		Moreover, if $\fA=\cA$ and $\fB=\cB$ are unital, then
		\[
		\wtilde{U} \coloneqq \wtilde{U}_\varphi \coloneqq \wtilde{U}_{\im\tau}.
		\]
	\end{enumerate}
\end{definition}

\begin{remarks}\ 
	\begin{enumerate}[topsep=-\parskip]
		\item Clearly, if $U$ is open, then so are $\what{U}$ and $\wtilde{U}$.
		Moreover, we have $U\subseteq \widehat{U}\subseteq \widetilde{U}$, $\widehat{\widehat{U}}=\widehat{U}$, $\widetilde{\widetilde{U}}=\widetilde{U}$, and $\widetilde{\widehat{U}}= \widehat{\widetilde{U}} = \widetilde{U}$.
		
		\item Let $\fA \cong \bigoplus_{k=1}^M \fA_k$ and $\fB \cong \bigoplus_{\ell=1}^N \fB_\ell$ be two $\KK$-algebras decomposed into a direct sum of connected $\KK$-algebras.
		Let $\tau: \{1,\dots,N\} \to \{1,\dots,M\}$ be an arbitrary mapping, let $\varphi_\ell: \fA_{\tau(\ell)} \to \fB_\ell$ be a morphism of $\KK$-algebras, $1\leq \ell\leq N$, and let $\bar{f_\ell}: U_{\tau(\ell)} \to \fB_\ell$ be a $\varphi_\ell$-differentiable function on an open subset $U_{\tau(\ell)} \subseteq \fA_{\tau(\ell)}$, $1\leq\ell\leq N$.
		Then 
		\[
		\varphi\coloneqq (\varphi_1\circ\pr_{\tau(1)},\dots,\dots,\varphi_N\circ\pr_{\tau(N)}): \fA \to \fB
		\]
		is a morphism of $\KK$-algebras and 
		\[
		f \coloneqq (\bar{f}_1\circ\pr_{\tau(1)},\dots,\bar{f}_N\circ\pr _{\tau(N)}): \bigcap_{\ell=1}^N \pr^{-1}_{\tau(\ell)}(U_{\tau(\ell)})\to\fB
		\]
		is a $\varphi$-differentiable function.
	\end{enumerate}
\end{remarks}

Conversely, all $\varphi$-differentiable functions can be written that way and then some:

\begin{lemma}\label{inclocal}
	Let $\varphi=(\varphi_1,\dots,\varphi_N):\fA = \bigoplus_{k=1}^M \fA_k \to \fB = \bigoplus_{\ell=1}^N \fB_\ell$ be a morphism of $\KK$-algebras that are decomposed into connected $\KK$-(sub)\footnote{in the category of \textit{non-unital} algebras we have canonical inclusions of algebras $\fA_{1,2} \hookrightarrow \fA_1\oplus\fA_2$;}algebras $\fA_k$, $1\leq k\leq M$, and $\fB_\ell$, $1\leq\ell\leq N$, respectively. 
	Let $U\subseteq\fA$ be open and let $f=(f_1,\dots,f_N):U\to\fB$ be a $\varphi$-differentiable function.
	Then:
	\begin{enumerate}
		\item $f$ is $\varphi$-differentiable if and only if $\forall 1\leq\ell\leq N: f_\ell:U\to\fB_\ell$ is $\varphi_\ell$-differentiable.
		
		\item Furthermore, $\forall 1\leq\ell\leq N\ \exists_1 \bar{f}_\ell: \pr_{\tau(\ell)}(U)\to\fB_\ell$ $\bar{\varphi}_\ell$-differentiable function such that the diagrams
		\[
		\begin{tikzcd}
		\fA \ar[two heads]{d}[swap]{\pr_{\tau(\ell)}} \ar{r}{\varphi_\ell} & \fB_\ell\\
		\fA_{\tau(\ell)} \ar[dashed]{ur}[swap]{\bar{\varphi}_\ell}
		\end{tikzcd}
		\Longrightarrow
		\begin{tikzcd}
		U \ar[two heads]{d}[swap]{\pr_{\tau(\ell)}} \ar{r}{f_\ell} & \fB_\ell\\
		\pr_{\tau(\ell)}(U) \ar[dashed]{ur}[swap]{\bar{f}_\ell}
		\end{tikzcd}
		\]
		are commutative.
		
		\item Automatic extension: thus, if $\bar{f} \coloneqq \oplus_{\ell=1}^N \bar{f}_\ell$, then $f = (\bar{f}_1 \circ \pr^\fA_{\tau(1)},\dots,\bar{f}_N \circ \pr^\fA_{\tau(N)}) \defeq \bar{f} \circ \Pi_\tau$, where $\bar{f}$ is a $\bar{\varphi}$-differentiable function, that is, the following diagrams
		\[
		\begin{tikzcd}[column sep = scriptsize]
		\fA \ar[r,"\varphi"] \ar[two heads,d,"\Pi_\tau",swap] & \fB \\
		\bigoplus_{\ell=1}^N \fA_{\tau(\ell)} \ar[ur,"\exists_1\bar{\varphi}",swap,dashed]
		\end{tikzcd}
		\Longrightarrow
		\begin{tikzcd}[column sep = scriptsize]
		U \ar[r,"f"] \ar[two heads,d,"\Pi_\tau",swap] & \fB \\
		\Pi_\tau(U) \ar[ur,"\exists_1\bar{f}",swap,dashed]
		\end{tikzcd}
		\Longrightarrow
		\begin{tikzcd}[column sep = scriptsize]
		\what{U} \ar[r,"\what{f}"] \ar[two heads,d,"\Pi_\tau",swap] & \fB \\
		\Pi_\tau(U) \ar[ur,"\exists_1\bar{f}",swap,dashed]
		\end{tikzcd}
		\]
		are commutative.
		In particular, $f$ extends uniquely to a $\varphi$-differentiable function $\what{f}:\what{U} \to \fB$ in a natural way.
		
		\item Reduction to inclusions of connected $\KK$-algebras: if $\fa\lhd\fA$ is such that $\fa\subseteq\ker\varphi$ and $q_\fa:\fA\twoheadrightarrow\fA/\fa$ denotes the canonical quotient projection, then $\exists_1 \wbar{\varphi}_\fa: \fA/\fa\to\fB$ morphism of $\KK$-algebras $\exists_1 \wbar{f}_\fa: q_\fa(U)\to\fB$ $\wbar{\varphi}_\fa$-differentiable:
		\[
		\begin{tikzcd}
		\fA \ar[d,two heads,swap,"q_\fa"] \ar[r,"\varphi"] & \fB \\
		\fA/\fa \ar[ur,dashed,swap,"\wbar{\varphi}_\fa"]
		\end{tikzcd}
		\Longrightarrow
		\begin{tikzcd}
		U \ar[d,two heads,swap,"q_\fa"] \arrow[r,"f"] & \fB \\
		q_\fa(U) \arrow[ur,dashed,swap,"\wbar{f}_\fa"]
		\end{tikzcd}
		\]
		are commutative diagrams.
		In particular, if $\fa=\ker\varphi$ and $q\coloneqq q_{\ker\varphi}$, we have commutative diagrams
		\[
		\begin{tikzcd}
		\fA \ar[d,two heads,swap,"q"] \ar[r,"\varphi"] & \fB \\
		\fA/\ker\varphi \ar[ur,hook,dashed,swap,"\wbar{\varphi}"]
		\end{tikzcd}
		\Longrightarrow
		\begin{tikzcd}
		U \ar[d,two heads,swap,"q"] \arrow[r,"f"] & \fB \\
		q(U) \arrow[ur,dashed,swap,"\wbar{f}"]
		\end{tikzcd}
		\]
	\end{enumerate} 
\end{lemma}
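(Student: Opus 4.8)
The plan is to isolate a single \emph{descent principle} from which all four parts follow, so that the real work is done exactly once. Call a map $g:W\to\fC$ on an open $W\subseteq\fA$ \emph{pre-$\psi$-differentiable} for a morphism $\psi:\fA\to\fC$ of $\KK$-algebras if at every $Z_0\in W$ one has $g(Z_0+H)=g(Z_0)+B(Z_0)\psi(H)+o(\norm H)$ for some $B(Z_0)\in\fC$ --- equivalently, $g$ is totally $\KK$-differentiable with $Dg(Z_0)=\bigl(H\mapsto B(Z_0)\psi(H)\bigr)$. The principle I would establish is: a pre-$\psi$-differentiable $g$ is automatically $\psi$-differentiable in the sense of \cref{Frechet}, and $g=\bar g\circ q_{\ker\psi}$ for a unique $\bar g:q_{\ker\psi}(W)\to\fC$ that is $\bar\psi$-differentiable, where $q_{\ker\psi}:\fA\twoheadrightarrow\fA/\ker\psi$ and $\bar\psi:\fA/\ker\psi\hookrightarrow\fC$ is the induced injection, $\psi=\bar\psi\circ q_{\ker\psi}$. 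This is the right lemma because of the point I expect to be the main obstacle: the mismatch between ``$\varphi$-differentiable'' and ``$\varphi_\ell$-differentiable''. Projecting the expansion of a $\varphi$-differentiable $f$ onto $\fB_\ell$ only produces a remainder that is $o(\norm{\varphi(H)})$, hence --- since $\varphi$ is bounded --- $o(\norm H)$, which is strictly weaker than the $o(\norm{\varphi_\ell(H)})$ demanded of an honestly $\varphi_\ell$-differentiable function; the descent to $\fA/\ker\psi$, where $\bar\psi$ becomes injective and therefore bounded below, is precisely what upgrades $o(\norm H)$ back to $o(\norm{\bar\psi(h)})$.

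To prove the principle I would argue: since $Dg(Z_0)(H)=B(Z_0)\psi(H)$ annihilates $\ker\psi$, integrating along line segments shows $g$ is constant along $\ker\psi$-directions in $W$ (globally on each connected $\ker\psi$-slice of $W$; locally this always holds, which is all the downstream local conclusions use), giving $g=\bar g\circ q_{\ker\psi}$ with $\bar g$ again totally differentiable, hence pre-$\bar\psi$-differentiable, on the open set $q_{\ker\psi}(W)$; as $\bar\psi$ is an injective linear map of finite-dimensional spaces, $\norm{\bar\psi(h)}\asymp\norm h$, so the $o(\norm h)$ remainder of $\bar g$ is $o(\norm{\bar\psi(h)})$ and $\bar g$ is genuinely $\bar\psi$-differentiable; composing back through the quotient --- using the trivial fact that if $h$ is $\psi'$-differentiable and $q$ a surjective algebra morphism with $\psi'\circ q=\psi$, then $h\circ q$ is $\psi$-differentiable with the same derivative --- shows $g$ is $\psi$-differentiable. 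Granting this, for (1) the direction ``$\Leftarrow$'' is the easy assembly: with $B:=(B_1,\dots,B_N)$, the remainder of $f$ has $\ell$-th component $o(\norm{\varphi_\ell(H)})\le o(\norm{\varphi(H)})$ in the direct-sum (maximum) norm on $\fB$. For ``$\Rightarrow$'' and (2): projecting a $\varphi$-differentiable $f$ makes each $f_\ell=\pr^{\fB}_\ell\circ f$ pre-$\varphi_\ell$-differentiable on $U$; the principle then gives that $f_\ell$ is $\varphi_\ell$-differentiable (this is ``$\Rightarrow$'' of (1)) and $f_\ell=\bar g_\ell\circ q_{\ker\varphi_\ell}$; since $\ker\varphi_\ell=\pr_{\tau(\ell)}^{-1}(\ker\bar\varphi_\ell)\supseteq\ker\pr_{\tau(\ell)}$ by \cref{MorFact}, I re-route $q_{\ker\varphi_\ell}=\rho_\ell\circ\pr_{\tau(\ell)}$ through $\fA_{\tau(\ell)}$ (with $\rho_\ell:\fA_{\tau(\ell)}\twoheadrightarrow\fA/\ker\varphi_\ell$) and set $\bar f_\ell:=\bar g_\ell\circ\rho_\ell:\pr_{\tau(\ell)}(U)\to\fB_\ell$, which satisfies $f_\ell=\bar f_\ell\circ\pr_{\tau(\ell)}$ and, because $\bar\varphi_\ell$ is itself $\bar\psi_\ell\circ\rho_\ell$, is $\bar\varphi_\ell$-differentiable by the composition fact; uniqueness is clear as $\pr_{\tau(\ell)}|_U$ is onto $\pr_{\tau(\ell)}(U)$. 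This is the right-hand diagram of (2), and $\bar f:=\bigoplus_\ell\bar f_\ell$ with $f=\bar f\circ\Pi_\tau$.

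Finally, for (3) the $\bar f_\ell$ just produced live on $\pr_{\tau(\ell)}(U)$, and $\pr_{\tau(\ell)}(\what U_\varphi)=\pr_{\tau(\ell)}(U)$ for every $\ell$ by the definition of $\what U_\varphi=\what U_{\im\tau}$ (\cref{def:projclosure}); hence $\what f:=(\bar f_1\circ\pr_{\tau(1)},\dots,\bar f_N\circ\pr_{\tau(N)})$ is well-defined on $\what U_\varphi$, restricts to $f$ on $U$, and is $\varphi$-differentiable there by ``$\Leftarrow$'' of (1) applied over $\what U_\varphi$; uniqueness follows by running (2) on any $\varphi$-differentiable extension $g:\what U_\varphi\to\fB$ of $f$ and observing that its components must again factor through $\pr_{\tau(\ell)}(\what U_\varphi)=\pr_{\tau(\ell)}(U)$ and agree there with $\bar f_\ell$. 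For (4) I apply the principle with $\psi=\varphi$ to get $f=\bar h\circ q_{\ker\varphi}$ with $\bar h$ honestly $\bar\varphi$-differentiable and $\bar\varphi:\fA/\ker\varphi\hookrightarrow\fB$ (this already gives the ``$\fa=\ker\varphi$'' diagram); for a general ideal $\fa\subseteq\ker\varphi$ I factor $q_{\ker\varphi}=p\circ q_\fa$ through $p:\fA/\fa\twoheadrightarrow\fA/\ker\varphi$ and set $\bar\varphi_\fa:=\bar\varphi\circ p$, $\bar f_\fa:=\bar h\circ p$, so that $\varphi=\bar\varphi_\fa\circ q_\fa$, $f=\bar f_\fa\circ q_\fa$ with $\bar f_\fa$ $\bar\varphi_\fa$-differentiable by the composition fact, both unique since $q_\fa|_U$ is surjective. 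Beyond the $o(\norm{\varphi(H)})$-versus-$o(\norm{\varphi_\ell(H)})$ upgrade absorbed into the descent principle, the only delicate point is the mild topological care needed to pass from ``locally constant along $\ker\psi$'' to the honest factorization $g=\bar g\circ q_{\ker\psi}$ when the $\ker\psi$-slices of $W$ are disconnected; this is irrelevant to the (local) differentiability conclusions, and for the global factorizations one either restricts to connected $\ker\psi$-slices or uses, harmlessly here, that $U$ may be taken connected.
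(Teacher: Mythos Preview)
Your approach is correct and genuinely different from the paper's. You isolate a single descent principle (pre-$\psi$-differentiable on an open set $\Rightarrow$ $\psi$-differentiable, with factorisation through $q_{\ker\psi}$) and derive all four parts from it, whereas the paper treats the parts separately. In one respect you are more careful: the paper asserts that (1) is ``immediate'', but as you rightly observe, projecting the expansion of a $\varphi$-differentiable $f$ yields only a remainder $o(\norm{\varphi(H)})=o(\norm{H})$, which is strictly weaker than the $o(\norm{\varphi_\ell(H)})$ required; your local descent (integrate along $\ker\psi$, then use that $\bar\psi$ is bounded below) is exactly the argument needed to close this gap.

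Where the paper's route is cleaner, and where your own write-up has a small wobble, is the global factorisation. Once $g$ is \emph{honestly} $\psi$-differentiable (which in (4) is the hypothesis, and in (2) is what your local descent has just delivered for $\psi=\varphi_\ell$), the definition itself forces $r(H)=0$ whenever $\psi(H)=0$, so $g(Z+H)=g(Z)$ for \emph{every} $H\in\ker\psi$ with $Z,Z+H\in U$, regardless of whether the slice $U\cap(Z+\ker\psi)$ is connected. This one line is how the paper handles (2) and (4), and it dissolves your slice-connectivity concern entirely. Your suggested fix ``take $U$ connected'' does not work (a connected $U$ can still have disconnected $\ker\psi$-slices), and ``restrict to connected slices'' does not yield a map on $q_{\ker\psi}(U)$; the right fix is to bootstrap: use your descent principle locally to upgrade pre-$\psi$ to $\psi$, then invoke the $o(\norm{\psi(H)})$ condition directly for the global factorisation. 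With that adjustment your argument is complete and, for part (1), actually fills in what the paper left unsaid.
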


\begin{proof}
	To (1): It is immediate that $f$ is $\varphi$-differentiable at $Z\in U$ iff every $f_\ell$ is $\varphi_\ell$-differentiable at $Z$, $1\leq\ell\leq N$.
	
	To (2): Without loss of generality suppose for notational simplicity $\ell=1$ and $\tau(1)=1$, i.e. $\bar{\varphi}_1: \fA_1\to\fB_1$ and $\varphi = \varphi_1 \circ \pr_1$.
	Writing $H=H_1\oplus\bar{H}\in\fA$ and $Z=Z_1\oplus\bar{Z}\in\fA$, it follows from (1) that $f_1(Z+H) = f_1(Z) + B\varphi(H)+o(\norm{\varphi(H)}) = f_1(Z) + B\varphi(H_1) + o(\norm{\varphi(H_1)})$, hence letting $H_1\to 0$ yields $f_1(Z+\bar{H})=f_1(Z)$, in other words $f_1$ depends only on $Z_1$.
	Thus $f_1$ canonically gives rise to a function $\bar{f}_1(Z_1)\coloneqq f_1(Z)$ that is clearly $\bar{\varphi}_1$-differentiable at $Z_1$. 
	Notice that $\pr_1(U)$ is open since $\pr_1$ is an open map.
	
	To (3): $f=\bar{f} \circ \Pi_\tau$ follows from (2), while $\bar{\varphi}$-differentiability of $\bar{f}$ is immediate from the $\bar{\varphi}_\ell$-differentiability of $\bar{f}_\ell$ for all $1\leq\ell\leq N$.
	Extension to $\what{f}:\what{U}\to\fB$ follows from the diagram and $\varphi$-differentiability of $\what{f}$ follows from the discussion in the last remark.
	
	To (4):	If $H\in \fa\subseteq\ker\varphi$, then $f(Z+H) = f(Z) + B\varphi(H) + o(\norm{\varphi(H)}) = f(Z)$.
	In other words, if $\varphi(Z) = \varphi(W)$, then also $f(Z) = f(W)$. 
	Thus $f$ induces a well-defined function $\wbar{f}_\fa(Z\mod \fa)\coloneqq f(Z)$ that is readily verified to be $\wbar{\varphi}_\fa$-differentiable.
	Notice that $q_\fa(U)$ is open since $q_\fa$ is a projection.
\end{proof}

\begin{remarks}\ 
	\begin{enumerate}[topsep=-\parskip]
		\item In the case of $\KK=\CC$ and $\fA=\A$ and $\fB=\cB$ being unital, we will later show that $f\in\cO_\varphi(U)$ extends $\varphi$-holomorphically further to $\wtilde{U}$ in a natural way.
		
		\item Successive application of (1), (2), and (3) reduces the problem of studying $\varphi$-differentiable functions to the case of $\varphi:\fA\hookrightarrow\fB$ being an inclusion of connected $\KK$-algebras.
		In the presence of units, this means $\varphi:(\A,\m)\hookrightarrow(\B,\n)$ is an embedding of local (unital) $\KK$-algebras, which itself is automatically a local morphism.
		This suggests that the only interesting morphisms for the purposes of a function theory and our conjectural category $\fdCFkth$ are inclusions of local $\CC$-algebras, not unlike the category of field extensions\footnote{a bad, bad category with nice morphisms;} over a given base field.
		
		\item Thus the question of whether a $\varphi$-differentiable function $f:U\to\fB$, $U\subseteq\fA$ open, has the form $f=g\circ\varphi$ for a $\fB$-differentiable function $g$ turns into a question of whether $f$ is always a restriction of a $\fB$-differentiable function $g$, and in particular, if every $\fA$-differentiable function $f$ admits an extension to a $\fB$-differentiable function $g$ along an inclusion $\fA\subseteq\fB$ of $\KK$-algebras.
		For a morphism $\varphi:\A\to\B$ of unital $\CC$-algebras, this will become clear once we establish analyticity (\cref{analyt}) in $\varphi$.
		In other words we will have the following commutative diagram:
		\[
		\begin{tikzcd}[column sep=scriptsize, row sep=normal]
		\A \ar[d,hook,"\varphi",swap] \ar[r,hookleftarrow] & U \ar[d,hook,"\varphi",swap] \ar[r,"f"] & \B\\
		\B \ar[r,hookleftarrow] & V  \ar[ur,dashed,"\exists_1 g",swap]
		\end{tikzcd}
		\]
		Even though $\A$ can be a subspace of $\B$ of an arbitrary codimension, the $\varphi$-holomorphic function of several complex variables $f$ extends locally in a unique fashion to a $\B$-holomorphic function $g$.	
		
		\item A version of the decomposition of $\A$-differentiable function $f$ into $\A_k$-differentiable components $f_k$, $1\leq k\leq M$, where $\A=\bigoplus_{k=1}^M \A_k$ is the decomposition of $\A$ into Artinian local $\KK$-algebras, is given in \cite{Water}, but we are not convinced of his argument\footnote{we are confident to mention this in here since at any time an arbitrary master thesis is read only by a very few selected people, and moreover, almost no one reads footnotes;}: he proves this first for the case of products of copies of $\RR$ and $\CC$, where he uses the multiplicativity of the absolute value on $\KK$ in an essential way, and then later states that the same proof applies to the case of products of local $\KK$-algebras. However, algebra norms are in general only submultiplicative, which breaks the argument used in the first case.
		
		\item \cref{inclocal}, reducing to inclusions $\fA\subseteq\fB$, raises the question as to why one should still bother with (general) morphisms of $\KK$-algebras at all. 
		Here are some conceptual reasons for doing this:
		\begin{enumerate}[(i)]
			\item As soon as one has $\fA$-holomorphic functions $f:U\to\fA$ available, one realizes in particular that we have ridden ourselves of $\CC$ being the inevitable choice of a codomain that appears when studying complex spaces by means of the holomorphic functions $f:X\to\CC$ or $f:X\to\PP^1$ defined on them. 
			In other words, there is no longer a canonical choice of a codomain, and $\varphi$ enables the only necessary compatibility condition between domain and codomain while simultaneously also giving a notion of $\fA$-holomorphy with values in some (but certainly not all) finite-dimensional $\fA$-modules $\fB$. 
			
			\item Continuing the previous point: tautologically, without introducing $\varphi$-holomorphy, we cannot even speak of $\varphi$ being ``$\fA$-holomorphic with values in $\fB$'' (since it takes values in a different algebra, for one), even though it seems intuitively so, being a linear map and all, or, by extension, that a composition of a $\fB$-holomorphic map $f$ with $\varphi$ also has nice, holomorphy-like properties. 
			After all, in general, one does not expect (and rightly so) that a holomorphic map when composed with some ``arbitrary'' map would still remain equally well-behaved. 
			
			\item While $\CC$ has exactly two continuous $\RR$-linear automorphisms, $\Aut_\KK(\fA)$ can be trivial or very rich; in fact, in the spirit of the previous point, $\Hom_{\Alg_\KK}(\fA,-)$ is never boring. 
			Thus $\varphi$-holomorphy is a vast generalization of anti-holomorphy, but not as involutive. 
			
			\item Finally, as we shall see, the basic theory works seamlessly right away for $\varphi$-holomorphic functions, so there really is no point in always proving two separate versions of each result (or for a fact in even remarking so).   	
		\end{enumerate}
	\end{enumerate}
\end{remarks}

\begin{lemma}[The Jacobian of $f$]\label{jacobian}
	Let $U\subseteq\fA$ be open and let $f:U\to\fA$ be $\fA$-differentiable at $Z_0\in U$. Then:
	\begin{equation}\label{Jac}
	\lambda(f'(Z_0)) = (J_{\KK}f)(Z_0),
	\end{equation}
	where $\lambda:\fA\to\rM_n(\KK)$ denotes the regular representation of $\fA$.
	Thus, if we identify $\fA$ with its regular matrix representation, we have $f'(Z)=(J_\KK f) (Z)$.
\end{lemma}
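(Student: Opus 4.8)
The plan is to exploit the uniqueness of the total ($\KK$-)derivative. Since $f$ is $\fA$-differentiable at $Z_0$, i.e. $\varphi$-differentiable for $\varphi=\id_\fA$, \cref{Frechet} furnishes an element $f'(Z_0)\in\fA$ with
\begin{equation*}
f(Z_0+H) = f(Z_0) + f'(Z_0)H + r(H),\qquad \norm{r(H)}=o(\norm{H})\text{ as }H\to 0,
\end{equation*}
where $f'(Z_0)H$ denotes the product in $\fA$ and the remainder condition reads $o(\norm{H})$ precisely because $\varphi=\id_\fA$. As multiplication in the $\KK$-algebra $\fA$ is $\KK$-bilinear, the map $H\mapsto f'(Z_0)H$ is $\KK$-linear, and it is — by the very definition of the left regular representation — the endomorphism $\lambda(f'(Z_0))=\lambda_{f'(Z_0)}\in\End_\KK(\fA)\cong\rM_n(\KK)$. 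Thus the displayed expansion already exhibits $f$ as totally $\KK$-differentiable at $Z_0$ with $\KK$-differential $\lambda(f'(Z_0))$.

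On the other hand, the Jacobian matrix $(J_\KK f)(Z_0)\in\rM_n(\KK)$ is by definition the unique matrix representing the total $\KK$-derivative of $f$ at $Z_0$ with respect to the chosen basis $\{a_1,\dots,a_n\}$ of $\fA$. By uniqueness of the total derivative, the two $\KK$-linear maps $\lambda(f'(Z_0))$ and $(J_\KK f)(Z_0)$ coincide, which is exactly \eqref{Jac}. If one prefers an entirely elementary route: for each basis vector $a_j$ one computes the directional derivative
\begin{equation*}
\lim_{t\to 0}\frac{f(Z_0+ta_j)-f(Z_0)}{t} = f'(Z_0)a_j,
\end{equation*}
which is simultaneously the $j$-th column of $\lambda(f'(Z_0))$ and, by definition of the Jacobian, the $j$-th column of $(J_\KK f)(Z_0)$; so the two matrices agree column by column. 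The concluding sentence of the statement is then immediate: identifying $\fA$ with $\lambda(\fA)\subseteq\rM_n(\KK)$, the element $f'(Z)$ is literally the matrix $(J_\KK f)(Z)$.

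I do not anticipate a genuine obstacle here; the one point worth a word of caution is that $\lambda$ need not be injective when $\fA$ is non-unital, so \eqref{Jac} is honestly an equality of matrices and does not by itself pin down $f'(Z_0)$ as an element of $\fA$ — this is exactly the non-uniqueness of the $\fA$-derivative recorded in the remarks following \cref{Classical}. When $\fA=\cA$ is unital, $\lambda$ is faithful and \eqref{Jac} determines $f'(Z_0)$ uniquely.
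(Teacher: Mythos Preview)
Your proof is correct, and your ``elementary route'' via directional derivatives is essentially identical to the paper's argument: the paper computes $\pdv{f}{z^i}=f'(Z)a_i$ for each basis vector and assembles these into the equality $f'(Z)(a_1,\dots,a_n)=(a_1,\dots,a_n)(J_\KK f)(Z)$, which is precisely your column-by-column comparison. Your primary approach via uniqueness of the total $\KK$-derivative is a mild abstraction of the same idea and is equally valid; the added remark on non-injectivity of $\lambda$ in the non-unital case is a nice observation but not required for the lemma itself.
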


\begin{proof}
	Let $\{a_1,\dots,a_n\}$ be a $\KK$-basis for $\fA$ and let $f=(f^1,\dots,f^n)$ be the components of $f$. We have $\forall 1\leq i\leq n:$
	\[
	f'(Z)a_i \defeq \pdv{f}{z^i} = \pdv{}{z^i}\sum_{r=1}^n f^r a_r = (a_1,\dots,a_n)\pdv{}{z^i}(f^1,\dots,f^n)^T,
	\]
	hence putting them all together we get $f'(Z)(a_1,\dots,a_n) = (a_1,\dots,a_n)(J_\KK f)(Z)$, i.e. $(J_\KK f)(Z)$ is the representation matrix of the multiplication by $f'(Z)$ in $\fA$ as required.
\end{proof}

\begin{remark}
	If $\varphi:\fA\to\fB$ is a morphism of $\KK$-algebras with $\dim_\KK\fA\neq\dim_\KK\fB$ and $f$ is $\varphi$-differentiable, then $f'$ and $J_{\KK}f$ have different sizes.
\end{remark}

\begin{corollary}[Jacobian Conjecture for $\A$-holomorphic regular maps]\label{jaconj}
	Let $\A=\bigoplus_{k=1}^N \A_k$ be a fully decomposed unital $\KK$-algebra such that $\forall 1\leq k\leq N: \A_k/\m_k\cong\KK$ and identify $\A\cong\KK^n$ as vector spaces.
	Then the Jacobian Conjecture holds trivially for $\A$-holomorphic regular maps: if $P=P(z_1,\dots,z_n)=(P_1,\dots,P_n): \KK^n\to\KK^n$ is a regular and $\A$-holomorphic map with $\det J_\KK P = \const\neq 0$, then $P$ is biregular.
\end{corollary}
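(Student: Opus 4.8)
The plan is to use \cref{jacobian} to convert the hypothesis $\det J_\KK P = c \neq 0$ into a very rigid structural statement about the matrix $J_\KK P(Z)$, and then to observe that this structure forces $P$ to be a \emph{triangular} (de Jonqui\`eres-type) polynomial automorphism, whose biregularity is then essentially immediate. The point of the corollary is precisely that the $\cA$-holomorphy hypothesis collapses the Jacobian Conjecture to this trivially true triangular case, uniformly in $\KK = \RR, \CC$ (which is already noteworthy, since the real Jacobian Conjecture is false for general polynomial maps).

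\textbf{Step 1: reduce to the local case.} Write $\cA = \bigoplus_{k=1}^N \cA_k$ with each $(\cA_k,\fm_k)$ local and $\cA_k/\fm_k \cong \KK$. By \cref{inclocal} (applied to $\varphi = \id_\cA$, so $\tau = \id$) the $\cA$-holomorphic map $P\colon \cA \to \cA$ splits as a ``product'' $P = \bigoplus_{k} \bar P_k \circ \pr_k$ with each $\bar P_k\colon \cA_k \to \cA_k$ being $\cA_k$-holomorphic; hence $J_\KK P$ is block diagonal and $\det J_\KK P = \prod_k \det J_\KK \bar P_k$, the $k$-th factor depending only on the coordinates of $\cA_k$. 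Since these factors involve disjoint sets of variables, constancy and non-vanishing of the product force each $\det J_\KK \bar P_k$ to be a non-zero constant; and $P$ is biregular if and only if every $\bar P_k$ is. So we may assume $\cA = (\cA,\fm,\KK)$ is local of dimension $n$, and we fix the stable lower-triangular basis $\{e_1 = 1_\cA, e_2, \dots, e_n\}$ of \cref{nicebasis}.

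\textbf{Step 2: pin down $J_\KK P$.} By \cref{jacobian}, $J_\KK P(Z) = \lambda(P'(Z))$; since $J_\KK P$ has polynomial entries and $\lambda$ is an injective $\KK$-linear map, $P' = \lambda^{-1}\circ J_\KK P$ is a polynomial map $\KK^n \to \cA$. In the chosen basis, properties (i)--(ii) of \cref{nicebasis} show that $\lambda(\fm)$ consists of strictly lower-triangular matrices, so every $\lambda(W)$ is lower triangular with all diagonal entries equal to $\sigma_\cA(W)$; hence $\det J_\KK P(Z) = \sigma_\cA(P'(Z))^n$. This equals the constant $c \neq 0$, so the polynomial $Z \mapsto \sigma_\cA(P'(Z))$ has constant $n$-th power; its image, being the continuous image of the connected space $\KK^n$ inside the finite set of $n$-th roots of $c$, is a single non-zero value $a$. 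Therefore $P'(Z) - a\,1_\cA \in \fm$ for all $Z$, and $J_\KK P(Z) = aE_n + \lambda\!\left(P'(Z) - a\,1_\cA\right)$ is $aE_n$ plus a strictly lower-triangular matrix with polynomial entries.

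\textbf{Step 3: read off the triangular form and conclude.} From Step 2, $\partial P^i/\partial z^i \equiv a$ and $\partial P^i/\partial z^k \equiv 0$ for $k > i$, so integrating gives $P^i(Z) = a z^i + g^i(z^1,\dots,z^{i-1})$ with each $g^i$ a polynomial (and $g^1$ constant). Thus $P = (aE_n)\circ T$, where $T\colon z \mapsto \bigl(z^i + a^{-1}g^i(z^1,\dots,z^{i-1})\bigr)_i$ is a triangular polynomial automorphism whose inverse is obtained recursively: $z^1 = w^1 - a^{-1}g^1$, and once $z^1,\dots,z^{i-1}$ are known as polynomials in $w^1,\dots,w^{i-1}$, then $z^i = w^i - a^{-1}g^i(z^1,\dots,z^{i-1})$ is a polynomial in $w^1,\dots,w^i$. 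Since $aE_n$ is visibly biregular, so is $P$. I expect the only mildly technical point to be the bookkeeping in Step 2 — verifying via the structure constants of \cref{nicebasis} that $\lambda(\fm)$ is strictly lower triangular, so that the triangular shape really does drop out of the constant-Jacobian hypothesis — but there is no genuine obstacle here; the whole argument is short once \cref{jacobian} is in hand.
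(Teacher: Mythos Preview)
Your proof is correct and follows essentially the same route as the paper: reduce to the local case via the block-diagonal decomposition, use \cref{jacobian} together with the lower-triangular representation of \cref{nicebasis} to see that $J_\KK P$ is lower triangular with constant scalar diagonal, and then invert the resulting triangular system recursively. The only cosmetic difference is that you argue $\sigma_\cA(P'(Z))$ is constant via connectedness of $\KK^n$, whereas the paper implicitly uses that a polynomial whose $n$-th power is a nonzero constant must be a unit in $\KK[z_1,\dots,z_n]$; both are fine.
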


\begin{proof}
	Since $\A_k/\m_k\cong\KK$ for all $1\leq k\leq N$, we can identify $\A$ without loss of generality with its lower-triangular representation: a change of basis for $\A$ translates into a composition of $P$ with invertible linear maps; in fact, even the value of the constant $\det J_\KK P$ itself does not change. 
	Then $\det P'(Z) = \det (J_\KK P)(Z)$. Let $P(Z)=\bigoplus_{k=1}^N Q_k(Z_k)$ with respect to the decomposition of $\A$.
	Then also $P'(Z) = \bigoplus_{k=1}^N Q'_k(Z_k)$, hence
	\[
	\det P'(Z) = \prod_{k=1}^{N}\det Q'_k(Z_k)\in\KK^{\times} = U(\KK[z_1,\dots,z_n])
	\]
	Therefore it suffices to assume that $\A=(\A,\m)$ is a local $\KK$-algebra with $\A/\m\cong\KK$. 
	By \hyperref[triangular]{lower-triangular form (\cref*{triangular})} of the local $\A$ we get:
	\[
	\forall i<j: \pdv{P_i}{z_j} = 0\ \text{and}\ \forall 1\leq i\leq n: \pdv{P_i}{z_i} = \pdv{P_1}{z_1},
	\]
	i.e. $\forall 1\leq i\leq n: P_i = P_i(z_1,\dots,z_i)$ is a polynomial depending only on the first $i$ variables. 
	Furthermore
	\[
	0\neq\const = \det (J_\KK P)(Z) = \det P'(Z) = \det \pdv{P}{z_1}(Z) = \left(\pdv{P_1}{z_1}\right)^n \Rightarrow P_1 = cz_1+d
	\]
	for some $c\in\KK^{\times}$ and $d\in\KK$, and more generally, $P_i = c z_i + p_i(z_1,\dots,z_{i-1})$ for some $p_i\in\KK[z_1,\dots,z_{i-1}]$. 
	Setting $w_i\coloneqq P_i$, one can now write the inverse map recursively as follows:
	\[
	z_1 = \frac{1}{c}(w_1-d),\ z_i = \frac{1}{c}(w_i - p_i(z_1,\dots,z_{i-1})),\ 2\leq i\leq n
	\]
\end{proof}

\begin{remark}
	Note that $\KK$ is allowed to be $\RR$ under the additional assumption that $\forall 1\leq k\leq N: \A_k/\m_k \cong \RR$.
	In general, however, the Jacobian conjecture is known to be false over $\RR$.
	Moreover, this is a special instance (the main diagonal consisting of the same element) of the case when $\det J_\KK P$ has triangular form, which is dealt with in a similar fashion.
	Note also that there exist examples of biregular maps with non-triangulable Jacobians.
\end{remark}

\begin{lemma}\label{rules}
	Let $\fA\xrightarrow{\varphi}\fB\xrightarrow{\psi}\fC$ be morphisms of $\KK$-algebras and $b\in\fB$. Let $f,f_1,f_2:U\to\fB$ $\varphi$-differentiable and $g:V\to\fC$ be $\psi$-differentiable such that $f(U)\subseteq V$. Then:
	\begin{enumerate}
		\item Linearity: $(bf)' = bf'$, $(bg)' = bg'$, and $(f_1+f_2)'=f_1'+f_2'$;
		
		\item Leibniz Rule: $(f_1 f_2)' = f_1'f_2 + f_1 f_2'$;
		
		\item Chain Rule: $g\circ f$ is $\psi\circ\varphi$-differentiable and $(g\circ f)'(Z_0) = g'(f(Z_0)) f'(Z_0)$;
		
		\item Constants: if $\fB=\B$ is unital, then $\varphi$ is $\varphi$-differentiable with $\varphi'= 1_\cB$.
	\end{enumerate}
\end{lemma}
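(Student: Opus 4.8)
The plan is to obtain all four rules directly from the defining first-order expansion \eqref{frechet}, by substituting one expansion into another and reading off the term linear in the relevant image; parts (1) and (4) are immediate, (2) rests on commutativity of $\fB$, and the chain rule (3) is where the little-$o$ bookkeeping needs care. For (1): left-multiplying $f(Z_0+H)=f(Z_0)+f'(Z_0)\varphi(H)+r(H)$ by $b\in\fB$ and using that $c\mapsto bc$ is a bounded operator on the Banach algebra $\fB$ gives $(bf)(Z_0+H)=(bf)(Z_0)+(bf'(Z_0))\varphi(H)+br(H)$ with $\norm{br(H)}\le\norm{b}\norm{r(H)}=o(\norm{\varphi(H)})$; the case $bg$ is the same using the $\fB$-module structure on $\fC$ and boundedness of $c'\mapsto\psi(b)c'$, and $f_1+f_2$ follows by adding the two expansions, a sum of two $o(\norm{\varphi(H)})$-functions being again $o(\norm{\varphi(H)})$. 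For (4): $\varphi$ is $\KK$-linear, so $\varphi(Z_0+H)=\varphi(Z_0)+1_\cB\cdot\varphi(H)$ \emph{exactly}, i.e.\ with vanishing remainder, whence $\varphi'\equiv 1_\cB$ — unitality of $\cB$ being precisely what allows the exact term $\varphi(H)$ to be written as the inner multiplication $1_\cB\varphi(H)$.

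For the Leibniz rule I would multiply the expansions $f_i(Z_0+H)=f_i(Z_0)+f_i'(Z_0)\varphi(H)+r_i(H)$, $i=1,2$, inside $\fB$. The constant term is $f_1(Z_0)f_2(Z_0)$; the terms carrying exactly one factor $\varphi(H)$ are $f_1'(Z_0)\varphi(H)f_2(Z_0)+f_1(Z_0)f_2'(Z_0)\varphi(H)$, which by \emph{commutativity} of $\fB$ collapses to $\bigl(f_1'(Z_0)f_2(Z_0)+f_1(Z_0)f_2'(Z_0)\bigr)\varphi(H)$; and every remaining term is $o(\norm{\varphi(H)})$ — either $f_1'(Z_0)\varphi(H)\cdot f_2'(Z_0)\varphi(H)=O(\norm{\varphi(H)}^2)$ by submultiplicativity of the norm, or a product of some $r_i(H)=o(\norm{\varphi(H)})$ with a factor that stays bounded as $H\to 0$ (being either $f_j(Z_0)$ or $f_j(Z_0)+f_j'(Z_0)\varphi(H)+r_j(H)\to f_j(Z_0)$ directly from the expansion of $f_j$). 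Collecting gives $(f_1f_2)'(Z_0)=f_1'(Z_0)f_2(Z_0)+f_1(Z_0)f_2'(Z_0)$.

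For the chain rule, set $W_0:=f(Z_0)$ and $K=K(H):=f(Z_0+H)-W_0=f'(Z_0)\varphi(H)+r(H)$, so $K\to 0$ as $H\to 0$; substituting into the expansion of $g$ at $W_0$ gives
\begin{equation*}
(g\circ f)(Z_0+H)=g(W_0)+g'(W_0)\psi(K)+s(K),\qquad\norm{s(K)}=o(\norm{\psi(K)}),
\end{equation*}
and since $\psi$ is a $\KK$-algebra morphism, $\psi(K)=\psi(f'(Z_0))(\psi\circ\varphi)(H)+\psi(r(H))$. The term linear in $(\psi\circ\varphi)(H)$ is then $g'(W_0)\psi(f'(Z_0))(\psi\circ\varphi)(H)=\bigl(g'(f(Z_0))f'(Z_0)\bigr)(\psi\circ\varphi)(H)$ under the $\fB$-module structure on $\fC$; as $\psi\circ\varphi$ is again a morphism of $\KK$-algebras, this will exhibit $g\circ f$ as $(\psi\circ\varphi)$-differentiable with the claimed derivative \emph{provided} the leftover $\tilde r(H):=g'(W_0)\psi(r(H))+s(K(H))$ is $o(\norm{(\psi\circ\varphi)(H)})$. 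Here I would first invoke the reduction of \cref{inclocal} to assume $\varphi$ and $\psi$ injective — replacing them by the $\bar\varphi,\bar\psi$ through which they factor and $f,g$ by the induced functions on the quotient domains — so that $\norm{\varphi(\cdot)}$ and $\norm{\psi(\cdot)}$ are equivalent to the ambient norms on the relevant subspaces; then $\norm{g'(W_0)\psi(r(H))}\le\norm{g'(W_0)}\norm{\psi}\norm{r(H)}$ together with $\norm{\psi(K(H))}\le\norm{\psi}\bigl(\norm{f'(Z_0)}\norm{\varphi(H)}+\norm{r(H)}\bigr)$ let one transport the estimate $\norm{r(H)}=o(\norm{\varphi(H)})$ across $\psi$ and $\varphi$ and finish.

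The main obstacle is exactly this last estimate. The definition of $(\psi\circ\varphi)$-differentiability asks for a remainder that is $o(\norm{(\psi\circ\varphi)(H)})$, which is strictly stronger than the $o(\norm{H})$ one gets for free from the Banach-space Fréchet chain rule (recall that $\varphi$-differentiability is a special, ``inner'' case of Fréchet differentiability, cf.\ the remarks after \cref{Classical}), so one cannot simply quote the latter; bridging this gap is what forces the boundedness of $\varphi$ and $\psi$ and the reduction to inclusions of connected algebras into play. Everything else is routine manipulation of first-order expansions, with commutativity of $\fB$ the only other structural ingredient, used as above to symmetrize the Leibniz rule.
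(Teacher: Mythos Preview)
Your treatment of (1), (2), and (4) is exactly the paper's: it omits (1) as trivial, expands the product for (2) and collects the $o(\norm{\varphi(H)})$ terms just as you do, and gives the one-line identity $\varphi(Z_0+H)=\varphi(Z_0)+1_\cB\varphi(H)$ for (4).

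For (3) the paper performs the very same direct substitution you outline and then shows the total remainder is $o(\norm{\varphi(H)})$ by writing
\[
\lim_{H\to 0}\frac{o\!\left(\norm{f'(Z_0)\varphi(H)+o(\norm{\varphi(H)})}\right)}{\norm{\varphi(H)}}=0,
\]
and stops there. It never distinguishes between $o(\norm{\varphi(H)})$ and the formally stronger $o(\norm{(\psi\circ\varphi)(H)})$ that the definition of $(\psi\circ\varphi)$-differentiability asks for, and it does not invoke \cref{inclocal} at all. So your core computation matches the paper; the detour through a reduction to injective morphisms is your own addition, and the paper is simply less fastidious about this point than you are.

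That said, your proposed fix does not close the gap you worry about. Factoring $f=\bar f\circ q_\varphi$ and $g=\bar g\circ q_\psi$ with $\bar\varphi,\bar\psi$ injective does not produce a chain of injective morphisms to which you can apply the direct argument: the composition reads $g\circ f=\bar g\circ q_\psi\circ\bar f\circ q_\varphi$, so you must still feed $\bar g$ the map $q_\psi\circ\bar f$, and establishing that this is $(q_\psi\circ\bar\varphi)$-differentiable runs into the identical obstacle (the remainder of $\bar f$ is only $o(\norm{\bar\varphi(\bar H)})$, while $q_\psi\circ\bar\varphi$ can have nontrivial kernel). In short, reducing $\varphi$ and $\psi$ \emph{separately} to injections does not let you assume without loss of generality that both are injective in the chain-rule setup.
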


\begin{proof}
	To (2): We have:
	\[
	\begin{aligned}
	f_1(Z_0+H)f_2(Z_0+H) &= \left(f_1(Z_0) + f'_1(Z_0)\varphi(H) + o(\norm{\varphi(H)})\right) \left(f_2(Z_0) + f'_2(Z_0)\varphi(H) + o(\norm{\varphi(H)})\right) = \\ 
	&= f_1(Z_0)f_2(Z_0) + \left(f_1(Z_0)f'_2(Z_0) + f'_1(Z_0) f_2(Z_0)\right)\varphi(H) \\
	&+ \underbrace{f'_1(Z_0)f'_2(Z_0)\varphi(H)^2}_{o(\norm{\varphi(H)})} +
	\underbrace{\left(f_1(Z_0) + f_2(Z_0) + \left(f'_1(Z_0) + f'_2(Z_0)\right)\varphi(H)\right)
	o(\norm{\varphi(H)})}_{o(\norm{\varphi(H)})} + o(\norm{\varphi(H)})^2
	\end{aligned}
	\]
	as $H\to 0$. 
	
	To (3): We have:
	\[
	\begin{aligned}
	g(f(Z_0+H)) &= g(f(Z_0) + \underbrace{f'(Z_0)\varphi(H) + o(\norm{\varphi(H)})}_{\to 0\ \text{as}\ H\to 0}) = \\ 
	&= g(f(Z_0)) + g'(f(Z_0))\left(f'(Z_0)\varphi(H) + o(\norm{\varphi(H)})\right) + o\left(\norm{f'(Z_0)\varphi(H) + o(\norm{\varphi(H)})}\right) =\\
	&= g(f(Z_0)) + g'(f(Z_0))f'(Z_0)\varphi(H) + \underbrace{g'(f(Z_0))o(\norm{\varphi(H)})}_{o(\norm{\varphi(H)})} + o\left(\norm{f'(Z_0)\varphi(H) + o(\norm{\varphi(H)})}\right).
	\end{aligned}
	\]
	For the last term we get:
	\[
	\begin{aligned}
	\lim_{H\to 0}\frac{o\left(\norm{f'(Z_0)\varphi(H) + o(\norm{\varphi(H)})}\right)}{\norm{\varphi(H)}} =
	\lim_{H\to 0}\frac{o(\norm{f'(Z_0)\varphi(H) + o(\norm{\varphi(H)})})}{\norm{f'(Z_0)\varphi(H) + o(\norm{\varphi(H)})}}
	\underbrace{\frac{\norm{f'(Z_0)\varphi(H) + o(\norm{\varphi(H)})}}{\norm{\varphi(H)}}}_{\text{bounded as}\ H\to 0} = 0,
	\end{aligned}
	\]
	i.e. $o\left(\norm{f'(Z_0)\varphi(H) + o(\norm{\varphi(H)})}\right) = o(\norm{\varphi(H)})$ as $H\to 0$.
	
	To (4):
	$\varphi(Z_0+H) = \varphi(Z_0)+1_\fB\varphi(H)$.
\end{proof}

\begin{corollary}
$\O_{\varphi}$ is a sheaf of $\fA$-algebras with a distinguished derivation.
\end{corollary}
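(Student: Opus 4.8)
The plan is to exhibit $\mathcal{O}_\varphi$ as a subsheaf of the (trivially sheafy) sheaf $\mathcal{C}_\mathfrak{B}$ of all $\mathfrak{B}$-valued functions on open subsets of $\mathfrak{A}$, cut out by a condition that is local in the point, and then to read off the algebra structure and the derivation from \cref{rules}. First I would record that, by \cref{Frechet}, a function $f\colon U\to\mathfrak{B}$ lies in $\mathcal{O}_\varphi(U)$ precisely when it is $\varphi$-differentiable at every point of $U$, and that $\varphi$-differentiability at a point $Z_0$ depends only on the germ of $f$ at $Z_0$. A sub-presheaf of a sheaf defined by such a local condition is automatically a subsheaf: the separation axiom is inherited from $\mathcal{C}_\mathfrak{B}$, and for gluing, a family $f_i\in\mathcal{O}_\varphi(U_i)$ agreeing on overlaps produces a well-defined function $f$ on $\bigcup_i U_i$ which, at any point, agrees near that point with some $f_i$ and is hence $\varphi$-differentiable there. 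So the sheaf axioms cost nothing beyond the locality of \cref{Frechet}.

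Next I would install the $\mathfrak{A}$-algebra structure. Pointwise addition and multiplication make each $\mathcal{C}_\mathfrak{B}(U)$ a commutative (a priori non-unital) ring, the $\mathfrak{A}$-module structure on $\mathfrak{B}$ induced by $\varphi$ makes it an $\mathfrak{A}$-algebra, all axioms being inherited from $\mathfrak{B}$ evaluated pointwise, and restriction maps respect these operations, hence are $\mathfrak{A}$-algebra homomorphisms. That $\mathcal{O}_\varphi(U)$ is an $\mathfrak{A}$-subalgebra is exactly the content of \cref{rules}: part (1) gives closure under sums and under multiplication by any fixed $b\in\mathfrak{B}$, in particular by $\varphi(a)$ for $a\in\mathfrak{A}$; part (2), the Leibniz rule, gives closure under products; the zero function is trivially $\varphi$-holomorphic; and in the unital case part (4) shows that the structure morphism $a\mapsto\varphi(a)$, viewed as a constant section, lands in $\mathcal{O}_\varphi(U)$.

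Finally, the distinguished derivation is the operator $D\colon f\mapsto f'$. Its being additive and $\mathfrak{A}$-linear is \cref{rules}(1), the Leibniz identity $D(f_1f_2)=(Df_1)f_2+f_1(Df_2)$ is \cref{rules}(2), and $D$ commutes with restrictions because $f'$ is computed pointwise; so the $D$'s patch together to a morphism of sheaves. The hard part — and the only step that is not formal given \cref{rules} — is to verify that $D$ actually maps $\mathcal{O}_\varphi(U)$ into itself, i.e.\ that the $\varphi$-derivative of a $\varphi$-holomorphic function is again $\varphi$-holomorphic. This is the analogue of ``once differentiable implies infinitely differentiable'' and is genuinely non-trivial; I would obtain it from the Cauchy-type integral representation and the ensuing analyticity of sections of $\mathcal{O}_\varphi$ (cf.\ \cref{analyt}), where in the complex unital case $f(z\oplus X)=\sum_{k=0}^{\nu-1}\tfrac{f^{(k)}(z)}{k!}\varphi(X)^k$ and $f'$ has the same shape with $f^{(k)}$ replaced by $f^{(k+1)}$, so $f'\in\mathcal{O}_\varphi(U)$. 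Granting this closure, $(\mathcal{O}_\varphi,D)$ is the asserted sheaf of $\mathfrak{A}$-algebras with a distinguished derivation; before that closure is available, $D$ is at least a derivation from $\mathcal{O}_\varphi$ into the $\mathcal{O}_\varphi$-module $\mathcal{C}_\mathfrak{B}$.
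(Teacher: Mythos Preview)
Your argument is correct and in fact more careful than the paper's, which states this corollary with no proof at all, treating it as an immediate consequence of \cref{rules}. Both you and the paper derive the sheaf and $\mathfrak{A}$-algebra structure formally from the locality of \cref{Frechet} and the closure properties in \cref{rules}(1),(2); there is no divergence in approach.

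The one point worth flagging is your handling of the target of the derivation. You correctly isolate the non-formal issue---whether $f'$ is again $\varphi$-holomorphic---and propose to resolve it via \cref{analyt}. But observe where the corollary sits in the paper: it closes Section~3, which treats possibly non-unital $\KK$-algebras with $\KK\in\{\RR,\CC\}$, while \cref{analyt} is proved only later and only in the complex unital setting. So invoking it here is a forward reference that does not cover the full generality of the corollary as stated. The paper's intended reading at this point must therefore be your fallback interpretation: $D$ is a derivation from $\mathcal{O}_\varphi$ into the $\mathcal{O}_\varphi$-module sheaf $\mathcal{C}_{\mathfrak{B}}$ of $\mathfrak{B}$-valued functions. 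That is exactly what \cref{rules}(1),(2) deliver, and it is what the paper is (tacitly) claiming here. Your closing sentence already says this; I would simply promote it from afterthought to the main reading and drop the appeal to analyticity, which is both logically premature and narrower in scope than the corollary.
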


\begin{example}[Sanity check]
	Let $\fA$ be connected and non-unital.
	Then $\exists\nu\in\NN\ \forall Z\in\fA: Z^\nu = 0$, but $\exists Z_0\in\fA: Z_0^{\nu-1}\neq 0$. 
	In particular, for $f(Z)\coloneqq Z^{\nu-1}$ we have $f\neq 0$.
	On the other hand, $0=(Z^\nu)' = \nu Z^{\nu-1} = \nu f(Z)$, even though $\ch\fA=0$.
	This seeming paradox is resolved by the fact that $\id_\fA:\fA\to\fA$ is actually not $\fA$-differentiable since $\fA$ does not possess a $1_\fA$, so Leibniz' rule does not apply successively to the function $Z^n$ on $\fA$. 
	On the other hand, if $A\in\fA$, then $g(Z)\coloneqq AZ$ is $\fA$-differentiable with $g'(Z) = A$. \qed
\end{example}
	
	\section{The Generalized Cauchy-Riemann Equations}\label{sec:CR}
	

We will need at first to make a superficial distinction between the real and complex case. 
This is most easily done away with by making the convention $\pd\coloneqq\d$ whenever $\KK=\RR$. 
Note that if $\KK=\CC$, this relation is actually implied since $\d=\pd+\pdbar$ and $\pdbar f=0$ as $f$ is in particular holomorphic in each of its (complex) variables. 
Let $(\fA,a_1,\dots,a_n)$ and $(\fB,b_1,\dots,b_m)$ be based $\KK$-algebras, not necessarily unital.

\begin{lemma}[Generalized Cauchy-Riemann-Scheffers\footnote{in honour of Georg Scheffers, who was the first to generalize them equations;} Equations] 
	Let $\fA\xrightarrow{\varphi}\fB$ be a morphism of $\KK$-algebras with structure tensor $(\gamma^i_{jk})_{1\leq j\leq n, 1\leq i,k\leq m}$, $U\subseteq\fA$ open, $Z\coloneqq z^1 a_1+\dots+z^n a_n \in U$ a general $\fA$-variable, $Z_0\in U$ a point, and $f\coloneqq f^1 b_1+\dots+f^m b_m:U\to\fB$ a map.
	\begin{enumerate}
		\item General form of $\varphi$-differentiability: $f$ is $\varphi$-differentiable at $Z_0$ if and only if $f$ is totally $\KK$-differentiable at $Z_0$ and there exists a function $g\coloneqq g^1 b_1+\dots+g^m b_m$ at $Z_0$ with values in $\fB$ such that $\forall 1\leq j\leq n\ \forall 1\leq i\leq m$:
		\begin{equation}\label{CR}
		\pdv{f^i}{z^j} = \sum_{s=1}^m \gamma^i_{js} g^s 
		\end{equation}
		in $Z_0$. 
		In this case, $f'(Z_0)=g(Z_0)$.\footnote{The functions $g^k$, $1\leq k\leq m$, will be the coordinates of the derivative, which should not be confused with the derivatives of the coordinate functions $f^i$, $1\leq i\leq n$.}
		
		\item Generalized Unital Cauchy-Riemann Equations \emph{(GUCR)}: Suppose that $\fA=\A$ is unital. Then $f$ is $\varphi$-differentiable at $Z_0$ if and only if $f$ is totally $\KK$-differentiable at $Z_0$ and $\forall 1\leq j\leq n\ \forall 1\leq i\leq m:$
		\begin{equation}\label{GUCR}
		\pdv{f^i}{z^j} = \sum_{r=1}^n \sum_{s=1}^m \eps^r \gamma^i_{js} \pdv{f^s}{z^r}
		\end{equation}
		in $Z_0$. In this case:
		\begin{equation}
		f'(Z_0) = \sum_{r=1}^n \eps^r \pdv{f}{z^r} (Z_0)
		\end{equation}
		
		\item Generalized Cauchy-Riemann Equations with Unit \emph{(GCRU)}: Suppose that $\fA=\A$ is unital with $a_1 = 1_\A$. Then $f$ is $\varphi$-differentiable at $Z_0$ if and only if $f$ is totally $\KK$-differentiable at $Z_0$ and $\forall 2\leq j\leq n$ $\forall 1\leq i\leq m:$
		\begin{equation}\label{GCRU}
		\pdv{f^i}{z^j} = \sum_{s=1}^m \gamma^i_{js} \pdv{f^s}{z^1} 
		\end{equation}
		in $Z_0$. In this case:
		\begin{equation}\label{pdunit}
		f'(Z_0) = \pdv{f}{z^1} (Z_0)
		\end{equation}
		In particular, if $\varphi=\id_\A$, then $\forall 2\leq j\leq n\ \forall 1\leq i\leq n$:
		\begin{equation}
		\pdv{f^i}{z^j} = \pdv{}{z^1} \sum_{r=1}^n \alpha^i_{jr} f^r
		\end{equation}
		at $Z_0$.
		
		\item Generalized Cauchy-Riemann-Scheffers Equations \emph{(GCRS)}: If $f$ is $\varphi$-differentiable at $Z_0$, then $f$ is totally $\KK$-differentiable at $Z_0$ and $\forall 1\leq j, k\leq n$ $\forall 1\leq i\leq m:$
		\begin{equation}\label{GCRS}
		\left(\sum_{r=1}^n \alpha^r_{jk} \pdv{}{z^r}\right)f^i = 
		\pdv{}{z^k} \left(\sum_{s=1}^m \gamma^i_{js} f^s\right) 
		\end{equation}
		in $Z_0$. The converse holds if $\fA=\A$ is unital.   
		
		\item Cauchy-Riemann-Scheffers Equations \emph{(CRS)}: Let $(\alpha^i_{jk})_{1\leq i,j,k\leq n}$ be the structure tensor of $\fA$. If $f:U\to\fA$ is $\fA$-differentiable at $Z_0$, then $f$ is totally $\KK$-differentiable at $Z_0$ and $\forall 1\leq i,j,k\leq n:$
		\begin{equation}\label{CRS}
		\left(\sum_{r=1}^n \alpha^r_{jk} \pdv{}{z^r}\right)f^i = \pdv{}{z^k}\left(\sum_{r=1}^n \alpha^i_{jr} f^r\right) 
		\end{equation}
		in $Z_0$. The converse holds if $\fA=\A$ is unital. 
	\end{enumerate}
\end{lemma}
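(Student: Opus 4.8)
My plan is to derive everything from item (1) and treat items (2)--(5) as formal corollaries, obtained by substituting a closed form for the coordinates $g^s$ of the derivative and feeding in the structure-constant identities from the preliminaries. I would keep the argument uniform in $\KK$ via the convention $\pd=\d$ for $\KK=\RR$ (for $\KK=\CC$, total $\KK$-differentiability already forces $\pdbar f=0$, so no separate treatment is needed).

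\textbf{Item (1).} Writing $Z=\sum_j z^j a_j$, $H=\sum_j h^j a_j$, $f=\sum_i f^i b_i$, and tentatively $f'(Z_0)=g=\sum_s g^s b_s$, I would first record, using $\KK$-linearity of $\varphi$ and the definition of the structure constants $\gamma^i_{jk}$,
\[
g\,\varphi(H)=\Big(\sum_s g^s b_s\Big)\Big(\sum_j h^j\varphi(a_j)\Big)=\sum_i\Big(\sum_{j,s}\gamma^i_{js}\,h^j g^s\Big)b_i .
\]
If $f$ is $\varphi$-differentiable at $Z_0$ in the sense of \cref{Frechet}, its remainder is in particular $o(\norm{H})$ (since $\norm{\varphi(H)}\le C\norm{H}$, $\varphi$ being bounded), so $f$ is totally $\KK$-differentiable at $Z_0$; comparing its $\KK$-differential $\sum_i\big(\sum_j\pdv{f^i}{z^j}h^j\big)b_i$ with the display and reading off the coefficients of the $h^jb_i$ yields \eqref{CR} together with $g=f'(Z_0)$. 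For the converse I would run the same computation backwards: \eqref{CR} says precisely $Df(Z_0)(H)=g\varphi(H)$, hence $f(Z_0+H)=f(Z_0)+g\varphi(H)+o(\norm{H})$, and it remains to upgrade this $o(\norm{H})$ remainder to the $o(\norm{\varphi(H)})$ of \cref{Frechet}. That step is immediate when $\varphi$ is injective ($\norm{\varphi(H)}\asymp\norm{H}$), and the general case reduces to it by factoring $\varphi$ and $f$ through $\fA\twoheadrightarrow\fA/\ker\varphi$ as in \cref{inclocal}, the differential $g\varphi(\cdot)$ annihilating $\ker\varphi$; this is the only genuinely analytic ingredient of the whole lemma.

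\textbf{Items (2)--(3).} Here $\fA=\A$ is unital, so $f'(Z_0)$ is the unique $B$ of \cref{Frechet} and may, restricting $H$ to the dense $\A^\times$, be computed as in \cref{Classical}. Taking $H=t\,1_\A$ with $t\to0$ in $\KK^\times$ and using $\varphi(1_\A)=1_\B$, I would identify $f'(Z_0)$ with the directional derivative along $1_\A=\sum_r\eps^r a_r$, i.e.\ $f'(Z_0)=\sum_r\eps^r\pdv{f}{z^r}(Z_0)$, so $g^s=\sum_r\eps^r\pdv{f^s}{z^r}$; substituting into \eqref{CR} gives \eqref{GUCR} --- item (2). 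For item (3) I would specialize further to $a_1=1_\A$, so $\eps^r=\delta^r_1$ and the formula becomes \eqref{pdunit}; since $a_1b_s=1_\B b_s=b_s$ one has $\gamma^i_{1s}=\delta^i_s$, so the $j=1$ instances of \eqref{GUCR} are tautologies and only $2\le j\le n$ survives, i.e.\ \eqref{GCRU}; taking $\varphi=\id_\A$ (so $b_k=a_k$, $\gamma=\alpha$) and pulling the constant $\alpha^i_{jr}$ inside the $\pd/\pd z^1$ gives the last displayed identity.

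\textbf{Items (4)--(5).} Here the unknown $g$ is eliminated. In the forward direction I would rewrite \eqref{CR} as $\pdv{f^i}{z^r}=\sum_p\gamma^i_{rp}g^p$, contract with $\alpha^r_{jk}$, and invoke the multiplicativity identity \eqref{assA1}, $\sum_r\gamma^i_{rp}\alpha^r_{jk}=\sum_s\gamma^i_{js}\gamma^s_{kp}$:
\[
\begin{aligned}
\sum_r\alpha^r_{jk}\pdv{f^i}{z^r}&=\sum_p g^p\sum_r\alpha^r_{jk}\gamma^i_{rp}=\sum_s\gamma^i_{js}\sum_p\gamma^s_{kp}g^p\\
&=\sum_s\gamma^i_{js}\pdv{f^s}{z^k}=\pdv{}{z^k}\Big(\sum_s\gamma^i_{js}f^s\Big),
\end{aligned}
\]
which is \eqref{GCRS}; item (5) is the case $\varphi=\id_\fA$, in which \eqref{assA1} is exactly associativity \eqref{ass} of $\fA$. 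For the converse (with $\fA=\A$ unital) I would set $g^p\coloneqq\sum_r\eps^r\pdv{f^p}{z^r}$ and check, using \eqref{GCRS} with $k$ replaced by $r$ and the unit relation \eqref{unit} in the form $\sum_r\eps^r\alpha^q_{jr}=\delta^q_j$,
\[
\sum_p\gamma^i_{jp}g^p=\sum_r\eps^r\pdv{}{z^r}\Big(\sum_p\gamma^i_{jp}f^p\Big)=\sum_r\eps^r\sum_q\alpha^q_{jr}\pdv{f^i}{z^q}=\pdv{f^i}{z^j},
\]
so \eqref{CR} holds and item (1) closes the loop. I expect the main obstacle to be not a single deep step but sustained bookkeeping discipline: keeping straight which lower index of $\gamma$ is the $\fA$-slot and which the $\fB$-slot when applying \eqref{assA1}; tracking exactly which hypothesis each direction consumes (multiplicativity of $\varphi$ and associativity of $\fB$ for \eqref{assA1} in the forward directions of (4)--(5); the existence of the unit coordinates $\eps^r$, i.e.\ \eqref{unit}, for the converses of (2)--(5)); and the $o(\norm{\varphi(H)})$-versus-$o(\norm{H})$ remainder point in the converse of (1) flagged above.
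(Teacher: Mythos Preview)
Your argument for items (2)--(5) is essentially identical to the paper's: the same identification $g^s=\sum_r\eps^r\,\partial f^s/\partial z^r$ via the unit, the same use of \eqref{assA1} for the forward direction of (4), and the same use of \eqref{unit} for its converse (the paper lands on \eqref{GUCR} rather than \eqref{CR} in that last step, but this is cosmetic).

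The one place you diverge is the converse of (1), where you flag a subtlety the paper does not address: the paper simply asserts that inner Fr\'echet differentiability in the sense of \cref{Frechet} is \emph{equivalent} to $\d f=g(Z)\,\d Z$ and then compares coefficients, never revisiting the $o(\norm{\varphi(H)})$ remainder. Your proposed repair, however, is circular. \cref{inclocal}(4) takes $\varphi$-differentiability---with the $o(\norm{\varphi(H)})$ remainder, and on an open set---as its \emph{hypothesis} in order to conclude that $f$ factors through $\fA/\ker\varphi$; you cannot invoke it to \emph{establish} $\varphi$-differentiability at a single point. Knowing only that $Df(Z_0)$ annihilates $\ker\varphi$ does not make $f(Z_0+H)-f(Z_0)$ small of order $\norm{\varphi(H)}$ along kernel directions: with $\fA,\fB$ carrying the zero multiplication, $\varphi$ any noninjective linear map, and $f$ quadratic in a kernel coordinate, all $\gamma^i_{jk}$ vanish so \eqref{CR} holds trivially at $0$ with $Df(0)=0$, yet the remainder is not $o(\norm{\varphi(H)})$. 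So either one reads the paper as tacitly using an $o(\norm{H})$ remainder in item (1)---in which case both your proof and the paper's are complete and your extra paragraph is unnecessary---or the literal $o(\norm{\varphi(H)})$ makes the ``if'' direction of (1) false when $\ker\varphi\neq 0$, and no pointwise fix is available.
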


\begin{proof}
	To (1): 
	Recasted in differential form(s), \hyperref[frechet]{inner Fr\'echet differentiability (\cref*{frechet})} is equivalent to 
	\[
	\d f=\pd f=g(Z)\d Z
	\] 
	(at $Z_0$) for some function $g(Z)$ defined in $Z_0$, and then $f'(Z_0)=g(Z_0)$ by definition. 
	We have:
	\[
	\pd f=\sum_{s=1}^m b_s \pd f^s =\sum_{s=1}^m \sum_{r=1}^n \pdv{f^s}{z^r} b_s\d z^r.
	\]
	On the other hand:
	\[
	\begin{aligned}
	g(Z)\d Z = \bigg(\sum_{t=1}^m b_t g^t\bigg)\bigg(\sum_{r=1}^n a_r \d z^r\bigg) = \sum_{t=1}^m \sum_{r=1}^n g^t a_r b_t \d z^r = 
	\sum_{t=1}^m \sum_{r=1}^n g^t \sum_{s=1}^m \gamma_{rt}^s b_s \d z^r = \sum_{s=1}^m \sum_{r=1}^n \bigg(\sum_{t=1}^m \gamma_{rt}^s g^t \bigg) b_s \d z^r.
	\end{aligned}
	\]
	Now, a direct comparison of the coefficients yields the desired result.
	
	To (2): By definition, we have $\forall 1\leq i\leq n:$
	\[
	\pdv{f}{z^i}=f'(Z)a_i.
	\]
	Using \cref{unitcoord}, we get:
	\[
	\sum_{r=1}^n \eps^r \pdv{f}{z^r} = \sum_{r=1}^n f'(Z) \eps^r a_r = f'(Z) \defeq g(Z).
	\]
	Therefore taking the $j$-th component gives
	\[
	g^j(Z) = \sum_{r=1}^n \eps^r \pdv{f^j}{z^r}
	\]
	for all $1\leq j\leq m$. 
	Now substituting this in \cref{CR} proves both directions.
	
	To (3): Special case of (2). Notice that for $j=1$ the system is vacuous since $\gamma^i_{1r} = \delta^i_r$.
	
	To (4):
	``$\Rightarrow$'': Using \cref{CR} and \cref{assA1}, we get:
	\[
	\begin{aligned}
	\sum_{r=1}^n \alpha^r_{jk} \pdv{f^i}{z^r} = \sum_{r=1}^n \alpha^r_{jk} \sum_{t=1}^m \gamma^i_{rt} g^t = 
	\sum_{t=1}^m g^t \sum_{r=1}^n \gamma^i_{rt} \alpha^r_{jk} = \sum_{t=1}^m g^t \sum_{s=1}^m \gamma^i_{js} \gamma^s_{kt} =
	\sum_{s=1}^m \gamma^i_{js} \sum_{t=1}^m \gamma^s_{kt} g^t = \sum_{s=1}^m \gamma^i_{js} \pdv{f^s}{z^k}.
	\end{aligned}
	\]
	We note that this only uses the multiplicativity of $\varphi$ and the associativity of the $\fA$-operation on $\fB$.
	
	``$\Leftarrow$'': If $\fA=\A$ is unital, we can use \cref{unit} to obtain:
	\[
	\begin{aligned}
	\sum_{t=1}^n \sum_{s=1}^m \eps^t \gamma^i_{js} \pdv{f^s}{z^t} = \sum_{t=1}^n \eps^t \pdv{}{z^t} \left(\sum_{s=1}^m \gamma^i_{js} f^s\right) =
	\sum_{t=1}^n \eps^t \left(\sum_{r=1}^n \alpha^r_{jt} \pdv{}{z^r}\right)f^i =
	\left(\sum_{r=1}^n \delta^r_j \pdv{}{z^r}\right)f^i = \pdv{f^i}{z^j}
	\end{aligned}	
	\]
	as required.
	
	To (5):
	Special case of (4) with $\fA\xrightarrow{\mathrm{id}}\fA$.
\end{proof}

\begin{remarks}\ 
	\begin{enumerate}[topsep=-\parskip]
		\item Regardless of $\KK$, total $\KK$-differentiability implies existence of the partial $\KK$-derivatives, so it is in any case sufficient. Depending on $\KK$, however, the assumption of total differentiability can be weakened appropriately. 
		If $\KK=\CC$, then by Hartogs' theorem partial complex differentiability (on an open neighbourhood) implies total complex differentiability there. If we write $z^j = x^j + \sqrt{-1} y^j$, then one can impose weaker conditions on $\pdv{f}{x^j}$ and $\pdv{f}{y^j}$ to ensure (partial) complex differentiability, cfg. Loomen-Menchoff theorem. 
		For a nice discussion on the topic of sufficient conditions ensuring that the Cauchy-Riemann equations imply holomorphy in the complex plane the interested reader can consult \cite{GrayMor}. 
		In the case of $\fA$ being a $\KK$-algebra other than $\CC$, it would be interesting to know if there exists an analogue of Loomen-Menchoff or of other results in the aforementioned article that extend to $\KK$-algebras $\fA$.
		
		\item If $\KK=\CC$, the (G)CRS-equations are PDEs of holomorphic functions in several complex variables with possibly complex coefficients unlike the classical CR-equation which has only real coefficients. 
		Thus, implicitly, $f$ additionally also satisfies the usual real CR-equations for each of its complex variables.
		
		\item More generally, recall that any morphism $\fA \xrightarrow{\varphi} \fB$ of $\KK$-algebras gives a choice of scalars via the $\fA$-module structure it induces on $\fB$. 
		The $\KK$-(G)CRS-equations correspond to the $\KK$-module structure on $\fB$ (when existent), which, however, does not arise from a morphism $\psi: \KK \to \fB$, unless $\fA = \cA$ is unital.
		In this sense, $\varphi: \fA \to \fB$ together with $\varphi$-linearity of $Df$ also gives rise to (G)CRS-equations in Several (usually not free) $\fA$-variables, but for the moment we shall not pursue this perspective here.		
		
		\item All of the above PDEs can be written down for arbitrary structure constants $(\alpha^i_{jk})$ and $(\gamma^r_{st})$, even though the different versions of the equations will no longer remain equivalent. 
		It is an open question if these equations still enjoy as a meaningful interpretation in the case of noncommutative and/or non-associative $\fA$ and $\fB$.
		
		\item On the other hand, if the family of solutions of a linear PDE of the form
		\[
		\pdv{f^i}{z^j} = \sum_{s=1}^m \alpha^i_{jk} \pdv{f^k}{z^1} 
		\]
		is to be stable under composition, then $(\alpha^i_{jk})_{1\leq i,j,k\leq n}$ is the structure tensor of a left-unital associative $\CC$-algebra.
		Indeed, a comparison of coefficients in
		\[
		\pdv{f^i}{z^1} = \sum_{k=1}^m \alpha^i_{1k} \pdv{f^k}{z^1} 
		\]
		implies $\forall 1\leq i,k\leq n: \alpha^i_{1k} = \delta^i_k$, i.e. the basis vector $a_1$ is a left unit.
		Furthermore we obtain for two solutions $f$ and $g$ with $h\coloneqq f\circ g$ also being a solution:
		\[
		\begin{aligned}
		\pdv{h_i}{z_j} &= \pdv{}{z_j} f_i(g_1,\dots,g_n) = \sum_{s=1}^n \pdv{f_i}{z_s}(g_1,\dots,g_n) \pdv{g_s}{z_j} =
		\sum_{s=1}^n \bigg(\sum_{\ell=1}^n \alpha^i_{s\ell} \pdv{f_\ell}{z_1}(g_1,\dots,g_n) \bigg) \bigg(\sum_{k=1}^n \alpha^s_{jk} \pdv{g_k}{z_1} \bigg) =\\
		&= \sum_{k,\ell=1}^n \bigg(\sum_{s=1}^n \alpha^i_{s\ell}\alpha^s_{jk} \bigg) \pdv{f_\ell}{z_1}(g_1,\dots,g_n) \pdv{g_k}{z_1}		
		\end{aligned}	
		\]
		and
		\[
		\begin{aligned}
		\pdv{h_i}{z_j} &= \sum_{s=1}^n \alpha^i_{js} \pdv{h_s}{z_1} = \sum_{s=1}^n \alpha^i_{js} \pdv{}{z_1} f_\lambda(g_1,\dots,g_n) =
		\sum_{s=1}^n \alpha^i_{js} \sum_{k=1}^n \pdv{f_s}{z_k}(g_1,\dots,g_n) \pdv{g_k}{z_1} = \\
		&= \sum_{s=1}^n \alpha^i_{js} \sum_{k=1}^n \bigg( \sum_{\ell=1}^n \alpha^s_{k\ell} \pdv{f_\ell}{z_1}(g_1,\dots,g_n) \bigg) \pdv{g_k}{z_1} =
		\sum_{k,\ell=1}^n \bigg(\sum_{s=1}^n \alpha^i_{js} \alpha^s_{k\ell} \bigg) \pdv{f_\ell}{z_1}(g_1,\dots,g_n) \pdv{g_k}{z_1}.
		\end{aligned}
		\]
		Hence a comparison of the coefficients yields $\forall 1\leq i,j,k,\ell\leq n$:
		\[
		\sum_{s=1}^n \alpha^i_{s\ell} \alpha^s_{jk} = \sum_{s=1}^n \alpha^i_{js}\alpha^s_{k\ell},
		\]
		which is precisely \cref{ass}.
		
		\item If $\fA$ is not unital, it is not clear in how far the Scheffers' form of the \hyperref[GCRS]{(generalized) Cauchy-Riemann equations (\ref*{GCRS})} fails to characterize $\fA$-differentiability of $f$ when compared with \cref{CR}.
		Heuristically at least, if $\fA=\m$ is the maximal ideal of a local $\KK$-algebra $\A=(\A,\m)$, then all the structure of $\A$ is contained in $\m$.
		
		\item A short comparison between the different forms of generalized Cauchy-Riemann equations is in order.
		The PDE system in \cref{GCRU} gives an explicit expression for each partial derivative of each component as a linear combination of the ``free parameters'', played by the $\pdv{}{z^1}$-derivatives, and thus consists of only $(n-1)m$ equations, which is optimal.
		In comparison, \hyperref[GCRS]{Scheffers' PDE system (\cref*{GCRS})} consists of $n^2 m$ equations, which is an order of magnitude more than in \cref{GCRU}, and thus inevitably contains some ``garbage'' as can be illustrated already in the case $\KK=\RR$, $n=2$, $\A=\CC$.
		Indeed, for the structure constants of $\CC/\RR$ one has
		\[
		\alpha^1_{11} = 1, \alpha^2_{11} = 0, \alpha^1_{12} = \alpha^1_{21} = 0, 
		\alpha^2_{12} = \alpha^2_{21} = 1, \alpha^1_{22} = -1, \alpha^2_{22} = 0
		\]
		and only two equations by \hyperref[GCRU]{(\ref*{GCRU})} as follows:
		\[
		\begin{aligned}
		\pdv{f^1}{x^2} = \alpha^1_{21} \pdv{f^1}{x^1} + \alpha^1_{22} \pdv{f^2}{x^1} = -\pdv{f^2}{x^1},\  
		\pdv{f^2}{x^2} = \alpha^2_{21} \pdv{f^1}{x^1} + \alpha^2_{22} \pdv{f^2}{x^1} = \pdv{f^1}{x^1}.
		\end{aligned}
		\]
		These are precisely the classical Cauchy-Riemann equations.
		In comparison, the \hyperref[GCRS]{PDE system (\ref*{GCRS})}  additionally yields several vacuous or repeated equations:
		\begin{itemize}[noitemsep,nolistsep]
			\item $i=1,j=1,k=1$: $0=0$;
			
			\item $i=1,j=1,k=2$: $\pdv{f^1}{x^2} = \pdv{f^1}{x^2}$;
			
			\item $i=1,j=2,k=1$: $\pdv{f^1}{x^2} = - \pdv{f^2}{x^1}$;
			
			\item $i=1,j=2,k=2$: $-\pdv{f^1}{x^1} = - \pdv{f^2}{x^2}$;
			
			\item $i=2,j=1,k=1$: $\pdv{f^2}{x^1} = \pdv{f^2}{x^1}$;
			
			\item $i=2,j=1,k=2$: $0=0$;
			
			\item $i=2,j=2,k=1$: $\pdv{f^2}{x^2} = \pdv{f^1}{x^1}$;
			
			\item $i=2,j=2,k=2$: $-\pdv{f^1}{x^1} = \pdv{f^1}{x^2}$.
		\end{itemize}
		
		\item The \hyperref[GCRU]{PDE system (\ref*{GCRU})} can also be written in the form
		\begin{equation}\label{pdesys}
		\Gamma_j \pdv{f^T}{z^1} = \pdv{f^T}{z^j}
		\end{equation}
		for $2\leq j\leq n$, where $f=(f_1,\dots,f_m)$ and $\Gamma_j \defeq (\gamma^i_{jk})_{1\leq i,k\leq m}$.
		If $\varphi = \id_\A$, then $\Gamma_j = A_j = (\alpha^i_{jk})_{1\leq i,k\leq n} = \lambda(a_j)$ is the regular representation of the basis vector $a_j$, $1\leq j\leq n$.
		Thus, if for example $\A=(\A,\m)$ is local with a choice of basis $a_1\coloneqq 1_\A$, $a_2,\dots,a_n\in\m$, we have $\forall 2\leq j\leq n: \det(A_j)=0$, so there is no a priori reason to expect the matrices $\Gamma_j$ to be invertible.
		Thus it is not always possible to express $f'(Z) = \pdv{f}{z^1}$ via the remaining $\pdv{f^i}{z^j}$, $2\leq j\leq n$, as shown by the next two examples below.
		Nevertheless, it might be possible to paste together different parts of the PDE systems for different $2\leq j\leq n$ to obtain a full rank linear system for the vector $\pdv{f^T}{z^1}$.
		In any case, it is dependent on the choice of basis for $\A$.
		In fact, since $\A^\times$ is open, it contains a basis for $\A$, and for any such choice of basis $a_1,\dots,a_n\in\A^\times$ we have $A_1,\dots,A_n\in\GL_n(\KK)$ and hence
		\[
		\forall 2\leq j\leq n: f'(Z) = \pdv{f}{z^1} = \pdv{f}{z^j} (A_j^{-1})^T.
		\]
		If we start with a basis $a_1\coloneqq 1_\A$ and $a_2,\dots,a_n\in\m$, then we can obtain a new basis $a'_1\coloneqq 1_\A, a'_2,\dots, a'_n$ with the property that $a'_j\in\A^\times$ for some $2\leq j\leq n$ as follows: let $\vareps_2,\dots,\vareps_n\in\{0,1\}$, not all of which are $0$, and set
		\[
		\begin{pmatrix}
		a'_1 \\
		a'_2 \\
		a'_3 \\
		\vdots \\
		a'_n
		\end{pmatrix}
		\coloneqq
		\begin{pmatrix}
		1 & 0 & 0 & \hdots & 0\\
		\vareps_2 & 1 & 0 & \hdots & 0\\
		\vareps_3 & 0 & 1 & \hdots & 0\\
		\vdots & \vdots & & \ddots & \vdots \\
		\vareps_n & 0 & 0 & \hdots & 1
		\end{pmatrix}
		\begin{pmatrix}
		a_1 \\
		a_2 \\
		a_3 \\
		\vdots \\
		a_n
		\end{pmatrix}.
		\]
		This is clearly again a basis for $\A$, since the transition matrix has $\det=1$, and $a'_j\in\A^\times$ for all $2\leq j\leq n$ with $\vareps_j=1$.
	\end{enumerate}
\end{remarks}

\begin{examples}\ 
	\begin{enumerate}[topsep=-\parskip]
		\item If $\A\coloneqq\CC[\eps]$ with $\eps^2=0$ (dual numbers), then $\alpha^1_{11} = 1,\ \alpha^2_{11} = 0,\ \alpha^1_{12} = \alpha^1_{21} = 0,\  \alpha^2_{12} = \alpha^2_{21} = 1,\ \alpha^1_{22} = \alpha^2_{22} = 0$, thus:
		\[
		\pdv{f^1}{z^2} = 0,\ \pdv{f^2}{z^2} = \pdv{f^1}{z^1}.
		\]
		In particular, we have no expression for $\pdv{f^2}{z^1}$.
		On the other hand, if we choose basis $a_1\coloneqq 1_\A$ and $a_2\coloneqq 1+\eps$, then $\alpha^1_{11} = 1,\ \alpha^2_{11} = 0,\ \alpha^1_{12} = \alpha^1_{21} = 0,\  \alpha^2_{12} = \alpha^2_{21} = 1,\ \alpha^1_{22} = -1,\ \alpha^2_{22} = 2$, thus
		\[
		\pdv{f^1}{z^2} = 2\pdv{f^2}{z^1},\ \pdv{f^2}{z^2} = \pdv{f^1}{z^1} + 2\pdv{f^2}{z^1} = \pdv{f^1}{z^1} + \pdv{f^1}{z^2},
		\]
		hence
		\[
		f'(Z) = \pdv{f}{z^1} = \bigg(\pdv{f^2}{z^2} - \pdv{f^1}{z^2}\bigg) + \frac{1+\eps}{2} \pdv{f^1}{z^2}.
		\]
		
		\item If $\A\coloneqq\CC[j]$ with $j^2=1$ (hyperbolic aka. split-complex numbers), then $\alpha^1_{11} = 1,\ \alpha^2_{11} = 0,\ \alpha^1_{12} = \alpha^1_{21} = 0,\ \alpha^2_{12} = \alpha^2_{21} = 1,\ \alpha^1_{22} = 1,\ \alpha^2_{22} = 0$, thus:
		\[
		\pdv{f^1}{z^2} = \pdv{f^2}{z^1},\ \pdv{f^2}{z^2} = \pdv{f^1}{z^1}.
		\]
	\end{enumerate}
\end{examples}



\begin{lemma}
	Let $U\subseteq\KK^n$ be open, let $f\coloneqq(f^1,\dots,f^m)^T:U\to\KK^m$ be a partially $\KK$-differentiable function and $g\coloneqq(g^1,\dots,g^m)^T:U\to\KK^m$ a function, satisfying the PDE system
	\[
	\pdv{f^i}{z^j} = \sum_{s=1}^m \gamma^i_{js} g^s
	\]
	for some constants $(\gamma^i_{jk})\subseteq\KK$, $1\leq j\leq n$, $1\leq i,k\leq m$. Furthermore define $G:(\KK^n)^m\to\rM_m(\KK)$
	\[
	G(Z_1,\dots,Z_m)\coloneqq(g(Z_1),\dots,g(Z_m)) = 
	\begin{pmatrix}
	g^1(Z_1) & \hdots & g^1(Z_m)\\
	\vdots   & \ddots & \vdots \\
	g^m(Z_1) & \hdots & g^m(Z_m)
	\end{pmatrix}.
	\]
	Then $(\gamma^i_{jk})$ are uniquely determined if and only if $\span_\KK g(U) = \KK^m$. In this case
	\begin{equation}
	\gamma^i_{jk} = \det
	\begin{pmatrix}
	g^1(Z_1) & \hdots & g^1(Z_m)\\
	\vdots   &  & \vdots \\
	\pdv{f^i}{z^j}(Z_1) & \hdots & \pdv{f^i}{z^j}(Z_m) - k\text{-th row}\\
	\vdots   &  & \vdots \\			
	g^m(Z_1) & \hdots & g^m(Z_m)
	\end{pmatrix}/
	\det 
	\begin{pmatrix}
	g^1(Z_1) & \hdots & g^1(Z_m)\\
	\vdots   & \ddots & \vdots \\
	g^m(Z_1) & \hdots & g^m(Z_m)
	\end{pmatrix} =\const
	\end{equation}
	whenever the denominator is non-zero.		
\end{lemma}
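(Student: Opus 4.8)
The plan is to read this off as a statement about one linear system, in which the entire role of $g$ is captured by the subspace $\span_\KK g(U)\subseteq\KK^m$, so that nothing about $f$ beyond the given PDE is needed.

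First I would fix $1\le i\le m$ and $1\le j\le n$. The equation $\pdv{f^i}{z^j}=\sum_{s=1}^m\gamma^i_{js}g^s$ exhibits the single function $\pdv{f^i}{z^j}$ on $U$ as a $\KK$-linear combination of $g^1,\dots,g^m$, and two coefficient tuples $(\gamma^i_{j1},\dots,\gamma^i_{jm})$ and $(\gamma'^i_{j1},\dots,\gamma'^i_{jm})$ give the same function on $U$ if and only if $\sum_s(\gamma^i_{js}-\gamma'^i_{js})g^s(Z)=0$ for all $Z\in U$, i.e.\ if and only if the difference lies in the annihilator $W^\perp\coloneqq\{v\in\KK^m:\sum_s v_s w_s=0\text{ for all }w\in\span_\KK g(U)\}$. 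Hence the admissible tuples for the $j$-th row of $\gamma$ form (a non-empty, by hypothesis) coset of $W^\perp$, so the whole tensor $(\gamma^i_{jk})$ is uniquely determined exactly when $W^\perp=0$; since a subspace of a finite-dimensional space is everything precisely when its annihilator is trivial, this is equivalent to $\span_\KK g(U)=\KK^m$. For the "only if" half I would also record the explicit construction: if $\span_\KK g(U)\subsetneq\KK^m$, pick $0\neq v\in W^\perp$ and replace one row of $(\gamma^i_{jk})$ by itself plus $v$ to get a second solution.

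For the formula, assume $\span_\KK g(U)=\KK^m$. Then there exist $Z_1,\dots,Z_m\in U$ with $g(Z_1),\dots,g(Z_m)$ a basis of $\KK^m$, equivalently $\det G(Z_1,\dots,Z_m)\neq 0$. Evaluating the PDE at these $m$ points yields, for fixed $i,j$, the square system $\sum_{s=1}^m\gamma^i_{js}\,g^s(Z_\ell)=\pdv{f^i}{z^j}(Z_\ell)$ for $1\le\ell\le m$, that is $G(Z_1,\dots,Z_m)^T\cdot(\gamma^i_{j1},\dots,\gamma^i_{jm})^T=\big(\pdv{f^i}{z^j}(Z_1),\dots,\pdv{f^i}{z^j}(Z_m)\big)^T$, with invertible coefficient matrix. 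Cramer's rule then gives $\gamma^i_{jk}$ as the determinant of $G(Z_1,\dots,Z_m)^T$ with its $k$-th column replaced by the right-hand side, over $\det G(Z_1,\dots,Z_m)^T=\det G(Z_1,\dots,Z_m)$; transposing the numerator matrix leaves the determinant unchanged and turns "$k$-th column replaced" into "$k$-th row replaced", reproducing verbatim the quotient in the statement. Constancy of this quotient is then immediate from the uniqueness already proved: its value must equal the one fixed scalar $\gamma^i_{jk}$ for every admissible choice of the $Z_\ell$'s, i.e.\ whenever the denominator is non-zero.

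I do not expect a genuine obstacle; the result is elementary once one sees that $g$ enters only through $\span_\KK g(U)$. The single point deserving care is the transposition bookkeeping in the Cramer step — making sure the matrix obtained is exactly the one displayed, with the $k$-th \emph{row} of $G$ (not a column) overwritten by the values $\pdv{f^i}{z^j}(Z_\ell)$, and that the denominator is recognised as $\det G(Z_1,\dots,Z_m)$, which is harmlessly equal to $\det G(Z_1,\dots,Z_m)^T$.
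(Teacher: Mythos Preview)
Your proposal is correct and follows essentially the same route as the paper: reduce to a linear system for the row $(\gamma^i_{j1},\dots,\gamma^i_{jm})$ by evaluating the PDE at $m$ points $Z_1,\dots,Z_m\in U$, observe that invertibility of $G(Z_1,\dots,Z_m)$ is equivalent to $\span_\KK g(U)=\KK^m$, and read off the formula via Cramer's rule. Your annihilator/coset phrasing of the uniqueness direction is a slightly more abstract packaging of the same linear-algebra fact the paper states directly, and your explicit transpose bookkeeping for the Cramer step is a welcome clarification the paper leaves implicit.
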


\begin{proof}
	We have $\span_\KK\im g = \KK^m$ if and only if there exist points $P_1,\dots,P_m\in\KK^n$ such that the vectors $g(P_1),\dots,g(P_n)$ are $\KK$-linearly independent.
	For each $1\leq j\leq n$ and $1\leq i\leq n$ we get a system of $m$ linear equations with $m$ variables
	\[
	(\gamma^i_{j1},\dots,\gamma^i_{jm})
	\begin{pmatrix}
	g^1(P_1) & \hdots & g^1(P_m)\\
	\vdots   & \ddots & \vdots \\
	g^m(P_1) & \hdots & g^m(P_m)
	\end{pmatrix} =
	\left(\pdv{f^i}{z^j}(P_1),\dots,\pdv{f^i}{z^j}(P_m)\right),
	\]
	hence the row $(\gamma^i_{jk})_{1\leq k\leq m}$ is uniquely determined if and only if $\rk G(P_1,\dots,P_m) = m$. Conversely, the $m$ unknowns $(\gamma^i_{jk})_{1\leq k\leq m}$ for each fixed $i,j$ are uniquely determined if and only if we can find $m$ points $P_1,\dots,P_m$ with the above property.
\end{proof}

\begin{corollary}
	Let $\fA\cong\KK^n$ be a finite-dimensional commutative associative $\KK$-algebra, let $U\subseteq\KK^n$ be an open subset and $f:U\to\KK^n\cong\fA$ an $\fA$-differentiable map.
	\begin{enumerate}
		\item If $\span_\KK f'(U)=\KK^n$, then $\fA$ is uniquely determined by $f$. In particular, if $\fB$ is another algebra structure on $\KK^n$ such that $f$ is also $\fB$-differentiable, then $\fA\cong\fB$.
		
		\item If $\fA\ni A$ with $A^2\neq 0$, then $\fA$ can be recovered from $\O_\fA(U)$ for an arbitrary open $U\subseteq\fA$.
		
		\item If $\span_\KK f'(U)=\KK^n$, then commutativity of $\fA$ is equivalent to 
		\begin{equation}
		\det
		\begin{pmatrix}
		f'^1(Z_1) & \hdots & f'^1(Z_n)\\
		\vdots   &  & \vdots \\
		\pdv{f^i}{z^j}(Z_1) & \hdots & \pdv{f^i}{z^j}(Z_n) - k\text{-th row}\\
		\vdots   &  & \vdots \\			
		f'^n(Z_1) & \hdots & f'^n(Z_n)
		\end{pmatrix} = \det
		\begin{pmatrix}
		f'^1(Z_1) & \hdots & f'^1(Z_n)\\
		\vdots   &  & \vdots \\
		\pdv{f^i}{z^k}(Z_1) & \hdots & \pdv{f^i}{z^k}(Z_n) - j\text{-th row}\\
		\vdots   &  & \vdots \\			
		f'^n(Z_1) & \hdots & f'^n(Z_n)
		\end{pmatrix}
		\end{equation}
		for all $1\leq i,j,k\leq n$ and any $Z_1,\dots,Z_n\in\fA$ with $f'(Z_1),\dots,f'(Z_n)$ $\KK$-linearly independent.
		
		\item If $\span_\KK f'(U)=\KK^n$, then associativity of $\fA$ is equivalent to
		\begin{equation}
		\begin{aligned}
		&\sum_{r=1}^n \det
		\begin{pmatrix}
		f'^1(Z_1) & \hdots & f'^1(Z_n)\\
		\vdots   &  & \vdots \\
		\pdv{f^r}{z^j}(Z_1) & \hdots & \pdv{f^r}{z^j}(Z_n) - k\text{-th row}\\
		\vdots   &  & \vdots \\			
		f'^n(Z_1) & \hdots & f'^n(Z_n)
		\end{pmatrix} 
		\begin{pmatrix}
		f'^1(Z_1) & \hdots & f'^1(Z_n)\\
		\vdots   &  & \vdots \\
		\pdv{f^i}{z^r}(Z_1) & \hdots & \pdv{f^i}{z^r}(Z_n) - \ell\text{-th row}\\
		\vdots   &  & \vdots \\			
		f'^n(Z_1) & \hdots & f'^n(Z_n)
		\end{pmatrix} = \\
		&\sum_{r=1}^n \det
		\begin{pmatrix}
		f'^1(Z_1) & \hdots & f'^1(Z_n)\\
		\vdots   &  & \vdots \\
		\pdv{f^r}{z^k}(Z_1) & \hdots & \pdv{f^r}{z^k}(Z_n) - \ell\text{-th row}\\
		\vdots   &  & \vdots \\			
		f'^n(Z_1) & \hdots & f'^n(Z_n)
		\end{pmatrix}
		\begin{pmatrix}
		f'^1(Z_1) & \hdots & f'^1(Z_n)\\
		\vdots   &  & \vdots \\
		\pdv{f^i}{z^j}(Z_1) & \hdots & \pdv{f^i}{z^j}(Z_n) - r\text{-th row}\\
		\vdots   &  & \vdots \\			
		f'^n(Z_1) & \hdots & f'^n(Z_n)
		\end{pmatrix}
		\end{aligned}
		\end{equation}
		for all $1\leq i,j,k,\ell\leq n$ and any $Z_1,\dots,Z_n\in\fA$ with $f'(Z_1),\dots,f'(Z_n)$ $\KK$-linearly independent.
		
		\item If $\span_\KK f'(U)=\KK^n$, then unitality of $\fA$ is equivalent with the existence of $(\eps_r)_{1\leq r\leq n}\subseteq\KK$ such that
		\begin{equation}
		\begin{aligned}
		\delta^i_k &=
		\sum_{r=1}^n \det
		\begin{pmatrix}
		f'^1(Z_1) & \hdots & f'^1(Z_n)\\
		\vdots   &  & \vdots \\
		\eps_r\pdv{f^i}{z^r}(Z_1) & \hdots & \eps_r\pdv{f^i}{z^r}(Z_n) - k\text{-th row}\\
		\vdots   &  & \vdots \\			
		f'^n(Z_1) & \hdots & f'^n(Z_n)
		\end{pmatrix} =
		\sum_{r=1}^n \det
		\begin{pmatrix}
		f'^1(Z_1) & \hdots & f'^1(Z_n)\\
		\vdots   &  & \vdots \\
		\eps_r\pdv{f^i}{z^k}(Z_1) & \hdots & \eps_r\pdv{f^i}{z^k}(Z_n) - r\text{-th row}\\
		\vdots   &  & \vdots \\			
		f'^n(Z_1) & \hdots & f'^n(Z_n)
		\end{pmatrix}
		\end{aligned}
		\end{equation}
		for all $1\leq i,k,\ell\leq n$ and any $Z_1,\dots,Z_n\in\fA$ with $f'(Z_1),\dots,f'(Z_n)$ $\KK$-linearly independent.
	\end{enumerate}
\end{corollary}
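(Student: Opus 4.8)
The plan is to reduce the entire statement to the preceding lemma. Taking there $m=n$, $g\coloneqq f'$, and the constants $(\gamma^i_{jk})$ to be the structure tensor $(\alpha^i_{jk})$ of $\fA$, that lemma expresses each $\alpha^i_{jk}$ as a Cramer-type quotient of two $n\times n$ determinants built from $f'$ and the partials $\pdv{f^i}{z^j}$, valid exactly when $\span_\KK f'(U)=\KK^n$. Besides this, the only ingredients needed are: the generalized Cauchy--Riemann equations \eqref{CR} for $\varphi=\id_\fA$, which say that an $\fA$-differentiable $f$ is totally $\KK$-differentiable and satisfies $\pdv{f^i}{z^j}=\sum_s\alpha^i_{js}(f')^s$ with $g=f'$; the Basic Relations \eqref{unit}, \eqref{commut}, \eqref{ass}, characterizing unitality, commutativity and associativity of $\fA$ through $(\alpha^i_{jk})$; and the multilinearity of the determinant in its rows.

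For (1): since $f$ is $\fA$-differentiable it satisfies \eqref{CR} with $g=f'$, and the uniqueness clause of the preceding lemma pins down $(\alpha^i_{jk})$ with respect to the standard basis of $\KK^n$, hence the multiplication on $\KK^n$, i.e.\ $\fA$ itself. If a second algebra structure $\fB$ on $\KK^n$ also makes $f$ differentiable, then by \cref{jacobian} the regular representation of its derivative equals $J_\KK f$ on $U$, just as for $\fA$; since $f'(U)$ spans, $\span_\KK J_\KK f(U)$ is the full matrix algebra $\lambda_\fA(\fA)$ and is contained in $\lambda_\fB(\fB)$, and symmetrically once $\fB$'s derivative also spans (automatic in the unital case), forcing $\lambda_\fA(\fA)=\lambda_\fB(\fB)$ and hence $\fA\cong\fB$.

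For (2): the map $Z\mapsto Z^2$ lies in $\O_\fA(U)$, because $(Z+H)^2=Z^2+2ZH+H^2$ with $\norm{H^2}\le\norm{H}^2=o(\norm{H})$, so it is $\fA$-differentiable with derivative $2Z$; and its derivative has $\span_\KK\{2Z:Z\in U\}=\KK^n$ since any nonempty open subset of $\KK^n$ spans. The hypothesis $A^2\neq 0$ ensures, via the polynomial identity theorem over the infinite field $\KK$, that $Z\mapsto Z^2$ is not the zero element of $\O_\fA(U)$, so that applying (1) to it genuinely recovers $\fA$.

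For (3)--(5): choose $Z_1,\dots,Z_n\in\fA$ with $f'(Z_1),\dots,f'(Z_n)$ linearly independent (possible by the spanning hypothesis) and put $D\coloneqq\det\bigl(f'(Z_1),\dots,f'(Z_n)\bigr)\neq 0$. Writing $N^i_{jk}$ for the numerator determinant of the preceding lemma (columns $f'(Z_1),\dots,f'(Z_n)$, with the $k$-th row replaced by $(\pdv{f^i}{z^j}(Z_1),\dots,\pdv{f^i}{z^j}(Z_n))$), multilinearity together with $\pdv{f^i}{z^j}=\sum_s\alpha^i_{js}(f')^s$ gives $N^i_{jk}=\alpha^i_{jk}D$, i.e.\ $\alpha^i_{jk}=N^i_{jk}/D$. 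Substituting this into \eqref{commut} and clearing the single factor $D$ produces precisely the identity of (3); substituting into \eqref{ass} and clearing $D^2$ produces (4); substituting into \eqref{unit}, after pulling the scalars $\eps^r$ inside the determinants, produces (5), the $\eps_r$ of the statement being the unit coordinates $\eps^r$ up to the nonzero factor $D$. In each case the equivalence with the structural property of $\fA$ is the corresponding Basic Relation, and the ``for all admissible $Z_i$'' formulation is legitimate because the Cramer value of $\alpha^i_{jk}$ does not depend on that choice. I expect no genuine obstacle here: every step is an instance of the preceding lemma, of a Basic Relation, or of the multilinearity of $\det$, and the only real care points are the index bookkeeping that matches the bilinear associativity relation \eqref{ass} with the sum of products of two $n\times n$ determinants in (4), and keeping track of the power of $D$ to be cleared (one in (3), two in (4), and in (5) absorbed into the $\eps_r$).
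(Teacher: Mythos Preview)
Your approach is essentially the paper's: apply the preceding Cramer-type lemma with $g=f'$ to express each $\alpha^i_{jk}$, then feed that expression into the Basic Relations \eqref{commut}, \eqref{ass}, \eqref{unit}. Parts (2)--(5) match the paper's proof almost verbatim (the paper uses $f(Z)=\tfrac12 Z^2$ so that $f'=\id_\fA$, but your $Z\mapsto Z^2$ with $f'(Z)=2Z$ works just as well).

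The one place you diverge is the ``In particular'' in (1). Your detour through \cref{jacobian} and regular representations is both unnecessary and not quite sound: from $\lambda_\fA(f'_\fA(Z))=J_\KK f(Z)=\lambda_\fB(f'_\fB(Z))$ and $\span f'_\fA(U)=\KK^n$ you get $\lambda_\fA(\fA)=\span J_\KK f(U)\subseteq\lambda_\fB(\fB)$, but the ``symmetrically'' step requires $\span f'_\fB(U)=\KK^n$, which is not assumed, and even granting $\lambda_\fA(\fA)=\lambda_\fB(\fB)$ you would still need faithfulness of the regular representations (not automatic in the non-unital case) to conclude $\fA\cong\fB$. The paper simply stops after the Cramer formula: once the structure constants with respect to the standard basis of $\KK^n$ are uniquely determined by $f$ and $f'$, the multiplication on $\KK^n$ is pinned down, so any second structure $\fB$ for which $f$ is differentiable with the same $f'$ must have the same structure constants, i.e.\ $\fA=\fB$. (The paper is itself terse here and does not discuss the possibility $f'_\fA\neq f'_\fB$ in the non-unital setting; your observation about absorbing the factor $D$ into $\eps_r$ in (5) is a genuine care point that the paper's one-line proof also glosses over.)
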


\begin{proof}
	To (1): Since $f$ is $\fA$-differentiable, after a choice of basis we have  $\forall 1\leq i,j\leq n$:
	\[
	\pdv{f^i}{z^j} = \sum_{s=1}^n\alpha^i_{js}f'^s,
	\]
	hence $\forall 1\leq i,j,k\leq n$:
	\[
	\alpha^i_{jk} = 
	\det
	\begin{pmatrix}
	f'^1(Z_1) & \hdots & f'^1(Z_n)\\
	\vdots   &  & \vdots \\
	\pdv{f^i}{z^j}(Z_1) & \hdots & \pdv{f^i}{z^j}(Z_n) - k\text{-th row}\\
	\vdots   &  & \vdots \\			
	f'^n(Z_1) & \hdots & f'^n(Z_n)
	\end{pmatrix}/
	\det 
	\begin{pmatrix}
	f'^1(Z_1) & \hdots & f'^1(Z_n)\\
	\vdots   & \ddots & \vdots \\
	f'^n(Z_1) & \hdots & f'^n(Z_n)
	\end{pmatrix}.
	\]
	
	To (2): Consider the globally defined $\fA$-differentiable map $f:\fA\to\fA$, $f(Z)\coloneqq\frac{1}{2}Z^2$, $f\neq 0$. Then $f'(Z) = Z$, i.e. $f'=\id_\fA$, and the claim follows with $f\big|_U\in\O_\fA(U)$ since every open set $U\subseteq\KK^n$ contains a $\KK$-basis.
	Notice that $f'$ itself is not necessarily $\fA$-differentiable.
	
	To (3): This is simply a restatement of \cref{commut} using the expression from the proof of (1).
	
	To (4): This is simply a restatement of \cref{ass} using the expression from the proof of (1).
	
	To (5): This is simply a restatement of \cref{unit} using the expression from the proof of (1).
\end{proof}

\begin{remarks}\ 
	\begin{enumerate}[topsep=-\parskip]
		\item In particular, this suggests that if a function $f:U\to\KK^n$ (e.g. a polynomial or an analytic function) can be written as an $\fA$-differentiable function over some $\fA$ and $f(Z)=O(Z^2)$, then it can be written so in an essentially unique way up to $\Aut_\KK(\fA)$. 
		This in turn is also suggestive as to what the morphisms of \textit{Funktionentheorien} should look like.
		
		\item The $1$-dimensional non-unital nilpotent $\KK$-algebra $\fA_2 \coloneqq \{  
		\begin{psmallmatrix}
		0 & x\\
		0 & 0
		\end{psmallmatrix} : x\in\KK \}$ is a pathological case that oftens needs to be excluded.
		It is also the only connected commutative associative non-unital $\KK$-algebra with the property that $\forall Z\in\fA_2: Z^2 = 0$.
		
		\item If $\rk(f'(Z_1),\dots,f'(Z_n))<n$ or even varies, we get parametric families of candidates $(\alpha^i_{jk})$ to be structure constants of $n$-dimensional $\KK$-algebras (commutativity or associativity does not follow a priori), and it might be possible to write $f$ non-uniquely as a function over different or isomorphic $\KK$-algebras. 
		It is not immediately clear that there is an algebra in the parametric space that is valid for the whole $U$. 
	\end{enumerate}
\end{remarks}


Finally, we consider a certain differential operator that lies at the heart of the theory of $\varphi$-differentiable functions:

\begin{definition}
	Let $\fA \xrightarrow{\varphi} \fB$ be a morphism of $\KK$-algebras, $U\subseteq \fA$ an open subset, and $f:U \to \fB$ a $\varphi$-differentiable function.
	If $\{a_1,\dots,a_n\}$ is a $\KK$-basis of $\fA$ and $Z= z^1 a_1 + \cdots + z^n a_n$, then we define the linear operators
	\begin{equation}
	\d_{ij} \coloneqq a_i \pdv{}{z^j} - a_j \pdv{}{z^i} \defeq \varphi(a_i) \pdv{}{z^j} - \varphi(a_j) \pdv{}{z^i},\ 1\leq i,j\leq n.
	\end{equation}
	Furthermore, if $\cA \xrightarrow{\varphi} \cB$ is a morphism of unital $\KK$-algebras with $a_1 = 1_\cA$, then we also put 
	\begin{equation}
	\d_j \coloneqq \d_{1 j} \defeq \pdv{f}{z^j} - a_j \pdv{f}{z^1} ,\ 2\leq j\leq n.
	\end{equation}
\end{definition}

\begin{remarks}\ 
	\begin{enumerate}[topsep=-\parskip]
		\item Clearly, $\forall 1\leq i,j\leq n: \d_{ii}=0,\ \d_{ij} = -\d_{ji}$.
		
		\item If $(a'_1,\dots,a'_n) = (a_1,\dots,a_n) (u^i_j)_{1\leq i,j\leq n}$ is a change of basis for $\fA$ with $Z \eqqcolon w^1 a'_1 + \dots + w^n a'_n$ and one defines
		\[
		\d'_{k \ell} \coloneqq a'_k \pdv{}{w^\ell} - a'_\ell \pdv{}{w^k}
		\]
		with respect to this new basis, one checks immediately that $\forall 1\leq k,\ell\leq n$:
		\begin{equation}\label{operatornewbasis}
		\d'_{k \ell} = \sum_{i,j=1}^n u^i_k u^j_\ell \d_{ij}.
		\end{equation}
	\end{enumerate}
\end{remarks}

\begin{lemma}\label{delop}
	Let $\fA \xrightarrow{\varphi} \fB$ be a morphism of $\KK$-algebras, $U\subseteq \fA$ an open subset, and $f:U \to \fB$ a totally $\KK$-differentiable function.
	\begin{enumerate}
		\item If $f$ is $\varphi$-differentiable, then $\forall 1\leq i<j\leq n: \d_{ij} f = 0$, where the last condition is independent of the choice of basis for $\fA$.
		
		\item If $\cA \xrightarrow{\varphi} \cB$ is a morphism of unital $\KK$-algebras, then the converse of (1) is true.
		In fact, if $a_1 = 1$, then it suffices to have only $\forall 2\leq j\leq n: \d_j f \defeq \d_{1 j} f=0$ satisfied.
	\end{enumerate}
\end{lemma}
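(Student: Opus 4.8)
The plan is to route both implications through the coordinate-free content of \eqref{CR} from the preceding Cauchy-Riemann-Scheffers Lemma: a totally $\KK$-differentiable $f:U\to\fB$ is $\varphi$-differentiable at $Z_0$ if and only if there is a $\fB$-valued function $g$ defined near $Z_0$ with $\pdv{f}{z^j}=\varphi(a_j)\,g$ for all $1\le j\le n$, in which case $f'(Z_0)=g(Z_0)$. I will also use freely that ``$\d_{ij}f=0$ for all $i<j$'' is the same as ``$\d_{ij}f=0$ for all $i,j$'', since $\d_{ii}=0$ and $\d_{ij}=-\d_{ji}$ by the remark preceding the statement.

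\textbf{Part (1).} If $f$ is $\varphi$-differentiable, then it is in particular totally $\KK$-differentiable and admits such a $g$, with $g=f'$, so
\[
\d_{ij}f=\varphi(a_i)\pdv{f}{z^j}-\varphi(a_j)\pdv{f}{z^i}=\varphi(a_i)\varphi(a_j)\,g-\varphi(a_j)\varphi(a_i)\,g=0
\]
by commutativity of $\fB$. For basis-independence I would invoke the transformation law \eqref{operatornewbasis}: if all $\d_{ij}f$ vanish for one basis, then $\d'_{k\ell}f=\sum_{i,j}u^i_k u^j_\ell\,\d_{ij}f=0$ for any other.

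\textbf{Part (2).} Here I would exhibit the function $g$ explicitly. Assume $\cA\xrightarrow{\varphi}\cB$ is a morphism of unital $\KK$-algebras and $\d_{ij}f=0$ for all $i,j$, i.e. $\varphi(a_i)\pdv{f}{z^j}=\varphi(a_j)\pdv{f}{z^i}$; write the unit as $1_\cA=\sum_r\eps^r a_r$ and set $g\coloneqq\sum_r\eps^r\pdv{f}{z^r}$. Then for each $j$,
\[
\varphi(a_j)\,g=\sum_r\eps^r\,\varphi(a_j)\pdv{f}{z^r}=\sum_r\eps^r\,\varphi(a_r)\pdv{f}{z^j}=\varphi\!\Big(\sum_r\eps^r a_r\Big)\pdv{f}{z^j}=\varphi(1_\cA)\pdv{f}{z^j}=\pdv{f}{z^j},
\]
using that $\varphi$ preserves units; since $f$ is totally $\KK$-differentiable, the characterization above yields $\varphi$-differentiability of $f$ with $f'=g$. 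For the final assertion, if $a_1=1_\cA$ then $\varphi(a_1)=1_\cB$ and $g$ reduces to $\pdv{f}{z^1}$, while the hypotheses $\d_jf=\d_{1j}f=0$ for $2\le j\le n$ read $\pdv{f}{z^j}=\varphi(a_j)\pdv{f}{z^1}=\varphi(a_j)\,g$ (holding trivially for $j=1$), so the same characterization applies; this also recovers \eqref{pdunit}.

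\textbf{Main obstacle.} There is essentially no obstacle beyond one structural point, which is exactly where unitality is used: in Part (2) the relations $\d_{ij}f=0$ are only compatibility (``integrability'') conditions, and to produce an actual $g$ one must solve them for a single element, which is possible precisely because $1_\cA$ is a $\KK$-combination $\sum_r\eps^r a_r$ of the $a_r$ with $\varphi(1_\cA)=1_\cB$. In the non-unital setting no such $g$ need exist, so the converse genuinely fails in general, consistent with the earlier examples contrasting inner-linear with merely $\fA$-linear maps.
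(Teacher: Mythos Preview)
Your proof is correct and follows essentially the same approach as the paper. The only minor organizational difference is in Part~(2): the paper works directly in a basis with $a_1=1_\cA$ and verifies the Generalized Cauchy--Riemann Equations with Unit \eqref{GCRU} coordinate-wise (so the general converse follows via the basis-independence already established), whereas you treat the general basis first by setting $g=\sum_r\eps^r\pdv{f}{z^r}$ and then specialize; both routes are equivalent and equally short.
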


\begin{proof}
	The independence of the condition $\forall 1\leq i<j\leq n: \d_{ij} f = 0$	of the choice of basis for $\fA$ follows directly from \hyperref[operatornewbasis]{bilinearity (\cref*{operatornewbasis})} of the operators $\d_{ij}$.
	
	``$\Rightarrow$'': clear, since $\pdv{f}{z_i}(Z) = f'(Z) a_i \defeq f'(Z) \varphi(a_i)$, $1\leq i\leq n$, and $a_i a_j = a_j a_i$.
	
	``$\Leftarrow$'': We show that $f$ satisfies \hyperref[GCRU]{the Generalized Cauchy-Riemann Equations with Unit (\cref*{GCRU})}.
	Writing $f=f^1 b_1 + \dots + f^m b_m$, we have:
	\[
	\pdv{f}{z^1} \varphi(a_j) = \sum_{s=1}^m \pdv{f^s}{z^1} \varphi(a_j) b_s = \sum_{s=1}^m \pdv{f^s}{z^1} \sum_{r=1}^m \gamma^r_{js} b_r = 
	\sum_{r=1}^m b_r \bigg( \sum_{s=1}^m \gamma^r_{js} \pdv{f^s}{z^1} \bigg).
	\] 
	On the other hand,
	\[
	\pdv{f}{z^j} = \pdv{f}{z^1} a_j,
	\]
	since $\d_j f=0$ for all $2\leq j\leq n$. 
	Therefore $\forall 1\leq i\leq m\ \forall 2\leq j\leq n$:
	\[
	\pdv{f^i}{z^j} = \sum_{s=1}^m \gamma^i_{js} \pdv{f^s}{z^1}
	\]
	as desired.
\end{proof}
	
	\section{Function Theory over Non-Unital $\KK$-Algebras}\label{sec:nonunitalfunc}
	
\emph{Most of the proofs in this section are routine verifications that the given classical theorems from the Complex Analysis of One Variable transfer almost verbatim to our setting and can therefore be safely skipped by the reader.}

\begin{definition}
	Let $\fA\xrightarrow{\varphi}\fB$ a morphism of not necessarily unital $\KK$-algebras, $U\subseteq\fA$ open, and $f\in\Cc^0(U,\fB)$. 
	Then $f$ is called $\varphi$-integrable if $f$ has a $\varphi$-primitive, i.e. if there exists $F\in\O_\varphi(U)$ such that $F'=f$.
\end{definition}

Recall that continuous functions can in general be integrated over rectifiable paths. 
However, in many of the following statements we will need for technical reasons slightly stronger regularity assumptions for the paths, namely piece-wise smoothness. 

\begin{lemma-definition}
	Let $\lambda_\fA:\fA\hookrightarrow\rM_n(\KK)$ be the stable lower-triangular representation of $\fA$, let $\norm{\cdot}_\fA$ be a submultiplicative norm on $\fA$, and let $\gamma\in\Cc^1_\pw(I,\fA)$ be a piece-wise smooth path, where $I\coloneqq[0,1]$.
	\begin{enumerate}
		\item $L_\fA(\gamma)\coloneqq \int_0^1 \norm{\gamma'(t)}_\fA\d t$ is the length of $\gamma$ with respect to $\norm{\cdot}_\fA$.
		
		\item If $\norm{\cdot}_\fA\coloneqq\norm{\cdot}_\rF$, then $L_\fA(\gamma)\defeq L(\lambda_\fA\circ\gamma) = \int_0^1 \norm{\gamma'(t)}_\rF\d t$ is the Euclidean length of $\gamma$ in $\rM_n(\KK)$.
		
		\item If $\fA\coloneqq\A$ is a unital $\KK$-algebra and $\gamma$ is a \emph{scalar} curve, then $L_\A(\gamma) = \norm{1_\A} L_\KK(\gamma)$. 
		In particular, if $\norm{\cdot}_\A = \norm{\cdot}_\rF$, then $ L_\A(\gamma) = \sqrt{n} L_\KK(\gamma)$, where $n=\dim_\KK\A$.
		
		\item If $\fA \xrightarrow{\varphi} \fB$ is a morphism of not necessarily unital $\KK$-algebras, then $L_\fB(\gamma)\coloneqq L_\fB(\varphi\circ\gamma) = \int_0^1 \norm{\varphi(\gamma'(t))}_\B\d t$.
		
		\item If $\A\xrightarrow{\varphi}\B$ is a morphism of unital $\KK$-algebras and $\gamma$ is a \emph{scalar} curve, then so is $\varphi\circ\gamma$ and $L_\B(\varphi\circ\gamma) = \norm{1_\B}_\B L_\KK(\gamma)$. 
		In particular, if $\norm{\cdot}_\B=\norm{\cdot}_\rF$, then $L_\B(\varphi\circ\gamma) = \sqrt{m} L_\KK(\gamma)$, where $m=\dim_\KK\B$.
	\end{enumerate}
\end{lemma-definition}

\begin{proof}
	To (3): $\gamma$ being scalar in $\A$ implies $\norm{\gamma'(t)}_\mathrm{F} = \abs{\gamma'(t)}\norm{1_\A}_\mathrm{F} = \abs{\gamma'(t)} \sqrt{n}$.
	
	To (4): Since $\varphi$ is $\RR$-linear in all cases, $(\varphi(\gamma(t)))' = \varphi(\gamma'(t))$.
	
	To (5): Clearly, $\varphi\circ\gamma$ is a scalar curve since $\varphi$ is $\KK$-linear. 
	We have $\norm{\varphi(\gamma'(t))}_\B = \abs{\gamma'(t)}\norm{\varphi(1_\A)}_\B = \abs{\gamma'(t)}\norm{1_\B}_\B$.
\end{proof}

\begin{remark}
When $\gamma$ is a scalar curve, it is more convenient to choose a unital norm.
\end{remark}

\begin{lemma}[``$\varphi$-integrable functions have $\oint=0$'']
	Let $\fA\xrightarrow{\varphi}\fB$ be a morphism of not necessarily unital $\KK$-algebras, $U\subseteq\fA$ open, $f\in\Cc^0(U,\fB)$ $\varphi$-integrable, and $\gamma\in\Cc^1_\pw(\SS^1,U)$. Then
	\[
	\oint_\gamma f(Z)\d Z = 0.
	\]
\end{lemma}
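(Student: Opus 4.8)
The plan is to reduce the statement to the fundamental theorem of calculus along a piece-wise smooth loop, exactly as one does in the classical one-variable case, with the only subtlety being the bookkeeping of the $\fA$-module structure on $\fB$ and the chain rule for $\varphi$-differentiable functions established in \cref{rules}. First I would pick a $\varphi$-primitive $F\in\O_\varphi(U)$ with $F'=f$, which exists by the definition of $\varphi$-integrability, and fix a piece-wise smooth loop $\gamma\in\Cc^1_\pw(\SS^1,U)$, parametrized as $\gamma:[0,1]\to U$ with $\gamma(0)=\gamma(1)$. By definition of the contour integral, $\oint_\gamma f(Z)\,\d Z = \int_0^1 f(\gamma(t))\,\varphi(\gamma'(t))\,\d t$, the $\fB$-valued integrand being the pairing of $f(\gamma(t))\in\fB$ with $\varphi(\gamma'(t))\in\fB$ (note $\d Z$ gets pushed through $\varphi$, consistent with the length conventions in the preceding Lemma-Definition).

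The key step is to recognize the integrand as the derivative of $t\mapsto F(\gamma(t))$. Since $\gamma$ is a curve in $U$ and $F$ is $\varphi$-differentiable, $F\circ\gamma:[0,1]\to\fB$ is piece-wise $C^1$, and on each smooth piece the chain rule (here the composition of the $\varphi$-differentiable $F$ with the $\KK$-differentiable path $\gamma$, which is the chain-rule computation already carried out in the proof of \cref{rules}(3), specialized to the trivial one-dimensional "algebra" of the parameter) gives
\[
\dv{}{t}\,F(\gamma(t)) = F'(\gamma(t))\,\varphi(\gamma'(t)) = f(\gamma(t))\,\varphi(\gamma'(t)).
\]
Concretely, one expands $F(\gamma(t+h)) = F(\gamma(t)) + F'(\gamma(t))\varphi(\gamma(t+h)-\gamma(t)) + o(\norm{\varphi(\gamma(t+h)-\gamma(t))})$ and uses $\gamma(t+h)-\gamma(t) = \gamma'(t)h + o(h)$ together with $\KK$-linearity and boundedness of $\varphi$ to get the stated derivative; this is the same $o$-estimate juggling as in \cref{rules}. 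Then, summing over the finitely many smooth subintervals $0=t_0<t_1<\cdots<t_N=1$ and applying the fundamental theorem of calculus on each (the function $F\circ\gamma$ being continuous on $[0,1]$ and $C^1$ on each piece),
\[
\oint_\gamma f(Z)\,\d Z = \int_0^1 \dv{}{t}F(\gamma(t))\,\d t = \sum_{k=1}^N \bigl(F(\gamma(t_k)) - F(\gamma(t_{k-1}))\bigr) = F(\gamma(1)) - F(\gamma(0)) = 0,
\]
the last equality because $\gamma$ is a loop.

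I do not expect a genuine obstacle here: the statement is purely formal and the only thing one must be slightly careful about is that $F\circ\gamma$ need not be differentiable at the finitely many corner points of $\gamma$, which is handled by the telescoping sum over smooth pieces, and that the product in the integrand lives in the (commutative, associative) algebra $\fB$ via the module action of $\fA$ — but commutativity/associativity are exactly what make the chain-rule computation go through, and they are standing hypotheses. The one place to be pedantic is to confirm that the $\varphi$-valued $1$-form $f(Z)\,\d Z$ is indeed $\int_0^1 f(\gamma(t))\varphi(\gamma'(t))\,\d t$ under the conventions fixed earlier, rather than $\int_0^1 f(\gamma(t))\gamma'(t)\,\d t$ with $\gamma'(t)\in\fA$; but since $f$ takes values in $\fB$ and the product $f(Z)\,\d Z$ must land in $\fB$, the module action forces the $\varphi$, and this is consistent with part (4) of the preceding Lemma-Definition on lengths. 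Hence no real difficulty; the proof is a verbatim transfer from the classical case, as the section preamble anticipates.
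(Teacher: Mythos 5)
Your proposal is correct and follows exactly the paper's argument: pick a $\varphi$-primitive $F$, identify the integrand $F'(\gamma(t))\varphi(\gamma'(t))$ as $\dv{}{t}F(\gamma(t))$, and apply the fundamental theorem of calculus piecewise to get $F(\gamma(1))-F(\gamma(0))=0$. The paper's proof is a one-line version of the same computation; your extra care with the corner points and the $o$-estimates is correct but not something the paper spells out.
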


\begin{proof}
	Let $F$ be a $\varphi$-primitive of $f$, then 
	\[
	\begin{aligned}
	\oint_{\gamma}f(Z)\d Z &= \oint_\gamma F'(Z)\d Z = \int_0^1 F'(\gamma(t))\gamma'(t)\d t \defeq \int_0^1 F'(\gamma(t))\varphi(\gamma'(t))\d t =
	\int_0^1 \d(F\circ\gamma) = F(\gamma(1))-F(\gamma(0)) = 0.
	\end{aligned}
	\]
\end{proof}

\begin{lemma}\label{intineq}
	Let $\fA\xrightarrow{\varphi}\fB$ be a morphism of not necessarily unital $\KK$-algebras, $\gamma\in\Cc^1_\pw(\SS^1,\fA)$, and $f\in\Cc^0(\gamma,\fB)$. Then:
	\[
	\norm{\oint_\gamma f(Z)\d Z}_\fB \leq \norm{f}_{\fB,\gamma} L_\fB(\gamma).
	\]
\end{lemma}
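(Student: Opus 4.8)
The plan is to unwind the definition of the contour integral as a $\fB$-valued integral over the parameter interval and then apply the triangle inequality for Banach-space-valued integrals together with submultiplicativity of $\norm{\cdot}_\fB$. Recall (as in the proof of the preceding lemma ``$\varphi$-integrable functions have $\oint=0$'' and the convention $\d Z \defeq \varphi(\d Z)$) that, for $\gamma$ parametrized by $t\in[0,1]$,
\[
\oint_\gamma f(Z)\,\d Z = \int_0^1 f(\gamma(t))\,\varphi(\gamma'(t))\,\d t,
\]
where the integrand $g(t) \coloneqq f(\gamma(t))\,\varphi(\gamma'(t))$ is a piece-wise continuous $\fB$-valued function on $[0,1]$: indeed $t\mapsto f(\gamma(t))$ is continuous since $f\in\Cc^0(\gamma,\fB)$ and $\gamma$ is continuous, $t\mapsto\gamma'(t)$ is piece-wise continuous by hypothesis, $\varphi$ is $\RR$-linear hence continuous, and multiplication in the finite-dimensional Banach algebra $\fB$ is continuous.

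First I would record the elementary fact that for any piece-wise continuous $h:[0,1]\to\fB$ one has $\norm{\int_0^1 h(t)\,\d t}_\fB \le \int_0^1 \norm{h(t)}_\fB\,\d t$. This is proved by splitting $[0,1]$ into the finitely many subintervals on which $h$ is continuous, approximating the integral on each piece by Riemann sums, invoking the ordinary triangle inequality for finite sums in the normed space $\fB$, and passing to the limit, all of which is legitimate because $\fB$ is finite-dimensional, hence complete, so the Riemann integral exists and is the limit of its sums. Applying this with $h=g$ gives
\[
\norm{\oint_\gamma f(Z)\,\d Z}_\fB \le \int_0^1 \norm{f(\gamma(t))\,\varphi(\gamma'(t))}_\fB\,\d t.
\]

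Next, since $\norm{\cdot}_\fB$ is submultiplicative by standing assumption, $\norm{f(\gamma(t))\,\varphi(\gamma'(t))}_\fB \le \norm{f(\gamma(t))}_\fB\,\norm{\varphi(\gamma'(t))}_\fB$. Bounding the first factor by the uniform norm $\norm{f}_{\fB,\gamma} = \sup_{Z\in\gamma}\norm{f(Z)}_\fB$, which is finite as $\gamma$ is compact and $f$ is continuous on it, and pulling it out of the integral yields
\[
\norm{\oint_\gamma f(Z)\,\d Z}_\fB \le \norm{f}_{\fB,\gamma}\int_0^1 \norm{\varphi(\gamma'(t))}_\fB\,\d t = \norm{f}_{\fB,\gamma}\,L_\fB(\gamma),
\]
the last equality being exactly item (4) of the preceding Lemma-Definition. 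This finishes the argument; the only point that requires any care at all is the triangle inequality for $\fB$-valued integrals of piece-wise continuous integrands, and that is entirely standard, so the whole proof is routine.
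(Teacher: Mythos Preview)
Your proof is correct and follows exactly the same route as the paper's: unwind the contour integral as $\int_0^1 f(\gamma(t))\varphi(\gamma'(t))\,\d t$, apply the triangle inequality for Banach-space-valued integrals, use submultiplicativity of $\norm{\cdot}_\fB$, pull out $\norm{f}_{\fB,\gamma}$, and recognize the remaining integral as $L_\fB(\gamma)$. The only difference is that you spell out in detail the justification of the triangle inequality for piece-wise continuous $\fB$-valued integrands, which the paper simply uses without comment.
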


\begin{proof}
	Using the submultiplicativity of the norm, we obtain
	\[
	\begin{aligned}
	\norm{\oint_\gamma f(Z)\d Z}_\fB &= \norm{\int_0^1 f(\gamma(t)) \varphi(\gamma'(t)) \d t}_\fB \leq \int_0^1\norm{f(\gamma(t))}_\fB \norm{\varphi(\gamma'(t))}_\fB \d t
	\leq \sup_{Z\in\gamma} \norm{f(Z)}_\fB \int_0^1 \norm{\varphi(\gamma'(t))}_\fB \d t
	\end{aligned}	
	\]
	as required.
\end{proof}

\begin{lemma}[Pre-Morera: $\varphi$-integrability of continuous functions]\label{premorera}
	Let $\fA\xrightarrow{\varphi}\fB$ be a morphism of not necessarily unital $\KK$-algebras.
	\begin{enumerate}
		\item Let $U\subseteq\fA$ be open and path-connected and let $f\in\Cc^0(U,\fB)$ be such that for any rectifiable loop $\gamma\in\Cc^0(\SS^1,U)$
		\[
		\oint_\gamma f(Z)\d Z = 0.
		\]
		Then $f$ is (globally) $\varphi$-integrable.
		
		\item Let $\mystar\subseteq\fA$ be open and star-convex with center $\ast\in\mystar$ and let $f\in \Cc^0(\mystar,\fB)$ be such that for any triangle $\triangle\subseteq\mystar$ with vertex at $\ast$
		\[
		\oint_{\pd\triangle}f(Z)\d Z = 0.
		\]
		Then $f$ is $\varphi$-integrable in $\mystar$ with a $\varphi$-primitive given by
		\[
		F(Z)\coloneqq\int_\ast^Z f(W)\d W.
		\]
		It follows in particular that $\forall \gamma \in \Cc^1_\pw(\SS^1,\mystar)$:
		\[
		\oint_\gamma f(Z)\d Z = 0.
		\]
		
		\item Let $U\in\fA$ be open and path-connected and let $f\in\Cc^0(U,\fB)$ be such that $\forall\ \square\subseteq U$:
		\[
		\oint_{\pd\square}f(Z)\d Z = 0.
		\]
		Then $f$ is $\varphi$-integrable with primitive
		\[
		F(Z)\coloneqq\int_{Z_0\stairup Z} f(W)\d W,
		\]
		where the integral is taken along a stairway from $Z_0$ to $Z$ with sufficiently small stairs. 
		It follows in particular that $\forall\gamma\in\Cc^1_\pw(\SS^1,U)$:
		\[
		\oint_\gamma f(Z)\d Z = 0.
		\]
	\end{enumerate}
\end{lemma}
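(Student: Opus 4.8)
The plan is to transplant, into this relative setting, the three standard routes from a vanishing‑contour‑integral hypothesis to the existence of a primitive; the only new points are that $\int_\gamma g(Z)\,\d Z$ now abbreviates $\int_0^1 g(\gamma(t))\,\varphi(\gamma'(t))\,\d t$ and that the remainder in the definition of $\varphi$-differentiability must come out as $o(\norm{\varphi(H)})$, not merely $o(\norm{H})$. For (1), fix $Z_0\in U$; since $U$ is open and connected, every $Z\in U$ is joined to $Z_0$ by a rectifiable (indeed polygonal) path inside $U$, and applying the hypothesis to the loop formed from two such paths shows $F(Z)\coloneqq\int_{Z_0}^{Z}f(W)\,\d W$ is well defined. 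For $H$ small enough that $[Z,Z+H]\subseteq U$, concatenation gives $F(Z+H)-F(Z)=\int_{[Z,Z+H]}f(W)\,\d W=\big(\int_0^1 f(Z+tH)\,\d t\big)\varphi(H)$, using $\KK$-linearity of $\varphi$ and commutativity of $\fB$ to extract the constant factor $\varphi(H)$; subtracting $f(Z)\varphi(H)$ and invoking submultiplicativity bounds the remainder by $\big(\sup_{t\in[0,1]}\norm{f(Z+tH)-f(Z)}\big)\norm{\varphi(H)}$, which is $o(\norm{\varphi(H)})$ by continuity of $f$. Hence $F\in\O_\varphi(U)$ with $F'=f$.

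For (2), put $F(Z)\coloneqq\int_\ast^{Z}f(W)\,\d W$ along $[\ast,Z]$, which lies in $\mystar$ by star-convexity. For $H$ small the solid triangle $\triangle$ on $\ast,Z,Z+H$ lies in $\mystar$: the edge $[Z,Z+H]$ is in $\mystar$ by openness, and each point of $\triangle$ sits on a segment from $\ast$ to a point of that edge, hence in $\mystar$. The hypothesis $\oint_{\pd\triangle}f=0$, read along $\ast\to Z\to Z+H\to\ast$, then gives $F(Z+H)-F(Z)=\int_{[Z,Z+H]}f(W)\,\d W$, and one concludes $F'=f$ exactly as in (1). The displayed identity $\oint_\gamma f(Z)\,\d Z=0$ for $\gamma\in\Cc^1_\pw(\SS^1,\mystar)$ is then immediate from the earlier lemma that $\varphi$-integrable functions have vanishing closed-contour integrals.

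For (3), first replace $\{a_1,\dots,a_n\}$ by a basis adapted to $\varphi$, with $a_{d+1},\dots,a_n$ spanning $\ker\varphi$ and $d=\rk\varphi$ (this is harmless: by a staircase approximation the hypothesis over all axis-parallel rectangles $\square\subseteq U$ is equivalent to its analogue in any basis). Call a path a stairway if each edge is parallel to some $a_i$; since $U$ is open and connected, $Z_0$ is joined to any $Z\in U$ by a stairway with arbitrarily fine steps in $U$, and we set $F(Z)\coloneqq\int_{Z_0\stairup Z}f(W)\,\d W$. The heart of the matter is well-definedness: two sufficiently fine stairways with the same endpoints differ by a closed stairway loop which, being null-homotopic in $U$, can be written as a finite $\ZZ$-combination of boundaries of small axis-parallel rectangles contained in $U$ (a common refinement plus a Lebesgue-number estimate keeping the intermediate rectangles inside $U$), so the two integrals coincide. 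Granting this, for small $H=\sum_k h^k a_k$ concatenation yields $F(Z+H)-F(Z)=\sum_{k\le d}h^k\varphi(a_k)\int_0^1 f(\gamma_k(t))\,\d t$ (the terms with $k>d$ vanish as $\varphi(a_k)=0$), where $\gamma_k$ is the $k$-th edge of a short stairway from $Z$ to $Z+H$; since $\varphi(a_1),\dots,\varphi(a_d)$ are linearly independent in $\fB$ one has $\abs{h^k}\le c^{-1}\norm{\varphi(H)}$ for a fixed $c>0$, so subtracting $f(Z)\varphi(H)=\sum_{k\le d}h^k f(Z)\varphi(a_k)$ and using continuity of $f$ gives remainder $o(\norm{\varphi(H)})$, i.e. $F'=f$; the displayed vanishing of $\oint_\gamma f$ follows once more from the earlier lemma.

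The step I expect to be the main obstacle is the well-definedness in (3): passing from ``every small coordinate rectangle in $U$ contributes $0$'' to ``every closed fine stairway in $U$ contributes $0$'' is a purely combinatorial lattice-homotopy argument, and it is here that some hypothesis on the topology of $U$ enters — for merely path-connected $U$ the construction yields only local $\varphi$-integrability (and vanishing of $\oint_\gamma f$ over null-homotopic $\gamma$), a global primitive as asserted requiring, e.g., $U$ simply connected. The only other care required is the norm bookkeeping just indicated: working in a basis adapted to $\ker\varphi$ — equivalently, reducing to the case of an injective $\varphi$ via \cref{inclocal}, where $\norm{\varphi(H)}$ and $\norm{H}$ are comparable by finite-dimensionality — is what upgrades the naive $o(\norm{H})$ bound to the $o(\norm{\varphi(H)})$ demanded by \cref{Frechet}.
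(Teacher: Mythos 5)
Your (1) and (2) follow the paper's route exactly: well-definedness of $F$ from the hypothesis, then the difference quotient along $[Z,Z+H]$ with remainder $\bigl(\int_0^1(f(Z+tH)-f(Z))\,\d t\bigr)\varphi(H)$, estimated via submultiplicativity and continuity of $f$ by $\sup_{t\in[0,1]}\norm{f(Z+tH)-f(Z)}_\fB\,\norm{\varphi(H)}_\fB=o(\norm{\varphi(H)}_\fB)$; the paper writes the factor $\norm{\varphi(H)}_\fB$ as $L_\fB([Z,Z+H])$ and invokes \cref{intineq}, but it is the same computation.

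In (3) you are more careful than the source, and the obstacle you flag is genuine --- but it is a defect of the statement and of the paper's own proof, not of your argument. The paper's proof of (3) jumps straight to ``a single stair $\lrcorner$'' and verifies only the difference quotient; it never shows that $F(Z)=\int_{Z_0\stairup Z}f(W)\,\d W$ is independent of the chosen stairway, which is exactly the combinatorial lattice-homotopy step you isolate. Global independence indeed fails for merely path-connected $U$: for $\fA=\fB=\CC$, $\varphi=\id$, $f(z)=1/z$ on $U=\CC^\times$, every solid rectangle $\square\subseteq U$ avoids the origin, so $\oint_{\pd\square}f=0$, yet $f$ has no global primitive and $\oint_{\abs{z}=1}f\,\d z=2\pi i\neq 0$. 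So (3) as printed (a global $\varphi$-primitive, and $\oint_\gamma f=0$ for \emph{all} piecewise-smooth loops) is too strong; the hypothesis yields local $\varphi$-integrability --- which is all that the later \cref{morera} actually uses --- and a global primitive only under an additional assumption such as simple connectivity, exactly as you say. Two further small points in your favour: the adapted basis (equivalently, the reduction to injective $\varphi$ via \cref{inclocal}) is what turns the paper's bound $L_\fB(\lrcorner)\leq\norm{\varphi(H_1)}_\fB+\norm{\varphi(H_2)}_\fB$ into something that is $O(\norm{\varphi(H)}_\fB)$, which does not hold for an arbitrary splitting $H=H_1+H_2$ when $\varphi$ has a kernel; and the $\ker\varphi$-directions cause no trouble in the difference quotient itself, since $\int_{[Z,Z+H]}f\,\d W=\bigl(\int_0^1 f(Z+tH)\,\d t\bigr)\varphi(H)$ vanishes identically for $H\in\ker\varphi$.
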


\begin{proof}
	To (1): By assumption, the function
	\[
	F(Z)\coloneqq\int_{Z_0}^Z f(W)\d W
	\]
	is well-defined for any fixed $Z_0\in U$ as it is independent of the choice of an integration path. 
	We are going to show that $F$ is a $\varphi$-primitive of $f$. 
	For a fixed $Z$ we have:
	\[
	\begin{aligned}
	F(Z+H)-F(Z) &= \int_Z^{Z+H}f(W)\d W = f(Z)\int_Z^{Z+H}\d W + \int_Z^{Z+H}g(W)\d W = f(Z)H + \int_Z^{Z+H}g(W)\d W,
	\end{aligned}
	\]
	where $g(W)\coloneqq f(W)-f(Z)\xrightarrow{W\to Z}0$ by continuity of $f$. 
	Since $U$ is open, for sufficiently small $H$ we can take an open ball around $Z$, which is convex\footnote{finite-dimensional $\KK$-vector spaces are locally convex TVS}, containing $Z+H$ and the $\RR$-line segment $[Z,Z+H]$, which we also take to be the integration path. 
	Now, using \cref{intineq}, we can estimate:
	\[
	\norm{\int_Z^{Z+H}g(W)\d W}_\fB \leq \norm{g}_{\fB,[Z,Z+H]} L_\fB([Z,Z+H]),
	\]
	where $\norm{g}_{\fB,[Z,Z+H]}\xrightarrow{H\to 0}0$ because $g(W)\xrightarrow{W\to Z}0$. 
	Since $\varphi$ is in any case $\RR$-linear, 
	\[
	L_\fB([Z,Z+H])\defeq L_\fB(\varphi([Z,Z+H])) = L_\fB([\varphi(Z),\varphi(Z)+\varphi(H)]) = \norm{\varphi(H)}_\fB.
	\]
	Therefore $F(Z+H)-F(Z) = f(Z)H + o(\norm{\varphi(H)}_\fB)$ as required.
	
	To (2): For $H$ small enough we have $\triangle\coloneqq [\ast,Z+H,Z]\subseteq\mystar$, and $\pd\triangle = [\ast,Z+H] + [Z+H,Z] + [Z,\ast]$. 
	Since $\oint_{\pd\triangle}f(W)\d W=0$, we get:
	\[
	F(Z+H)-F(Z) = \int_Z^{Z+H}f(W)\d W,
	\]
	from where one proceeds analogously to (1).
	
	To (3): It suffices to prove the claim for a single stair $\lrcorner$. Let $H_1$ and $H_2$ correspond to the sides of $\lrcorner\coloneqq Z\lrcorner (Z+H)$, i.e. $H_1+H_2 = H$. 
	Then $L_\fB(\varphi(\lrcorner))\leq \norm{\varphi(H_1)}_\fB + \norm{\varphi(H_2)}_\fB \geq \norm{\varphi(H)}_\fB$, and we proceed as in (1): 
	\[
	\norm{\int_{\lrcorner}g(W)\d W}_\fB \leq \norm{g}_{\fB,\lrcorner} L_\fB(\lrcorner)\leq \norm{g}_{\fB,\lrcorner}\left(\norm{\varphi(H_1)}_\fB + \norm{\varphi(H_2)}_\fB\right),
	\]
	the latter being again $o(\norm{\varphi(H)}_\fB)$ as desired.
\end{proof}


\begin{proposition}[$\d$-closedness of $\varphi$-holomorphic forms]\label{closed}
	Let $\fA\xrightarrow{\varphi}\fB$ be a morphism of not necessarily unital $\KK$-algebras, $U\subseteq\fA$ open, $f:U\to\fB$ a totally $\KK$-differentiable function, and define $\omega\coloneqq f(Z)\d Z$. 
	We have:
	\begin{enumerate}
		\item If $f$ is also $\varphi$-differentiable, then $\omega$ is $\pd$-closed.
		
		\item If $\varphi: \cA \to \cB$ is a morphism of unital $\KK$-algebras, then the converse of (1) also holds. 
		
		\item If $\KK=\CC$ and $f$ is also $\varphi$-differentiable, then $\omega$ is also $\d$-closed.
	\end{enumerate}
\end{proposition}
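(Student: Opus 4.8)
The plan is to work throughout with the $\fB$-valued $1$-form $\omega = f(Z)\,\d Z$, where $\d Z = \sum_{r=1}^{n} a_r\,\d z^r$, so that $\d(\d Z) = 0$ (and likewise $\pd(\d Z) = 0$), and to use the differential-form reformulation of $\varphi$-differentiability made explicit in the course of proving \cref{CR}: a totally $\KK$-differentiable $f$ is $\varphi$-differentiable at a point exactly when $\pd f = f'(Z)\,\d Z$ there, i.e. $\pd f$ is the $\fA$-module multiple of $\d Z$ by the element $f'(Z)\in\fB$.

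For (1) and (3) together, I would first note that whenever $f$ is $\varphi$-differentiable one has $\d f = f'(Z)\,\d Z$: this is immediate for $\KK=\RR$ (where $\pd\coloneqq\d$), and for $\KK=\CC$ it follows from $\pdbar f = 0$, so that $\d f = \pd f = f'(Z)\,\d Z$. Then graded Leibniz for the module product gives
\[
\d\omega = \d f\wedge\d Z = (f'(Z)\cdot\d Z)\wedge\d Z = \sum_{i,j=1}^{n}\varphi(a_j)\varphi(a_i)\,f'(Z)\,\d z^i\wedge\d z^j .
\]
Since $\varphi$ is multiplicative and $\fA$ is commutative, $\varphi(a_j)\varphi(a_i) = \varphi(a_ja_i) = \varphi(a_ia_j) = \varphi(a_i)\varphi(a_j)$, so this equals $f'(Z)\cdot(\d Z\wedge\d Z)$, and $\d Z\wedge\d Z = \sum_{i<j}(a_ia_j - a_ja_i)\,\d z^i\wedge\d z^j = 0$ once more by commutativity of $\fA$. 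Hence $\d\omega = 0$, which is (3) verbatim and also yields (1), because $\pd\omega$ coincides with $\d\omega$ when $\KK=\RR$ and with the $(2,0)$-component of $\d\omega$ when $\KK=\CC$. As a cross-check, in coordinates $\omega = \sum_{s,r}\bigl(\sum_t\gamma^s_{rt}f^t\bigr)b_s\,\d z^r$ (as in the proof of \cref{CR}), so $\pd$-closedness is the identity $\sum_t\gamma^s_{rt}\,\pd f^t/\pd z^j = \sum_t\gamma^s_{jt}\,\pd f^t/\pd z^r$, which comes out of $\pd f^t/\pd z^j = \sum_u\gamma^t_{ju}g^u$ (with $g=f'(Z)$) via \cref{assA1} together with the symmetry $\alpha^k_{jr}=\alpha^k_{rj}$ of \cref{commut}.

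For (2), I would assume $\varphi\colon\cA\to\cB$ is a morphism of \emph{unital} algebras and that $\omega$ is $\pd$-closed. As above, $\pd\omega = \pd f\wedge\d Z$ (using $\pd(\d Z)=0$, and $\pdbar f = 0$ when $\KK=\CC$), and equating the coefficient of each $\d z^i\wedge\d z^j$ in $\pd\omega$ to zero gives $\varphi(a_j)\,\pd f/\pd z^i = \varphi(a_i)\,\pd f/\pd z^j$ for all $1\le i,j\le n$. Writing $1_\cA = \sum_r\eps^r a_r$ as in \cref{unitcoord}, so $\sum_r\eps^r\varphi(a_r) = 1_\cB$, and contracting the last relation with $\eps^j$ and summing over $j$, I obtain $\pd f/\pd z^i = \varphi(a_i)\,f'(Z)$ with $f'(Z)\coloneqq\sum_r\eps^r\,\pd f/\pd z^r$. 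By \cref{GUCR} this, together with the assumed total $\KK$-differentiability, means $f$ is $\varphi$-differentiable, with derivative $f'(Z)$.

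I expect the only genuinely delicate point to be the rearrangement in (1)/(3): one must verify that the $\fA$-module product $f'(Z)\cdot\d Z$ wedged with $\d Z$ can legitimately be reorganized into $f'(Z)\cdot(\d Z\wedge\d Z)$, which is precisely where both commutativity of $\fA$ and multiplicativity of $\varphi$ are used — dropping either breaks the argument, consistently with the hypotheses. In (2) the essential use of unitality is that $\pd$-closedness only yields $\fA$-linearity of $\pd f$ a priori, and it is the unit coordinates $\eps^r$ that promote this to representability of the derivative by a single element of $\cB$; correspondingly, the converse in (2) should not be expected to hold in the non-unital case.
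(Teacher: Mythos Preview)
Your argument is correct and follows essentially the same route as the paper. The only difference is organizational: the paper computes $\pd\omega = \sum_{i<j}(\d_{ij}f)\,\d z^i\wedge\d z^j$ once and then invokes \cref{delop} for both directions, whereas you substitute $\pd f = f'(Z)\,\d Z$ to obtain $f'(Z)\cdot(\d Z\wedge\d Z)=0$ for (1)/(3) and redo the unit-coordinate contraction underlying \cref{delop} by hand for (2).
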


\begin{proof}
	To (1) \& (2):
	Let $\{a_1,\dots,a_n\}$ be a basis of $\fA$ and write $Z=z^1 a_1 + \dots + z^n a_n$.
	Since $\d z^i\wedge\d z^j = -\d z^j\wedge\d z^i$, we have:
	\[
	\begin{aligned}
	\pd \omega &= \sum_{i=1}^n\pdv{f}{z^i}\d z^i \wedge \d Z = \sum_{i=1}^n\pdv{f}{z^i}\d z^i \wedge \bigg(\sum_{j=1}^n a_j \d z^j\bigg) = 
	\sum_{i,j=1}^n \pdv{f}{z^i} \varphi(a_j) \d z^i \wedge \d z^j =\\
	&= \!\!\!\!\!\! \sum_{1\leq i<j \leq n} \bigg(\pdv{f}{z^i} \varphi(a_j) - \pdv{f}{z^j} \varphi(a_i) \bigg) \d z^i \wedge \d z^j = 
	\sum_{1\leq i<j \leq n} (\d_{ij} f) \d z^i \wedge \d z^j,
	\end{aligned}
	\]
	from which both claims follow by \cref{delop}.
	
	To (3): If $\KK=\CC$, claim follows from (1), since $\d=\partial+\pdbar$ and $\pdbar\omega = \sum_{i=1}^n\pdv{f}{\bar{z}^i}\d\bar{z}^i\wedge\d Z = 0$ as $f$ is holomorphic (in each variable).
\end{proof}


\begin{remarks}\ 
	\begin{enumerate}[topsep=-\parskip]
		\item Much of what follows hinges upon the fact that $\omega=f(Z)\d Z$ is $\d$-closed. 
		The proof of (1) shows that to this effect it suffices for $f$ to only be directionally $\varphi$-differentiable in directions of some basis of $\fA$.
		Another byproduct of the proof is the interpretation of $\d_{ij} f$ as the coordinates of $\pd(f(Z)\d Z)$.
		
		\item Recall that if $\omega$ is a $\d$-closed 1-form, then it is locally exact by Poincare's lemma. 
		This is one way to show that $\varphi$-holomorphic functions admit local $\varphi$-primitives.
		We will give another proof of this based on a generalized Cauchy-Goursat theorem.
		
		\item The existence of local primitives in turn enables the integration of $\omega$ along \emph{continuous} paths \cite[see][Ch.1, §7]{BerGay}. 
		In the same vein, homotopy arguments about $\int\omega$ work in the continuous category, so we won't always have to take extra care of path regularity.
		Thus the next proposition is stated for continuous paths, but for the purpose of obtaining another proof of the existence of local primitives without appealing to Poincare's lemma one should assume the paths to be rectifiable or $\Cc^1_\pw$-regular.		
	\end{enumerate}	
\end{remarks}

\begin{proposition}[Cauchy-Goursat Integral Theorem over $\A$]\label{GCIT}
	Let $\fA\xrightarrow{\varphi}\fB$ a morphism of not necessarily unital $\KK$-algebras, $U\subseteq\fA$ open and path-connected, and $f\in\Cc^0(U,\fB)$.
	\begin{enumerate}
		\item Homological Cauchy-Goursat: if $f\in\O_\varphi(U)$, where additionally $f'$ is assumed continuous in the case $\KK=\RR$, and $\Gamma\in Z_1(U,\ZZ)$ is a 1-cycle with $[\Gamma]=0$ in $H_1(U,\ZZ)$, then 
		\begin{equation}\label{CIT}
		\int_{\Gamma}f(Z)\d Z = 0.
		\end{equation}
		\item Homotopical Cauchy-Goursat: in particular, if $f\in\O_\varphi(U)$, where additionally $f'$ is assumed continuous in the case $\KK=\RR$, and $\pi_1(U)=0$, then for any $\gamma\in\Cc^0(\SS^1,U)$: 
		\begin{equation}
		\oint_{\gamma}f(Z)\d Z = 0.
		\end{equation}
		
		\item Cauchy-Goursat minus a point: if $f\in\O_\varphi(U\setminus\{\pt\})$, then $\forall\triangle\subseteq U$:
		\begin{equation}
		\oint_{\pd\triangle}f(Z)\d Z = 0.
		\end{equation}
		
		\item Minus a finite number of points: if $f\in\O_\varphi(U\setminus\{p_1,\dots,p_s\})$, then $\forall\triangle\subseteq U$:
		\begin{equation}
		\oint_{\pd\triangle}f(Z)\d Z = 0.
		\end{equation}
	\end{enumerate}
\end{proposition}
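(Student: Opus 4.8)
The plan is to obtain parts (1) and (2) from the $\d$-closedness of $\omega \coloneqq f(Z)\,\d Z$ proved in \cref{closed}, and to prove parts (3) and (4) by transcribing the classical Goursat bisection argument. The only features absent from the one-variable proofs are the substitution of $\varphi(\d Z)$ for $\d Z$ under the integral sign and the use of the submultiplicative-norm length functional $L_\fB$ together with the estimate of \cref{intineq} in place of the ordinary arc length and the $ML$-inequality.

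\emph{Parts (1) and (2).} By \cref{closed} the $\fB$-valued $1$-form $\omega$ is $\d$-closed: this uses only analyticity of $f$ when $\KK=\CC$, and the standing hypothesis that $f'$ is continuous (so that $\omega$ is a $\Cc^1$-form) when $\KK=\RR$, with the convention $\pd=\d$. A $\d$-closed $1$-form is locally exact by the Poincaré lemma, so $\int_\gamma\omega$ is well defined for every \emph{continuous} path $\gamma$ in $U$ and is invariant under homotopies fixing the endpoints; see \cite[Ch.~1, \S7]{BerGay}. In particular, if $\rho$ is a continuous singular $2$-simplex in $U$ then its boundary $\pd\rho$ is a nullhomotopic loop, so $\int_{\pd\rho}\omega=0$; hence the group homomorphism $Z_1(U,\ZZ)\to(\fB,+)$, $\sum n_i\sigma_i\mapsto\sum n_i\int_{\sigma_i}\omega$, annihilates $B_1(U,\ZZ)$ and therefore factors through a homomorphism $H_1(U,\ZZ)\to\fB$. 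Since $[\Gamma]=0$ by hypothesis, $\int_\Gamma\omega=0$, which is (1). For (2): if $\pi_1(U)=0$, any continuous loop $\gamma$ in $U$ is nullhomotopic, so $\oint_\gamma\omega=0$ directly from the homotopy invariance above (equivalently $H_1(U,\ZZ)=0$ and (1) applies).

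\emph{Part (3).} The key step is the \emph{Goursat lemma}: if $V\subseteq\fA$ is open, $g\in\O_\varphi(V)$, and $\triangle\subseteq V$ is a solid triangle, then $\oint_{\pd\triangle}g(Z)\,\d Z=0$. This is already a special case of (1) (the $2$-chain $\triangle$ bounds $\pd\triangle$ inside $V$), but it also has the familiar direct proof, which I would include since it shows where the algebra structure enters: bisecting $\triangle$ through the edge midpoints and cancelling interior edges gives $\oint_{\pd\triangle}g=\sum_{i=1}^4\oint_{\pd\triangle_i}g$, so some subtriangle $\triangle^{(1)}$ satisfies $\lvert\oint_{\pd\triangle^{(1)}}g\rvert\geq\tfrac14\lvert\oint_{\pd\triangle}g\rvert$; iterating produces nested triangles shrinking to a point $Z_*\in\triangle$ with $\lvert\oint_{\pd\triangle}g\rvert\leq 4^k\lvert\oint_{\pd\triangle^{(k)}}g\rvert$. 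Expanding $g$ at $Z_*$ via $\varphi$-differentiability, $g(Z)=g(Z_*)+g'(Z_*)\varphi(Z-Z_*)+r(Z-Z_*)$ with $\norm{r(H)}=o(\norm{\varphi(H)})$, one checks that $g(Z_*)$ admits the $\varphi$-primitive $g(Z_*)\varphi(Z)$ and $g'(Z_*)\varphi(Z-Z_*)$ the $\varphi$-primitive $\tfrac12 g'(Z_*)\varphi(Z-Z_*)^2$ (using commutativity of $\fB$), so these terms contribute $0$ over the closed contour $\pd\triangle^{(k)}$; by \cref{intineq}, $\lvert\oint_{\pd\triangle^{(k)}}g\rvert\leq\big(\sup_{Z\in\pd\triangle^{(k)}}\norm{r(Z-Z_*)}\big)L_\fB(\pd\triangle^{(k)})$, where the first factor is $o(2^{-k})$ and $L_\fB(\pd\triangle^{(k)})=O(2^{-k})$, so the bound is $o(4^{-k})$; multiplied by the $4^k$ above this is $o(1)$, forcing $\oint_{\pd\triangle}g=0$. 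To prove (3) itself, let $p$ be the excluded point and $\triangle\subseteq U$. If $p\notin\triangle$, apply the Goursat lemma with $V$ an open neighbourhood of $\triangle$ contained in $U\setminus\{p\}$. If $p\in\triangle$, first reduce to the case where $p$ is a vertex by joining $p$ to the remaining vertices (if $p$ is interior) or to the opposite vertex (if $p$ lies on an edge) and summing the resulting contours; with $p$ a vertex, choose points $X,Y$ on the two edges through $p$ near $p$, so $\triangle$ splits into $\triangle(X,Y,p)$ and two triangles avoiding $p$; the latter contribute $0$ by the Goursat lemma, hence $\oint_{\pd\triangle}f=\oint_{\pd\triangle(X,Y,p)}f$, and $\lvert\oint_{\pd\triangle(X,Y,p)}f\rvert\leq\norm{f}_{\fB,\triangle(X,Y,p)}L_\fB(\pd\triangle(X,Y,p))\to 0$ as $X,Y\to p$, because $f$ is continuous (hence locally bounded) at $p$ while $L_\fB(\pd\triangle(X,Y,p))\to 0$.

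\emph{Part (4).} Given distinct $p_1,\dots,p_s$ and $\triangle\subseteq U$, iterate the midpoint bisection enough times to partition $\triangle$ into congruent subtriangles of diameter $<\tfrac12\min_{i\neq j}\norm{p_i-p_j}_\fA$; each subtriangle then contains at most one $p_j$, and $\oint_{\pd\triangle}f=\sum\oint_{\pd\triangle'}f=0$ by applying (3) to each subtriangle $\triangle'$ meeting $\{p_1,\dots,p_s\}$ (regarded inside the open set $U\setminus\{p_i:p_i\notin\triangle'\}$) and the Goursat lemma to the rest. The step I expect to require the most care is the passage in (1) from local exactness of $\omega$ to the homological conclusion — i.e.\ verifying that $\Gamma\mapsto\int_\Gamma\omega$ is well defined on $Z_1(U,\ZZ)$ and kills $B_1(U,\ZZ)$ while allowing merely continuous chains; everything else is a faithful rerun of the one-complex-variable proofs with $\varphi$ inserted and the norm estimates of the present section in place of $\lvert\cdot\rvert$.
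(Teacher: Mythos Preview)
Your proof is correct and matches the paper's approach closely. The only difference worth flagging is in part (1): the paper applies Stokes' theorem directly—writing $\Gamma=\pd\square$ for some singular $2$-chain $\square$ and computing $\int_\Gamma\omega=\int_\square\d\omega=0$—whereas you go through local exactness (Poincaré) and homotopy invariance to build a homomorphism $Z_1(U,\ZZ)\to\fB$ that annihilates $B_1$. Both routes use the same input (\cref{closed} and the $\Cc^1$-regularity of $\omega$, automatic for $\KK=\CC$ and supplied by hypothesis for $\KK=\RR$), and the step you single out as needing the most care—well-definedness of $\int_\Gamma\omega$ on continuous chains—is precisely what the paper's later remark about integrating locally exact $1$-forms along continuous paths addresses. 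For (3) your Goursat bisection is identical to the paper's (which credits Rudin's exposition), and your explicit subdivision in (4) spells out what the paper leaves as ``follows from (3) (or its proof)''.
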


\begin{proof}
	To (1):
	If $\KK=\CC$, then $\omega\coloneqq f(Z)\d Z$ is $\Cc^1$-regular (in fact, $\Cc^\omega$-regular), so we can apply Stokes' theorem: since $\Gamma\in B_1(U,\ZZ)$, there exists $\square\in Z_2(U,\ZZ)$ such that $\Gamma = \pd\square$. 
	But $\omega$ is closed by \cref{closed}, therefore
	\[
	\int_\Gamma\omega = \int_{\pd\square}\omega = \int_{\square}\d\omega = 0.
	\]
	
	To (2): Since $\pi_1(U)=0$, any $\gamma\in\Cc^0(\SS^1,U)$ is homotopical to a point, hence null-homologous in particular. 
	Alternatively, use homotopy-invariance of closed 1-forms.
	
	To (3): We adapt Goursat-Pringsheim's approach following Rudin's exposition, which is applicable to $\KK$ without the need of additional $\Cc^1$-regularity of $\omega$. 
	First suppose that $p\notin\triangle$ and let $\triangle=[a,b,c]$. 
	Let $a',b',c'$ be the mid points on the opposite sides and let $\triangle'_j$, $1\leq j\leq 4$, be the respective newly formed triangles by connecting $a',b',c'$ with each other. 
	Then
	\[
	J\coloneqq \oint_{\pd\triangle}\omega = \sum_{j=1}^4\oint_{\pd\triangle'_j}\omega
	\]
	and therefore
	\[
	\norm{\oint_{\pd\triangle'_j}\omega}_\fB \geq \frac{\norm{J}_\fB}{4}
	\]
	for certain fixed $1\leq j\leq 4$. 
	Repeating the same procedure for $\triangle'_j$ in place of $\triangle$, we get a sequence of triangles $\triangle_1,\triangle_2,\dots,\triangle_k,\dots$ such that $\triangle\supset\triangle_1\supset\triangle_2\supset\dots$ with $L_\fA(\pd\triangle_k) = 2^{-k}L_\fA(\pd\triangle)$ and 
	\[
	\norm{J}_\fB \leq 4^k\norm{\oint_{\pd\triangle_k}\omega}_\fB,\ k\in\NN.
	\]
	Since $\varphi$ is $\RR$-linear in any case, we also have $L_\fB(\pd\triangle_k) = 2^{-k}L_\fB(\pd\triangle)$. 
	By Cantor's ``nested intervals theorem'', it follows that there is a (unique) point $ Z_0\in\bigcap_{k\in\NN}\triangle_k\subseteq\triangle$, and $f$ is $\varphi$-differentiable at $Z_0$. 
	Therefore $\forall\vareps>0\ \exists\delta>0\ \forall \norm{Z-Z_0}_\fA<\delta:\ \norm{f(Z)-f(Z_0)-f'(Z_0)(Z-Z_0)}_\fB\leq \vareps\norm{\varphi(Z-Z_0)}_\fB$. 
	There exists $k\in\NN$ such that $\forall Z\in\triangle_k: \norm{Z-Z_0}_\fA<\delta$. 
	Then also $\norm{\varphi(Z-Z_0)}_\fB \leq \diam_\fB\varphi(\triangle_k)<L_\fB(\partial\triangle_k) = 2^{-k}L_\fB(\partial\triangle)$. 
	Now, since $\oint_{\pd\triangle_k}\d Z = 0$ and $\oint_{\pd\triangle_k}(Z-Z_0)\d Z = 0$ (they have the obvious $\varphi$-primitives), we have:
	\[
	\oint_{\pd\triangle_k}\omega = \oint_{\pd\triangle_k}\big(f(Z)-f(Z_0)-f'(Z_0)(Z-Z_0)\big)\d Z,
	\]
	whence
	\[
	\begin{aligned}
	\norm{J}_\fB &\leq 4^k \norm{\oint_{\pd\triangle_k}\big(f(Z)-f(Z_0)-f'(Z_0)(Z-Z_0)\big)\d Z}_\fB
	\leq 4^k\sup_{Z\in\pd\triangle_k} \norm{f(Z)-f(Z_0)-f'(Z_0)(Z-Z_0)}_\fB L_\fB(\pd\triangle_k) \\
	&\leq 4^k \sup_{Z\in\pd\triangle_k} \vareps \norm{\varphi(Z-Z_0)}_\fB L_\fB(\pd\triangle_k) \leq 4^k \vareps 2^{-k} L_\fB(\pd\triangle) L_\fB(\pd\triangle_k) = 
	\vareps L_\fB(\pd\triangle)^2
	\end{aligned}
	\]
	for all $\vareps>0$.
	Thus $J=0$. 
	
	Now suppose without loss of generality that $p=a$ is a vertex of $\triangle$. 
	If $a,b,c$ are colinear, then the claim is trivial. 
	If not, choose $x\in [a,b]$ and $y\in [a,c]$ both close to $p=a$. 
	Then
	\[
	\oint_\triangle\omega = \left(\oint_{\pd [a,x,y]} + \oint_{\pd [x,b,y]} + \oint_{\pd [b,c,y]}\right)\omega,
	\]
	the last two being 0, since they do not contain $p=a$. 
	Therefore:
	\[
	\oint_\triangle\omega = \left(\int_a^x + \int_x^y + \int_y^a\right)\omega.
	\]
	Since $f$ is bounded on $\triangle$, letting $x,y\to p=a$ we get the desired result. Finally, if $p\in\triangle$ arbitrary, then apply the above to the triangles $[a,b,p]$, $[b,c,p]$ and $[c,a,p]$.
	
	To (4): follows from (3) (or its proof).
\end{proof}

\begin{remarks}Let $\fA\xrightarrow{\varphi}\fB$ a morphism of not necessarily unital $\KK$-algebras.
	\begin{enumerate}
		\item If $U\subseteq\fA$ is open and $f\in\O_\varphi(U\setminus\{p_1,\dots,p_s\})$, then $\forall\ \square\subseteq U: \oint_{\pd\square}f(Z)\d Z = 0$, because $\square = \triangle + \triangledown$.
		
		\item If $U\subseteq\fA$ is open and 1-connected and $f\in\O_\varphi(U)$, then $f$ is $\varphi$-integrable, i.e. $f$ has a $\varphi$-primitive. 
		This is more general than what Poincare's lemma delivers because simply connected domains are not necessarily contractible, e.g. $\SS^n$ for $n\geq 2$, or better yet, $\BB^n\setminus\{\pt\}$ for $n\geq 3$.
		
		\item In particular, if $U\subseteq\fA$ is an arbitrary open set and $f\in\O_{\varphi}(U)$, then $f$ is locally $\varphi$-integrable, i.e. for every $p\in U$ there exists an (open) neighbourhood $V_p\ni p$ inside $U$ such that $f|_{V_p}$ has a $\varphi$-primitive.
		This follows also from Poincare's lemma.
		
		\item For instance, if $\dim_\RR\fA\geq 3$, $U\subseteq\fA$ path-connected, and $\emptyset\neq S\subset U$ such that $\pi_1(U\setminus S) = 0$, then all $f\in\O_\varphi(U\setminus S)$ are (globally) $\varphi$-integrable, e.g. take $U$ open with $\pi_1(U)=0$ and $S\coloneqq\BB$ a topological ball inside $U$. 
		This is in stark contrast to the complex plane, where removing even a single point breaks global integrability, cfg. $1/z$ on $\CC$. 
		
		\item If $\mystar\subseteq\fA$ is star-convex and $f\in\Cc^0(\mystar,\fB)$ such that $f\in\O_\varphi(\mystar\setminus\{p_1,\dots,p_s\})$, then $f$ has a $\varphi$-primitive. \qed
	\end{enumerate}
\end{remarks}
	
	\section{Function Theory over Unital $\CC$-Algebras: The Cornerstones}\label{sec:unitalfunc}
		
\subsection{Admissible Points and 1-Cycles}\label{subs:adm}

We have reached the point where both the real and complex and the non-unital and unital theory diverge from each other. 
Let us first fix some notations. 

\begin{definition}
	Let $\A\in\fdCAlg_\KK$. Then $\A_\sing\coloneqq\A\setminus\A^\times$ is called the singular cone of $\A$.
\end{definition}

Let $\A$ be a \textit{unital} $\CC$-algebra and let $\Spm\A=\{\fM_1,\cdots,\fM_M\}$ be its (maximal) spectrum. 
In particular, $\#\Spm\A = M$ and $\A_\sing = \bigcup_{k=1}^M\fM_k$. 
Accordingly, the decomposition of $\A$ into local Artin algebras gives a \textit{vector space} decomposition 
\[
\A = \A_1\times\cdots\times\A_M = (\CC_{(1)}\oplus\m_1)\times\cdots\times(\CC_{(M)}\oplus\m_M), 
\]
where $\CC_{(k)}$ are indexed copies of $\CC$ and in this notation $\fM_k = \A_1 \times \cdots \times \A_{k-1} \times \m_k \times \A_{k+1} \times \cdots \times \A_M$, $1\leq k\leq M$.
As a \textit{topological space} $\A^\times$ sits coordinate-wise inside said vector space in the form of
\[
\A^\times = (\CC^{\times}_{(1)}\times\m_1)\times\cdots\times(\CC^{\times}_{(M)}\times\m_M).
\]
Notice that here we make no use of the structure of $\A^\times_k$, $1\leq k\leq M$, as a \emph{Lie group}, but only as a topological space. 
Let us denote by $I_k\coloneqq (0,\dots,0,1_{\A_k},0,\dots,0)\in\A$, $1\leq k\leq M$, the canonical idempotents for said decomposition.
They satisfy the usual relations $I_k I_\ell = \delta_{k\ell} I_k$, $1\leq k,\ell\leq M$, and $\sum_{k=1}^M I_k = 1_\A$. 
Every $Z\in\A$ can be uniquely written as $Z=\bigoplus_{k=1}^M Z_k = \sum_{k=1}^M I_k Z_k$ for some $Z_k\in\A_k$. 
Recall the projections $\pr_k:\A\twoheadrightarrow\A_k$ and $\sigma_k:\A \twoheadrightarrow \CC_{(k)}$, $1\leq k\leq M$. 
In particular, $Z_k = \pr_k(Z)$, and for a given $\gamma\in\Cc^0(\SS^1,\A)$ we shall also write $\gamma_k\coloneqq\pr_k(\gamma) \coloneqq \pr_k\circ\gamma:\SS^1\to\A_k$, and $\gamma^\sp_k \coloneqq \sigma_k(\gamma) \coloneqq \sigma_k \circ \gamma:\SS^1\to\CC_{(k)}=\CC$.
Moreover, if $\iota^\sp_k:\CC_{(k)}\hookrightarrow\A$ and $\iota_k:\A_k\hookrightarrow\A$ are the respective canonical inclusions of \textit{vector spaces}, we shall also consider $\iota^\sp_k\circ\gamma^\sp_k = I_k\gamma^\sp_k$ and $\iota_k\circ\pr_k(\gamma) = I_k\gamma_k$, where by slight abuse of notation $(I_k\gamma_k^{(\sp)})(t) \coloneqq I_k \gamma_k^{(\sp)}(t)$. 
Furthermore, for the homotopy classes of closed curves in $\A^\times$ we have
\[
[\SS^1;\A^\times] = \prod_{k=1}^M [\SS^1;\CC^{\times}_{(k)}\times\m_k]= \prod_{k=1}^M [\SS^1;\CC^{\times}_{(k)}],
\]
where namely $\gamma \simeq \sum_{k=1}^M \gamma_k I_k \simeq \sum_{k=1}^M \gamma^\sp_k I_k$ in $\pi_1(\A^\times)$.
Being projections, $\pr_k$ and $\sigma_k$ are continuous and open. 
Therefore, if $U\subseteq\A$ is open and 0-connected, then so are $\pr_k(U)\subseteq \A_k$ and $\sigma_k(U)\subseteq \CC_{(k)}=\CC$, $1\leq k\leq M$.
Let $\Gamma \coloneqq \sum_{j=1}^m n_j \gamma_j \in Z_1(U,\ZZ)$ be a 1-cycle.
Then $\pr_k$ and $\sigma_k$ induce chain maps 
\[
\begin{aligned}
&(\pr_k)_\#: Z_1(U,\ZZ) \to Z_1(\pr_k(U),\ZZ),\ \Gamma_k \coloneqq (\pr_k)_\#\Gamma \defeq \sum_{j=1}^m n_j \pr_k(\gamma_j)\\
&(\sigma_k)_\#: Z_1(U,\ZZ) \to Z_1(\sigma_k(U),\ZZ),\ \Gamma^\sp_k \coloneqq (\sigma_k)_\# \Gamma \defeq \sum_{j=1}^m n_j \sigma_k(\gamma_j),
\end{aligned}
\]
which in turn induce as usual maps in homology $(\pr_k)_*: H_1(U,\ZZ)\to H_1(\pr_k(U),\ZZ)$ and $(\sigma_k)_*: H_1(U,\ZZ)\to H_1(\sigma_k(U),\ZZ)$ respectively, $1\leq k\leq M$.
Like in the case of loops, we shall use the same letter to denote both the cycle and its support in order to avoid unnecessary additional notational clutter.
Furthermore, if $\cA=(\cA,\fm)$ is local, we shall write $\sigma \coloneqq \sigma_1$, $\gamma^\sp \coloneqq \gamma_1^\sp$, $\Gamma^\sp \coloneqq \Gamma_1^\sp \coloneqq \sigma_* \Gamma$, and $z \coloneqq z^1 \equiv \sigma(Z)$.

\begin{definition}[Admissible points, sets, and 1-cycles]
	Let $\cA \cong \bigoplus_{k=1}^M (\cA_k,\fm_k)$ and $\cB \cong \bigoplus_{\ell=1}^N (\cB_\ell,\fn_\ell)$ be decompositions of $\cA$ and $\cB$ into local Artin $\CC$-algebras.	 
	\begin{enumerate}
		\item Let $I\subseteq \{1,\dots,M\}$ be an index subset and let $\gamma\in\Cc^0(\SS^1,\A)$.
		Then the set
		\[
		\fF_I(\gamma) \coloneqq \bigcup_{k\in I} \sigma_k^{-1}(\gamma^\sp_k) = 
		\Big\{(z_k\oplus X_k)_k \in \prod_{k=1}^M \CC_{(k)}\oplus\m_k\ |\ \exists k\in I: z_k\in\gamma^\sp_k\Big\}
		\]
		is called the $I$-restricted set of forbidden points (forbidden zone) for $\gamma$.
		In the special case $I=\{1,\dots,M\}$ we shall also write $\fF(\gamma) \coloneqq \fF_\cA(\gamma)$ for the full forbidden zone of $\gamma$.
		Its complement
		\[
		\begin{aligned}
		\Adm_I(\gamma) \coloneqq \A \setminus \fF_I(\gamma) = \big\{Z\in\A\ |\ \forall k\in I: \sigma_k(Z) \notin \gamma^\sp_k\big\} = 
		\prod_{k=1}^M
		\begin{cases}
		(\CC_{(k)} \setminus \gamma^\sp_k) \times \m_k, \text{ if } k\in I\\
		\cA_k, \text{ otherwise }
		\end{cases}
		\end{aligned}
		\]
		is called the $I$-restricted set of admissible points for $\gamma$.
		In the special case $I=\{1,\dots,M\}$ we shall also write $\Adm(\gamma) \coloneqq \Adm_\cA(\gamma)$ for the full admissible set of $\gamma$.
		
		\item Let $I\subseteq \{1,\dots,M\}$ be an index subset, let $U\subseteq\A$ be open and path-connected, and let $\Gamma\coloneqq\sum_{j=1}^m n_j \gamma_j\in Z_1(U,\ZZ)$ be a 1-cycle. 
		Then 
		\[
		\fF_I(\Gamma) \coloneqq \bigcup_{j=1}^m \fF_I(\gamma_j) \defeq \big\{Z\in\cA |\ \exists k\in I: \sigma_k(Z) \in \Gamma_k^\sp \big\}
		\]
		is called the $I$-restricted forbidden zone of the 1-cycle $\Gamma$.
		In the special case $I=\{1,\dots,M\}$ we shall also write $\fF(\Gamma) \coloneqq \fF_\cA(\Gamma)$ for the full forbidden zone of $\Gamma$.
		Its complement
		\[
		\begin{aligned}
		\Adm_I(\Gamma) &\coloneqq \A\setminus\fF_I(\Gamma) = \bigcap_{j=1}^m \Adm_I(\gamma_j) =
		\prod_{k=1}^M
		\begin{cases}
		\Big(\CC_{(k)}\setminus\bigcup_{j=1}^m \gamma^\sp_{j,k}\Big)\times\m_k, \text{ if } k\in I\\
		\cA_k, \text{ otherwise }
		\end{cases}
		\!\!\!\!\!\!\defeq \\
		&\defeq 
		\prod_{k=1}^M
		\begin{cases}
		(\CC_{(k)}\setminus\Gamma^\sp_k)\times\m_k, \text{ if } k\in I\\
		\cA_k, \text{ otherwise }
		\end{cases}
		\end{aligned}
		\]
		is called the $I$-restricted set of admissible points for $\Gamma$.
		In the special case $I=\{1,\dots,M\}$ we shall also write $\Adm(\Gamma) \coloneqq \Adm_\cA(\Gamma)$ for the full admissible set of $\Gamma$.
		
		\item If $\varphi = \bar{\varphi} \circ \Pi_\tau: \cA \to \cB$ with $\tau=\tau_\varphi:\{1,\dots,N\} \to \{1,\dots,M\}$ is a morphism of $\CC$-algebras in canonical factorization form, $U\subseteq\A$ is open and path-connected, and $\Gamma\coloneqq\sum_{j=1}^m n_j \gamma_j\in Z_1(U,\ZZ)$ is a 1-cycle, then $\fF_\varphi(\Gamma) \coloneqq \fF_{\im\tau_\varphi}(\Gamma)$ is called the set of $\varphi$-forbidden points for $\Gamma$.
		Its complement $\Adm_\varphi(\Gamma) \coloneqq \Adm_{\im\tau_\varphi}(\Gamma)$ is called the set of $\varphi$-admissible points for $\Gamma$.
		
		\item A subset $V\subseteq\A$ is called $\varphi$-admissible for $\Gamma$ if all its points are $\varphi$-admissible, that is, if $V\subseteq\Adm_\varphi(\Gamma)$.
		Symmetrically, given a subset $V\subseteq\A$, a 1-cycle $\Gamma\in Z_1(U,\ZZ)$ is called $\varphi$-admissible for $V$ if $V$ is a $\varphi$-admissible set for $\Gamma$.
		One defines analogously $\varphi$-forbidden subsets of $\cA$ with respect to a given 1-cycle and $\varphi$-forbidden 1-cycles with respect to a given subset of $\cA$.
	\end{enumerate}
\end{definition}

\begin{remarks}
	Let $\cA = \bigoplus_{k=1}^M \cA_k \xrightarrow{\varphi} \cB = \bigoplus_{\ell=1}^N \cB_\ell$ be a morphism of fully decomposed $\CC$-algebras with corresponding index subset $I\coloneqq \im \tau_\varphi \subseteq \{1,\dots,M\}$, let $U\subseteq\cA$ be open and path-connected, and let $\Gamma \coloneqq \sum_{j=1}^m n_j \gamma_j \in Z_1(U,\ZZ)$.
	\begin{enumerate}
		\item Clearly, if $I\subseteq J\subseteq \{1,\dots,M\}$, then $\fF_I(\Gamma) \subseteq \fF_J(\Gamma)$, or equivalently, $\Adm_I(\Gamma)\supseteq \Adm_J(\Gamma)$. 
		In particular, one always has $\fF_\varphi(\Gamma) \subseteq \fF_\cA(\Gamma)$ and $\Adm_\cA(\Gamma) \subseteq \Adm_\varphi(\Gamma)$.
		
		\item If $\A=(\A,\m)$ is local, then $\fF(\Gamma) = \Gamma^\sp\times\m \defeq \wtilde{\Gamma}$ is the cylindrical closure of (the support of) $\Gamma$.
		In general, if $\A$ is not connected/local, we only have $\fF_I(\Gamma)\supseteq\wtilde{\Gamma}_I$, i.e. $\Adm_I(\Gamma)\subseteq\A\setminus\wtilde{\Gamma}_I$. 
		
		\item By definition, one has
		\[
		\Adm(\Gamma) = \prod_{k=1}^M 
		\begin{cases}
		\Adm(\Gamma_k), \text{ if } k\in I\\
		\cA_k, \text{ otherwise }
		\end{cases}
		\!\!\!\!\!\!= \prod_{k=1}^M
		\begin{cases}
		\Adm(\Gamma^\sp_k)\times\m_k, \text{ if }  k\in I\\
		\cA_k, \text{ otherwise },
		\end{cases}
		\]
		where $\Adm(\Gamma_k)$ and $\Adm(\Gamma^\sp_k)$ are taken in the $\CC$-algebras $\A_k$ and $\CC_{(k)}$ respectively, $1\leq k\leq N$.
		
		
		\item A $\varphi$-admissible point means that the winding numbers of the $k$-th projections, $k\in I$, are well-defined: the projected points do not lie on the projected curves themselves. 
		On the other hand, the condition $(\varphi \circ T_{Z_0})_\#\Gamma\subseteq \cB^\times$ means that the integral of the $\cB$-valued differential form $\omega\coloneqq \d Z/\varphi(Z-Z_0)$ is well-defined over $\Gamma$, and so both notions are well compatible. 
		We will see in a jiffy why this should not surprise us at all.
		
		\item $\fF_I(\Gamma)$ is a closed subset of $\A$ as a finite union of continuous preimages of closed sets, hence the set of admissible points $\Adm_I(\Gamma)$ is open. 
		If $\Gamma$ consists of sufficiently non-pathological curves such that $\CC_{(k)}\setminus\gamma^\sp_{j,k}$ is dense in $\CC_{(k)}$ for all $k\in I$ and $1\leq j\leq m$, then $\Adm_I(\gamma_j)$, $1\leq j\leq m$, are finite products of open dense sets, hence $\Adm_I(\Gamma)$ too is dense open in $\A$ as an intersection of the finitely many $\Adm_I(\gamma_j)$-s.
		Thus, if $V\subseteq\A$ is open and $\Gamma$ is a not too pathological 1-cycle, then $V$ always contains admissible points for $\Gamma$.
		
		
		\item Finally, if $Z\in\Adm_\varphi(\Gamma)$ and keeping in mind that $\varphi$ is in particular continuous, then $\varphi(Z)\in\Adm_\cB(\varphi_\#\Gamma)$ since $\forall \ell\in \{1,\dots,N\}: \bar{\varphi}_\ell(\cA_{\tau(\ell)}^\times)\subseteq\cB_\ell^\times$. \qed	
	\end{enumerate}
\end{remarks}

\subsection{The Index}\label{subs:ind}

\begin{definition}[Index over $\A$]
	Let $\A\xrightarrow{\varphi}\B$ be a morphism of unital $\CC$-algebras, $U\subseteq\A$ an open and path-connected subset, and $\Gamma\coloneqq\sum_{j=1}^m n_j \gamma_j \in Z_1(U,\ZZ)$ a 1-cycle.
	Then the index of $\Gamma$ around $Z_0$ is given by
	\begin{equation}
	\begin{aligned}
	\Ind_\varphi(\Gamma,Z_0) &\coloneqq \Ind_\varphi^\ZZ(\Gamma,Z_0) \coloneqq \frac{1}{2\pi i} \int_\Gamma \frac{\d Z}{\varphi(Z-Z_0)} \defeq
	\sum_{j=1}^m n_j \frac{1}{2\pi i} \oint_{\gamma_j} \frac{\d Z}{\varphi(Z-Z_0)} \defeq \sum_{j=1}^m n_j \Ind_\varphi(\gamma_j,Z_0)
	\end{aligned}
	\end{equation}
	where $\Ind_\varphi^\ZZ$ indicates that $\Gamma$ has coefficients in $\ZZ$. 
	Moreover, we set $\Ind_\A \coloneqq \Ind_{\id_\A}$ for short.
\end{definition}

\begin{remarks}\
	\begin{enumerate}[topsep=-\parskip]
		\item For example, if $\gamma\in\Cc^0(\SS^1,\CC)$ is a loop and $z\notin\gamma$, then above notation says $\ind(\gamma,z) = \Ind_\CC(\gamma,z) = \Ind_\CC^\ZZ(\gamma,z) \eqqcolon \ind_\CC(\gamma,z)$, the latter being used to emphasize the fact that the index is taken in the complex plane.
		
		\item If $\cA \xrightarrow{\varphi} \cB$ is a morphism of $\CC$-algebras, $U\subseteq \cA$ open and path-connected, $\Gamma \in Z_1(U,\ZZ)$ a 1-cycle, and $Z_0\in \Adm(\Gamma)$, then $\Ind_\cA(\Gamma,Z_0) \in \cA$ and $\Ind_\varphi(\Gamma,Z_0) \in \cB$. 
	\end{enumerate}
\end{remarks}

\begin{lemma}[Periods of Cauchy's Reproducing Kernel \& Index Calculation over $\A$]\label{indexcalc}
	Let $U\subseteq\cA$ be open and path-connected and let $\Gamma\in Z_1(U,\ZZ)$ be a $\Cc^1_\pw$-regular 1-cycle.
	\begin{enumerate}
		\item Index with respect to composition of morphisms: if $\cA \xrightarrow{\psi} \cB \xrightarrow{\varphi} \cC$ are morphisms of $\CC$-algebras, then $\forall Z\in\Adm_\psi(\Gamma)$:
		\begin{equation}
		\Ind_{\varphi\circ\psi}(\Gamma,Z) = \Ind_\varphi(\psi_\# \Gamma,\psi(Z)).
		\end{equation}
		In particular, if $\varphi=\id_\cB$, then $\forall Z\in\Adm_\psi(\Gamma)$:
		\begin{equation}
		\Ind_\psi(\Gamma,Z) = \Ind_\cB(\psi_\# \Gamma,\psi(Z)).
		\end{equation}
		
		\item Index over a local $\A$: if $\A=(\A,\m)$ is local and $\sigma:\A\cong_{\Vect}\CC\oplus\m\twoheadrightarrow\A/\m\cong\CC$ is the canonical ``spectral'' quotient projection onto the scalars, then $\forall Z\in \Adm_\cA(\Gamma)$:
		\begin{equation}\label{localind}
		\Ind_\A(\Gamma,Z) = \Ind_\sigma(\Gamma,Z) = \ind_\CC(\Gamma^\sp,z) \in \ZZ,
		\end{equation}
		where recall the notation $z \defeq \sigma(Z)$ and $\Gamma^\sp \defeq \sigma_\# \Gamma$ is the ``spectral'' part of the 1-cycle $\Gamma$.
		It follows that if $\varphi: (\cA,\fm) \to (\cB,\fn)$ is a morphism of local $\CC$-algebras, then $\forall Z\in\Adm_\varphi(\Gamma) \equiv \Adm_\cA(\Gamma)$:
		\begin{equation}\label{localmorphind}
		\Ind_\varphi(\Gamma,Z) = \Ind_\cA(\Gamma,z) = \ind_\CC(\Gamma^\sp,z) \in \ZZ.
		\end{equation}
		
		\item Index over a direct sum of morphisms: if $\varphi \coloneqq \oplus_{\ell=1}^N \varphi_\ell: \cA \coloneqq \bigoplus_{\ell=1}^N \cA_\ell \to \cB \coloneqq \bigoplus_{\ell=1}^N \cB_\ell$, $\Gamma_\ell \coloneqq (\pr_\ell)_\# \Gamma$, and $Z=\oplus_{\ell=1}^N Z_\ell$, then $\forall Z\in\Adm_\varphi(\Gamma) = \prod_{\ell=1}^N \Adm_{\varphi_\ell}(\Gamma_\ell):$
		\begin{equation}\label{indempotentcycle}
		\Ind_\varphi(\Gamma,Z) = \oplus_{\ell=1}^N \Ind_{\varphi_\ell}(\Gamma_\ell,Z_\ell) \in \cB.
		\end{equation}
		In particular, if $\varphi_\ell: (\cA_\ell,\fm_\ell) \to (\cB_\ell,\fn_\ell)$, $1\leq \ell\leq N$, are morphisms of local $\CC$-algebras, then $\forall Z\in\Adm_\varphi(\Gamma) \equiv \Adm_\cA(\Gamma)$:
		\begin{equation}\label{morphind}
		\begin{aligned}
		\Ind_\varphi(\Gamma,Z) &= \oplus_{\ell=1}^N \Ind_{\cA_\ell}(\Gamma_\ell,Z_\ell) =\oplus_{\ell=1}^N \ind_\CC(\Gamma^\sp_\ell,\sigma_\ell(Z))=
		\varphi(\Ind_\cA(\Gamma,Z)) \in \ZZ^{\oplus N} \subseteq \cB,
		\end{aligned}
		\end{equation}
		where recall that $\Gamma^\sp_\ell \defeq (\sigma_\ell)_\# \Gamma$ is the $\ell$-th eigenvalue part of the 1-cycle $\Gamma$.
		
		\item Index over a general morphism: if $\varphi = (\oplus_{\ell=1}^N \bar{\varphi}_\ell) \circ \Pi_\tau: \cA \cong \bigoplus_{k=1}^M (\cA_k,\fm_k) \to \cB \cong \bigoplus_{\ell=1}^N (\cB_\ell,\fn_\ell)$ is a general morphism of $\CC$-algebras in canonical factorization form, then $\forall Z\in\Adm_\varphi(\Gamma):$
		\begin{equation}\label{indphi}
		\begin{aligned}
		\Ind_\varphi(\Gamma,Z) &= \oplus_{\ell=1}^N \Ind_{\bar{\varphi}_\ell} (\Gamma_{\tau(\ell)},Z_{\tau(\ell)}) = 
		\oplus_{\ell=1}^N \ind_{\CC_{\tau(\ell)}} (\Gamma^\sp_{\tau(\ell)},z_{\tau(\ell)}) = \varphi(\Ind_\cA(\Gamma,Z)) \in \ZZ^{\oplus N} \subseteq \cB,
		\end{aligned}
		\end{equation}
		provided $Z \in \Adm_\cA(\Gamma)$ for the last part of the equality.			
	\end{enumerate}
\end{lemma}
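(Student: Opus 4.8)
The plan is to establish the four parts in the stated order, each reducing to its predecessors, with essentially all the genuine content concentrated in part~(2). I would prove part~(1) by a bare change of variables in the defining integral: writing $\Gamma = \sum_{j=1}^m n_j\gamma_j$ and using $\CC$-linearity of $\psi$, one has $(\psi\circ\gamma_j)' = \psi\circ\gamma_j'$ and $\psi(\gamma_j(t))-\psi(Z) = \psi(\gamma_j(t)-Z)$, so the integrand defining $\Ind_\varphi(\psi_\#\Gamma,\psi(Z))$ coincides termwise with the one defining $\Ind_{\varphi\circ\psi}(\Gamma,Z)$. The only point requiring care is that $Z\in\Adm_\psi(\Gamma)$ already suffices for both sides to be well-defined: the condition $\sigma_k(\gamma_j(t)-Z)\neq 0$ for $k\in\im\tau_\psi$ forces $\psi(\gamma_j(t)-Z)\in\cB^\times$ — this is where locality of the components $\bar\psi_\ell$, supplied by \cref{MorFact}, enters — and hence $(\varphi\circ\psi)(\gamma_j(t)-Z)\in\cC^\times$; the compatibility $\psi(Z)\in\Adm_\varphi(\psi_\#\Gamma)$ is already recorded in the remarks of \cref{subs:adm}. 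The case $\varphi = \id_\cB$ is then immediate.

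Part~(2) is the crux. The second equality of \eqref{localind} is just part~(1) with $\psi = \sigma_\cA$ and $\varphi = \id_\CC$ (together with the identity $\Ind_\CC = \ind_\CC$). For the first equality $\Ind_\cA(\Gamma,Z) = \Ind_\sigma(\Gamma,Z)$ — equivalently, that $\Ind_\cA(\Gamma,Z)$ is an honest integer multiple of $1_\cA$ rather than an integer plus a nilpotent — I would, by $\ZZ$-linearity, reduce to a single loop $\gamma$ and transplant Cauchy's classical argument into the finite-dimensional commutative Banach algebra $\cA$: set $h(s)\coloneqq\int_0^s\gamma'(t)(\gamma(t)-Z)^{-1}\,\d t$, piecewise $\Cc^1$ since $\gamma(t)-Z\in\cA^\times$, so that $h(1) = 2\pi i\,\Ind_\cA(\gamma,Z)$; then, using commutativity of $\cA$ (so that $h(s)$ and $h'(s)$ commute and the usual derivative formula for $\exp$ applies), the map $s\mapsto\exp(-h(s))(\gamma(s)-Z)$ has vanishing derivative on each smooth arc, hence is constant equal to $\gamma(0)-Z = \gamma(1)-Z\in\cA^\times$, forcing $\exp\bigl(2\pi i\,\Ind_\cA(\gamma,Z)\bigr) = 1_\cA$. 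Decomposing $\cA\cong\CC\oplus\m$ and writing the index as $w\cdot 1_\cA + X$ with $w\in\CC$, $X\in\m$, comparison of scalar parts gives $w\in\ZZ$, and then $\exp(2\pi i X) = 1_\cA$ forces $X = 0$ since $\exp$ restricts to the bijection $\m\xrightarrow{\sim}1+\m$ (cf.\ the proof of \cref{Ustruct}). Applying $\sigma_\cA$ and invoking part~(1) once more identifies $w = \ind_\CC(\gamma^\sp,z)$. Equation~\eqref{localmorphind} then follows by writing $\varphi = \id_\cB\circ\varphi$, applying part~(1), using \eqref{localind} over the local algebra $\cB$, and invoking the commutative triangle \eqref{scalarcom} to rewrite $(\sigma_\cB)_\#\varphi_\#\Gamma = \Gamma^\sp$ and $\sigma_\cB(\varphi(Z)) = z$.

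Parts~(3) and (4) are then formal. For (3), equation~\eqref{indempotentcycle} is immediate because on $\cB\cong\prod_\ell\cB_\ell$ inversion, application of $\varphi$, and integration are all carried out componentwise; and \eqref{morphind} follows by invoking part~(2) in each summand (each $\varphi_\ell$ being local) and noting that $\varphi$ maps $\oplus_\ell n_\ell\cdot 1_{\cA_\ell}$ to $\oplus_\ell n_\ell\cdot 1_{\cB_\ell}$. For (4), I would factor $\varphi = (\oplus_\ell\bar\varphi_\ell)\circ\Pi_\tau$ and use part~(1) with $\psi = \Pi_\tau$ to land in the situation of (3) — here $\pr_\ell\circ\Pi_\tau = \pr_{\tau(\ell)}$ identifies the $\ell$-th projection of $(\Pi_\tau)_\#\Gamma$ with $\Gamma_{\tau(\ell)}$ and the $\ell$-th component of $\Pi_\tau(Z)$ with $Z_{\tau(\ell)}$ — then apply (3), whose hypotheses hold since the $\bar\varphi_\ell$ are local by \cref{MorFact}. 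Finally, the extra clause $Z\in\Adm_\cA(\Gamma)$ is exactly what makes $\Ind_\cA(\Gamma,Z)$ defined, and applying $(\oplus_\ell\bar\varphi_\ell)\circ\Pi_\tau$ to its explicit value from (3) produces the last equality $\Ind_\varphi(\Gamma,Z) = \varphi(\Ind_\cA(\Gamma,Z))$.

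The main obstacle — indeed the only non-formal step — is the integrality assertion inside (2): carrying Cauchy's ``$\exp(-h(s))(\gamma(s)-Z)$ is constant'' device over to the Banach algebra $\cA$ and then exploiting the bijectivity of $\exp$ on the nilpotent maximal ideal to upgrade ``integer modulo $\m$'' to an integer outright.
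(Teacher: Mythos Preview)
Your proposal is correct. Parts (1), (3), and (4) match the paper's argument essentially verbatim. Part (2) is where you diverge: the paper does \emph{not} use the Cauchy exponential trick. Instead it exploits the homotopy structure of $\cA^\times$ established in \cref{subs:adm}: since $\cA^\times \cong \CC^\times \times \fm$ as topological spaces, one has $\gamma \simeq \gamma^\sp$ inside $\cA^\times$, and since $\omega = \d W/W$ is $\d$-closed (\cref{closed}), homotopy invariance gives directly
\[
\Ind_\cA(\gamma,0) = \frac{1}{2\pi i}\oint_\gamma \frac{\d W}{W} = \frac{1}{2\pi i}\oint_{\gamma^\sp}\frac{\d W}{W} = \frac{1}{2\pi i}\int_0^1 \frac{(\gamma^\sp)'(t)\,1_\cA}{\gamma^\sp(t)}\,\d t = \ind_\CC(\gamma^\sp,0),
\]
followed by a translation to handle general $Z$. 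Your route --- show $\exp(2\pi i\,\Ind_\cA(\gamma,Z)) = 1_\cA$, then invoke the bijection $\exp:\fm \xrightarrow{\sim} 1+\fm$ from \cref{Ustruct} to kill the nilpotent part --- is equally valid and arguably more self-contained, since it does not rely on the prior analysis of $[\SS^1;\cA^\times]$. The paper's approach, however, is a one-step computation rather than a two-step ``integrality then identification'' argument, and it fits the surrounding narrative better: $\d$-closedness of $f(Z)\,\d Z$ (\cref{closed}) is the engine driving the entire integration theory, and this is one more instance of it.
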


\begin{proof}
	By $\ZZ$-linearity of the definition of $\Ind_\varphi(-,Z)$ it suffices to verify the claims for a single loop $\gamma \in \Cc^1_\pw(\SS^1,U)$.	
	
	To (1):
	Since $\psi$ is linear, we have
	\[
	\begin{aligned}
	\Ind_{\varphi\circ\psi}(\gamma,Z) \defeq \int_\gamma \frac{\d W}{(\varphi\circ\psi)(W-Z)} = \int_0^1 \frac{\varphi(\psi(\gamma'(t))) \d t}{\varphi(\psi(\gamma(t))-\psi(Z))}= 
	\int_0^1 \frac{\varphi((\psi\circ\gamma)'(t)) \d t}{\varphi(\psi(\gamma(t))-\psi(Z))} = \Ind_\varphi(\psi\circ\gamma,\psi(Z))
	\end{aligned}
	\]	
	as required.	
	
	To (2):
	First assume (without loss of generality) that $Z=0\in\Adm_\cA(\gamma)$.
	By \cref{closed}, $\omega\coloneqq\d W/W$ is a $\d$-closed 1-form over $\A^\times$, hence its integral is invariant under continuous homotopy $\gamma \simeq \gamma^\sp$ there. 
	Thus
	\[
	\begin{aligned}
	\Ind_\cA(\gamma,0) &\defeq \frac{1}{2 \pi i} \oint_\gamma \frac{\d W}{W} = \frac{1}{2 \pi i} \oint_{\gamma^\sp} \frac{\d W}{W} = 
	\frac{1}{2 \pi i} \int_0^1 \frac{(\gamma^\sp)'(t) 1_{\A}}{\gamma^\sp(t)}\d t = 
	\frac{1}{2 \pi i} \oint_{\gamma^\sp}\frac{\d z}{z} = \ind_\CC(\gamma^\sp,0) = \Ind_\sigma(\gamma,0)
	\end{aligned}
	\]
	by (1).
	Next, substituting $V\coloneqq W-Z$ and $\delta\coloneqq\gamma-Z$, we get $\forall Z\in\Adm_\cA(\gamma)$:
	\[
	\begin{aligned}
	\Ind_\cA(\gamma,Z) &\defeq \frac{1}{2 \pi i} \oint_\gamma \frac{\d W}{W-Z} = \frac{1}{2 \pi i} \oint_\delta \frac{\d V}{V} = \ind_\CC(\sigma\circ\delta,0) =
	\ind_\CC(\sigma(\gamma)-\sigma(Z),0) = \ind_\CC(\sigma(\gamma),\sigma(Z)) = \Ind_\sigma(\gamma,Z)
	\end{aligned}
	\]
	as desired.
	Furthermore, if $(\cA,\fm) \xrightarrow{\varphi} (\cB,\fm)$ is a morphism of local $\CC$-algebras with corresponding canonical projections $\sigma_\cA:\cA\twoheadrightarrow\cA/\fm$ and $\sigma_\cB:\cB\twoheadrightarrow\cB/\fn$, then
	\[
	\begin{aligned}
	\Ind_\varphi(\gamma,Z) &= \Ind_\cB(\varphi\circ\gamma,\varphi(Z)) = \ind_\CC(\sigma_\cB\circ\varphi\circ\gamma,\sigma_\cB(\varphi(Z))) =
	\ind_\CC(\sigma_\cA\circ\gamma,\sigma_\cA(Z)) = \Ind_\cA(\gamma,Z)
	\end{aligned}
	\]
	by the previous discussion, (1), and \hyperref[scalarcom]{Diagram \ref*{scalarcom}}.
	
	To (3):
	We have
	\[
	\begin{aligned}
	\Ind_\varphi(\gamma,Z) &\defeq \frac{1}{2\pi i} \oint_\gamma \frac{\d W}{\varphi(W-Z)} = 
	\frac{1}{2\pi i} \oint_{\oplus_{\ell=1}^N \gamma_\ell} \frac{\d (\oplus_{\ell=1}^N W_\ell)}{\oplus_{\ell=1}^N \varphi_\ell(W_\ell-Z_\ell)} =
	\bigoplus_{\ell=1}^N \frac{1}{2\pi i} \oint_{\gamma_\ell} \frac{\d W_\ell}{\varphi_\ell(W_\ell-Z_\ell)} \defeq \bigoplus_{\ell=1}^N \Ind_{\varphi_\ell}(\gamma_\ell,Z_\ell),
	\end{aligned}
	\]
	where notice that the direct sum happens in $\cB=\bigoplus_{\ell=1}^N \cB_\ell$.
	If $\varphi_\ell$ are morphisms of local $\CC$-algebras, then the last equality follows from (2) and the application of $\varphi$ on $\ZZ^N \subseteq \cA$ to arrive in $\ZZ^N \subseteq \cB$.
	
	To (4):
	Since $\cA \cong \bigoplus_{k=1}^M (\cA_k,\fm_k)$, we have $\Ind_\cA(\gamma,Z) = \oplus_{k=1}^M \Ind_{\cA_k}(\gamma_k,Z_k)$ by (3).
	Writing $\bar{\varphi} \coloneqq \oplus_{\ell=1}^N \bar{\varphi}_\ell$ and thus $\varphi = \bar{\varphi} \circ \Pi_\tau$, we therefore have:
	\[
	\begin{aligned}
	\varphi(\Ind_\cA(\gamma,Z)) &= \bar{\varphi}(\Pi_\tau(\oplus_{k=1}^M \Ind_{\cA_k}(\gamma_k,Z_k))) = 
	\bar{\varphi}(\oplus_{\ell=1}^N \Ind_{\cA_{\tau(\ell)}}(\gamma_{\tau(\ell)},Z_{\tau(\ell)})) =
	\Ind_{\bar{\varphi}}(\oplus_{\ell=1}^N \gamma_{\tau(\ell)},\oplus_{\ell=1}^N Z_{\tau(\ell)}) = \\
	&= \Ind_{\bar{\varphi}}(\Pi_\tau(\gamma),\Pi_\tau(Z)) = \Ind_{\bar{\varphi}\circ\Pi_\tau}(\gamma,Z) \defeq \Ind_\varphi(\gamma,Z)
	\end{aligned}
	\]
	by (3) and (1), as desired.
\end{proof}

\begin{definition}
	If $\cA$ is a $\CC$-algebra and $Z_0\in \cA$, then $[Z_0] \coloneqq Z_0 + \nil\cA \in \cA/\nil\cA \cong \CC^{\oplus M}$ will denote the equivalence class of $Z_0 \mod \nil\cA$ as a subset of $\cA$.
\end{definition}

\begin{corollary} 
	Let $\cA \xrightarrow{\varphi} \cB$ be a morphism of $\CC$-algebras, let $U\subseteq\cA$ be open and path-connected, and let $\Gamma\in Z_1(U,\ZZ)$ be a 1-cycle.
	\begin{enumerate}
		\item If $Z_0\in\Adm_\varphi(\Gamma)$ and $X_0\in\nil\cA$, then $\Ind_\varphi(\Gamma,Z_0+X_0) = \Ind_\varphi(Z_0)$.
		Hence we have a well-defined map $\Ind_\varphi(\Gamma,-): \Adm_\varphi(\Gamma)/\nil\cA \to \ZZ^N \subseteq \cB$, $[Z_0] \mapsto \Ind_\varphi(\Gamma,[Z_0])$, where notice that $\Adm_\varphi(\Gamma)/\nil\cA \subseteq \CC^M$.
		In particular, if $0\in\Adm_\varphi(\Gamma)$, then $\Ind_\A(\Gamma,X_0) = \Ind_\A(\Gamma,0)$.
		
		\item If $(\cA,\fm) \xrightarrow{\varphi} (\cB,\fn)$ is a morphism of local $\CC$-algebras, then the Index of $\Gamma$ is invariant under $\varphi$, that is, for a fixed $Z\in\Adm(\Gamma)$ the quantity $\Ind_\cB(\varphi_\# \Gamma,\varphi(Z))$ does not depend on $\varphi$.
		In particular, if $\cA = (\cA,\fm)$ is local, then $\Ind_\cA(\Gamma,Z)$ is invariant under $\CC$-algebra endomorphisms of $\cA$.
	\end{enumerate}
\end{corollary}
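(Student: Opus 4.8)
\emph{Proof plan.} The plan is to deduce both parts of the corollary directly from \cref{indexcalc}, exploiting the fact that every closed-form expression for $\Ind_\varphi(\Gamma,-)$ obtained there depends on the point only through its spectral projections $\sigma_k(Z)$, and that each $\sigma_k$, being an epimorphism of $\CC$-algebras onto $\CC$, annihilates $\nil\cA$.

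For part (1) I would first note that $Z_0+X_0\in\Adm_\varphi(\Gamma)$ whenever $Z_0\in\Adm_\varphi(\Gamma)$ and $X_0\in\nil\cA$: membership in $\Adm_\varphi(\Gamma)=\Adm_{\im\tau_\varphi}(\Gamma)$ is a condition on the values $\sigma_k(Z)$ for $k\in\im\tau_\varphi$ only, and $\sigma_k(Z_0+X_0)=\sigma_k(Z_0)+\sigma_k(X_0)=\sigma_k(Z_0)$ since $\sigma_k$ is an algebra epimorphism onto $\CC$ and $X_0$ is nilpotent. Then I invoke \eqref{indphi}, which writes $\Ind_\varphi(\Gamma,Z)=\bigoplus_{\ell=1}^N\ind_{\CC_{\tau(\ell)}}(\Gamma^\sp_{\tau(\ell)},\sigma_{\tau(\ell)}(Z))$; the right-hand side is manifestly unchanged under $Z\mapsto Z+X_0$, giving $\Ind_\varphi(\Gamma,Z_0+X_0)=\Ind_\varphi(\Gamma,Z_0)$. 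Hence the assignment $Z_0\mapsto\Ind_\varphi(\Gamma,Z_0)$ is constant on cosets of $\nil\cA$ and descends to a well-defined map $\Adm_\varphi(\Gamma)/\nil\cA\to\ZZ^N\subseteq\cB$, with $\Adm_\varphi(\Gamma)/\nil\cA\subseteq\cA/\nil\cA\cong\CC^M$ and image in $\ZZ^N$ again by \eqref{indphi}. The displayed consequence is the special case $\varphi=\id_\cA$, $Z_0=0$.

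For part (2), assume $(\cA,\fm)\xrightarrow{\varphi}(\cB,\fn)$ is a morphism of local $\CC$-algebras and fix $Z\in\Adm(\Gamma)$. Applying part (1) of \cref{indexcalc} with $\psi\coloneqq\varphi$ and the outer morphism taken to be $\id_\cB$ gives $\Ind_\cB(\varphi_\#\Gamma,\varphi(Z))=\Ind_\varphi(\Gamma,Z)$, while \eqref{localmorphind} identifies this with $\ind_\CC(\Gamma^\sp,\sigma_\cA(Z))$ — an expression involving neither $\cB$ nor $\varphi$. Thus $\Ind_\cB(\varphi_\#\Gamma,\varphi(Z))$ does not depend on the choice of morphism $\varphi$ out of $(\cA,\fm)$. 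Specialising to $\cB=\cA$ and $\varphi\in\End_{\fdCAlg_\CC}(\cA)$ — which is automatically local, since $\varphi(\fm)\subseteq\nil\cA=\fm$ by the Artin property — yields $\Ind_\cA(\varphi_\#\Gamma,\varphi(Z))=\Ind_\cA(\Gamma,Z)$, i.e. invariance of the index under $\CC$-algebra endomorphisms.

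I do not anticipate a genuine obstacle: the whole argument is an unpacking of \cref{indexcalc}. The only points needing care are that $\Adm_\varphi(\Gamma)$ and $\Adm_\cA(\Gamma)$ coincide when $\cA$ is local (so that the hypothesis $Z\in\Adm(\Gamma)$ suffices to apply \eqref{localmorphind}), and the elementary fact that a $\CC$-algebra homomorphism into $\CC$ kills nilpotents — both already recorded in the excerpt.
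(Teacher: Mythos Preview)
Your proposal is correct and follows essentially the same route as the paper: part (1) is deduced from \eqref{indphi}, whose right-hand side depends on $Z$ only through the spectral projections $\sigma_k(Z)$ (equivalently, the eigenvalues of $Z$), and part (2) from \eqref{localmorphind}, which expresses $\Ind_\varphi(\Gamma,Z)=\Ind_\cB(\varphi_\#\Gamma,\varphi(Z))$ as $\ind_\CC(\Gamma^\sp,\sigma_\cA(Z))$, a quantity independent of $\varphi$. Your write-up is simply more explicit about the intermediate step $\Ind_\cB(\varphi_\#\Gamma,\varphi(Z))=\Ind_\varphi(\Gamma,Z)$ via \cref{indexcalc}(1), which the paper absorbs into its reference to \cref{indexcalc}(2).
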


\begin{proof}
	To (1): Since $\nil\cA = \bigoplus_{k=1}^M \fm_k$, this follows from \cref{indphi}, which depends only on the eigenvalues of $Z_0 + X_0$.
	
	To (2): This is immediate from the second part of \cref{indexcalc} (2).
\end{proof}

\begin{remarks}\ 
	\begin{enumerate}[topsep=-\parskip]
		\item Using \cref{indphi}, $\Ind_\varphi$ can also be well-defined for merely continuous 1-cycles by means of the topological degree since the projections always have well-defined topological degrees in the respective complex plane(s).
		
		\item \cref{indphi} makes it clear that, even though in the local case the Index is $\ZZ$-valued like in the complex plane, $\Ind_\varphi$ is more accurately seen as $\cB$-valued.
		This illuminates the fact that in our setting it is only natural to consider 1-cycles with more general coefficients than $\ZZ$, for example $\cA$ or $\cB$, as we shall do in \cite{MG04}.
		
		\item Let $Z_0\coloneqq 0$, then $\Ind_\A(-,0): \gamma \mapsto \Ind_\A(\gamma,0) = \oplus_{k=1}^M \ind(\gamma^\sp_k,0) \in \ZZ^M \subseteq \cA$ is only a one-way homotopy invariant in $\A^\times$. 
		Indeed, if $\gamma,\delta\in\Cc^0(\SS^1,\A^\times)$ with $\gamma \simeq \delta$ in $\A^\times$, then $\gamma^\sp_k \simeq \delta^\sp_k$ in $\CC_{(k)}^{\times}$, $1\leq k\leq M$, and hence $\Ind_\A(\gamma,0) = \Ind_\A(\delta,0)$, but clearly the converse need not hold.
		
		\item Let $V$ be a $\CC$-vector space and $\A=(\A,\m)$ a choice of a local $\CC$-algebra structure on $V$. 
		Then \cref{localind} shows that, as expected, $\Ind_\A$ depends on this choice because it is determined by the choice of one-dimensional complex subspace of $V$ onto which $\gamma \subseteq V$ and $Z_0 \in V$ are projected.
		On the bright side, $\Ind_\A$ is at least invariant under algebra endomorphisms by the previous corollary.
		
		\item By definition, we say that $\gamma:\SS^1\hookrightarrow\A^\times$ is a \textit{simple} loop around 0 or around the singular cone $\A_\sing$ iff $\Ind_\A(\gamma,0) = 1$. 
		Likewise, $\gamma$ is a \emph{simple} loop around a point $Z_0\in\A$ or around $Z_0+\A_\sing$ iff $\Ind_\A(\gamma,Z_0)=1$.
		
		\item Further notice that $\Ind_\varphi(\gamma,Z_0) = 0$ if and only if $\forall k\in I: \ind(\gamma^\sp_k,\sigma_k(Z_0))=0$. 
		Similarly, $\Ind_\varphi(\gamma,Z_0) \in\cB^\times$ if and only if $\forall k\in I: \ind(\gamma^\sp_k,\sigma_k(Z_0))\neq 0$.
		
	\end{enumerate}
\end{remarks}

\begin{lemma}[Index is locally constant]
	Let $U\subseteq\A$ be open and path-connected and $\Gamma\in Z_1(U,\ZZ)$. 
	Then the map
	\[
	\Ind_\varphi(\Gamma,-):\Adm_\varphi(\Gamma)\to\ZZ,\ Z\mapsto\int_\Gamma\frac{\d W}{\varphi(W-Z)}
	\]
	is locally constant. 
	In particular, $\Gamma$ divides $\Adm_\varphi(\Gamma)$ into two types of components: points with zero and non-zero Index.
\end{lemma}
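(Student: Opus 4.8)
The plan is to reduce to the classical fact that the winding number of a $1$-cycle in the complex plane is locally constant off its support, via the reduction of $\Ind_\varphi$ to classical indices of spectral projections from \cref{indexcalc}.

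Write $\cA \cong \bigoplus_{k=1}^M (\cA_k,\fm_k)$ and $\cB \cong \bigoplus_{\ell=1}^N (\cB_\ell,\fn_\ell)$ and put $\varphi$ in canonical factorization form $\varphi = \big(\bigoplus_{\ell=1}^N \bar{\varphi}_\ell\big)\circ\Pi_\tau$ with $\tau = \tau_\varphi$ and $I = \im\tau$. By \hyperref[indphi]{\cref*{indexcalc} (4)} we have for every $Z\in\Adm_\varphi(\Gamma)$
\[
\Ind_\varphi(\Gamma,Z) = \bigoplus_{\ell=1}^N \ind_{\CC_{(\tau(\ell))}}\!\big(\Gamma^\sp_{\tau(\ell)},\sigma_{\tau(\ell)}(Z)\big) \in \ZZ^{\oplus N}\subseteq\cB,
\]
where $\Gamma^\sp_{\tau(\ell)} = (\sigma_{\tau(\ell)})_\#\Gamma$, and by the very definition of $\Adm_\varphi(\Gamma)$ the point $\sigma_{\tau(\ell)}(Z)$ avoids the support of $\Gamma^\sp_{\tau(\ell)}$ for every $1\leq\ell\leq N$, so each summand is well-defined.

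Now recall the classical fact from the complex analysis of one variable: for a ($\Cc^1_\pw$, or merely continuous) $1$-cycle $c$ in $\CC$, the function $w\mapsto\ind_\CC(c,w)$ is locally constant on $\CC\setminus c$ (indeed constant on connected components). Since each spectral projection $\sigma_{\tau(\ell)}:\cA\twoheadrightarrow\CC_{(\tau(\ell))}$ is $\CC$-linear, hence continuous, the composition $Z\mapsto\ind_{\CC_{(\tau(\ell))}}\!\big(\Gamma^\sp_{\tau(\ell)},\sigma_{\tau(\ell)}(Z)\big)$ is locally constant on $\{Z\in\cA : \sigma_{\tau(\ell)}(Z)\notin\Gamma^\sp_{\tau(\ell)}\}\supseteq\Adm_\varphi(\Gamma)$. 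A finite direct sum of locally constant functions is locally constant, so $\Ind_\varphi(\Gamma,-)$ is locally constant on $\Adm_\varphi(\Gamma)$. Since $\Adm_\varphi(\Gamma)$ is open in $\cA$, hence locally connected, $\Ind_\varphi(\Gamma,-)$ is constant on each connected component; collecting the components according to whether this constant value vanishes exhibits $\Adm_\varphi(\Gamma)$ as the disjoint union of the open-and-closed sets $\{\Ind_\varphi(\Gamma,-)=0\}$ and $\{\Ind_\varphi(\Gamma,-)\neq 0\}$.

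There is essentially no obstacle here beyond quoting \cref{indexcalc}. The only point requiring a little care is the case of merely continuous $1$-cycles: there one either invokes the topological-degree version of \cref{indphi} noted above, or argues directly that $Z\mapsto\Ind_\varphi(\Gamma,Z)$ is continuous on $\Adm_\varphi(\Gamma)$ --- the integrand $\varphi(\gamma_j(t)-Z)^{-1}$ depends continuously on $(t,Z)$, since $\varphi(\gamma_j(t)-Z)\in\cB^\times$ there and inversion is continuous on $\cB^\times$, so the integral defining $\Ind_\varphi$ varies continuously with $Z$ by uniform continuity on compacta --- and then uses that a continuous map into the discrete set $\ZZ^{\oplus N}\subseteq\cB$ is automatically locally constant.
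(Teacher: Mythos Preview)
Your proof is correct but takes a different route than the paper. The paper argues directly: differentiating the parameter integral under the sign in the variable $Z$ yields
\[
\frac{\d}{\d Z}\Ind_\varphi(\Gamma,Z) = \int_\Gamma \frac{\d W}{\varphi(W-Z)^2} = 0,
\]
the last equality because $-1/\varphi(W-Z)$ is a global $\varphi$-primitive of the integrand in $W$, so the integral over a cycle vanishes. Your approach instead invokes the structural reduction of \cref{indexcalc} to express $\Ind_\varphi$ as a tuple of classical winding numbers in the spectral planes and then quotes the one-variable fact. The paper's argument is self-contained and does not depend on the index calculation lemma, staying closer to the classical proof via differentiation; your argument is more structural and makes transparent \emph{why} local constancy holds (it is inherited componentwise from the classical case), at the cost of relying on the already-established \cref{indexcalc}. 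Both are short and legitimate; the paper's version has the minor advantage of applying uniformly to $\Cc^1_\pw$-cycles without the side discussion about continuous cycles that you include.
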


\begin{proof}
	This is a parameter integral with a continuously $\A$-differentiable integrand, therefore:
	\[
	\frac{\d}{\d Z} \Ind_\varphi(\Gamma,Z) = \int_\Gamma \frac{\d}{\d Z} \left(\frac{1}{\varphi(W-Z)}\right)\d W = \int_\Gamma \frac{\d W}{\varphi(W-Z)^2} = 0,
	\]
	since $-1/\varphi(W-Z)$ is a global primitive of the integrand in the variable $W$.
\end{proof}

Notice that the zero or non-zero component of $\Ind_\varphi(\Gamma,-)$ is allowed to be empty.
We can now determine them:

\begin{corollary}[Components of $\Gamma$]
	Let $\cA = \bigoplus_{k=1}^M \cA_k \xrightarrow{\varphi} \cB = \bigoplus_{\ell=1}^N \cB_\ell$ be a morphism of $\CC$-algebras with corresponding index subset $I\coloneqq \im\tau_\varphi\subseteq \{1,\dots,M\}$, let $U\subseteq\A$ be open and path-connected, let $\Gamma\in Z_1(U,\ZZ)$ be a 1-cycle, and $Z_0\in\Adm_\varphi(\Gamma)$ a point.
	Then:
	\begin{enumerate}
		\item Components of $\Ind_\varphi(\Gamma,-)$: each component of $\Gamma$ is of the form 
		\[
		\prod_{k=1}^M 
		\begin{cases}
		C_k\times\m_k, \text{ if } k\in I\\
		\cA_k, \text{ otherwise,}
		\end{cases}		
		\]
		where $C_k$ is a component in $\CC_{(k)}=\CC$ of the locally constant function $\ind_\CC(\Gamma^\sp_k,-)$, $k\in I$. 
		
		\item Invertible Index: $\Ind_\varphi(\Gamma,Z_0)\in\cB^\times$ if and only if for all $k\in I$ the component of $\Gamma^\sp_k$ containing the projection $\sigma_k(Z_0)$ is bounded in the respective complex plane $\CC_{(k)}=\CC$.
	\end{enumerate}
\end{corollary}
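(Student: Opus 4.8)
The plan is to reduce everything to the one-variable planar picture by means of the index computation of \cref{indexcalc}; concretely, \cref{indphi} gives, for every $Z\in\Adm_\varphi(\Gamma)$,
\[
\Ind_\varphi(\Gamma,Z)=\bigoplus_{\ell=1}^N \ind_{\CC_{\tau(\ell)}}\!\bigl(\Gamma^\sp_{\tau(\ell)},\sigma_{\tau(\ell)}(Z)\bigr)\,1_{\cB_\ell}\in\ZZ^{\oplus N}\subseteq\cB,
\]
so that on $\Adm_\varphi(\Gamma)$ the map $\Ind_\varphi(\Gamma,-)$ depends on $Z$ only through the eigenvalue tuple $\bigl(\sigma_k(Z)\bigr)_{k\in I}$ and is the pullback along $Z\mapsto(\sigma_k(Z))_{k\in I}$ of the ``product'' of the ordinary winding-number functions $\ind_\CC(\Gamma^\sp_k,-)$, $k\in I$. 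First I would record this reformulation together with the already-established fact that $\Ind_\varphi(\Gamma,-)$ is locally constant on $\Adm_\varphi(\Gamma)$; after that both parts are essentially formal.

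For (1), I would start from the defining description
\[
\Adm_\varphi(\Gamma)=\prod_{k=1}^M
\begin{cases}
(\CC_{(k)}\setminus\Gamma^\sp_k)\times\m_k, & k\in I,\\
\cA_k, & k\notin I,
\end{cases}
\]
and observe that each $\m_k$ is a finite-dimensional $\CC$-vector space, hence connected, each $\cA_k\cong\CC_{(k)}\oplus\m_k$ is connected, and $\CC_{(k)}\setminus\Gamma^\sp_k$ is the disjoint union of its connected components $C_k$ --- which are exactly the maximal connected sets on which $\ind_\CC(\Gamma^\sp_k,-)$ is constant. Since for a finite product of topological spaces the connected components are precisely the products of the connected components of the factors, the connected components of $\Adm_\varphi(\Gamma)$ are exactly the sets in the statement. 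Local constancy of $\Ind_\varphi(\Gamma,-)$ then identifies these with the ``components of $\Ind_\varphi(\Gamma,-)$'', which is (1).

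For (2), I would write $\Ind_\varphi(\Gamma,Z_0)=\bigoplus_{\ell=1}^N n_\ell\,1_{\cB_\ell}$ with $n_\ell\coloneqq\ind_\CC\!\bigl(\Gamma^\sp_{\tau(\ell)},\sigma_{\tau(\ell)}(Z_0)\bigr)\in\ZZ$ (again \cref{indphi}), use $\cB^\times=\prod_{\ell=1}^N\cB_\ell^\times$ coming from the Artin decomposition of $\cB$, and note that in each local factor $\cB_\ell$ the integral multiple $n_\ell\,1_{\cB_\ell}$ of the unit lies in $\cB_\ell^\times$ iff its residue $n_\ell$ under $\sigma_{\cB_\ell}\colon\cB_\ell\twoheadrightarrow\cB_\ell/\fn_\ell\cong\CC$ is non-zero, since $\cB_\ell^\times=\{W:\rho_{\cB_\ell}(W)>0\}$ and $\rho_{\cB_\ell}=\lvert\cdot\rvert\circ\sigma_{\cB_\ell}$. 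Thus $\Ind_\varphi(\Gamma,Z_0)\in\cB^\times$ iff $\ind_\CC(\Gamma^\sp_k,\sigma_k(Z_0))\neq 0$ for every $k\in I$; translating this eigenvalue-wise condition into geometry is then the classical relationship between the winding number of a plane $1$-cycle and the components of its complement (the index vanishes on the unbounded component), so that a non-zero index at $\sigma_k(Z_0)$ amounts to the component $C_k\ni\sigma_k(Z_0)$ of $\CC_{(k)}\setminus\Gamma^\sp_k$ being bounded. Running this over all $k\in I$ yields (2).

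There is no serious obstacle here: the only step that takes genuine care is the very first one --- packaging \cref{indphi} so that $\Ind_\varphi(\Gamma,-)$ is literally a pullback of a product of planar winding-number functions --- and once that is set up, (1) is point-set topology of finite products and (2) is the decomposition $\cB^\times=\prod_\ell\cB_\ell^\times$ into local factors together with the standard facts about the index on the complement of a plane cycle.
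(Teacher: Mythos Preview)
Your proposal is correct and follows essentially the same approach as the paper: both parts are deduced directly from \cref{indphi}, with (1) being the product-of-components observation and (2) the equivalence $\Ind_\varphi(\Gamma,Z_0)\in\cB^\times \Leftrightarrow \forall k\in I:\ind_\CC(\Gamma^\sp_k,\sigma_k(Z_0))\neq 0$ followed by the classical fact about the unbounded component. The paper's proof is a two-line version of what you wrote out in detail.
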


\begin{proof}
	To (1): This is immediate from \cref{indphi}.
	
	To (2): Again by \cref{indphi} we have $\Ind_\varphi(\Gamma,Z_0)\in\cB^\times \Leftrightarrow \forall k\in I: \ind(\Gamma^\sp_k,\sigma_k(Z_0)) \neq 0 \Leftrightarrow$ the component of $\Gamma^\sp_k$ in $\CC$ containing $\sigma_k(Z_0)$ is not the unbounded one.
\end{proof}

\subsection{Cauchy's Integral Formula over $\cA$ and Consequences}\label{subs:CIF}

\begin{proposition}[Cauchy's Integral Formula: Cauchy's Reproducing Kernel over $\A$]\label{cif}
	Let $\cA\xrightarrow{\varphi}\cB$ be a morphism of $\CC$-algebras and let $Z_0\in\A$. 
	Let $\Delta\coloneqq\Delta_r(Z_0)$ be the open polydisc around $Z_0$ of radius $r$ and let $f\in\O_\varphi(\Delta)$. 
	If $\gamma\in\Cc^1_\pw(\SS^1,\Delta)$ is an admissible loop for $Z_0$, then
	\begin{equation}\label{CIF}
	f(Z_0) \Ind_\varphi(\gamma,Z_0) = \frac{1}{2\pi i} \oint_\gamma \frac{f(Z)}{\varphi(Z-Z_0)}\d Z.
	\end{equation}
	In particular, if $Z_0 \in \nil\A$ and $\gamma$ is a simple $\Cc^1_\pw$-loop in $\A^\times$ around $Z_0$, then
	\begin{equation}\label{CIFnil}
	f(Z_0) = \frac{1}{2\pi i} \oint_\gamma \frac{f(Z)}{Z-Z_0}\d Z.
	\end{equation}
\end{proposition}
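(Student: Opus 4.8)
The plan is to mimic the one‑variable proof of Cauchy's formula by inserting $f(Z)=f(Z_0)+\bigl(f(Z)-f(Z_0)\bigr)$ under the integral sign; the entire content then lies in showing that the ``error'' term contributes nothing, and — since submultiplicativity of the $\cB$-norm provides no usable bound on $\norm{\varphi(Z-Z_0)^{-1}}$ near the forbidden zone — I would obtain this by shrinking $\gamma$ towards $Z_0$ along the explicit homotopy $\gamma_\epsilon(t)\coloneqq Z_0+\epsilon\bigl(\gamma(t)-Z_0\bigr)$. The first step is the splitting
\[
\frac{1}{2\pi i}\oint_\gamma\frac{f(Z)}{\varphi(Z-Z_0)}\,\d Z
= f(Z_0)\,\Ind_\varphi(\gamma,Z_0)+\frac{1}{2\pi i}\oint_\gamma\frac{f(Z)-f(Z_0)}{\varphi(Z-Z_0)}\,\d Z ,
\]
which is immediate from the definition of $\Ind_\varphi(\gamma,Z_0)$ and $\CC$-linearity of the integral; it remains to prove that the last integral vanishes.

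To that end I would work on $V\coloneqq\{Z\in\Delta:\varphi(Z-Z_0)\in\cB^\times\}$, which is open, contains the support of $\gamma$ (this is exactly admissibility of $\gamma$ for $Z_0$), and is path‑connected, being a polydisc with finitely many complex hyperplanes through $Z_0$ removed. Expanding $(u+v)^{-1}=u^{-1}-u^{-2}v+o(\norm v)$ via the Neumann series with $u\coloneqq\varphi(Z-Z_0)\in\cB^\times$ and $v\coloneqq\varphi(H)$ shows that $Z\mapsto\varphi(Z-Z_0)^{-1}$ is $\varphi$-holomorphic on $V$ (with derivative $-\varphi(Z-Z_0)^{-2}$); hence by the Leibniz rule (\cref{rules}) so is $g(Z)\coloneqq\bigl(f(Z)-f(Z_0)\bigr)\varphi(Z-Z_0)^{-1}$, and therefore by \cref{closed}(3) the $\cB$-valued $1$-form $\omega\coloneqq g(Z)\,\d Z$ is $\d$-closed on $V$.

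Now for $\epsilon\in(0,1]$ the loop $\gamma_\epsilon$ lies in $V$: it stays inside the polydisc $\Delta=\Delta_r(Z_0)$ (which is star‑shaped about $Z_0$) and $\varphi(\gamma_\epsilon-Z_0)=\epsilon\,\varphi(\gamma-Z_0)\in\cB^\times$. Since $\{\gamma_s\}_{s\in[\epsilon,1]}$ is a homotopy in $V$ from $\gamma_\epsilon$ to $\gamma$, the cycle $\gamma-\gamma_\epsilon$ is null‑homologous in $V$, so \cref{GCIT}(1) (applied to the closed form $\omega$) gives $\oint_\gamma\omega=\oint_{\gamma_\epsilon}\omega$ for every $\epsilon$. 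Writing $g(Z)=f'(Z_0)+r(Z-Z_0)\,\varphi(Z-Z_0)^{-1}$ with $r(H)\coloneqq f(Z_0+H)-f(Z_0)-f'(Z_0)\varphi(H)$, so that $\norm{r(H)}=o(\norm{\varphi(H)})$, and using $\oint_{\gamma_\epsilon}\d Z=0$ (the constant $1_\cB$ has $\varphi$-primitive $\varphi$), I get $\oint_{\gamma_\epsilon}\omega=\oint_{\gamma_\epsilon}r(Z-Z_0)\,\varphi(Z-Z_0)^{-1}\,\d Z$. Along $\gamma_\epsilon$ one has $Z-Z_0=\epsilon\bigl(\gamma(t)-Z_0\bigr)\eqqcolon H$, whence $\norm{\varphi(H)^{-1}}=\epsilon^{-1}\norm{\varphi(\gamma(t)-Z_0)^{-1}}\le\epsilon^{-1}M$ with $M\coloneqq\max_t\norm{\varphi(\gamma(t)-Z_0)^{-1}}<\infty$, while $\norm{r(H)}\le\epsilon\,\eta(\epsilon)$ uniformly in $t$ for some $\eta(\epsilon)\to0$ (because $\norm{\varphi(H)}=O(\epsilon)$ uniformly), and $L_\cB(\gamma_\epsilon)=\epsilon\,L_\cB(\gamma)$; then \cref{intineq} gives $\norm{\oint_{\gamma_\epsilon}\omega}\le M\,\eta(\epsilon)\,L_\cB(\gamma)\xrightarrow{\,\epsilon\to0\,}0$. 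As this is independent of $\epsilon$, it equals $0$, so $\oint_\gamma\omega=0$ and \cref{CIF} follows. Finally, \cref{CIFnil} is the special case: a simple $\Cc^1_\pw$-loop $\gamma\subseteq\cA^\times$ around $Z_0\in\nil\cA$ is automatically admissible for $Z_0$ (for each relevant index $k$, $\sigma_k(\gamma(t)-Z_0)=\sigma_k(\gamma(t))\neq0$), and $\Ind_\varphi(\gamma,Z_0)=\varphi\bigl(\Ind_\cA(\gamma,Z_0)\bigr)=\varphi(1_\cA)=1_\cB$ by \cref{indphi}, so \cref{CIF} becomes $f(Z_0)=\frac{1}{2\pi i}\oint_\gamma\frac{f(Z)}{\varphi(Z-Z_0)}\,\d Z$, which for $\varphi=\id_\cA$ is the displayed identity verbatim.

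The main obstacle is precisely this vanishing of the error integral: converting ``$\norm{r(H)}=o(\norm{\varphi(H)})$'' into a genuine bound in the presence of the factor $\varphi(Z-Z_0)^{-1}$, which blows up as $Z$ approaches a $\varphi$-forbidden point and is not controlled by submultiplicativity. Contracting along $\gamma_\epsilon$ is the resolving trick — there $\varphi(\gamma_\epsilon(t)-Z_0)=\epsilon\,\varphi(\gamma(t)-Z_0)$, so the whole $\epsilon$-dependence of $\norm{\varphi(\gamma_\epsilon(t)-Z_0)^{-1}}$ sits in the explicit prefactor $\epsilon^{-1}$, the remaining factor being uniformly bounded by compactness of $\gamma$ — and this bypasses invoking any several‑complex‑variables removable‑singularity theorem across the hyperplanes $\{Z:\sigma_k(Z)=\sigma_k(Z_0)\}$.
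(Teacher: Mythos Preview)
Your argument is correct. The paper takes a more structural route: it first reduces to the case $Z_0=0$ and $\varphi:(\cA,\fm)\to(\cB,\fn)$ local, where $\Delta^\times=\DD_r^*(0)\times\DD_r(0)^d$ has $\pi_1=\ZZ$, so $\gamma$ is homotoped not to a radial rescaling of itself but to the \emph{scalar} circle $\gamma_\vareps(t)=\vareps e^{2\pi i\nu t}$. The point of that choice is that $\varphi(\gamma_\vareps'(t))/\varphi(\gamma_\vareps(t))=2\pi i\nu$ is literally constant in $t$, so the error integral collapses to $\nu\int_0^1\bigl(f(\gamma_\vareps(t))-f(0)\bigr)\d t$ and is bounded by $|\nu|\,\norm{f-f(0)}_{\cB,\gamma_\vareps}$, which tends to $0$ by mere continuity of $f$ at $0$ --- no appeal to Fr\'echet differentiability or to a separate bound on $\norm{\varphi(\cdot)^{-1}}$ is needed. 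The general case is then obtained by decomposing $\varphi$ via the canonical factorization $\varphi=\bar\varphi\circ\Pi_\tau$ into local pieces and summing.

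Your radial contraction $\gamma_\epsilon=Z_0+\epsilon(\gamma-Z_0)$ sidesteps this reduction entirely and handles an arbitrary $\varphi$ in one stroke; the price is having to track the $\epsilon^{-1}$ blow-up of $\norm{\varphi(\gamma_\epsilon-Z_0)^{-1}}$ explicitly and cancel it against $L_\cB(\gamma_\epsilon)=\epsilon L_\cB(\gamma)$ (indeed, with that cancellation already in hand you could drop the splitting $g=f'(Z_0)+r\,\varphi^{-1}$ and just bound $\norm{g}_{\gamma_\epsilon}\le\epsilon^{-1}M\,\norm{f-f(Z_0)}_{\gamma_\epsilon}$ by continuity, as the paper effectively does). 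So the two proofs implement the same homotopy-plus-shrinking idea, but the paper's leverages the spectral projections to get a cleaner estimate and to make the link with the classical one-variable CIF visible, while yours is more self-contained and avoids invoking the Artin decomposition.
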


\begin{proof}
	It suffices to prove the claim for $Z_0=0$, otherwise simply consider $\tilde{f}(Z)\coloneqq f(Z+Z_0)$ and $\tilde{\gamma}(t)\coloneqq \gamma(t)-Z_0$.
	Notice that in general both $f$ and $\Ind_\varphi$ take values in $\B$.
	We first show the claim for a local morphism $\varphi: (\cA,\fm) \to (\cB,\fn)$. 
	If we put $d\coloneqq\dim_\CC\m$, we have $\Delta^\times \coloneqq \Delta \cap \A^\times = \Delta \cap (\CC^{\times} \times \CC^d) = \DD^{\ast}_r(0) \times \DD_r(0)^d$ as topological spaces, where $\DD^{\ast}_r(0)$ denotes the punctured open disk with radius $r$ and center 0 in $\CC$.
	Therefore $\Delta^\times$ has the same homotopy structure as $\A^\times$, namely $\pi_0(\Delta^\times)=1$, $[\SS^1;\Delta^\times] = [\SS^1;\DD^{\ast}_r(0)]$, and hence $\pi_1(\Delta^\times)=\ZZ$, where explicitly $\gamma \simeq \gamma_\vareps: [0,1]\to \DD^{\ast}_r(0) \times \{0_\fm\}$, $t\mapsto \vareps e^{2\pi i \nu t}$, for an arbitrary fixed $0<\vareps<r$ and some winding number $\nu\in\ZZ$. 
	By \cref{localmorphind} we have:
	\[
	\varphi\bigg(\frac{\gamma'_\vareps(t)}{\gamma_\vareps(t)}\bigg) = \varphi (2\pi i \nu) = 2\pi i\nu \defeq 2\pi i \ind_\CC(\gamma_\vareps,0) = 2\pi i \Ind_\varphi(\gamma,0).
	\]
	Since $f\in\O_\varphi(\Delta)$, the $\B$-valued differential form
	\[
	\omega\coloneqq\frac{f(Z)-f(0)}{\varphi(Z)}\d Z
	\]
	defined over $\Delta^\times$ is $\d$-closed, hence its integrals are invariant under homotopy inside $\Delta^\times$. 
	Let $\norm{\cdot}_\B$ be any submultiplicative norm on $\B$. 
	We can now proceed as in the proof of the classical Cauchy Integral Formula:
	\[
	\begin{aligned}
	&\norm{\frac{1}{2\pi i} \oint_{\gamma} \frac{f(Z)}{\varphi(Z)}\d Z - \Ind_\varphi(\gamma,0) f(0)}_\B = 
	\norm{\frac{1}{2\pi i} \oint_{\gamma} \frac{f(Z)-f(0)}{\varphi(Z)}\d Z}_\B =
	\norm{\frac{1}{2\pi i} \oint_{\gamma_\vareps} \frac{f(Z)-f(0)}{\varphi(Z)}\d Z}_\B = \\
	&\norm{\frac{1}{2\pi i} \int_0^1 \frac{f(\gamma_\vareps(t))-f(0)}{\varphi(\gamma_\vareps(t))} \varphi(\gamma'_\vareps(t))\d t}_\B =
	\norm{\Ind_\varphi(\gamma,0) \int_0^1 \big(f(\gamma_\vareps(t))-f(0)\big)\d t}_\B \leq \abs{\Ind_\varphi(\gamma,0)} \norm{f-f(0)}_{\B,\gamma_\vareps} \xrightarrow{\vareps\to 0} 0
	\end{aligned}
	\]
	as desired.
	
	Next assume that $\varphi$ has the form $\varphi\coloneqq\oplus_{\ell=1}^N \varphi_\ell:\cA = \bigoplus_{\ell=1}^N (\cA_\ell,\fm_\ell)\to\bigoplus_{\ell=1}^N (\cB_\ell,\fn_\ell)$.
	Writing $Z = \oplus_{\ell=1}^N Z_\ell$, $f(Z)=\oplus_{\ell=1}^N f_\ell(Z_\ell)$ for $\varphi_\ell$-holomorphic $f_\ell$, and $\gamma(t) = \oplus_{\ell=1}^N \gamma_\ell(t)$, one obtains:
	\[
	\begin{aligned}
	\frac{1}{2\pi i} \oint_\gamma \frac{f(Z)}{\varphi(Z)}\d Z &= \bigoplus_{\ell=1}^N \frac{1}{2\pi i} \oint_{\gamma_\ell} \frac{f_\ell(Z_\ell)}{\varphi_\ell(Z_\ell)}\d Z_\ell =
	\bigoplus_{\ell=1}^N f_\ell(0) \Ind_{\varphi_\ell}(\gamma_\ell,0) = 
	\bigg(\bigoplus_{\ell=1}^N f_\ell(0)\bigg) \bigg(\bigoplus_{\ell=1}^N \Ind_{\varphi_\ell}(\gamma_\ell,0)\bigg) = f(0)\Ind_\varphi(\gamma,0)
	\end{aligned}
	\]
	by \cref{morphind} as required.
	Finally, if $\cA\xrightarrow{\varphi}\cB$ is a general morphism of $\CC$-algebras with canonical factorization $\varphi = \bar{\varphi}\circ \Pi_\tau$ as in \cref{MorFact}, then by \cref{inclocal} it induces a corresponding factorization of $f$ as
	\[
	\begin{tikzcd}[column sep = scriptsize, row sep=normal]
	U \ar[r,"f"] \ar[two heads,d,"\Pi_\tau",swap] & \cB \\
	\Pi_\tau(U) \ar[ur,"\bar{f}",swap]
	\end{tikzcd}
	\]
	for some $\bar{f}\in\cO_{\bar{\varphi}}(\Pi_\tau(U))$ of the form $\bar{f} = \oplus_{\ell=1}^N \bar{f}_\ell$.
	Putting $\bar{\gamma} \coloneqq \Pi_\tau\circ\gamma$ and noting $\d\bar{\gamma} \defeq \d(\Pi_\tau\circ\gamma) = (\Pi_\tau\circ\gamma)'(t)\d t = (\Pi_\tau\circ\gamma')(t)\d t$ as $\Pi_\tau$ is linear, we obtain
	\[
	\begin{aligned}
	\frac{1}{2\pi i} \oint_\gamma \frac{f(Z)}{\varphi(Z)}\d Z &= \frac{1}{2\pi i} \int_0^1 \frac{f(\gamma(t)) \varphi(\gamma'(t))}{\varphi(\gamma(t))}\d t = 
	\frac{1}{2\pi i} \int_0^1 \frac{(\bar{f} \circ \Pi_\tau)(\gamma(t)) (\bar{\varphi} \circ \Pi_\tau)(\gamma'(t))}{(\bar{\varphi} \circ \Pi_\tau)(\gamma(t))} \d t = \frac{1}{2\pi i} \oint_{\bar{\gamma}} \frac{\bar{f}(W) \d W}{\bar{\varphi}(W)} =\\
	&= \Ind_{\bar{\varphi}}(\bar{\gamma},0) \bar{f}(0) = \Ind_\varphi(\gamma,0) f(0)
	\end{aligned}
	\]
	by the previous discussion and \cref{indexcalc} as required.
\end{proof}

%
%
%

\begin{remarks}\ 
	\begin{enumerate}[topsep=-\parskip]
		\item A byproduct of the argument used in the proof of \cref{cif} is that every point $Z_0\in\A$ has an open neighbourhood containing an admissible loop of an arbitrary \textit{integral} index $\Ind_\varphi(\gamma,Z_0)\in\ZZ^{\oplus M}=\pi_1$.
		
		\item Putting $\fI_\gamma \coloneqq \Ind_\varphi(\gamma,-)$, it follows from \cref{cif} that $f|_\gamma$ determines $f$ in all components of the set $\Adm(\gamma)\cap\fI_\gamma^{-1}((\ZZ\setminus\{0\})^N)$, which contains stuff outside of $\Delta$: in particular, this set is always unbounded as it contains a copy of $\nil\A$. 
		In other words, one obtains a $\varphi$-holomorphic continuation of $f$.
		More precisely:
		
	\end{enumerate}
\end{remarks}

\begin{lemma}[Continuation to a local cylinder]\label{localcylindercont}
	Let $(\A,\m)\xrightarrow{\varphi}(\B,\n)$ be a morphism of local $\CC$-algebras, let $\Delta\coloneqq\Delta_r(Z_0)\subseteq\A$ be a polydisk, and $f\in\O_\varphi(\Delta)$.
	Then for any loop $\gamma\in\Cc^1_\pw(\SS^1,\Delta)$ the function $f$ has a $\varphi$-holomorphic continuation to $\Delta\cup(C_\gamma\times\m)$ given by $\forall Z\in \Delta\cap(C_\gamma\times\m)\ \forall X\in\m:$
	\begin{equation}
		f(Z+X) \coloneqq \frac{1}{2\pi i\Ind_\varphi(\gamma,Z)} \oint_\gamma \frac{f(W)}{\varphi(W-Z-X)}\d W,
	\end{equation}
\end{lemma}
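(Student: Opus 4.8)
The plan is to recognise the right-hand side of the displayed formula as a single, honestly defined $\cB$-valued function $F$ on the open cylinder $C_\gamma\times\fm$, to check via Cauchy's Integral Formula that $F$ coincides with $f$ on the overlap $\Delta\cap(C_\gamma\times\fm)$, and finally to show that $F$ is $\varphi$-holomorphic, so that $f$ and $F$ glue (using that $\cO_\varphi$ is a sheaf, the corollary to \cref{rules}) to a $\varphi$-holomorphic function on $\Delta\cup(C_\gamma\times\fm)$. As usual one first translates so that $Z_0=0$; write $z\coloneqq\sigma(Z)$, $\gamma^\sp\coloneqq\sigma\circ\gamma$, and recall that $C_\gamma$ is a connected component of $\CC\setminus\gamma^\sp$ on which $\ind_\CC(\gamma^\sp,-)$ is constant equal to some nonzero integer $k$ (this is needed for the denominator to make sense).

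First I would verify that the formula is well posed on all of $C_\gamma\times\fm$. For $V$ with $\sigma(V)\in C_\gamma$ and $W\in\gamma$ one has $\sigma(W-V)=\gamma^\sp(t)-\sigma(V)\neq 0$, hence $W-V\in\cA^\times$, hence $\varphi(W-V)\in\cB^\times$ since $\varphi$ is a unital ring homomorphism; thus the integrand $f(W)/\varphi(W-V)$ is a continuous $\cB$-valued function along the piece-wise smooth loop $\gamma$ and the contour integral exists. By \cref{localmorphind} together with the corollary to \cref{indexcalc}, $\Ind_\varphi(\gamma,V)=\ind_\CC(\gamma^\sp,\sigma(V))=k$ depends only on $\sigma(V)$ (not on the splitting $V=Z+X$) and is invertible in $\cB$; and since a bounded component of the complement of the compact set $\gamma^\sp$ lies in its convex hull, hence in $\sigma(\Delta)=\DD_r(0)$, every $z\in C_\gamma$ equals $\sigma(Z)$ for some $Z\in\Delta\cap(C_\gamma\times\fm)$, so $\{Z+X:Z\in\Delta\cap(C_\gamma\times\fm),\ X\in\fm\}$ is exactly $C_\gamma\times\fm$. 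We therefore get a single-valued $F\colon C_\gamma\times\fm\to\cB$, $F(V)=\frac{1}{2\pi i k}\oint_\gamma f(W)/\varphi(W-V)\,\d W$.

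Next I would check agreement on the overlap: for $V\in\Delta\cap(C_\gamma\times\fm)$ the loop $\gamma$ is admissible for $V$ (because $\sigma(V)\in C_\gamma\subseteq\CC\setminus\gamma^\sp$), so \cref{cif} applied with $V$ in place of $Z_0$ gives $f(V)\,\Ind_\varphi(\gamma,V)=\frac{1}{2\pi i}\oint_\gamma f(W)/\varphi(W-V)\,\d W$, and dividing by the invertible integer $k=\Ind_\varphi(\gamma,V)$ yields $f(V)=F(V)$. Then I would prove $\varphi$-holomorphy of $F$ on the open set $C_\gamma\times\fm$: for fixed $W\in\gamma$ the map $V\mapsto f(W)/\varphi(W-V)$ is $\varphi$-differentiable, being the composite of the $\id_\cA$-differentiable translation $V\mapsto W-V$, the unit-preserving $\varphi$ (which is $\varphi$-differentiable with derivative $1_\cB$ by \cref{rules}), the $\cB$-differentiable inversion on $\cB^\times$, and left multiplication by the constant $f(W)\in\cB$; by the chain and product rules (\cref{rules}) its $\varphi$-derivative is $f(W)/\varphi(W-V)^2$. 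Since this derivative and the integrand are jointly continuous in $(W,V)$, differentiating under the integral sign — the very argument already used to show that $\Ind_\varphi(\gamma,-)$ is locally constant — gives that $F$ is $\varphi$-differentiable on $C_\gamma\times\fm$. Hence $F\in\cO_\varphi(C_\gamma\times\fm)$, and gluing with $f$ completes the proof.

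\textbf{Main obstacle.} The only non-formal step is the last one, i.e.\ transporting the parameter-integral (differentiation-under-the-integral-sign) argument to the $\varphi$-differentiable setting so as to conclude that $F$ is $\varphi$-holomorphic; but this is literally the same estimate as in the locally-constant-index lemma, using submultiplicativity of an algebra norm on $\cB$ and uniform continuity on the compact loop. Everything else is bookkeeping with the spectral projection $\sigma$, the fact that $\varphi$ carries units to units, and the invariance of $\Ind_\varphi(\gamma,-)$ under translation by $\nil\cA$ (so that the denominator is a nonzero constant on $C_\gamma\times\fm$).
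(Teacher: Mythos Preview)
Your proposal is correct and follows essentially the same three-step approach as the paper's proof (well-definedness of the formula on $C_\gamma\times\fm$, agreement with $f$ on the overlap via \cref{cif}, and $\varphi$-holomorphy by differentiating under the integral sign together with local constancy of the index). One minor caveat: the paper defines $C_\gamma\coloneqq\{z\in\CC\setminus\gamma^\sp:\ind(\gamma^\sp,z)\neq 0\}$ as the union of \emph{all} nonzero-index components, so the index need not be a single global constant $k$; simply replace your $k$ by the locally constant $\Ind_\varphi(\gamma,V)$ and your argument goes through unchanged.
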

where $C_\gamma\coloneqq\{z\in\CC\setminus\gamma^\sp: \ind(\gamma^\sp,z)\neq 0\}$ is the non-zero-index component(s) of $\gamma^\sp \coloneqq \sigma_\cA\circ\gamma$.
\begin{proof}
	Firstly, notice that $\Delta\cap(C_\gamma\times\m)\neq\emptyset$ is open and for $X=0$ the RHS agrees with $f$ on $\Delta\cap(C_\gamma\times\m)$ by \cref{cif}. 
	Secondly, since $\cA$ is local, $C_\gamma\times\m\subseteq\Adm(\gamma)$ and $\forall Z\in C_\gamma\times\m: \Ind_\varphi(\gamma,Z) = \ind(\gamma^\sp,z)\neq 0$, and since $X\in\m$ and $Z\in\Adm(\gamma)$, we have $Z+X\in\Adm(\gamma)$ and $\Ind_\varphi(\gamma,Z+X) = \Ind_\varphi(\gamma,Z)\neq 0$, so the RHS is well-defined.
	In particular, if $Z_1+X_1 = Z_2+X_2$ for some $Z_{1,2}\in U$ and $X_{1,2}\in\m$, i.e. $Z_2 = Z_1 + X$ for $X\coloneqq X_1-X_2\in\m$, then
	$Z_1\in\Adm(\gamma)\Leftrightarrow Z_2\in\Adm(\gamma)$ and $\Ind_\varphi(\gamma,Z_1+X_1) = \Ind_\varphi(\gamma,Z_1) = \Ind_\varphi(\gamma,Z_2) = \Ind_\varphi(\gamma,Z_2+X_2)$, hence $f(Z+X)$ itself is well-defined.
	Finally, it is immediate that the so defined function is $\varphi$-holomorphic since $\Ind_\varphi(\gamma,Z)$ is locally constant and one can $\varphi$-differentiate under the integral sign.
\end{proof}

\subsection{Analyticity}\label{subs:anal}

\begin{definition}
	Let $\fA\xrightarrow{\varphi}\fB$ be a morphism of not necessarily unital $\KK$-algebras and let $U\subseteq\fA$ be open. A function $f:U\to\fB$ is called $\varphi$-analytic at $Z_0\in U$ if in a neighbourhood of $Z_0$ it is given by a convergent power series of the form
	\[
	f(Z) = \sum_{\ell=0}^{\infty} B_\ell (Z-Z_0)^\ell \defeq \sum_{\ell=0}^{\infty} B_\ell \varphi(Z-Z_0)^\ell
	\]
	for some $B_\ell\in\fB$. 
	The $\KK$-algebra of $\varphi$-analytic functions at $0$ will be denoted by $\fB_\fA\{Z\} \coloneqq \fB_\varphi\{Z\} \coloneqq \fB\{\varphi(Z)\}$.
\end{definition}

\begin{remark}
	If $\fA\xrightarrow{\varphi}\fB$ is a morphism of not necessarily unital $\KK$-algebras, where $\fA$ is connected and non-unital, then $\exists \nu\in\NN\ \forall Z\in\fA: Z^\nu = 0$, and therefore $\fB\{\varphi(Z)\} = \fB[[\varphi(Z)]] = \fB[\varphi(Z)]$.
\end{remark}

\begin{definition}[Products of Spectral Balls and Annuli]
	Let $\A=\bigoplus_{k=1}^M \A_k$ be the decomposition of $\A$ into local Artinian $\CC$-algebras and let $Z_0\in\A$.
	Let $\underline{r}\coloneqq(r_1,\dots,r_M)\in [0,\infty]^M$ and $\underline{R}\coloneqq(R_1,\dots,R_M)\in [0,\infty]^M$.
	Then:
	\begin{enumerate}
		\item Spectral polyball (polycylinder): $\BB^\sp(Z_0,\underline{R}) \coloneqq \prod_{k=1}^M \BB^\sp_{\A_k}(\pr_k(Z_0),R_k)$ and $\wbar{\BB}^\sp(Z_0,\underline{R}) \coloneqq \prod_{k=1}^M \wbar{\BB}^\sp_{\A_k}(\pr_k(Z_0),R_k)$.
		
		\item Spectral polyannulus: $\AA^\sp(Z_0,\underline{r},\underline{R}) \coloneqq \prod_{k=1}^M \AA^\sp_{\A_k}(\pr_k(Z_0),r_k,R_k)$ and $\bar{\AA}^\sp(Z_0,\underline{r},\underline{R}) \coloneqq \prod_{k=1}^M \bar{\AA}^\sp_{\A_k}(\pr_k(Z_0),r_k,R_k)$.
	\end{enumerate}
\end{definition}


\begin{lemma}\label{limsupmax}
	Let $(a_i(k))_{k\in\NN}\subseteq\RR$, $1\leq i\leq n$, be $n$ sequences of real numbers.
	Then
	\[
	\max_{1\leq i\leq n} \limsup_{k\to\infty} a_i(k) =
	\limsup_{k\to\infty} \max_{1\leq i\leq n} a_i(k)
	\]
\end{lemma}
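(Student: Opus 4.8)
The plan is to avoid any pigeonhole or subsequence argument and instead chain together two elementary facts about suprema and monotone limits in the extended reals $[-\infty,+\infty]$, so that the degenerate cases where some limsup equals $\pm\infty$ require no special treatment; both inequalities of the asserted equality then fall out simultaneously.

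First I recall the ``two-index'' rule for suprema: for any family $(c_{k,i})$ of extended reals one has $\sup_{k,i} c_{k,i} = \sup_k \big(\sup_i c_{k,i}\big) = \sup_i \big(\sup_k c_{k,i}\big)$, and a supremum over a finite set is attained, hence is a maximum. Applying this to $c_{k,i} \coloneqq a_i(k)$, with $k$ ranging over $\{k \in \NN : k \ge N\}$, gives for every $N \in \NN$
\[
\sup_{k \ge N}\ \max_{1 \le i \le n} a_i(k) = \max_{1 \le i \le n}\ \sup_{k \ge N} a_i(k).
\]

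Next, set $b_i(N) \coloneqq \sup_{k \ge N} a_i(k)$; each sequence $\big(b_i(N)\big)_{N \in \NN}$ is non-increasing, with $\lim_{N \to \infty} b_i(N) = \limsup_{k \to \infty} a_i(k)$. The second elementary fact is that the pointwise maximum of finitely many non-increasing sequences is itself non-increasing and commutes with the limit, i.e. $\lim_{N \to \infty} \max_{1 \le i \le n} b_i(N) = \max_{1 \le i \le n} \lim_{N \to \infty} b_i(N)$: indeed $\max_i b_i(N)$ is non-increasing and bounded below by $\max_i \lim_N b_i(N)$, while conversely for any extended real $t > \max_i \lim_N b_i(N)$ one can pick, for each of the finitely many $i$, an index $N_i$ with $b_i(N_i) < t$, so that $\max_i b_i(N) < t$ for all $N \ge \max_i N_i$. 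Letting $N \to \infty$ in the displayed identity — its left-hand side being $\limsup_{k \to \infty} \max_{1\le i\le n} a_i(k)$ by definition of $\limsup$, and its right-hand side being $\max_{1\le i\le n} \limsup_{k \to \infty} a_i(k)$ by the fact just proved — yields exactly the asserted equality.

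Every step above is a one-line verification that holds verbatim in $[-\infty, +\infty]$, so there is no genuine obstacle here. The only point worth flagging is that the commutation of $\max$ with the monotone limit genuinely relies on the finiteness of the index set $\{1, \dots, n\}$ (it fails for infinitely many sequences), which is exactly the standing hypothesis of the lemma.
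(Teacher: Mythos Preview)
Your proof is correct and takes a genuinely different route from the paper's. The paper proves the two inequalities separately: the direction $\max_i \limsup_k a_i(k) \le \limsup_k \max_i a_i(k)$ follows from monotonicity of $\limsup$, while for the reverse direction it extracts a subsequence along which $\max_i a_i(k)$ converges to its $\limsup$, then applies a pigeonhole argument to find a single index $j$ that realises the maximum infinitely often along that subsequence. Your argument instead rewrites $\limsup$ as $\lim_{N\to\infty}\sup_{k\ge N}$ and observes two commutations: first $\sup_{k\ge N}\max_i = \max_i \sup_{k\ge N}$ (iterated suprema), then $\lim_N \max_i = \max_i \lim_N$ for finitely many non-increasing sequences. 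This yields both inequalities at once and, as you note, is uniform in the extended reals without any case distinctions; the paper's subsequence extraction implicitly needs a word about the $\pm\infty$ cases. The paper's approach is perhaps more vivid (it pinpoints which index ``wins''), while yours is cleaner bookkeeping and makes the role of the finiteness of $n$ completely transparent.
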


\begin{proof}
	To ``$\leq$'': We have $\forall k\in\NN\ \forall 1\leq j\leq n: a_j(k) \leq
	\max\limits_{1\leq i\leq n} a_i(k) \Rightarrow 
	\forall 1\leq j\leq n: \limsup\limits_{k\to\infty} a_j(k) \leq
	\limsup\limits_{k\to\infty} \max\limits_{1\leq i\leq n} a_i(k) \Rightarrow
	\max\limits_{1\leq j\leq n} \limsup\limits_{k\to\infty} a_j(k) \leq
	\limsup\limits_{k\to\infty} \max\limits_{1\leq i\leq n} a_i(k)$.
	
	To ``$\geq$'': Put $a(k)\coloneqq\max\limits_{1\leq i\leq n} a_i(k)$ and $a\coloneqq\limsup\limits_{k\to\infty} a(k)\in [-\infty,\infty]$.
	Let $I\subseteq\NN$ such that $a=\lim\limits_{k\in I} a(k)$.
	Clearly, $\exists 1\leq j\leq n: a(k) = a_j(k)$ for infinitely many $k\in I$, and denote their set by $J\subseteq I$.
	In particular, $\lim\limits_{k\in J} a_j(k) = a$ and $a\leq\limsup\limits_{k\to\infty} a_j(k)$.
	Thus $\limsup\limits_{k\to\infty} \max\limits_{1\leq i\leq n} a_i(k) = a \leq
	\limsup\limits_{k\to\infty} a_j(k) \leq 
	\max\limits_{1\leq i\leq n} \limsup\limits_{k\to\infty} a_j(k)$, i.e. $\max\limits_{1\leq i\leq n} \limsup\limits_{k\to\infty} a_j(k) \geq 
	\limsup\limits_{k\to\infty} \max\limits_{1\leq i\leq n} a_i(k)$.
\end{proof}

\begin{lemma}[Root test in Banach spaces]\label{rootest}
	Let $(E,\norm{\cdot})$ be a Banach space, let $a\coloneqq\sum_{n=0}^\infty a_n$ be a series in $E$, and define $\alpha\coloneqq\limsup_{n\to\infty}\norm{a_n}^{1/n}$.
	\begin{enumerate}
		\item If $\alpha<1$, then the series converges.
		\item If $\alpha>1$, then the series diverges.
		\item If $\alpha=1$, then no conclusion can be drawn.
		\item $\alpha$ is independent of the choice of an equivalent vector norm $\norm{\cdot}'$. 
	\end{enumerate}
	In particular, if $E$ is finite-dimensional, then $\alpha$ is independent of any choice of $\norm{\cdot}$ on $E$. 
\end{lemma}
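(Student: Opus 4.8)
The plan is to transcribe the classical scalar root test almost verbatim, leaning on completeness of $E$ for the convergent case and on the necessary condition $a_n \to 0$ for the divergent case, and then to settle norm-independence by squeezing $n$-th roots between constants that tend to $1$.

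For (1), I would fix $\beta$ with $\alpha < \beta < 1$ and use the definition of $\limsup$ to find $N \in \NN$ with $\norm{a_n}^{1/n} \leq \beta$, hence $\norm{a_n} \leq \beta^n$, for all $n \geq N$. Then $\sum_{n \geq N}\norm{a_n} \leq \sum_{n\geq N}\beta^n < \infty$, so $\sum a_n$ converges absolutely; since $E$ is a Banach space, it converges. For (2), if $\alpha > 1$ then $\norm{a_n}^{1/n} > 1$, i.e. $\norm{a_n} > 1$, for infinitely many $n$, so $a_n \not\to 0$ and the series diverges, because the terms of any convergent series in a normed space tend to $0$. For (3), I would exhibit the standard scalar counterexamples transported into $E$: choosing a unit vector $e \in E$ (possible unless $E = \{0\}$, in which case the statement is vacuous), the series $\sum \frac{1}{n}e$ diverges and $\sum \frac{1}{n^2}e$ converges, while both have $\alpha = 1$ since $n^{-1/n} \to 1$.

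For (4), given a norm $\norm{\cdot}'$ equivalent to $\norm{\cdot}$, pick $c, C > 0$ with $c\norm{x} \leq \norm{x}' \leq C\norm{x}$ for all $x \in E$; then $c^{1/n}\norm{a_n}^{1/n} \leq (\norm{a_n}')^{1/n} \leq C^{1/n}\norm{a_n}^{1/n}$ for every $n \geq 1$, and since $c^{1/n} \to 1$ and $C^{1/n} \to 1$, passing to $\limsup$ yields $\limsup_n (\norm{a_n}')^{1/n} = \alpha$. Statement (5) then follows at once, since all norms on a finite-dimensional $\KK$-vector space are equivalent. The only step that needs a little care is the $\limsup$ computation in (4): one should record the elementary fact that if $x_n \to \mu > 0$ and $(y_n)$ is a bounded nonnegative sequence, then $\limsup_n(x_n y_n) = \mu\,\limsup_n y_n$ — this, rather than any analytic subtlety, is the sole non-boilerplate ingredient, everything else being the scalar argument with "absolutely convergent $\Rightarrow$ convergent" furnished by completeness of $E$.
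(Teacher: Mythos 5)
Your proposal is correct and follows essentially the same route as the paper: the paper delegates parts (1)--(3) to "the proofs are the same as in the scalar case" (with the counterexamples embedded along a one-dimensional subspace $\RR\hookrightarrow E$, just as you do with your unit vector $e$), and proves (4) by exactly your squeeze using $c^{1/n}\to 1$. You have merely written out in full the steps the paper leaves as routine verification.
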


\begin{proof}
	One readily verifies that the comparison and the root test are valid in any Banach space $(E,\norm{\cdot})$ since the proofs are the same and the (counter)-example(s) for the border case $\alpha=1$ are the same after a suitable choice of an identification $\RR\hookrightarrow E$ with a 1-dimensional subspace of $E$. 
	We only verify that $\alpha$ is independent of the choice of an equivalent vector norm
	$\norm{\cdot}$.
	So, let $\norm{\cdot}'$ and $\norm{\cdot}''$ be two equivalent vector norms on $E$ and let $c'>0$ and $c''>0$ be constants such that $\norm{\cdot}''> c'\norm{\cdot}'$ and $\norm{\cdot}'> c''\norm{\cdot}''$.
	Since $c^{1/n}\to 1$ as $n\to\infty$, it follows by symmetry that $\limsup_{n\to\infty}\norm{a_n}'^{1/n}=\limsup_{n\to\infty}\norm{a_n}''^{1/n}$.
\end{proof}

\begin{lemma-definition}[Analytic Radius]\label{analrad}
	Let $(\A,\norm{\cdot})\in\CBanAlg_\KK$ and let $f(Z)=\sum_{n=0}^\infty A_n Z^n\in\A[[Z]]$.
	Then the analytic radius of the power series $f$ is defined as
	\begin{equation}
	R\coloneqq R_f\coloneqq \frac{1}{\limsup\limits_{n\to\infty}\norm{A_n}^{1/n}}\in[0,\infty]
	\end{equation}
	and is independent of the choice of an equivalent vector norm.
	In particular, if $\A$ is finite-dimensional, then $R_f$ is independent of the choice of any (submultiplicative or not) norm $\norm{\cdot}$ for $\A$.
\end{lemma-definition}

\begin{proof}
	The proof is the same as of the previous lemma.
\end{proof}

\begin{lemma}\label{limsup}
	Let $(a_i(\ell))_{\ell\in\NN}\subseteq\CC$, $1\leq i\leq n$, be $n$ sequences of complex numbers, let $\alpha_i\coloneqq\limsup_{\ell\to\infty}\abs{a_i(\ell)}^{1/\ell}$, $1\leq i\leq n$, and put $\alpha\coloneqq\max_{1\leq i\leq n} \alpha_i$.
	If there is exactly one $1\leq i_0\leq n$ such that $\alpha=\alpha_{i_0}$, then 
	\[
	\limsup_{\ell\to\infty}\bigg\lvert\sum_{i=1}^n a_i(\ell)\bigg\rvert^{1/\ell} = \alpha
	\]
\end{lemma}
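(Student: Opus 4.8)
The plan is to establish the two inequalities ``$\leq$'' and ``$\geq$'' separately, only the second one requiring the hypothesis that $i_0$ is unique.

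For ``$\leq$'' I would argue directly and without the uniqueness assumption. By the triangle inequality $\bigl|\sum_{i=1}^n a_i(\ell)\bigr| \leq n \max_{1\leq i\leq n}|a_i(\ell)|$, hence $\bigl|\sum_{i=1}^n a_i(\ell)\bigr|^{1/\ell} \leq n^{1/\ell}\max_{1\leq i\leq n}|a_i(\ell)|^{1/\ell}$. Since $n^{1/\ell}\to 1$, passing to $\limsup$ and invoking \cref{limsupmax} gives $\limsup_{\ell\to\infty}\bigl|\sum_i a_i(\ell)\bigr|^{1/\ell} \leq \limsup_{\ell\to\infty}\max_i|a_i(\ell)|^{1/\ell} = \max_i\alpha_i = \alpha$.

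For ``$\geq$'' I would use the uniqueness of $i_0$. If $n=1$ there is nothing to prove, so assume $n\geq 2$; then the hypothesis forces $\beta\coloneqq\max_{i\neq i_0}\alpha_i < \alpha$, and in particular $\alpha>0$ (finite or $+\infty$). Fix any real $c$ with $\beta<c<\alpha$. By definition of $\limsup$, for each $i\neq i_0$ we have $|a_i(\ell)|\leq c^\ell$ for all $\ell$ large, hence $\bigl|\sum_{i\neq i_0}a_i(\ell)\bigr|\leq (n-1)c^\ell$ for $\ell\geq \ell_0$. Next pick an infinite set $I\subseteq\NN$ with $\lim_{\ell\in I}|a_{i_0}(\ell)|^{1/\ell}=\alpha_{i_0}=\alpha$; then for any $c'$ with $c<c'<\alpha$ one has $|a_{i_0}(\ell)|\geq c'^\ell$ for all large $\ell\in I$. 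The reverse triangle inequality then yields, for large $\ell\in I$,
\[
\Bigl|\sum_{i=1}^n a_i(\ell)\Bigr| \geq |a_{i_0}(\ell)| - \Bigl|\sum_{i\neq i_0}a_i(\ell)\Bigr| \geq c'^\ell - (n-1)c^\ell = c'^\ell\Bigl(1-(n-1)(c/c')^\ell\Bigr) \geq \tfrac12 c'^\ell,
\]
using $0<c/c'<1$ so the bracket tends to $1$. Taking $\ell$-th roots, $\bigl|\sum_i a_i(\ell)\bigr|^{1/\ell}\geq (\tfrac12)^{1/\ell}c'\to c'$ along $I$, so $\limsup_{\ell\to\infty}\bigl|\sum_i a_i(\ell)\bigr|^{1/\ell}\geq c'$; letting $c'\uparrow\alpha$ (or $c'\to\infty$ if $\alpha=+\infty$) gives the claim, and combined with the first part this proves the equality.

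The only real bookkeeping issue — the ``main obstacle'' such as it is — is keeping the case distinctions tidy: the trivial case $n=1$, and the possibility $\alpha=+\infty$, where instead of approaching a finite value one simply lets the separating constants $c<c'$ range over all reals exceeding $\beta$. The uniqueness hypothesis is used exactly once, to guarantee $\beta<\alpha$ so that such a separating constant exists; dropping it allows cancellation of the dominant term $a_{i_0}$ and the conclusion genuinely fails.
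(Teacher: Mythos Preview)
Your proof is correct. It differs from the paper's argument in structure: the paper proceeds by induction on $n$, taking the case $n=2$ (with $r_1\neq r_2$) as a ``well-known'' fact about radii of convergence of sums of power series, and then for $n\geq 3$ successively absorbs one $a_j$ at a time into $a_{i_0}$, each absorption being an instance of the $n=2$ case. Your approach, by contrast, is direct and uniform in $n$: the upper bound via the triangle inequality and \cref{limsupmax}, and the lower bound via the reverse triangle inequality along a subsequence realizing the $\limsup$ of the dominant term. Your argument is fully self-contained and makes the role of the uniqueness hypothesis completely explicit (it is precisely what gives $\beta<\alpha$ and hence a separating constant $c$), whereas the paper's proof is terser but outsources the essential estimate to the base case.
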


\begin{proof}
	It is more convenient to work with the radii $r_i\coloneqq 1/\alpha_i, r\coloneqq 1/\alpha=\min_{1\leq i\leq n} r_i\in [0,\infty]$, $1\leq i\leq n$.
	The proof is by induction on $n$.
	The case $n=2$, where $r_1\neq r_2$, is well-known from the convergence properties of power series in the Complex Analysis of One Variable.
	Now, let $n\geq 3$ and let $i_0$ be the unique index with $r = r_{i_0}$.
	Since $\forall 1\leq j\leq n,\ j\neq i_0: r_j>r_{i_0}$, the sequence $a'_j(\ell)\coloneqq a_{i_0}(\ell) + a_j(\ell)$ has again a radius $r_{i_0}$.
	Now iterate the same argument for the remaining indices $1\leq j\leq n$, $j\neq i_0$.
\end{proof}

\begin{remark}
	When $n=2$ and $r_1=r_2$ it is not possible to deduce $r$ only from $r_1$ and $r_2$ without closer analysis of the sum $a_1(\ell) + a_2(\ell)$.
\end{remark}

\begin{proposition}[Analytic Radius and Polycylinder of Convergence]\label{convergencepolycylinder}
	Let $\A\in\fdCAlg_\CC$, let $f(Z)=\sum_{\ell=0}^\infty A(\ell) Z^\ell\in\A[[Z]]$, let $\norm{\cdot}$ be an arbitrary submultiplicative norm on $\A$, and put $R\coloneqq R_f$. 
	Then:
	\begin{enumerate}
		\item If $\A=\bigoplus_{k=1}^M \A_k$ is the decomposition of $\A$ into Artin local $\CC$-algebras and $A(\ell)\eqqcolon\oplus_{k=1}^M a_k(\ell)$ accordingly, then $R=\min_{1\leq k\leq M} R_k$, where $R_k\coloneqq \limsup_{\ell\to\infty}\norm{a_k(\ell)}^{1/\ell}$, $1\leq k\leq M$, are the individual radii.
		
		\item If $R>0$, then for any choice of a submultiplicative norm $\norm{\cdot}$ the series $f$ is locally normally convergent on the $R$-ball $\BB_R^{\norm{\cdot}}(0)\coloneqq\{Z\in\A:\norm{Z}<R\}$.
		
		\item If $R<\infty$, then for any choice of a vector norm $\norm{\cdot}$ and any $r>R$ there exists a point $Z_0\in\A$ with $\norm{Z_0}=r$ such that $f$ is divergent at $Z_0$.
		
		\item If $R>0$, then $f$ is locally normally convergent on $\BB'_R(0) \coloneqq \bigcup_{\norm{\cdot}}\BB_R^{\norm{\cdot}}(0)$, where the union is taken over all submultiplicative norms on $\A$.
		
		\item In fact, we have $\BB'_R(0) = \BB_\A^\sp(0,R) \defeq \{Z\in\A: \rho_\A(Z)<R\}$ is the spectral $R$-ball of $\A$ around 0.
		
		\item Cylinder of convergence: if $(\A,\m)$ is local, then $\BB^\sp_\A(0,R) = \DD_R(0)\times\m$.
		
		\item If $\A=(\A,\m)$ is local, then $f$ is divergent\footnote{it has been recently brought to our attention that a proof of this fact is essentially contained in \cite{Ketch};} everywhere outside of $\wbar{\BB}^\sp_\A(0,R)$.
		
		\item Spectral radius of divergence: for a local $\CC$-algebra $\A=(\A,\m)$ define
		\[
		D^\sp\coloneqq\frac{1}{\limsup\limits_{\ell\to\infty}\rho_\A(A(\ell))^{1/\ell}}
		\in	[0,\infty]
		\]
		Then $D^\sp\geq R$ and $f$ diverges at any $Z_0\in\A$ with $\rho_\A(Z_0)>D^\sp$.
		
		\item Polycylinder of convergence: if $\cA\xrightarrow{\varphi}\cB$ is a morphism of $\CC$-algebras and $\cA=\bigoplus_{k=1}^M \cA_k$ is the decomposition of $\cA$ into connected $\CC$-algebras, then $g(Z)\in\B\{\varphi(Z-Z_0)\}$ is locally uniformly convergent on some spectral polycylinder 
		\[
		\BB^\sp(Z_0,\underline{R}) \defeq \prod_{k=1}^M \BB^\sp_{\A_k}(\pr_k(Z_0),R_k) = \prod_{k=1}^M \DD_{R_k}(\sigma_k(Z_0))\times\m_k,
		\]
		where $\underline{R}\coloneqq(R_1,\dots,R_M)\in (0,\infty]^M$, and is divergent outside of it.
	\end{enumerate}	
\end{proposition}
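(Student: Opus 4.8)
The plan is to clear the elementary statements by reducing each to a one-variable complex analysis or Banach-space root-test argument, and then to isolate the one genuinely algebra-specific point — the divergence of a power series outside the spectral ball of a \emph{local} $\CC$-algebra in (7).

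\emph{The easy parts.} For (1), equip $\cA = \bigoplus_k \cA_k$ with the direct sum norm (submultiplicative by the corresponding Lemma--Definition); since the analytic radius is independent of the choice of an equivalent vector norm (\cref{analrad}), it may be computed with this norm, and $\norm{A(\ell)}_\oplus^{1/\ell} = \max_k \norm{a_k(\ell)}_{\cA_k}^{1/\ell}$ together with \cref{limsupmax} yields $1/R = \max_k (1/R_k)$, i.e.\ $R = \min_k R_k$. For (2), submultiplicativity gives $\norm{A(\ell)Z^\ell} \le \norm{A(\ell)}\,\norm{Z}^\ell$, so on each closed ball $\wbar{\BB}_r^{\norm{\cdot}}(0)$ with $r < R$ one has $\limsup_\ell(\norm{A(\ell)}r^\ell)^{1/\ell} = r/R < 1$ and \cref{rootest} gives normal convergence; (4) is then the union of these balls over all submultiplicative norms. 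For (5), $\BB'_R(0) \subseteq \BB^\sp_\cA(0,R)$ is immediate from $\rho_\cA \le \norm{\cdot}$ (\cref{normexist}(1)); conversely, given $\rho_\cA(Z) < R$ choose $\vareps > 0$ with $\rho_\cA(Z)+\vareps < R$ and apply \cref{normexist}(3) to the matrix of $Z$ in the stable lower-triangular representation — its restriction to $\cA$ is a submultiplicative norm with $\norm{Z} \le \rho_\cA(Z)+\vareps < R$. Statement (6) follows from $\rho_\cA = \abs{\cdot}\circ\sigma_\cA$ in the local case, and (8) from $\rho_\cA(A(\ell)) \le \norm{A(\ell)}$ (giving $D^\sp \ge R$) together with the observation that $\abs{\sigma_\cA(A(\ell)Z_0^\ell)}^{1/\ell} = \abs{\sigma_\cA(A(\ell))}^{1/\ell}\,\rho_\cA(Z_0) \to \rho_\cA(Z_0)/D^\sp > 1$, so the terms do not vanish. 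For (3) I would restrict $f$ to the scalar line $w \mapsto f(w\cdot 1_\cA) = \sum_\ell A(\ell)w^\ell$, a genuine $\cA$-valued power series of radius exactly $R$ by \cref{rootest}, hence divergent for $\abs{w} > R$; scaling along this line (and, if needed, passing to the local summand of minimal radius, whose divergence set is open, connected and unbounded) produces a divergent point of each prescribed norm. Finally (9) is obtained by translating so that $Z_0 = 0$, feeding the canonical factorization $\varphi = \bar{\varphi}\circ\Pi_\tau$ of \cref{MorFact} and the induced factorization of $g$ from \cref{inclocal} into the already-proved local statements (5)--(7), and assembling $\BB^\sp(Z_0,\underline{R})$ with $R_k$ the minimum of the radii of the $\bar{\varphi}_\ell$-components over $\ell$ with $\tau(\ell) = k$.

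\emph{The crux: (7).} Suppose $f$ converges at some $Z_0 = z\oplus X$ with $z = \sigma_\cA(Z_0) \ne 0$ (so $Z_0 \in \cA^\times$). Then $\{A(\ell)Z_0^\ell\}_\ell$ is bounded, say by $C$, with respect to any fixed submultiplicative norm, and submultiplicativity gives $\norm{A(\ell)} = \norm{A(\ell)Z_0^\ell\cdot Z_0^{-\ell}} \le C\,\norm{Z_0^{-1}}^\ell$, whence $1/R = \limsup_\ell\norm{A(\ell)}^{1/\ell} \le \norm{Z_0^{-1}}$. As this holds for \emph{every} submultiplicative norm on $\cA$, and \cref{normexist}(3) applied to $Z_0^{-1}$ furnishes, for every $\vareps > 0$, a submultiplicative norm on $\cA$ with $\norm{Z_0^{-1}} \le \rho_\cA(Z_0^{-1})+\vareps = \rho_\cA(Z_0)^{-1}+\vareps$, we conclude $1/R \le \rho_\cA(Z_0)^{-1}$, i.e.\ $\rho_\cA(Z_0) \le R$; contrapositively, $f$ diverges whenever $\rho_\cA(Z_0) > R$. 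I expect this ``infimum over submultiplicative norms'' device to be the one step that genuinely exploits finite-dimensionality and locality: the naive substitute — projecting to the residue field $\cA/\fm \cong \CC$ — loses all information precisely when the coefficients $A(\ell)$ lie in $\fm$, whereas the argument above sidesteps that. Everything else is one-variable complex analysis; the only remaining friction I foresee is the index-bookkeeping in (9), matching the component radii $R_k$ to the projections $\sigma_k$ and the factorization $\tau = \tau_\varphi$.
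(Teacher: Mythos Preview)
Your proposal is correct throughout, and for parts (1)--(6), (8), and (9) it tracks the paper's proof essentially line for line (same choice of direct-sum norm in (1), same root-test bound in (2), same appeal to \cref{normexist} in (5), same $\rho_\cA = \abs{\cdot}\circ\sigma_\cA$ in (6) and (8), same factorization bookkeeping in (9)).

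Part (7) is where you diverge from the paper, and your route is substantially shorter. The paper fixes a filtering basis $\{1_\cA,e_2,\dots,e_n\}$ as in \cref{nicebasis}, writes $Z_0^\ell = z^\ell(1+X/z)^\ell \eqqcolon z^\ell u(\ell)$ via the binomial theorem, observes that the coordinates $u_i(\ell)$ grow only polynomially in $\ell$, and then runs a coordinate-by-coordinate $\limsup$ argument invoking \cref{limsup} to force $\limsup_\ell\norm{A(\ell)u(\ell)}_\infty^{1/\ell} \ge 1/R$; this occupies roughly thirty lines. Your argument bypasses all of that: convergence at $Z_0\in\cA^\times$ gives $\norm{A(\ell)} \le C\norm{Z_0^{-1}}^\ell$, hence $1/R \le \norm{Z_0^{-1}}$ for \emph{every} submultiplicative norm, and then \cref{normexist}(4) collapses the right-hand side to $\rho_\cA(Z_0^{-1})$, which in the local case equals $\rho_\cA(Z_0)^{-1}$ because $Z_0$ has a single eigenvalue. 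The paper's approach is constructive and pinpoints which basis coordinate witnesses the divergence; yours is cleaner, makes the role of locality transparent (the identity $\rho_\cA(Z_0^{-1}) = 1/\rho_\cA(Z_0)$ is exactly what fails for non-local $\cA$), and renders \cref{limsup} and the filtering-basis combinatorics unnecessary.
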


\begin{proof}
	To (1):
	Since $R$ is independent of the choice of norm, we can pick $\norm{\cdot}_\oplus$.
	Applying \cref{limsupmax}, we get:
	\[
	\begin{aligned}
	R &= \frac{1}{\limsup\limits_{\ell\to\infty}\norm{A(\ell)}^{1/\ell}_\oplus} = 
	\frac{1}{\limsup\limits_{\ell\to\infty} (\max\limits_{1\leq k\leq M} \norm{a_k(\ell)}_{\A_k})^{1/\ell}} =
	\frac{1}{\max\limits_{1\leq k\leq M}\limsup\limits_{\ell\to\infty} \norm{a_k(\ell)}_{\A_k}^{1/\ell}} = \min\limits_{1\leq k\leq M} R_k.
	\end{aligned}
	\]
	
	To (2):
	Let $Z_0\in\A$ be a point such that $\norm{Z_0}<R$ and choose $\norm{Z_0}<r_1<r_2<R$.
	Since $1/r_2>1/R$, there exists $L\in\NN$ such that $\forall \ell\geq L: \norm{A(\ell)}<1/r_2^\ell$. 
	Thus $\forall \ell\geq L: \norm{A(\ell) Z^\ell}\leq \norm{A(\ell)}\norm{Z}^\ell<q\coloneqq (r_1/r_2)^\ell$, hence $\sum_{\ell=0}^\infty\norm{A(\ell) Z^\ell}<\infty$ on the open $\{\norm{Z}<r_1\}\ni Z_0$, majorized by the geometric series in $q$.
	
	To (3):
	For example take $Z_0\coloneqq r$ and apply the root test. 
	Notice that, unless $\A=\CC$, $f$ does not diverge for all $Z_0$ with $\norm{Z_0}=r$ because there exist nilpotent elements of arbitrary norm.
	
	To (4):
	This follows from (2) since clearly any $Z\in\BB'_R(0)$ has an open neighbhourhood, on which $f$ converges normally.
	
	To (5):
	By \cref{normexist}, for $Z_0\in\A$ there exists a norm $\norm{\cdot}$ with $\norm{Z_0}<R$ if and only if $\rho_\A(Z_0)<R$.
	Indeed, if $\norm{Z_0}<R$, then $\rho_\A(Z_0)\leq\norm{Z_0}<R$. 
	Conversely, if $\rho_\A(Z_0)<R$ and $\vareps>0$ with $\rho_\A(Z_0)+\vareps<R$, then there exists an algebra norm $\norm{\cdot}$, depending on $Z_0$ and $\vareps$, with $\norm{Z_0}\leq\rho_\A(Z_0)+\vareps<R$.
	
	To (6): 
	follows from the triangular form of $(\cA,\fm)$.
	
	To (7): 
	In accordance with \cref{nicebasis} choose a filtering $\CC$-basis $\{e_1\coloneqq 1_\A, e_2,\dots,e_n\}$ of $\A$ with corresponding structure constants $(\gamma^i_{jk})_{1\leq i,j,k\leq n}$ such that $\forall 1\leq i,k\leq n: \gamma^i_{1k} = \gamma^i_{k1} = \delta^i_k$ and $\forall j,k\geq 2\ \forall i\leq\max\{j,k\}:\gamma^i_{jk} = \gamma^i_{kj} = 0$.
	Write $A(\ell) = \sum_{i=1}^n a_i(\ell) e_i$ for $a_i(\ell)\in\CC$, $\ell\in\NN$.
	Endow $\A$ with the maximum \emph{vector} norm $\norm{\cdot}_\infty$ and write $Z=z\oplus X\in\A$ for $z\in\CC$, $X\in\m$.
	Put $\alpha_i\coloneqq\limsup_{\ell\to\infty}\abs{a_i(\ell)}^{1/\ell}$, $\alpha\coloneqq\max_{1\leq i\leq n}\alpha_i$, and $r_i\coloneqq 1/\alpha_i$, $1\leq i\leq n$.
	By the same argument as in (1) we obtain that $R = \min_{1\leq i\leq n} r_i$, or equivalently, $R=1/\alpha$.
	Now assume that $Z\notin\BB^\sp_\A(0,R)$, i.e. $\rho_\A(Z)=\abs{z}>R=1/\alpha\geq 0$.
	We are going to show that $\limsup_{\ell\to\infty}\norm{A(\ell) Z^\ell}_\infty^{1/\ell}>1$.
	We have:
	\[
	\begin{aligned}
	\norm{A(\ell) Z^\ell}_\infty^{1/\ell} &= \norm{A(\ell) (z+X)^\ell}_\infty^{1/\ell} = \abs{z}\norm{A(\ell)\left(1+\frac{X}{z}\right)^\ell}_\infty^{1/\ell} = 
	\abs{z}\Bigg\lVert{A(\ell)\underbrace{\Bigg(1+\sum_{k=1}^\ell\binom{\ell}{k}\bigg(\frac{X}{z}\bigg)^k\Bigg)}_{\eqqcolon u(\ell)}}\Bigg\rVert_\infty^{1/\ell} =
	\abs{z}\norm{A(\ell)u(\ell)}_\infty^{1/\ell},
	\end{aligned}
	\]
	where $\forall \ell\in\NN: u(\ell)\in\A^\times$.
	Write $u(\ell) = \sum_{i=1}^n u_i(\ell) e_i$, where $u_1(\ell) = 1$.
	Moreover, if $\nu\in\NN$ is the smallest integer with $X^\nu\neq 0$, then $\forall \ell>\nu$:
	\[
	u(\ell) = 1 + \sum_{k=1}^\nu\binom{\ell}{k}\bigg(\frac{X}{z}\bigg)^k = \sum_{i=1}^n \Bigg(\sum_{k=1}^\nu c_{ik}\binom{\ell}{k}\Bigg) e_i = 
	\sum_{i=1}^n u_i(\ell) e_i
	\]
	for some constants $c_{ik}\in\CC$, $1\leq i\leq n$, $1\leq k\leq\nu$.
	Since $\binom{\ell}{k}=O(\ell^k)$ as $\ell\to\infty$ for fixed $1\leq k\leq \ell$, we have in any case $\forall 1\leq i\leq n: u_i(\ell) = O(\ell^{\nu+1})$ as $\ell\to\infty$.
	Carrying out the multiplication with the filtering basis, one obtains
	\[
	\begin{aligned}
	u(\ell)A(\ell) &= \sum_{i=1}^n e_i \Bigg( \sum_{j,k=1}^n \gamma^i_{jk} u_j(\ell) a_k(\ell) \Bigg) =
	a_1(\ell) + \sum_{i=2}^n e_i \Bigg(u_i(\ell) a_1(\ell) + \sum_{k=2}^{i-1} \bigg( \sum_{j=2}^{i-1} \gamma^i_{jk} u_j(\ell) \bigg) a_k(\ell) + a_i(\ell) \Bigg).
	\end{aligned}
	\]
	Now put $\alpha'_1 \coloneqq \limsup_{\ell\to\infty} \abs{a_1(\ell)}^{1/\ell} = \alpha_1$ and
	\[
	\forall 2\leq i\leq n: 
	\alpha'_i \coloneqq \limsup_{\ell\to\infty} \Bigg\lvert u_i(\ell) a_1(\ell) + \sum_{k=2}^{i-1} \bigg( \sum_{j=2}^{i-1} \gamma^i_{jk} u_j(\ell) \bigg) a_k(\ell) + a_i(\ell)\Bigg\rvert^{1/\ell}.
	\]
	We need to show that
	\[
	\limsup_{\ell\to\infty}\norm{A(\ell)u(\ell)}_\infty^{1/\ell} \equiv \max\limits_{1\leq i\leq n} \alpha'_i \geq \alpha \defeq \max\limits_{1\leq i\leq n} \alpha_i.
	\]
	Assume to the contrary that $\alpha'_1,\dots,\alpha'_n < \alpha$ and let $i_0$ be the smallest integer such that $\alpha=\alpha_{i_0}$.
	If $i_0=1$, then consider $\sigma(f(Z)) = \sum_{\ell=0}^{\infty} a_1(\ell) z^\ell$, which has analytic radius $r_1 \defeq 1/\alpha_1 = 1/\alpha = R$.
	But now we have $\abs{z}>R = r_1 \Rightarrow \sigma(f(Z)) = \infty \Rightarrow f(Z) = \infty$.
	If $i_0\geq 2$, then consider
	\[
	\begin{aligned}
	\alpha'_{i_0} &\defeq 
	\limsup_{\ell\to\infty} \Bigg\lvert u_{i_0}(\ell) a_1(\ell) + 
	\sum_{k=2}^{i_0-1} \underbrace{\bigg( \sum_{j=2}^{i_0-1} \gamma^{i_0}_{jk} u_j(\ell) \bigg)}_{\eqqcolon v^{i_0}_k(\ell)} a_k(\ell) + a_{i_0}(\ell) \Bigg\rvert^{1/\ell} =
	\limsup_{\ell\to\infty} \Bigg\lvert u_{i_0}(\ell) a_1(\ell) + \sum_{k=2}^{i_0-1} v^{i_0}_k(\ell) a_k(\ell) + a_{i_0}(\ell) \Bigg\rvert^{1/\ell}.
	\end{aligned}
	\]
	Since $u_i(\ell) = O(\ell^{\nu+1})$ as $\ell\to\infty$, we also have $v^{i_0}_k(\ell) = O(\ell^{\nu+1})$ as $\ell\to\infty$, hence $\limsup_{\ell\to\infty} \abs{u_{i_0}(\ell) a_1(\ell)}^{1/\ell} \leq \alpha_1$ and $\forall 2\leq k\leq i_0-1: \limsup_{\ell\to\infty} \abs{v^{i_0}_k(\ell) a_k(\ell)}^{1/\ell} \leq \alpha_k$.
	But by choice of $i_0$ we have $\alpha_1,\dots,\alpha_{i_0-1}<\alpha_{i_0}$, hence by \cref{limsup} it follows that $\alpha'_{i_0} = \alpha_{i_0} = \alpha$, a contradiction to our assumption.
	Thus
	\[
	\limsup_{\ell\to\infty}\norm{A_\ell Z^\ell}_\infty^{1/\ell} = \abs{z} \limsup_{\ell\to\infty} \norm{A(\ell)u(\ell)}_\infty^{1/\ell} > \frac{1}{\alpha}\alpha = 1
	\]
	as desired.
	
	
	To (8):
	For any submultiplicative $\norm{\cdot}$ and any $\ell\in\NN_0$ we have $\rho_\A(A(\ell))\leq\norm{A(\ell)}$, hence $D^\sp\geq R$.
	
	To (9): 
	If $\cA\xrightarrow{\varphi}\cB$ is a general morphism of $\CC$-algebras, then by \cref{MorFact} we have a canonical factorization $\varphi = \bar{\varphi} \circ \Pi_\tau$, where $\bar{\varphi} = \oplus_{\ell=1}^N \bar{\varphi}_\ell:\bigoplus_{\ell=1}^N (\A_{\tau(\ell)},\m_{\tau(\ell)}) \to \bigoplus_{\ell=1}^N (\B_\ell,\n_\ell)$.
	Since each $\bar{\varphi}_\ell$ is $\CC$-linear and local, we have
	\[
	\bar{\varphi}^{-1}\Big(\prod_{\ell=1}^N\DD_{R_\ell}(\sigma_\ell(Z_0))\times\n_\ell\Big) = \prod_{\ell=1}^N \DD_{R_{\tau(\ell)}}(\sigma_{\tau(\ell)}(Z_0))\times\m_{\tau(\ell)},
	\]
	and furthermore taking $\Pi_\tau^{-1}$ gives precisely $\BB^\sp(Z_0,\underline{R})$ for some $\underline{R}\in (0,\infty]^M$, where notice that $\BB^\sp_{\cA_k}(\pr_k(Z_0),\infty) \defeq \cA_k$.
	Finally, locally normal convergence of $g=f\circ\varphi\in\B\{\varphi(Z)\}$ for $f\in\B\{W\}$ follows from the locally normal convergence of $f$ and continuity of the morphism $\varphi$.
\end{proof}

\begin{remarks}\ 
	\begin{enumerate}[topsep=-\parskip]
		\item The product spectral topology on $\A$ is the one generated by the open spectral polycylinders $\BB^\sp(Z_0,\underline{R})$, $Z_0\in\A$, $\underline{R}\in (0,\infty]^M$, as a subbasis of open sets.
		
		\item Given $f\in\O_\varphi(U)$, we shall also sometimes write for short $\BB^\sp_f(Z_0) \coloneqq \BB^\sp(Z_0,\underline{R}_f(Z_0))$ to mean the (maximal) open spectral polycylinder of convergence of $f$ at the point $Z_0\in U$.
		
		\item \cref{convergencepolycylinder} (7) is specific to connected finite-dimensional commutative associative Banach algebras.
		If we drop commutativity and/or finite dimensionality, then we can have convergence outside of the spectral ball, for example whenever there exist elements $A,B\in\A$ with $\rho(A),\rho(B)>0$ but $AB=0$.
		Existence of such elements is not possible in finite-dimensional local commutative $\CC$-algebras by the triangular structure.
		For an explicit example consider the series
		\[
		f(Z) \coloneqq \sum_{\ell=0}^\infty 
		\begin{pmatrix*}[l]
		0 & 2^{-\ell}\\
		0 & 0
		\end{pmatrix*}
		Z^\ell,\ Z\in\rM_2(\CC).
		\]
		Again all norms are equivalent and we have $R_f=2$, but for $Z_0\coloneqq\begin{psmallmatrix*}[c]
		3 & 0\\
		0 & 0
		\end{psmallmatrix*}$ we have $\rho_\A(Z_0) = 3>R$ and $f(Z_0) = 0$.
		
		\item Since $D^\sp$ depends only on the behaviour of the first (unital) coordinate of $A(\ell)$, it is generally easier to compute than $R$, but it can be an arbitrarily suboptimal estimate.
		
		\item Notice that the well-known estimate $\abs{z_1+\dots+z_n}^{1/\ell} \leq \max_{1\leq i\leq n} \abs{z_i}^{1/\ell}$ for $z_1,\dots,z_n\in\CC$ and $\ell\in\NN$ does not help with the proof of (7) in \cref{convergencepolycylinder} as it gives the ``wrong'' direction of the estimate.
	\end{enumerate}
\end{remarks}

\begin{definition}[Banach-space-valued measures]
	Let $(X,\Sigma)$ be a measurable space and $E$ a Banach space. 
	An $E$-valued measure on $(X,\Sigma)$ is a map $\mu:\Sigma\to E$ such that
	\begin{enumerate}[(i)]
		\item $\mu(\emptyset) = 0$;
		\item $\mu(\bigcupdot_{k\in\NN}X_k) = \sum_{k\in\NN}\mu(X_k)$ for any countable disjoint family $(X_k)_{k\in\NN}\subseteq\Sigma$.
	\end{enumerate}
\end{definition}

\begin{remarks}\ 
	\begin{enumerate}[topsep=-\parskip]
		\item The only difference to the usual notion of a measure is the lack of non-negativity, which per se does not make sense in an arbitrary Banach space $E$.
		Moreover, the order of summation does not matter since the order of set-theoretic union does not.
		
		\item If $E=\A\in\fdCAlg_\CC$ and $\{a_1,\dots,a_n\}$ is a $\CC$-basis for $\A$, then a measure $\mu:\Sigma\to\A$ can be uniquely written as $\mu = \sum_{k=1}^n a_k \mu_k$, where $\mu_k:\Sigma\to\CC$, $1\leq k\leq n$, are complex-valued measures (in the usual sense of functional analysis).
	\end{enumerate}
\end{remarks}

Let $\cB=\bigoplus_{\ell=1}^N\cB_\ell$ be the decomposition of $\B$ into Artinian local $\CC$-algebras $(\cB_\ell,\fn_\ell,\norm{\cdot}_{\B_\ell})$, $1\leq \ell\leq N$, with unital submultiplicative norms and let $\norm{\cdot} \coloneqq \norm{\cdot}_\oplus=\max_{1\leq \ell\leq N}\norm{\cdot}_{\B_\ell}$ be the direct sum norm on $\B$.
For $Z\in\A=\bigoplus_{k=1}^M(\A_k,\m_k)$ we shall write $Z=\bigoplus_{k=1}^M (z_k\oplus Z_{\m_k})$, where $z_k\in\CC$ and $Z_{\m_k}\in\m_k$, $1\leq k\leq M$.

\begin{proposition}[Cauchy-Transform over $\A$]\label{cauchytrans}
	Let $\varphi = (\oplus_{\ell=1}^N \bar{\varphi}_\ell)\circ \Pi_\tau: \A=\bigoplus_{k=1}^M (\A_k,\m_k) \to \B = \bigoplus_{\ell=1}^N (\B_\ell,\n_\ell)$ be a morphism of $\CC$-algebras given in canonical factorization form.
	Let $\Omega$ be a measurable space, $\mu$ a finite $\B$-valued measure on $\Omega$, $G\coloneqq\bigoplus_{k=1}^M (g_k\oplus G_{\m_k}):\Omega\to\A$ a measurable function, and $U\subseteq\A$ open such that $\forall Z\in U\ \forall 1\leq \ell\leq N$:
	\begin{equation}
	\inf\limits_{t\in\Omega} \left(\abs{g_{\tau(\ell)}(t)-z_{\tau(\ell)}}-\norm{\bar{\varphi}_\ell(G_{\m_{\tau(\ell)}}(t)-Z_{\m_{\tau(\ell)}})}_{\cB_\ell}\right)>0.
	\end{equation}
	In particular, $\varphi(G(\Omega)-U) \subseteq \cB^\times$.
	Then $f:U\to\B$ defined by
	\[
	f(Z)\coloneqq\int_\Omega\frac{\d\mu(t)}{\varphi(G(t)-Z)}
	\]
	is $\varphi$-analytic on $U$ with derivatives
	\[
	\frac{f^{(k)}(Z)}{k!} = \int_\Omega \frac{\d\mu(t)}{\varphi(G(t)-Z)^{k+1}},\ k\in\NN_0.
	\]
\end{proposition}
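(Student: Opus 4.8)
The plan is to imitate the one-variable proof that a Cauchy transform of a measure is analytic: factor the kernel, expand it into a Neumann series, and integrate term by term; the only genuinely new ingredient is that the inverse-norm estimates of \cref{norminverselocal} supply the uniformity (in the integration variable $t$) that in the classical case comes from a ``distance to the support'' bound. Throughout equip $\B$ with the direct sum norm $\norm{\cdot}_\oplus$, which is unital and submultiplicative. First, the integrand of $f$ is well-defined: the $\ell$-th component of $\varphi(G(t)-Z)\in\B_\ell$ equals $\bar{\varphi}_\ell\bigl((g_{\tau(\ell)}(t)-z_{\tau(\ell)})\oplus(G_{\m_{\tau(\ell)}}(t)-Z_{\m_{\tau(\ell)}})\bigr)$, and since $\bar{\varphi}_\ell$ is a morphism of unital local $\CC$-algebras it fixes the scalars and carries $\m_{\tau(\ell)}$ into $\n_\ell$; hence this component has scalar part $g_{\tau(\ell)}(t)-z_{\tau(\ell)}$, whose modulus strictly exceeds the norm of its nilpotent part by hypothesis, so it is a unit of the local algebra $\B_\ell$. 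Thus $\varphi(G(t)-Z)\in\B^\times$ for all $t\in\Omega$, $Z\in U$, which also proves the ``in particular''.

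Fix $Z_0\in U$. By the hypothesis applied at $Z=Z_0$, the numbers $c_\ell\coloneqq\inf_{t\in\Omega}\bigl(|g_{\tau(\ell)}(t)-z_{\tau(\ell)}|-\norm{\bar{\varphi}_\ell(G_{\m_{\tau(\ell)}}(t)-Z_{\m_{\tau(\ell)}})}_{\B_\ell}\bigr)$ (components taken at $Z_0$) are positive; put $c\coloneqq\min_{1\le\ell\le N}c_\ell>0$. Applying \cref{norminverselocal} componentwise, together with the majorization $\norm{w^{-1}}\le(|z|-\norm{X})^{-1}$ valid whenever $w=z\oplus X$ with $\norm{X}<|z|$ (obtained by passing to the geometric series), we get
\[
\norm{\varphi(G(t)-Z_0)^{-1}}_\oplus\le 1/c\eqqcolon C
\]
\emph{uniformly in} $t\in\Omega$. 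Writing $\varphi(G(t)-Z)=\varphi(G(t)-Z_0)\bigl(1_\B-\varphi(G(t)-Z_0)^{-1}\varphi(Z-Z_0)\bigr)$ and using continuity of the linear map $\varphi$, choose an open ball $V$ with $Z_0\in V\subseteq U$ on which $\norm{\varphi(Z-Z_0)}_\oplus<1/(2C)$; then $\norm{\varphi(G(t)-Z_0)^{-1}\varphi(Z-Z_0)}_\oplus\le\tfrac12$ for all $t\in\Omega$, $Z\in V$. By commutativity of $\B$ and the Neumann series, for such $Z$ and every $t$,
\[
\frac{1}{\varphi(G(t)-Z)}=\sum_{k=0}^{\infty}\frac{\varphi(Z-Z_0)^{k}}{\varphi(G(t)-Z_0)^{k+1}},
\]
the series converging normally in $t$ and locally uniformly in $Z\in V$, majorized by $C\sum_k 2^{-k}$.

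It remains to integrate term by term against $\mu$. Decomposing $\mu=\sum_{r}b_r\mu_r$ into finitely many complex-valued measures as in the remark following the definition of Banach-space-valued measures (equivalently, using $\norm{\int_\Omega h\,\d\mu}_\oplus\le\norm{h}_{\infty}\norm{\mu}$ for bounded measurable $h$, with $\norm{\mu}$ the total variation), the uniform-in-$t$ convergence licenses exchanging the sum and the integral. This yields, on $V$,
\[
f(Z)=\sum_{k=0}^{\infty}B_k\,\varphi(Z-Z_0)^{k},\qquad B_k\coloneqq\int_\Omega\frac{\d\mu(t)}{\varphi(G(t)-Z_0)^{k+1}}\in\B,
\]
so $f\in\B\{\varphi(Z-Z_0)\}$ in a neighbourhood of $Z_0$; as $Z_0\in U$ was arbitrary, $f$ is $\varphi$-analytic on $U$. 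Differentiating this convergent $\varphi$-power series term by term — legitimate by \cref{rules} together with the local uniform convergence, exactly as in the classical one-variable case — gives $f^{(k)}(Z_0)=k!\,B_k$, which is the asserted formula for $f^{(k)}/k!$, valid at every point of $U$.

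The crux is the uniform bound $\norm{\varphi(G(t)-Z_0)^{-1}}_\oplus\le C$: this is precisely what the ``more-width-than-depth''-type hypothesis and \cref{norminverselocal} are designed to supply, and it is where finite-dimensionality of the $\n_\ell$ (through their nilpotency degrees) genuinely enters; everything else is the standard Neumann-series-plus-dominated-convergence routine.
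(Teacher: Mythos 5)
Your proof is correct and follows essentially the same route as the paper's: a uniform-in-$t$ bound on $\norm{\varphi(G(t)-Z_0)^{-1}}$ obtained from the more-width-than-depth hypothesis via the inverse-norm lemma, followed by a Neumann/geometric series expansion of the kernel around $Z_0$ and term-by-term integration justified by finiteness of $\mu$ and the uniform convergence. The only (cosmetic) differences are that you expand around a fixed center $Z_0$ with the factorization $\varphi(G(t)-Z)=\varphi(G(t)-Z_0)\bigl(1_\B-\varphi(G(t)-Z_0)^{-1}\varphi(Z-Z_0)\bigr)$ rather than the paper's equivalent fraction form, and that you spell out the ``in particular'' invertibility claim, which the paper leaves implicit.
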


\begin{proof}
	Fix $Z\in U$ and set
	\[
	R \coloneqq \min\limits_{1\leq \ell\leq N} \inf\limits_{t\in\Omega} \left(\abs{g_{\tau(\ell)}(t)-z_{\tau(\ell)}} - \norm{\bar{\varphi}_\ell(G_{\m_{\tau(\ell)}}(t)-Z_{\m_{\tau(\ell)}})}_{\cB_\ell}\right)>0.
	\]
	By continuity of $\varphi$ choose $r>0$ small enough such that $\BB^{\norm{\cdot}}_\A(Z;r)\subseteq U$ and $\varphi(\BB^{\norm{\cdot}}_\A(Z;r)) \subseteq \BB^{\norm{\cdot}}_\B(\varphi(Z);R)$.  
	By hypothesis $\forall 1\leq \ell\leq N\ \forall t\in\Omega$:
	\[
	\norm{\frac{\bar{\varphi}_\ell(G_{\m_{\tau(\ell)}}(t)-Z_{\m_{\tau(\ell)}})}{g_{\tau(\ell)}(t)-z_{\tau(\ell)}}}_{\cB_\ell}<1,
	\]
	hence $\forall t\in\Omega$:
	\[
	\begin{aligned}
	&\norm{\frac{1}{\varphi(G(t)-Z)}}_\cB =
	\norm{\bigoplus_{\ell=1}^N\frac{1}{(g_{\tau(\ell)}(t)-z_{\tau(\ell)})+\bar{\varphi}_\ell(G_{\m_{\tau(\ell)}}(t)-Z_{\m_{\tau(\ell)}})}}_\oplus \defeq\\
	&\defeq \max\limits_{1\leq \ell\leq N} \norm{\frac{1}{(g_{\tau(\ell)}(t)-z_{\tau(\ell)})+\bar{\varphi}_\ell(G_{\m_{\tau(\ell)}}(t)-Z_{\m_{\tau(\ell)}})}}_{\cB_\ell} =
	\max\limits_{1\leq \ell\leq N}\frac{1}{\abs{g_{\tau(\ell)}(t)-z_{\tau(\ell)}}} 
	\norm{\frac{1}{1-\frac{\bar{\varphi}_\ell(Z_{\m_{\tau(\ell)}}-G_{\m_{\tau(\ell)}}(t))}{g_{\tau(\ell)}(t)-z_{\tau(\ell)}}}}_{\cB_\ell} \\
	&\leq \max\limits_{1\leq \ell\leq N} \frac{1}{\abs{g_{\tau(\ell)}(t)-z_{\tau(\ell)}}}
	\frac{1}{1-\norm{\frac{\bar{\varphi}_\ell(G_{\m_{\tau(\ell)}}(t)-Z_{\m_{\tau(\ell)}})}{g_{\tau(\ell)}(t)-z_{\tau(\ell)}}}_{\cB_\ell}} =
	\max\limits_{1\leq \ell\leq N} \frac{1}{\abs{g_{\tau(\ell)}(t)-z_{\tau(\ell)}}-\norm{\bar{\varphi}_\ell(G_{\m_{\tau(\ell)}}(t)-Z_{\m_{\tau(\ell)}})}_{\cB_\ell}} 
	\leq\frac{1}{R}
	\end{aligned}
	\]
	by \cref{norminverse}.
	Thus $\forall W\in\BB^{\norm{\cdot}}_\A(Z;r)\ \forall t\in\Omega$:
	\[
	q \coloneqq \norm{\frac{\varphi(W-Z)}{\varphi(G(t)-Z)}} \leq \norm{\varphi(W-Z)}\norm{\frac{1}{\varphi(G(t)-Z)}} \leq \frac{\norm{\varphi(W-Z)}}{R} < 1.
	\]
	Hence for all $W\in\BB^{\norm{\cdot}}_\A(Z;r)$ the geometric series
	\[
	\begin{aligned}
	\frac{1}{\varphi(G(t)-W)} = \frac{1}{\varphi(G(t)-Z)}\frac{1}{1-\frac{\varphi(W-Z)}{\varphi(G(t)-Z)}} = 
	\frac{1}{\varphi(G(t)-Z)} \sum_{\ell=0}^\infty \left(\frac{\varphi(W-Z)}{\varphi(G(t)-Z)}\right)^\ell
	\end{aligned}
	\]
	converges uniformly in $t\in\Omega$. 
	Therefore, since $\mu$ is finite, we can interchange summation and integration to obtain $\forall W\in\BB^{\norm{\cdot}}_\A(Z;r)$:
	\[
	\begin{aligned}
	f(W) &= \int_\Omega \frac{\d\mu(t)}{\varphi(G(t)-W)} = \int_\Omega \left(\sum_{\ell=0}^\infty \frac{\varphi(W-Z)^\ell}{\varphi(G(t)-Z)^{\ell+1}}\right) \d\mu(t) =
	\sum_{\ell=0}^\infty \left(\int_\Omega \frac{\d\mu(t)}{\varphi(G(t)-Z)^{\ell+1}}\right) \varphi(W-Z)^\ell
	\end{aligned}
	\]
	as desired.
\end{proof}

\begin{remark}
	Using \cref{norminverselocaluniform} one can  relax the hypothesis of \cref{cauchytrans} as follows:
	\[
	\forall Z\in U\ \forall 1\leq\ell\leq N\ \forall t\in\Omega:\abs{g_{\tau(\ell)}(t)-z_{\tau(\ell)}} \geq
	\norm{\bar{\varphi}_\ell(G_{\m_{\tau(\ell)}}(t)-Z_{\m_{\tau(\ell)}})}_{\cB_\ell},
	\]
	but keep
	\[
	\inf\limits_{t\in\Omega} \abs{g_{\tau(\ell)}(t)-z_{\tau(\ell)}} > 0.
	\]
	Indeed, if we fix $Z\in U$ and choose $R>0$ small enough such that $\abs{g_{\tau(\ell)}(t)-z_{\tau(\ell)}} \geq \nu_\ell R$, where $\nu_\ell\in\NN$ is the smallest integer such that $\forall t\in\Omega: \bar{\varphi}_\ell(G_{\m_{\tau(\ell)}}(t)-Z_{\m_{\tau(\ell)}})^{\nu_\ell}=0$, then by \cref{norminverselocaluniform} we obtain
	\[
	\norm{\frac{1}{\bar{\varphi}_\ell(G_{\tau(\ell)}(t)-Z_{\tau(\ell)})}}_{\cB_\ell} \leq \frac{\nu_\ell}{\abs{g_{\tau(\ell)}(t)-z_{\tau(\ell)}}} \leq \frac{1}{R},
	\]
	from where one proceeds analogously as in the proof above. \qed
\end{remark}

\begin{proposition}[Analyticity \& Cauchy's Integral Formula over $\A$ for Derivatives]\label{analyt}
	Let $U\subseteq\A$ be open and path-connected and let $f\in\O_\varphi(U)$.
	Then:
	\begin{enumerate}
		\item For every open polydisk $\Delta\coloneqq\Delta_r(Z_0)\subseteq U$ with center $Z_0\in U$ and every $Z_0$-admissible loop $\gamma\in\Cc^1_\pw(\SS^1,\Delta)$ in it there exists an open neighbourhood $V\ni Z_0$ in $\Delta$ such that $\forall Z\in V$:
		\begin{equation}\label{cifder}
		f^{(k)}(Z) \Ind_\varphi(\gamma,Z) = \frac{k!}{2\pi i} \oint_\gamma \frac{f(W)}{\varphi(W-Z)^{k+1}}\d W,\ k\in\NN_0.
		\end{equation}
		
		\item $f$ is $\varphi$-analytic in $U$.
	\end{enumerate}
\end{proposition}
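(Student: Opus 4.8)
The plan is to obtain \cref{cifder} from Cauchy's Integral Formula \cref{cif} by letting the base point vary, and then recognise the resulting parameter integral as a Cauchy transform to which \cref{cauchytrans} applies; statement (2) drops out as the special case of a simple loop.

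First I would fix $Z_0\in U$, the polydisk $\Delta=\Delta_r(Z_0)\subseteq U$, and the $Z_0$-admissible loop $\gamma$. Since the support of $\gamma$ is a compactum inside $\Delta$ and $\Adm_\varphi(\gamma)$ is open and contains $Z_0$, one can pick a small ball $V\ni Z_0$ inside $\Delta$ and a radius $r'>0$ so that for every $Z\in V$ one has $\gamma\subseteq\Delta_{r'}(Z)\subseteq\Delta\subseteq U$ and $Z\in\Adm_\varphi(\gamma)$; shrinking $V$ once more, the lemma that $\Ind_\varphi(\gamma,-)$ is locally constant lets us also assume $\Ind_\varphi(\gamma,Z)\equiv c$ on $V$ for a fixed $c\in\ZZ^{\oplus N}\subseteq\cB$. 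Applying \cref{cif} at the centre $Z$ for each $Z\in V$ gives
\begin{equation*}
f(Z)\,c=\frac{1}{2\pi i}\oint_\gamma\frac{f(W)}{\varphi(W-Z)}\,\d W,\qquad Z\in V.
\end{equation*}
Exactly as in the proof of \cref{cif}, writing $\varphi=(\oplus_\ell\bar{\varphi}_\ell)\circ\Pi_\tau$ in canonical factorisation form reduces everything coordinatewise to the case of a morphism of local $\CC$-algebras $\varphi:(\cA,\fm)\to(\cB,\fn)$, which I assume henceforth.

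Now, as in the proof of \cref{cif}, the $\cB$-valued $1$-form $f(W)\varphi(W-Z)^{-1}\,\d W$ is $\d$-closed in $W$ on $U\cap\{W:\sigma_\cA(W)\neq\sigma_\cA(Z)\}$, so its integral over $\gamma$ is invariant under homotopies staying in that set. In a basis $\{1_\cA,e_2,\dots,e_n\}$ adapted to the local structure, $\Delta$ is a product $\DD_r(\sigma_\cA(Z_0))\times P$ with $P$ a convex polydisk in $\fm$ about the nilpotent part $(Z_0)_\fm$; the homotopy that keeps the scalar coordinate fixed and linearly slides the nilpotent part of $\gamma(t)$ to the constant value $(Z_0)_\fm$ therefore stays in $\Delta$ and keeps the scalar part of the path away from $\sigma_\cA(V)$, so in the integral I may replace $\gamma$ by $\delta(t):=\sigma_\cA(\gamma(t))+(Z_0)_\fm$. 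Then with $\Omega:=[0,1]$, $G:=\delta$, and $\mu(E):=\frac{1}{2\pi i}\int_E f(\delta(t))\varphi(\delta'(t))\,\d t$ — a finite $\cB$-valued measure, the integrand being bounded — the hypothesis of \cref{cauchytrans} holds for $Z$ in a possibly smaller ball $V'\ni Z_0$: the scalar separation $|\sigma_\cA(\delta(t))-\sigma_\cA(Z)|$ is bounded below by $\tfrac12\dist(\sigma_\cA(Z_0),\sigma_\cA(\gamma))>0$ uniformly in $t$, while the nilpotent contribution $\|\varphi(\delta_\fm(t)-Z_\fm)\|_\cB=\|\varphi((Z_0)_\fm-Z_\fm)\|_\cB$ is made arbitrarily small by shrinking $V'$. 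Hence \cref{cauchytrans} gives that $Z\mapsto\int_\Omega\varphi(G(t)-Z)^{-1}\,\d\mu(t)=f(Z)c$ is $\varphi$-analytic on $V'$ with $(fc)^{(k)}(Z)/k!=\frac{1}{2\pi i}\oint_\gamma f(W)\varphi(W-Z)^{-(k+1)}\,\d W$. Taking $\gamma$ a simple loop with $\Ind_\varphi(\gamma,Z_0)=1_\cB$ (these exist near any $Z_0$ inside any polydisk by the byproduct of \cref{cif}) gives $c=1_\cB$ and thus statement (2), as $Z_0\in U$ was arbitrary; for a general $\gamma$, $f$ is $\varphi$-analytic by (2), so $f^{(k)}$ exists, $(fc)^{(k)}=f^{(k)}c$ by \cref{rules} since $c$ is constant, and $c=\Ind_\varphi(\gamma,Z)$ on $V'$, which is precisely \cref{cifder}.

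The main obstacle is the applicability of \cref{cauchytrans}: its ``more-width-than-depth'' hypothesis is genuinely false for an arbitrary admissible $\gamma$, which may oscillate widely in the nilpotent directions while remaining admissible, so the contour integral cannot be fed into \cref{cauchytrans} as it stands. The device that fixes this — homotoping $\gamma$ within $\Delta$ so as to freeze its nilpotent part at a value close to $Z_\fm$, legitimate because the $\cB$-valued form is $\d$-closed and the polydisk is convex in its nilpotent slice — is the one genuinely new step; everything else (smallness of $V$, finiteness of $\mu$, differentiating under the integral sign, commuting the locally constant index past the derivative, and the coordinatewise reduction to local morphisms) is routine bookkeeping carried out just as for \cref{cif}.
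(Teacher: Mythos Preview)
Your proof is correct, but the paper takes a more direct route on both parts. For (1), the paper does not invoke \cref{cauchytrans} at all: once $f(Z)\Ind_\varphi(\gamma,Z)$ is written as the parameter integral $\tfrac{1}{2\pi i}\oint_\gamma f(W)\varphi(W-Z)^{-1}\,\d W$ on $V=\Delta\cap\Adm_\varphi(\gamma)$, one simply differentiates both sides in $Z$ under the integral sign --- the integrand is manifestly $\varphi$-differentiable in $Z$, and $\Ind_\varphi(\gamma,\cdot)$ is locally constant, so the index commutes past the derivative; iterating gives \cref{cifder} immediately, with no homotopy and no Cauchy transform. For (2), the paper again sidesteps your homotopy device by \emph{choosing} the loop scalar from the outset, namely $\gamma_\vareps(t)=\bigoplus_k\vareps e^{2\pi it}$ inside $\Delta$; this has $\Ind_\varphi=1$ and identically zero nilpotent part, so the ``more-width-than-depth'' hypothesis of \cref{cauchytrans} is trivially satisfied and $\varphi$-analyticity drops out. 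Your freezing-the-nilpotent-part homotopy is valid and conceptually nice --- it shows the transform hypothesis can always be \emph{arranged} rather than merely imposed --- but it is extra work the paper avoids by a judicious contour choice. Two small points in your write-up: the Cauchy transform with $G=\delta$ produces the derivative formula as an integral over $\delta$, not over $\gamma$, so to land on \cref{cifder} you must homotope back, using that the higher-power forms $f(W)\varphi(W-Z)^{-(k+1)}\,\d W$ are likewise $\d$-closed; and your logical order (first (2) via a simple loop, then (1) for general $\gamma$ using that $f^{(k)}$ now exists) is arguably cleaner than the paper's bare induction when $\Ind_\varphi(\gamma,Z_0)$ is a non-unit.
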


\begin{proof}
	To (1): Let $\Delta\coloneqq\Delta_r(Z_0)\subseteq U$ be an arbitrary open polydisk in $U$ with center $Z_0\in U$. 
	Fix a $Z_0$-admissible loop $\gamma\in\Cc^1_\pw(\SS^1,\Delta)\neq\emptyset$ and put $\fI_\gamma(Z)\coloneqq\Ind_\varphi(\gamma,Z)$. 
	Define $V\coloneqq\Delta\cap\Adm_\varphi(\gamma)\ni Z_0$, which is open. 
	Then by \cref{cif} we obtain $\forall Z\in V$:
	\[
	f(Z)\fI_\gamma(Z) = \frac{1}{2\pi i} \oint_\gamma \frac{f(W)}{\varphi(W-Z)}\d W.
	\]
	Differentiating both sides gives us
	\[
	f'(Z)\fI_\gamma(Z) + f(Z)\fI_\gamma'(Z) = f'(Z)\fI_\gamma(Z) = \frac{1}{2\pi i}\oint_\gamma\frac{f(W)}{\varphi(W-Z)^2}\d W,
	\]
	since $\fI_\gamma:\Adm_\varphi(\gamma)\to\ZZ^{\oplus N}\subseteq\cB$ is a locally constant map.	
	Now the formula for the higher derivatives $f^{(k)}$, $k\in\NN$, follows in the same way by induction.
	
	To (2):
	Without loss of generality we can assume $Z_0=0$.
	Following the argument in the the proof of \cref{cif} we know constructively that there exists a sufficiently small $\vareps>0$ such that the loop $\gamma \coloneqq \gamma_\vareps$, defined by
	\[
	\gamma_\vareps(t) \coloneqq \bigoplus_{k=1}^M \vareps e^{2\pi i t},\ t\in [0,1],
	\]
	is contained in $\Delta$ and $Z_0$-admissible with $\fI_{\gamma}(Z_0)=1_\A$ (in fact, only the indices $k\in\im \tau$ matter).
	Denote by $C\ni Z_0$ the connected open component of $Z_0$, on which $\fI_\gamma$ stays $=1_\A$.
	Then we have $\forall Z\in V\cap C$:
	\[
	f(Z) = \frac{1}{2\pi i\fI_\gamma(Z)} \oint_\gamma \frac{f(W)}{\varphi(W-Z)}\d W = \frac{1}{2\pi i} \oint_\gamma \frac{f(W)}{\varphi(W-Z)}\d W.
	\]
	One immediately checks that $\Omega\coloneqq I\defeq [0,1]$, $G\coloneqq\gamma$, and $\d\mu(t)\coloneqq f(\gamma(t))\varphi(\gamma'(t))\d t$ satisfy the hypothesis of \cref{cauchytrans}.
	In particular, the ``$\inf$-condition'' presents no problem because $\Omega$ is compact and $\gamma$ is continuous.
	Now the claim follows since $\fI_\gamma|_{V\cap C}=\const=1_\A$.
\end{proof}

\begin{remark}
	It is seductive to conjecture at this point that if $\fA$ is a connected non-unital $\CC$-algebra, then all $\fA$-holomorphic functions at $0$ are given by $\fA\{Z\} = \fA[Z]$, and hence that $\fA[Z]$ contains in fact all $\fA$-holomorphic functions.
	However, this is not true.
	Consider $\fA \coloneqq \fM$ being the maximal ideal of $\CC[X]/(X^3)$, i.e. $\fA = \CC e_1 \oplus \CC e_2$ with $e_1^2 = e_2$, $e_1 e_2 = 0$, $e_2^2 = 0$. 
	Let $f\in \CC\{z_1\} \setminus \CC[z_1]$ and define $F(z_1 e_1 + z_2 e_2) \coloneqq e_1 + f(z_1) e_2$. 
	Then $F$ is not polynomial, but it is $\fA$-holomorphic:
	\[
	\begin{aligned}
	F(Z+H) &= e_1 + f(z_1 + h_1) e_2 = e_1 + (f(z_1) + f'(z_1) h_1 + r(h_1)) e_2 =  e_1 + f(z_1)e_2 + f'(z_1)h_1 e_2 + r(h_1) e_2 = \\
	&= F(Z) + (f'(z_1) e_1 + w e_2)(h_1 e_1 + h_2 e_2) + r(h_1) e_2,
	\end{aligned}
	\]
	where $w\in\CC$ is arbitrary and $\abs{r(h_1)} = o(\abs{h_1})$.
	But $\norm{r(h_1) e_2}_1 = o(\norm{H}_1)$ because:
	\[
	\frac{\norm{r(h_1) e_2}_1}{\norm{h_1 e_1 + h_2 e_2}_1} = \frac{\abs{r(h_1)}}{\abs{h_1} + \abs{h_2}} \leq \frac{\abs{r(h_1)}}{\abs{h_1}} \to 0
	\]
	as $H \to 0$.
	Notice once again the non-uniqueness of the derivative $F'$ for non-unital algebras. \qed
\end{remark}

\begin{corollary}[Cauchy's Inequality over $\A$]\label{cauchyineq}
	Let $U\subseteq\A$ be open and path-connected, $Z_0\in U$, $f\in\O_\varphi(U)$, and $\gamma\in\Cc^1_\pw(\SS^1,U)$ an admissible loop for $Z_0$ with $\Ind_\A(\gamma,Z_0)=1$. Then $\forall k\in\NN_0:$
	\begin{equation}
	\norm{f^{(k)}(Z_0)}_\B \leq \frac{k!}{2\pi}\norm{f}_{\B,\gamma} L_\B(\gamma)
	\sup\limits_{t\in I}\norm{\frac{1}{\varphi(\gamma(t))-\varphi(Z_0)}}_\B^{k+1}.
	\end{equation}
	In particular, we have the following two special cases:
	\begin{enumerate}
		\item If $\gamma(t)\coloneqq Z_0 + r e^{2\pi i t}$, $t\in I$, then
		\begin{equation}\label{meanvalueineq0}
		\norm{f^{(k)}(Z_0)}_\B \leq \frac{k!}{r^k} \norm{f}_{\B,\gamma}.
		\end{equation}
		
		\item If $(\A,\m)\xrightarrow{\varphi}(\B,\n)$ and $\norm{\cdot}_\B$ is unital, then for $X_0\in\m$ and $\nu\in\NN$ with $\varphi(X_0)^{\nu-1}\neq 0$ and $\varphi(X_0)^\nu=0$ in $\n$ and $\gamma(t)\coloneqq(\lambda_0+re^{2\pi i t})$, $t\in I$, with $\lambda_0\in\CC$ we have $\forall k\in\NN_0$:		
		\begin{equation}\label{meanvalueineq}
		\norm{f^{(k)}(\lambda_0+X_0)}_\B \leq k! \norm{f}_{\B,\gamma} \bigg(\sum_{j=0}^{\nu-1}\bigg(\frac{\norm{\varphi(X_0)}_\B}{r}\bigg)^j\bigg)^{k+1}.
		\end{equation}
	\end{enumerate}
\end{corollary}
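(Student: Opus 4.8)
The plan is to read off all three estimates from the generalized Cauchy Integral Formula for derivatives \cref{analyt}(1) together with the elementary bound \cref{intineq}. The hypothesis $\Ind_\A(\gamma,Z_0)=1$ in particular forces $Z_0\in\Adm_\cA(\gamma)$, so \cref{indexcalc} gives $\Ind_\varphi(\gamma,Z_0)=\varphi(\Ind_\A(\gamma,Z_0))=\varphi(1_\A)=1_\B$; hence \cref{cifder} evaluated at $Z=Z_0$ collapses to $f^{(k)}(Z_0)=\tfrac{k!}{2\pi i}\oint_\gamma f(W)\,\varphi(W-Z_0)^{-(k+1)}\,\d W$ for all $k\in\NN_0$. (If one wants the full generality of the statement, with $\gamma$ not a priori inside a polydisk around $Z_0$, this formula still holds because $f(W)\,\varphi(W-Z_0)^{-(k+1)}\d W$ is a $\d$-closed $\B$-valued form on $\Adm_\varphi(\gamma)$ by \cref{closed}, so its integral is unchanged under homotopy and the small-loop case of \cref{analyt}(1) suffices.) Applying \cref{intineq} to this form and using submultiplicativity of $\norm{\cdot}_\B$ together with the $\CC$-linearity of $\varphi$ (so that $\varphi(\gamma(t)-Z_0)=\varphi(\gamma(t))-\varphi(Z_0)\in\cB^\times$ for each $t$) then yields
\[
\norm{f^{(k)}(Z_0)}_\B\le\frac{k!}{2\pi}\,\norm{f}_{\B,\gamma}\,L_\B(\gamma)\,\sup_{t\in I}\norm{\frac{1}{\varphi(\gamma(t))-\varphi(Z_0)}}_\B^{k+1},
\]
which is the asserted general inequality.

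For the first special case I would substitute the circle $\gamma(t)=Z_0+re^{2\pi i t}$. Its spectral projections $\sigma_k\circ\gamma$ each wind once about $\sigma_k(Z_0)$, so $\Ind_\A(\gamma,Z_0)=1$ is automatic. Since $\gamma'(t)=2\pi i r e^{2\pi i t}\cdot 1_\A$ and $\varphi(1_\A)=1_\B$, one computes $L_\B(\gamma)=\int_0^1\norm{\varphi(\gamma'(t))}_\B\,\d t=2\pi r\norm{1_\B}_\B=2\pi r$ for a unital norm $\norm{\cdot}_\B$, while $\varphi(\gamma(t))-\varphi(Z_0)=re^{2\pi i t}\cdot 1_\B$ gives $\sup_{t\in I}\norm{(\varphi(\gamma(t))-\varphi(Z_0))^{-1}}_\B=1/r$. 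Substituting into the general inequality produces $\norm{f^{(k)}(Z_0)}_\B\le\tfrac{k!}{2\pi}\norm{f}_{\B,\gamma}\cdot 2\pi r\cdot r^{-(k+1)}=\tfrac{k!}{r^k}\norm{f}_{\B,\gamma}$, i.e. \cref{meanvalueineq0}.

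For the second special case I would take $Z_0:=\lambda_0+X_0$ with the circle $\gamma(t)=\lambda_0+re^{2\pi i t}$; here $\sigma\circ\gamma$ winds once about $\lambda_0=\sigma(Z_0)$, so again $\Ind_\A(\gamma,Z_0)=1$ and, as above, $L_\B(\gamma)=2\pi r$. The one genuinely new ingredient is the uniform estimate of $\sup_{t\in I}\norm{(\varphi(\gamma(t))-\varphi(\lambda_0+X_0))^{-1}}_\B=\sup_{t\in I}\norm{(re^{2\pi i t}1_\B-\varphi(X_0))^{-1}}_\B$: writing $re^{2\pi i t}1_\B-\varphi(X_0)=z\oplus(-\varphi(X_0))$ with $\abs z=r$ and $\varphi(X_0)\in\n$ nilpotent with $\varphi(X_0)^\nu=0$, \cref{norminverselocal} (for the unital norm $\norm{\cdot}_\B$) bounds this, independently of $t$, by $\tfrac1r\sum_{j=0}^{\nu-1}\bigl(\norm{\varphi(X_0)}_\B/r\bigr)^{j}$; inserting this together with $L_\B(\gamma)=2\pi r$ into the general inequality then gives the claimed bound. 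The main thing to watch in the whole argument is exactly this last piece of bookkeeping — tracking the powers of $r$ produced by $L_\B(\gamma)$ against those in the $(k+1)$-st power of the geometric-series bound — so that the final constant comes out right; a useful consistency check is that setting $X_0=0$, $\nu=1$ must reproduce \cref{meanvalueineq0}.
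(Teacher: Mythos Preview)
Your approach is exactly the paper's: apply \cref{intineq} (equivalently, bound the parametrized integral pointwise using submultiplicativity) to the derivative formula \cref{cifder}, then specialize the curve and, for (2), invoke \cref{norminverselocal}. One caveat worth flagging since you explicitly raised it: if you actually carry out the $r$-bookkeeping you warn about (or run your own consistency check $X_0=0$, $\nu=1$), the substitution yields
\[
\norm{f^{(k)}(\lambda_0+X_0)}_\B \le \frac{k!}{r^{k}}\,\norm{f}_{\B,\gamma}\Bigl(\sum_{j=0}^{\nu-1}\bigl(\norm{\varphi(X_0)}_\B/r\bigr)^{j}\Bigr)^{k+1},
\]
so the displayed \cref{meanvalueineq} is missing a factor $r^{-k}$; your derivation is the correct one and the printed formula appears to be a typo.
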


\begin{proof}
	We have
	\[
	\begin{aligned}
	\norm{f^{(k)}(Z_0)}_\B &= \frac{k!}{2\pi} \norm{\int_0^1\frac{f(\gamma(t))\varphi(\gamma'(t))}{\varphi(\gamma(t)-Z_0)^{k+1}}\d t}_\B
	\leq \frac{k!}{2\pi} \int_0^1 \norm{f(\gamma(t))}_\B \norm{\varphi(\gamma'(t))}_\B \norm{\frac{1}{\varphi(\gamma(t))-\varphi(Z_0)}}_\B^{k+1}\d t,
	\end{aligned}
	\]
	from which the desired inequality follows.
	Now, (1) and (2) follow by direct substitution as $\varphi$ preserves scalars, where in (2) we also apply \cref{norminverselocalformula}.
\end{proof}

\begin{corollary}[Liouville over $\A$]\label{liouville}
	Let $U\subseteq\A$ be open and path-connected and let $f\in\O_\varphi(U)$. 
	Suppose that there exists a subset $V\subseteq U$ such that:
	\begin{enumerate}[(i)]
		\item $f|_V$ is bounded and
		\item for every $Z_0\in U$, $V$ contains a collection of $Z_0$-admissible simple loops $\{\gamma_n\}_{n\in\NN}$ such that
		\[
		\bigg\{\sup\limits_{t\in I} \norm{\frac{1}{\varphi(\gamma_n(t))-\varphi(Z_0)}}_\B: n\in\NN \bigg\}
		\]
		is unbounded, but
		\[
		L_\B(\gamma_n) \sup\limits_{t\in I} \norm{\frac{1}{\varphi(\gamma_n(t))-\varphi(Z_0)}}_\B \xrightarrow{n\to\infty} 0.
		\]
	\end{enumerate}
	Then $f=\const$. In particular, if $(\A,\m)$ is local, $U\supseteq\CC$, and $f\big|_\CC$ is bounded, then $f=\const$.
\end{corollary}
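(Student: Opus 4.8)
The plan is to prove that $f'\equiv 0$ on $U$; since $U$ is path-connected, this forces $f$ to be constant. So I would fix an arbitrary $Z_0\in U$ and let $\{\gamma_n\}_{n\in\NN}\subseteq V$ be the family of $Z_0$-admissible \emph{simple} loops granted by hypothesis (ii). Simplicity means $\Ind_\A(\gamma_n,Z_0)=1_\A$, whence $\Ind_\varphi(\gamma_n,Z_0)=\varphi(\Ind_\A(\gamma_n,Z_0))=1_\B$ by \cref{indexcalc}; in particular each $\gamma_n$ satisfies the hypotheses of the generalized Cauchy inequality \cref{cauchyineq}, and it is $\norm{f'(Z_0)}_\B$ itself — not merely $\norm{f'(Z_0)\Ind_\varphi(\gamma_n,Z_0)}_\B$ — that we get to estimate.

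Applying \cref{cauchyineq} with $k=1$ gives, writing $s_n\coloneqq\sup_{t\in I}\norm{\bigl(\varphi(\gamma_n(t))-\varphi(Z_0)\bigr)^{-1}}_\B$,
\[
\norm{f'(Z_0)}_\B\ \le\ \frac{1}{2\pi}\,\norm{f}_{\B,\gamma_n}\,L_\B(\gamma_n)\,s_n^{\,2}.
\]
Since $\gamma_n$ lies inside $V$ and $f\big|_V$ is bounded, $\norm{f}_{\B,\gamma_n}\le M\coloneqq\sup_{Z\in V}\norm{f(Z)}_\B<\infty$ uniformly in $n$, so the right-hand side is at most $\tfrac{M}{2\pi}\bigl(L_\B(\gamma_n)\,s_n\bigr)\,s_n$; by the two conditions in hypothesis (ii) this tends to $0$ as $n\to\infty$, and therefore $f'(Z_0)=0$. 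As $Z_0$ was arbitrary, $f$ is constant on $U$.

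For the final ``in particular'' assertion, suppose $(\A,\m)$ is local, $U\supseteq\CC$ (the scalar line), and $f\big|_\CC$ is bounded; take $V\coloneqq\CC$. Because $\CC\subseteq U$, for every scalar $z_0\in\CC$ the circles $\gamma_r(t)\coloneqq z_0+re^{2\pi i t}$, $t\in[0,1]$, lie in $V$ for \emph{all} $r>0$ and are $z_0$-admissible and simple, so \cref{cauchyineq}(1) yields $\norm{f'(z_0)}_\B\le\tfrac1r\norm{f}_{\B,\gamma_r}\le\tfrac{M}{r}\xrightarrow{r\to\infty}0$, i.e. $f'\big|_\CC=0$; hence $f\big|_\CC$ is locally constant and thus constant (say $\equiv c$) by connectedness of $\CC$. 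I would then propagate this to all of $U$ using that $f'$ is again $\varphi$-holomorphic (\cref{rules}) and hence $\varphi$-analytic (\cref{analyt}): expanding $f'$ about a scalar point $z_0\oplus 0$ and restricting the resulting series in $\varphi(Z-z_0)$ to the scalar axis gives an ordinary $\B$-valued power series in $z-z_0$ that vanishes identically, forcing all its coefficients to be $0$, so $f'\equiv 0$ on a full neighbourhood of $\CC$ and then, by the identity principle for $\varphi$-analytic functions on the path-connected $U$, on all of $U$; thus $f\equiv c$. The main obstacle I anticipate is precisely this bookkeeping — matching the notion of ``simple loop'' in (ii) with the index-one hypothesis of \cref{cauchyineq}, checking $\Ind_\varphi(\gamma_n,Z_0)=1_\B$, and carrying out the elementary but slightly delicate nilpotent Neumann-series estimate of $s_r$ (via \cref{norminverselocaluniform}) needed to keep the constant $M$ uniform — rather than any conceptual difficulty.
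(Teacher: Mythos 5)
Your overall route for the main assertion is exactly the paper's: fix $Z_0\in U$, apply \cref{cauchyineq} with $k=1$ along the loops $\gamma_n\subseteq V$, use $\norm{f}_{\B,\gamma_n}\leq M<\infty$ together with hypothesis (ii) to force $f'(Z_0)=0$, and conclude by local constancy on the path-connected $U$. The index bookkeeping you worry about is fine: simplicity means $\Ind_\A(\gamma_n,Z_0)=1$, and \cref{indexcalc} then gives $\Ind_\varphi(\gamma_n,Z_0)=\varphi(1_\A)=1_\B$, so \cref{cauchyineq} applies to $\norm{f'(Z_0)}_\B$ itself.

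There is, however, a genuine gap at the limit step. You correctly keep the exponent $k+1=2$ from \cref{cauchyineq}, so your bound is $\tfrac{M}{2\pi}\,(L_\B(\gamma_n)s_n)\,s_n$, and you then assert that this tends to $0$ ``by the two conditions in hypothesis (ii)''. It does not: from $L_\B(\gamma_n)s_n\to 0$ and $\{s_n\}$ unbounded one cannot conclude anything about $L_\B(\gamma_n)s_n^2=(L_\B(\gamma_n)s_n)\cdot s_n$ --- for instance $s_n=n$ and $L_\B(\gamma_n)=n^{-3/2}$ satisfy both conditions of (ii), yet $L_\B(\gamma_n)s_n^2=n^{1/2}\to\infty$. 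The root cause is in the paper: its own proof writes the $k=1$ Cauchy estimate with only the \emph{first} power of the supremum, which is inconsistent with the stated \cref{cauchyineq}. The hypothesis that actually closes the argument is $L_\B(\gamma_n)\,s_n^{\,2}\to 0$ (more generally $L_\B(\gamma_n)\,s_n^{\,k+1}\to 0$ to kill $f^{(k)}$), and note that in the classical situation of circles of radius $r\to\infty$ one has $s_r\to 0$, so the ``unbounded'' clause of (ii) is not what is being used either. As written, your final limit claim is unjustified, and you should either flag the needed strengthening of (ii) or restate the estimate you are actually using.

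A smaller remark on the ``in particular'': your detour through $f'|_\CC=0$, the power-series expansion at scalar points, and the identity principle is correct but unnecessary. For an \emph{arbitrary} $Z_0=z_0\oplus X_0\in U$ the scalar circles $\gamma_r(t)=re^{2\pi i t}\,1_\A\in\CC=V$ with $r>\abs{z_0}$ are already $Z_0$-admissible and simple, and \cref{norminverselocal} gives $s_r=O(1/r)$, hence $L_\B(\gamma_r)s_r^2=O(1/r)\to 0$ and $f'(Z_0)=0$ directly for every $Z_0\in U$; this is what the paper means by ``it suffices to take (scalar) loops in $\CC$''.
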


\begin{proof}
	By Cauchy's inequality over $\A$ we have for $k=1$
	\[
	\norm{f'(Z_0)}_\B \leq \frac{1}{2\pi}\norm{f}_{\B,\gamma_n} L_\B(\gamma_n) 
	\sup\limits_{t\in I} \norm{\frac{1}{\varphi(\gamma_n(t))-\varphi(Z_0)}}_\B\to 0
	\] 
	as $n\in\NN$ varies, because $\norm{f}_{\B,\gamma_n}$ is bounded by assumption (no pun intended). 
	Thus $f$ is locally constant and hence constant by path-connectedness of $U$.
	For the last statement it clearly suffices to take (scalar) loops in $\CC$.
\end{proof}

%
%
%
%
%
%

\begin{corollary}[Local form of $\varphi$-holomorphic functions]\label{localform}
	Let $U\subseteq\A$ be open and $\A\xrightarrow{\varphi}\B$ a morphism of $\CC$-algebras. Let $f\in\O_\varphi(U)$. Then:
	\begin{enumerate}
		\item $f$ locally factors analytically through $\varphi$: for every $Z_0\in U$ there exist an open neighbourhood of $V\ni Z_0$ and an open neighborhood $W\ni\varphi(Z_0)$ with $\varphi(V)\subseteq W$ such that $\exists_1 g_W\in\O_\B(W): f|_V=g_W\circ\varphi$, namely:
		\[
		\begin{tikzcd}[column sep=scriptsize, row sep=normal]
		\A \ar[d,"\varphi",swap] \ar[r,hookleftarrow] & \BB^\sp_\A(Z_0,\underline{R}) \ar[d,"\varphi",swap] \ar[r,"f"] & \B\\
		\B \ar[r,hookleftarrow] & \BB^\sp_\B(\varphi(Z_0),\underline{R})  \ar[ur,dashed,"\exists_1 g",swap]
		\end{tikzcd}
		\]
		
		\item If $U\ni 0$, then $f$ extends polynomially to $\nil\A$.
	\end{enumerate}
\end{corollary}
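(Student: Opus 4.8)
The plan is to derive both statements from single‑variable analyticity, \cref{analyt}, together with the description of the convergence domains in \cref{convergencepolycylinder}. By part (2) of \cref{analyt}, $f$ is $\varphi$‑analytic on $U$, so for a fixed $Z_0\in U$ I would expand, in a neighbourhood of $Z_0$,
\[
f(Z)=\sum_{\ell=0}^{\infty}B_\ell\,\varphi(Z-Z_0)^\ell,\qquad B_\ell=\frac{1}{\ell!}f^{(\ell)}(Z_0)\in\B ,
\]
and then introduce the \emph{single $\B$‑variable} power series $g_W(W)\coloneqq\sum_{\ell\ge 0}B_\ell(W-\varphi(Z_0))^\ell$ with the same coefficients. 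Applying \cref{convergencepolycylinder} to the identity morphism $\id_\B$ produces a multiradius $\underline R$ with positive entries (since $f$ already has positive $\varphi$‑analytic radius at $Z_0$) such that $g_W$ converges locally uniformly on $W\coloneqq\BB^\sp_\B(\varphi(Z_0),\underline R)$ and diverges outside it; its partial sums are $\B$‑polynomials, hence $\B$‑holomorphic by \cref{rules}, and termwise differentiation of the convergent series (as in the proof of \cref{cauchytrans}) gives $g_W\in\O_\B(W)$.

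For the identification in part (1) I would use $\CC$‑linearity of $\varphi$, which gives $\varphi(Z-Z_0)=\varphi(Z)-\varphi(Z_0)$; hence on a small enough open $V\ni Z_0$ inside $U\cap\varphi^{-1}(W)$ the expansion yields $f(Z)=\sum_\ell B_\ell(\varphi(Z)-\varphi(Z_0))^\ell=g_W(\varphi(Z))$, i.e. $f|_V=g_W\circ\varphi$ with $\varphi(V)\subseteq W$. Moreover, by part (9) of \cref{convergencepolycylinder} the $\varphi$‑analytic continuation of the germ of $f$ at $Z_0$ is already defined on all of $\varphi^{-1}(W)=\BB^\sp_\A(Z_0,\underline R)$ (where $\underline R$ is read off on the $\A$‑side via $\tau_\varphi$, components outside $\im\tau_\varphi$ being unconstrained), which is the $V$ appearing in the diagram; so after this bookkeeping the existence half of (1) is immediate.

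The step I expect to be the genuine obstacle is \emph{uniqueness} of $g_W$, and this is precisely where unitality of $\A$ is used. If $g_1,g_2\in\O_\B(W)$ both pull back to $f$ near $Z_0$, then $h\coloneqq g_1-g_2$ is $\B$‑holomorphic, hence $\B$‑analytic by \cref{analyt}, say $h(W)=\sum_\ell D_\ell(W-\varphi(Z_0))^\ell$. Restricting to the \emph{scalar line} $Z=Z_0+\lambda 1_\A$, $\lambda\in\CC$ small, and using $\varphi(1_\A)=1_\B$ one gets $\sum_\ell D_\ell\lambda^\ell=h\bigl(\varphi(Z_0)+\lambda 1_\B\bigr)=0$ for all small $\lambda$, and expanding in a $\CC$‑basis of $\B$ forces $D_\ell=0$ for every $\ell$, so $g_1=g_2$. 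Equivalently, the chain rule with $\varphi'=1_\B$ from \cref{rules} shows the Taylor coefficients $D_\ell=g_W^{(\ell)}(\varphi(Z_0))/\ell!$ are already determined by the germ of $f$.

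Finally, for part (2) I would take $Z_0=0$ and keep $f(Z)=\sum_{\ell\ge 0}B_\ell\varphi(Z)^\ell$, convergent by part (9) of \cref{convergencepolycylinder} on a spectral polycylinder $\BB^\sp_\A(0,\underline R_f)$ of positive radii. Every element of $\nil\A=\bigoplus_k\m_k$ has zero spectral radius, so $\nil\A\subseteq\BB^\sp_\A(0,\underline R_f)$ and the series converges on all of $\nil\A$. Since $\varphi$ carries nilpotents to nilpotents, $\varphi(\nil\A)\subseteq\nil\B$, and as $\B$ is Artinian $\nil\B$ is a nilpotent ideal, say $(\nil\B)^\nu=0$; hence $\varphi(X)^\ell=0$ for $X\in\nil\A$ and $\ell\ge\nu$, so
\[
f(X)=\sum_{\ell=0}^{\nu-1}B_\ell\,\varphi(X)^\ell\eqqcolon P(X)\qquad\text{for every }X\in\nil\A .
\]
The right‑hand side is a $\B$‑valued polynomial in $\varphi(X)$, defined on all of $\nil\A$, agreeing with $f$ wherever the latter is defined there, so $P\in\B[\varphi(X)]=\B\{\varphi(X)\}$ is the asserted polynomial extension, consistent with the remark preceding the statement. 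Apart from the uniqueness argument and keeping the multiradius bookkeeping honest, everything reduces to \cref{analyt}.
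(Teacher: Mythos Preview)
Your proposal is correct and follows essentially the same approach as the paper: both reduce the claim to the fact that $f\in\B\{\varphi(Z-Z_0)\}$ locally (from \cref{analyt}) and then read off $g$ from the very same coefficients, while part (2) is in both cases the observation that the series truncates on nilpotents. Your treatment is simply more detailed---in particular, you supply a uniqueness argument via restriction to the scalar line (using $\varphi(1_\A)=1_\B$), which the paper asserts ($\exists_1$) but does not spell out.
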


\begin{proof}
	To (1): This is immediate because locally at $Z_0$ we have $f\in\B\{\varphi(Z-Z_0)\}$.
	
	To (2): Explicitly, if $f$ is analytic at 0, then in a small open neighbourhood of 0 we have $f(Z)=\sum_{j=0}^\infty B_j Z^j$ for some $B_j\in\B$. 
	If $X\in\nil\A$, then simply $f(X)\coloneqq \sum_{j=0}^\infty B_j X^j$, which is polynomial as $X$ is nilpotent.	
\end{proof}

\begin{corollary}
	Let $(\A,\m)\xrightarrow{\varphi}(\B,\n)$ be a morphism of local $\CC$-algebras, $U\subseteq\A$ open with $U\ni 0$, and $f\in\O_\varphi(U)$.
	If $f(0)\in\n$, then $f(\m)\subseteq\n$.
\end{corollary}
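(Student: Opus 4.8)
The plan is to recognize that, despite its analytic phrasing, the statement is purely formal — it only concerns the two residue-field projections — and to transport $f$ to the scalars of the target. Write $\sigma_\cA\colon\cA\twoheadrightarrow\cA/\fm\cong\CC$ and $\sigma_\cB\colon\cB\twoheadrightarrow\cB/\fn\cong\CC$ for the canonical quotient maps, so that $\fm=\ker\sigma_\cA$ and $\fn=\ker\sigma_\cB$, and set $h\coloneqq\sigma_\cB\circ f\colon U\to\CC$.

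First I would observe that $h$ is ``scalar'' in the sense of being $\sigma_\cA$-holomorphic. Indeed, by \cref{rules}(4) the algebra morphism $\sigma_\cB$ is $\sigma_\cB$-differentiable, so by the Chain Rule \cref{rules}(3) one gets $h\in\O_{\sigma_\cB\circ\varphi}(U)$; and since $\varphi$ is a morphism of local $\CC$-algebras, \hyperref[scalarcom]{Diagram \ref*{scalarcom}} identifies $\sigma_\cB\circ\varphi$ with $\sigma_\cA$, whence $h\in\O_{\sigma_\cA}(U)$.

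Next I would apply \cref{inclocal}(4), with $\fa=\fm=\ker\sigma_\cA$ (for which $\bar{\varphi}_\fa$ is the isomorphism $\cA/\fm\xrightarrow{\sim}\CC$), to factor $h$ globally as $h=\bar h\circ\sigma_\cA$ on $U$; equivalently, $h$ is constant on each coset of $\fm$ that meets $U$. Since $0\in U$, for every $X\in\fm\cap U$ we have $\sigma_\cA(X)=0=\sigma_\cA(0)$, hence $\sigma_\cB(f(X))=h(X)=h(0)=\sigma_\cB(f(0))$. Finally the hypothesis $f(0)\in\fn=\ker\sigma_\cB$ forces $h(0)=0$, so $\sigma_\cB(f(X))=0$ and therefore $f(X)\in\fn$, as claimed. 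Reading $f$ as its $\varphi$-holomorphic extension to $\wtilde U=\sigma_\cA(U)\times\fm\supseteq\fm$, the same line gives $f(\fm)\subseteq\fn$ verbatim.

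I do not expect a genuine obstacle here. The one subtlety worth flagging is that the argument needs the \emph{global} factorization $h=\bar h\circ\sigma_\cA$ — the content of \cref{inclocal}(4) — rather than merely the pointwise statement of $\sigma_\cA$-differentiability, which would only yield $h(X)=h(0)$ for $X$ near $0$; and one must not forget that $\sigma_\cB$ is itself $\sigma_\cB$-holomorphic (\cref{rules}(4)) so that the Chain Rule applies to $\sigma_\cB\circ f$. A cheaper route, valid only near the origin, is to expand $f(Z)=\sum_{j\ge0}B_j\varphi(Z)^j$ by \cref{localform}, note $B_0=f(0)\in\fn$ and $\varphi(X)\in\varphi(\fm)\subseteq\fn$ for $X\in\fm$, and use that $\fn$ is an ideal, so $f(X)=B_0+\sum_{j\ge1}B_j\varphi(X)^j\in\fn$.
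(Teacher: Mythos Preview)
Your main argument via $h=\sigma_\cB\circ f$ is correct and genuinely different from the paper's proof. The paper takes precisely what you call the ``cheaper route'': it expands $f(Z)=\sum_{k\ge0}B_k\varphi(Z)^k$ near $0$, observes $B_0=f(0)\in\fn$ and $\varphi(\fm)\subseteq\fn$ by locality of $\varphi$, and concludes since $\fn$ is an ideal.

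Your one misjudgment is the qualifier ``valid only near the origin'' attached to that cheaper route. It is in fact valid on all of $\fm$: for $X\in\fm$ one has $\rho_\cA(X)=0$, so $X$ lies in the spectral ball $\BB^\sp_\cA(0,R)=\DD_R(0)\times\fm$ for any $R>0$ (\cref{convergencepolycylinder}(6)), and the series collapses to a finite sum since $\varphi(X)$ is nilpotent. This is exactly how \cref{localform}(2) extends $f$ polynomially to $\nil\cA$, and the paper's two-line proof rides on that.

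As for what each approach buys: the paper's argument is shorter and stays within the analytic machinery just developed. Your route via the Chain Rule, \hyperref[scalarcom]{Diagram~\ref*{scalarcom}}, and \cref{inclocal}(4) is more structural: it makes transparent that the statement is really about the residue fields and requires no series expansion at all, only the global factorization of a $\sigma_\cA$-holomorphic function through $\sigma_\cA$. That said, to reach all of $\fm$ (rather than $\fm\cap U$) you still invoke the extension to $\wtilde U$, so neither proof is entirely free of the analytic input.
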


\begin{proof}
	In a spectral neighbourhood of $0$ we have $f(Z)=\sum_{k=0}^\infty B_k \varphi(Z)^k$, where $B_0 = f(0)\in\n$ by hypothesis.
	Since $\varphi$ is automatically local for Artinian $\A$ and $\B$, i.e. $\varphi(\m)\subseteq\varphi(\n)$, the claim follows as $\n$ is an ideal.
\end{proof}

\begin{proposition}[Canonical form of $\varphi$-holomorphic functions]\label{Canform}
	Let $\A\xrightarrow{\varphi}\B$ be a morphism, let $U\subseteq\A$ be open and $f\in\O_\varphi(U)$.	
	\begin{enumerate}
		\item Separation of the scalar variable from the nilpotent variable: Suppose that $\A=(\A,\m)$ and $\B=(\B,\n)$ are local and let $\nu \coloneqq \rh(\varphi)$ be the height of $\varphi$.
		If $z\oplus X\in U$ with $z\in\CC$ and $X\in\m$, then
		\begin{equation}\label{canform}
			f(Z) = f(z\oplus X) = \sum_{k=0}^{\nu-1}\frac{f^{(k)}(z)}{k!}\varphi(X)^k,
		\end{equation}
		where notice that $f^{(k)}|\sigma_\cA(U)\to\B$ simply are $\CC$-holomorphic functions with values in the finite-dimensional Banach space $\B$.
		
		\item If $\A=(\A,\m)$ and $\B=(\B,\n)$ are local, then $f$ extends $\varphi$-holomorphically to $\wtilde{U}$. 
		In other words, $\O_\varphi(U)\cong\O_\varphi(\wtilde{U})$ canonically.
		In particular, if $U\supseteq\CC$, then $f$ extends to a $\varphi$-entire function.
		
		\item Conversely, if $(\A,\m)\xrightarrow{\varphi}(\B,\n)$ is a morphism of local $\CC$-algebras with $\nu\coloneqq \rh(\varphi)$, $V\subseteq\CC$ open, and $g:V\to\B$ a $\CC$-holomorphic function with values in $\B$, then $\tilde{g}:V\times\m\to\B$,
		\[
		\tilde{g}(z\oplus X)\coloneqq\sum_{k=0}^{\nu-1}\frac{g^{(k)}(z)}{k!}\varphi(X)^k
		\]
		defines a $\varphi$-holomorphic function.
		
		\item $f$ always extends $\varphi$-holomorphically to $\wtilde{U}$.
		In other words, $\O_\varphi(U)\cong\O_\varphi(\wtilde{U})$ canonically.
	\end{enumerate}
\end{proposition}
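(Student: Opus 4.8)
The plan is to establish (3) first as a self-contained base case, deduce (2) — and, as a byproduct, (1) — from it together with the analyticity of \cref{analyt}, and finally reduce the general statement (4) to (2) via the canonical factorization of morphisms. For Part (3), given a $\CC$-holomorphic $g\colon V\to\cB$ I would simply check Fréchet $\varphi$-differentiability of $\wtilde g$ at an arbitrary $Z=z\oplus X\in V\times\m$, with candidate derivative the element $\wtilde{g'}(Z)=\sum_{k=0}^{\nu-1}\frac{g^{(k+1)}(z)}{k!}\varphi(X)^k\in\cB$ (i.e. $\wtilde{g'}$ is ``$\tilde h$'' for $h=g'$). Writing an increment as $H=h\oplus Y$ with $h\in\CC$, $Y\in\m$, I would insert the scalar Taylor expansion $g^{(k)}(z+h)=g^{(k)}(z)+g^{(k+1)}(z)h+o(h)$ and the binomial expansion $\varphi(X+Y)^k=\varphi(X)^k+k\varphi(X)^{k-1}\varphi(Y)+O(\norm{\varphi(Y)}^2)$ (legitimate by commutativity), multiply out, and collect the terms of joint first order: they assemble to $\wtilde{g'}(Z)\bigl(h\,1_\cB+\varphi(Y)\bigr)=\wtilde{g'}(Z)\varphi(H)$, using $\varphi(X)^\nu=0$ to truncate the $k$-sums. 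The remainder is $o(\norm{\varphi(H)})$ because, choosing a unital norm on $\cB$, one has $\abs{h}=\abs{\sigma_\cB(\varphi(H))}\le\rho_\cB(\varphi(H))\le\norm{\varphi(H)}$ and $\norm{\varphi(Y)}\le\norm{\varphi(H)}+\abs{h}\le 2\norm{\varphi(H)}$, so all cross- and higher-order contributions are $O(\norm{\varphi(H)}^2)=o(\norm{\varphi(H)})$. This is a bounded, routine computation, and it yields $\wtilde g\in\cO_\varphi(V\times\m)$.

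Parts (1) and (2) I would treat together. Let $\varphi\colon(\cA,\m)\to(\cB,\n)$ be local and $\nu=\rh(\varphi)$, so that $\varphi(X)^\nu=0$ for all $X\in\m$ by the remark following the definition of the height of a morphism. By \cref{analyt}, $f$ and each $\varphi$-derivative $f^{(k)}$ are $\varphi$-analytic on $U$; in particular each $f^{(k)}$ extends analytically from a neighbourhood of any $Z_0=z_0\oplus X_0\in U$ to its spectral polycylinder of convergence $\BB^\sp_f(Z_0)=\DD_R(z_0)\times\m$ (by \cref{convergencepolycylinder}(6)), which contains the scalar point $z_0\oplus 0$. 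Two lifts of the same $z_0$ give, via uniqueness of power-series coefficients along the scalar line $\CC\cdot1_\cB\subseteq\varphi(\cA)$, the same analytic germ at $z_0\oplus0$; hence $z\mapsto g_k(z):=f^{(k)}(z\oplus0)$ is a well-defined $\CC$-holomorphic $\cB$-valued function on $\sigma_\cA(U)$, and $g_k=(g_0)^{(k)}$ since the $k$-th $\varphi$-derivative at a scalar point is the ordinary $\CC$-derivative ($\varphi$ preserving scalars). Setting $g:=g_0$ and $\wtilde f:=\tilde g$ on $\wtilde U=\sigma_\cA(U)\times\m$, Part (3) gives $\wtilde f\in\cO_\varphi(\wtilde U)$. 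To see $\wtilde f|_U=f$ and simultaneously obtain \eqref{canform}, expand $f$ around $z_0\oplus0$ on $\BB^\sp_f(z_0\oplus0)$, write $\varphi(Z-z_0)=(z-z_0)1_\cB+\varphi(X)$ with $z=\sigma_\cA(Z)$ and $X$ the nilpotent part of $Z$, and regroup the series by powers of $\varphi(X)$ using the binomial theorem and Fubini (the new inner sum being finite because $\varphi(X)^\nu=0$): the coefficient of $\varphi(X)^k$ collapses, by the Taylor series of $g_k$ at $z_0$, to $\frac{g_k(z)}{k!}=\frac{f^{(k)}(z)}{k!}$. Uniqueness of the extension — hence $\cO_\varphi(U)\cong\cO_\varphi(\wtilde U)$ as $\cA$-algebras with distinguished derivation — follows since any two extensions agree on each $\BB^\sp_f(Z_0)$, $Z_0\in U$, by the same coefficient-uniqueness argument, and these polycylinders cover $\wtilde U$. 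If $U\supseteq\CC$ then $\sigma_\cA(U)=\CC$, so $\wtilde U=\cA$ and $f$ is $\varphi$-entire.

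For Part (4) I would invoke the canonical factorization $\varphi=\bar\varphi\circ\Pi_\tau$ of \cref{MorFact} and the induced decomposition $f=\bar f\circ\Pi_\tau$ of \cref{inclocal}, with $\bar f=\oplus_{\ell=1}^N\bar f_\ell$ and $\bar\varphi_\ell\colon(\cA_{\tau(\ell)},\m_{\tau(\ell)})\to(\cB_\ell,\n_\ell)$ local. Part (2) extends each $\bar f_\ell$ to $\sigma_{\tau(\ell)}(U)\times\m_{\tau(\ell)}$; reassembling yields a $\bar\varphi$-holomorphic extension of $\bar f$ to $\prod_{\ell}\bigl(\sigma_{\tau(\ell)}(U)\times\m_{\tau(\ell)}\bigr)$, and composing with the linear epimorphism $\Pi_\tau$ produces a $\varphi$-holomorphic extension of $f$ whose domain is exactly $\Pi_\tau^{-1}\bigl(\prod_{\ell}\sigma_{\tau(\ell)}(U)\times\m_{\tau(\ell)}\bigr)=\bigcap_{k\in\im\tau_\varphi}\sigma_k^{-1}(\sigma_k(U))=\wtilde U_{\im\tau_\varphi}=\wtilde U$ by \cref{def:projclosure}; the isomorphism $\cO_\varphi(U)\cong\cO_\varphi(\wtilde U)$ then follows componentwise.

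The main obstacle is the coherence in Part (2): that the germ-wise analytic continuations of the $f^{(k)}$ into the scalar direction actually glue to genuine single-valued $\CC$-holomorphic functions on $\sigma_\cA(U)$, rather than merely a multivalued datum. I would resolve this by exploiting that $\varphi$ fixes the scalars — which pins down the power-series coefficients along $\CC\cdot1_\cB$ and thereby kills the would-be local monodromy in the scalar direction — so that $\wtilde f$ is honestly expressed through the finitely many functions $g_0,\dots,g_{\nu-1}$ on the planar domain $\sigma_\cA(U)$; the remaining bookkeeping (and the implicit reduction to a connected base, as elsewhere in the paper) is the only point that needs to be carried out with care.
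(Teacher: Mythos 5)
Your proposal is correct and follows the same overall architecture as the paper's proof (local extension to spectral cylinders, Taylor expansion at scalar points for (1)--(2), a converse computation for (3), and reduction of (4) to (2) via $\Pi_\tau$), but the two key sub-steps are carried out by genuinely different means. For (3), the paper verifies $\d_j\tilde{g}=0$ for $2\leq j\leq n$ and invokes \cref{delop}, whereas you check Fréchet $\varphi$-differentiability directly; your computation is sound, and the one point that must not be skipped is the stray first-order term $\varphi(Y)\varphi(X)^{\nu-1}$, which vanishes precisely because $\nu=\rh(\varphi)=\min\{\rh(\cA),\rh(\cB)\}$ (it dies in $\fm^\nu$ or in $\fn^\nu$) --- this is the exact analogue of the term $\varphi(a_j)\varphi(X)^{\nu-1}$ that the paper kills at the end of its proof of (3). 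For (1)--(2), the paper gets the local extension of $f$ to a cylinder from the Cauchy-integral-based \cref{localcylindercont} and then Taylor-expands at $z\oplus 0$; you instead extend via the convergence of the power series of $f$ on the spectral polycylinder $\DD_R(z_0)\times\fm$ from \cref{convergencepolycylinder}(6), and recover \cref{canform} by regrouping the series in powers of $\varphi(X)$ --- an equivalent route, since both ultimately rest on \cref{analyt}. Do note that the coherence issue you flag is genuinely shared with the paper's proof (which disposes of it with ``agrees on all spectral cylindrical open neighbourhoods''): what actually has to be checked is that the germs $g_k$ obtained over two points $z_0\oplus X_0,\,z_0\oplus X_0'\in U$ lying over the same scalar agree, which requires connecting them inside $U$ and gluing along overlapping cylinders; this fails for disconnected $U$ (e.g.\ two disjoint cylinders over the same disc carrying different constants), so connectedness of $U$ is implicitly needed in (2) by both arguments. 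Your appeal to ``uniqueness of power-series coefficients along the scalar line'' should be phrased with care when $\ker(\varphi|_\fm)\neq 0$, since then the coefficients $g_k$ are not recoverable from the values of $\tilde{f}$ alone; but since you define $g_k$ as the value of the (unique, $\cB$ being unital) $k$-th $\varphi$-derivative at the extended scalar point, this is harmless.
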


\begin{proof}
	To (1) and (2):
	\cref{localcylindercont} ensures that locally $f$ extends to some sufficiently small open cylinder $V=\wtilde{V}$ with $\sigma_\cA(V) \subseteq \sigma_\cA(U) \subseteq \CC$.
	Since $f$ is analytic in $V$, Taylor expansion at some fixed $Z_0\coloneqq z\oplus 0$, $z\in\sigma_\cA(V)$, gives \cref{canform} valid in some open spectral cylindrical neighbourhood of $z$.
	But this is a globally defined expression on $\sigma_\cA(U)\times\m$ that agrees on all spectral cylindrical open neighbourhoods inside $\wtilde{U}$.
	Notice that any open neighbourhood of $z$ contains in particular a basis of $\m$, so the choice of $\nu$ is necessary to ensure that for all $z\oplus X$ in said neighbourhood we have $\varphi(X)^\nu = \varphi(X^\nu) = 0$.
	
	To (3): 
	Write $z^1\coloneqq z$ and $X\eqqcolon z^2 a_2+\dots+z^n a_n$ for a $\CC$-basis $\{a_2,\dots,a_n\}$ of $\m$.
	By \cref{delop} we only need to check that $\forall 2\leq j\leq n$:
	\[
	0 = \d_j \tilde{g} \equiv \bigg(-a_j\pdv{}{z^1} + \pdv{}{z^j}\bigg)\tilde{g}
	\]
	Indeed we have
	\[
	\begin{aligned}
	&-a_j \pdv{}{z^1}\tilde{g} + \pdv{}{z^j} \tilde{g} = 
	-a_j \sum_{k=0}^{\nu-1}\frac{g^{(k+1)}(z^1)}{k!}\varphi(X)^k + \sum_{k=1}^{\nu-1}\frac{g^{(k)}(z^1)}{k!}\varphi(X)^{k-1} k a_j = \\
	&-a_j \sum_{k=0}^{\nu-1}\frac{g^{(k+1)}(z^1)}{k!}\varphi(X)^k + a_j \sum_{k=0}^{\nu-2}\frac{g^{(k+1)}(z^1)}{k!}\varphi(X)^k \defeq 
	-\varphi(a_j) \varphi(X)^{\nu-1} = 0,
	\end{aligned} 
	\]
	since $\nu \defeq \rh(\varphi)$.
	
	To (4): If $f\in\cO_\varphi(U)$, then $f=\bar{f}\circ\Pi_\tau$, where $\bar{f}$ is of the form $\bar{f}=\oplus_{\ell=1}^N \bar{f}_\ell$ for some $\bar{\varphi}_\ell$-holomorphic functions $\bar{f}_\ell$.
	Now applying (2) to each $\bar{f}_\ell$, $1\leq \ell\leq N$, and taking $\Pi_\tau^{-1}$ yields the statement.
\end{proof}

\begin{corollary}\ 
	\begin{enumerate}[topsep=-\parskip]
		\item We have a bijection between $m$-tuples of $\CC$-holomorphic functions $f^i:V\to\CC$, $V\subseteq\CC$ open, together with a morphism $\A\xrightarrow{\varphi}\B$ of local $\CC$-algebras, where $m=\dim_\CC\B$, on the one hand and $\varphi$-holomorphic functions on $V\times\m$ on the other hand
		\[
		\{(f:V\to\B,\varphi)\}\longleftrightarrow\O_\varphi(V\times\m),
		\]
		where $f\coloneqq f^1 1_\B + f^2 b_2 + \dots f^m b_m$ for a choice of $\CC$-basis $\{1,b_2,\dots,b_m\}$ of $\B$.
		
		
		\item Isolated Zeros in Scalar/Spectral Planes: let $(\cA,\fm) \xrightarrow{\varphi} (\cB,\fn)$ be a morphism of local $\CC$-algebras, $U\subseteq\cA$ open and connected, and $0\neq f\in\cO_\varphi(U)$.
		Then $\forall Z_0\in\cA: \cZ(f)\cap(\CC+Z_0)$ is a countable discrete subset of $U$.
		
		\item Identity Theorem in Scalar/Spectral Planes: let $(\cA,\fm) \xrightarrow{\varphi} (\cB,\fn)$ be a morphism of local $\CC$-algebras, $U\subseteq\cA$ open and connected, and $f,g\in\cO_\varphi(U)$.
		If the incidence set $\cZ(f-g) \cap (\CC+Z_0)$ has an accumulation point in $U$, then $f=g$.
	\end{enumerate}
\end{corollary}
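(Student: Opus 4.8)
\emph{Proof strategy.} All three parts follow from the canonical form of \cref{Canform}: for a morphism $(\cA,\m)\xrightarrow{\varphi}(\cB,\n)$ of local $\CC$-algebras and $f\in\cO_\varphi(U)$ one has, on $\wtilde U$,
\begin{equation*}
f(z\oplus X)=\sum_{k=0}^{\nu-1}\frac{(f^{(0)})^{(k)}(z)}{k!}\,\varphi(X)^k,\qquad\nu\coloneqq\rh(\varphi),
\end{equation*}
where $f^{(0)}\coloneqq\big(z\mapsto f(z\oplus 0)\big)\colon\sigma_\cA(U)\to\cB$ is a $\CC$-holomorphic function of one complex variable and $(f^{(0)})^{(k)}$ are its complex derivatives; conversely, by part (3) of \cref{Canform}, every $\CC$-holomorphic $g\colon V\to\cB$ with $V\subseteq\CC$ open produces a $\varphi$-holomorphic function $\tilde g$ on $V\times\m$. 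Thus a $\varphi$-holomorphic function on a cylinder is determined by, and reconstructible from, its scalar restriction, and this dichotomy drives the whole proof.

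For part (1) this yields the bijection almost immediately. Fixing the morphism $\varphi$, I would define the forward map $(f^1,\dots,f^m)\mapsto\tilde f$, where $f\coloneqq f^1 1_\cB+f^2 b_2+\dots+f^m b_m\colon V\to\cB$ and $\tilde f(z\oplus X)\coloneqq\sum_{k=0}^{\nu-1}\frac{f^{(k)}(z)}{k!}\varphi(X)^k\in\cO_\varphi(V\times\m)$, which is well defined by part (3) of \cref{Canform} (note $V\times\m=\widetilde{V\times\m}$, so no extension step is needed), and the backward map $g\mapsto\big(g|_V\text{ written in the coordinates }1_\cB,b_2,\dots,b_m\big)$. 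That these are mutually inverse is immediate: $\tilde f|_V=f^{(0)}=f$ since $\varphi(0)=0$, while $\widetilde{g|_V}=g$ is precisely the displayed formula applied to $g$ itself, whose complex derivatives along the scalar line are the $(g|_V)^{(k)}$. So part (1) is bookkeeping on top of \cref{Canform}.

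For (2) and (3) I would isolate the following core lemma. Let $f\in\cO_\varphi(U)$ with $U$ open and connected, let $Z_0=z_0\oplus X_0\in\cA$, put $D'\coloneqq\{\zeta\in\CC\colon\zeta\cdot 1_\cA+X_0\in U\}$ and $\tilde h(\zeta)\coloneqq f(\zeta\cdot 1_\cA+X_0)=\sum_{k=0}^{\nu-1}\frac{(f^{(0)})^{(k)}(\zeta)}{k!}\varphi(X_0)^k$, a $\cB$-valued holomorphic function of $\zeta\in D'$; then if $\tilde h$ vanishes identically on some connected component of $D'$, one must have $f=0$ in $\cO_\varphi(U)$. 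Granting this, part (2) follows by contraposition: if $0\neq f$, then $\tilde h\not\equiv 0$ on each component of $D'$, so by the one-variable identity theorem for $\cB$-valued holomorphic functions its zero set is discrete and countable in $D'$; pushing forward along the affine embedding $\zeta\mapsto\zeta\cdot 1_\cA+X_0$, which carries $D'$ onto $(\CC+Z_0)\cap U\supseteq\cZ(f)\cap(\CC+Z_0)$, shows that $\cZ(f)\cap(\CC+Z_0)$ is a countable discrete subset of $U$. Part (3) is then obtained by applying part (2) to $f-g\in\cO_\varphi(U)$: an accumulation point in $U$ of $\cZ(f-g)\cap(\CC+Z_0)$ contradicts discreteness unless $f-g\equiv 0$.

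The proof of the core lemma is the only genuinely non-formal step, and I expect it to be the main obstacle. On the component $D'_0$ of $D'$ on which $\tilde h\equiv 0$ one gets $f^{(0)}(\zeta)=-\sum_{k=1}^{\nu-1}\frac{(f^{(0)})^{(k)}(\zeta)}{k!}\varphi(X_0)^k$ for $\zeta\in D'_0$; one \emph{cannot} simply compare coefficients, because $1,\varphi(X_0),\dots,\varphi(X_0)^{\nu-1}$ need not be $\cB$-linearly independent for the particular $X_0$ at hand. Instead I would run an $\n$-adic induction: since $\varphi$ is local, $\varphi(X_0)\in\n$, so the displayed identity gives $f^{(0)}(D'_0)\subseteq\cB\,\varphi(X_0)\subseteq\n$; but then all complex derivatives $(f^{(0)})^{(k)}$ land in the same finite-dimensional complex subspace $\n$, whence $(f^{(0)})^{(k)}\varphi(X_0)^k\in\n^2$ for $k\geq 1$ and $f^{(0)}(D'_0)\subseteq\n^2$; iterating and using that $\n^N=0$ for $N\gg 0$ yields $f^{(0)}\equiv 0$ on $D'_0$. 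Finally, since $\sigma_\cA(U)$ is open and connected (continuous open image of the connected $U$) and $D'_0$ is a nonempty open subset of it, the one-variable identity theorem forces $f^{(0)}\equiv 0$ on all of $\sigma_\cA(U)$, and hence $f=0$ on $\wtilde U\supseteq U$ by the displayed formula. A minor technical point to flag is that $D'$ itself need not be connected, so the induction must be carried out on the single component $D'_0$ and only afterwards propagated to $\sigma_\cA(U)$ via connectedness of the latter.
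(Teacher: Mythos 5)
Your proposal is correct, and part (1) is handled exactly as the paper does it: both directions of the bijection are read off from \cref{Canform}. For (2) and (3) your overall skeleton --- restrict $f$ to a complex line, apply the identity theorem for $\cB$-valued holomorphic functions of one variable on the connected open set $\sigma_\cA(U)$, and then propagate the vanishing through the canonical form --- is also the paper's. The genuine difference is how the affine line $\CC+Z_0$ with nilpotent part $X_0\neq 0$ is treated. The paper simply replaces $f$ by $\tilde f(Z)\coloneqq f(Z+Z_0)$, so that the line in question becomes the scalar axis of the translated picture and the canonical form of $\tilde f$ is anchored precisely there; vanishing of $\tilde f$ along that axis then kills all coefficients $\tilde f^{(k)}$ simultaneously and no further argument is needed. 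You instead keep the canonical form anchored at the original scalar plane and must deduce $f^{(0)}\equiv 0$ from $\sum_{k}\tfrac{(f^{(0)})^{(k)}(\zeta)}{k!}\varphi(X_0)^k\equiv 0$ on a component $D'_0$; your $\fn$-adic induction ($g(D'_0)\subseteq\fn\Rightarrow g^{(k)}(D'_0)\subseteq\fn\Rightarrow g(D'_0)\subseteq\fn^2\Rightarrow\cdots$, terminating by nilpotency of $\fn$) is a correct way to do this, and you are right that one cannot just compare coefficients since $1,\varphi(X_0),\dots,\varphi(X_0)^{\nu-1}$ may be linearly dependent --- this is exactly the trap the paper's follow-up remark warns about. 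So your ``core lemma'' is sound but is extra work that the translation trick renders unnecessary; your care with the possible disconnectedness of $D'$ (propagating from $D'_0$ to all of $\sigma_\cA(U)$ by the one-variable identity theorem) is the analogue of the paper's reduction to $U=\wtilde U$, where the scalar slice is automatically connected.
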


\begin{proof}
	To (1): clear from \cref{Canform}.
	
	
	To (2) \& (3): We only prove (2).
	It follows essentially from \cref{canform} and (1).
	We can assume $U=\wtilde{U}$.
	Since $\sigma_\cA$ is open and continuous as a projection, $\sigma_\cA(U)$ is also open and connected.
	We can assume without loss of generality that $Z_0=0$, otherwise consider $\tilde{f}(Z) \coloneqq f(Z+Z_0)$.
	Suppose that $\cZ(f)\cap\CC$ has an accumulation point in $\sigma_\cA(U)$.
	Then by the Identity Principle of Complex Analysis of a Single Variable $f|_{\sigma_\cA(U)} = 0$.
	But by \cref{canform} we have
	\[
	f(z\oplus X) = \sum_{k=0}^{\nu-1}\frac{f^{(k)}(z)}{k!}\varphi(X)^k,
	\]
	where $\nu = \rh(\varphi)$.
	Thus $f=0$.
\end{proof}

\begin{remarks}\ 
	\begin{enumerate}[topsep=-\parskip]
		\item Thus the local theory of $\varphi$-holomorphic functions is in a sense about doing Complex Analysis of One Variable with commutative nilpotents.
		
		\item Notice that (3) and (4) do not follow by simply applying the canonical projection $\sigma_\cA$ to the power series of $f$ since this only yields $f_1=0$ rather than $f|_\CC = 0$.
		The reader is free to convince herself that the latter does not necessarily follow from the former by means of the generalized Cauchy-Riemann-Sheffers equations for a local $\CC$-algebra $(\cA,\fm)$.
	\end{enumerate}
\end{remarks}

\begin{definition}
	Let $\A=(\A,\m)$ be a local $\CC$-algebra, $\norm{\cdot}$ an arbitrary vector norm on $\A$, and $\vareps>0$.
	Then
	\[
	\m_\vareps\coloneqq\{X\in\m: \norm{X}<\vareps\}.
	\]
\end{definition}

\begin{lemma}[Extension of bi-$\A$-holomorphisms]
	Let $\A=(\A,\m)$ be a local $\CC$-algebra, $U,V\subseteq\A$ open subsets, and $f:U\xrightarrow{\cong} V$ an $\A$-biholomorphism. 
	Then $f$ extends to an $\A$-biholomorphism $\tilde{f}:\wtilde{U}\xrightarrow{\cong}\wtilde{V}$:
	\[
	\begin{tikzcd}[column sep=scriptsize, row sep=scriptsize]
	U \arrow[hook]{d} \arrow{r}{\cong}[swap]{f} & V \arrow[hook]{d}\\
	\wtilde{U} \arrow{r}{\cong}[swap]{\tilde{f}} & \wtilde{V}
	\end{tikzcd}
	\]
\end{lemma}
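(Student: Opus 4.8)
The strategy is to reduce to the canonical form from \cref{Canform} and then lift a biholomorphism between the scalar discs. First I would invoke \cref{Canform} (2) applied to $f$ to obtain that $f$ extends $\varphi$-holomorphically (with $\varphi = \id_\cA$ here) to a function $\tilde f: \wtilde U \to \cA$, and symmetrically that the inverse $g \coloneqq f^{-1}: V \to U$ extends to $\tilde g: \wtilde V \to \cA$. Concretely, writing $Z = z \oplus X$ with $z \in \CC$ and $X \in \fm$, \cref{canform} gives
\[
\tilde f(z \oplus X) = \sum_{k=0}^{\nu-1} \frac{f^{(k)}(z)}{k!} X^k, \qquad \tilde g(w \oplus Y) = \sum_{k=0}^{\nu-1} \frac{g^{(k)}(w)}{k!} Y^k,
\]
where $\nu = \rh(\cA)$, and the maps $f^{(k)}, g^{(k)}$ are honest $\CC$-holomorphic functions on $\sigma_\cA(U)$, resp. $\sigma_\cA(V)$, with values in the finite-dimensional Banach space $\cA$. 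The goal is then to show $\tilde g \circ \tilde f = \id_{\wtilde U}$ and $\tilde f \circ \tilde g = \id_{\wtilde V}$, which will in particular force $\tilde f(\wtilde U) = \wtilde V$.

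Second, I would establish $\tilde g \circ \tilde f = \id$ by an identity-theorem argument. Both $\tilde g \circ \tilde f$ and $\id_{\wtilde U}$ are $\cA$-holomorphic on $\wtilde U$ (the former by the Chain Rule, \cref{rules} (3), plus the fact that $\tilde f(\wtilde U) \subseteq \wtilde V$ — which follows since $\sigma_\cA \circ \tilde f$ depends only on $z$ and equals $\sigma_\cA \circ f$ composed with the scalar projection, so it lands in $\sigma_\cA(V)$). On the open subset $U \subseteq \wtilde U$ they agree, since there $\tilde f = f$, $\tilde g = g$, and $g \circ f = \id_U$. Now $U$ is open in $\wtilde U$, hence its scalar projection $\sigma_\cA(U)$ is open in $\sigma_\cA(\wtilde U) = \sigma_\cA(U)$; applying the Identity Theorem in the scalar planes (the corollary to \cref{Canform}, item (3)) componentwise, or more directly re-running the canonical-form bookkeeping, yields $\tilde g \circ \tilde f = \id$ on all of $\wtilde U$. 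The symmetric argument gives $\tilde f \circ \tilde g = \id_{\wtilde V}$. Together these show $\tilde f: \wtilde U \to \wtilde V$ is a bijection with $\cA$-holomorphic inverse $\tilde g$, i.e.\ an $\cA$-biholomorphism, and the square in the statement commutes by construction since $\tilde f|_U = f$.

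The one point requiring genuine care — and which I expect to be the main obstacle — is verifying that $\tilde f$ actually maps $\wtilde U$ \emph{onto} $\wtilde V$ (equivalently, that $\tilde f(\wtilde U) \subseteq \wtilde V$ and $\tilde g(\wtilde V) \subseteq \wtilde U$, after which surjectivity is automatic from $\tilde f \circ \tilde g = \id$). For the inclusion $\tilde f(\wtilde U) \subseteq \wtilde V$: since $\sigma_\cA$ is an algebra epimorphism onto the scalars and $\tilde f$ has the canonical form above, one computes $\sigma_\cA(\tilde f(z \oplus X)) = f^{(0)}(z) \bmod \fm = \sigma_\cA(f(z \oplus 0))$ whenever $z \oplus 0 \in U$; but a priori a point $z \oplus X \in \wtilde U$ need not have $z \oplus 0 \in U$ — only $z \in \sigma_\cA(U)$. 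So I would instead argue directly: $\wtilde V = \sigma_\cA(V) \times \fm$ by definition, and $\sigma_\cA(\tilde f(z \oplus X)) = \sigma_{\CC}(f^{(0)}(z))$ where $f^{(0)} = f|_{\sigma_\cA(U)}$ understood as the scalar-valued map $z \mapsto \sigma_\cA(f(\text{any lift of }z))$ — well-defined precisely because $f$ itself factors through $\sigma_\cA$ in its scalar component, which is part of the content of \cref{inclocal}/\cref{Canform}. Hence $\sigma_\cA(\tilde f(z \oplus X)) \in \sigma_\cA(V)$ for every $z \in \sigma_\cA(U)$, giving $\tilde f(\wtilde U) \subseteq \sigma_\cA(V) \times \fm = \wtilde V$. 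The reverse inclusion is symmetric, and then bijectivity follows formally from the two identities $\tilde g \circ \tilde f = \id_{\wtilde U}$, $\tilde f \circ \tilde g = \id_{\wtilde V}$ already proved. This completes the proof.
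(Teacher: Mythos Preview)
Your proof is correct and follows the same three-step outline as the paper: extend $f$ and $g$ via \cref{Canform}, verify the inclusion $\tilde f(\wtilde U)\subseteq\wtilde V$, then invoke the identity principle (the paper cites the identity principle of SCVs, you cite the Identity Theorem in scalar planes; either works once $\wtilde U$ is connected, which one may assume componentwise).

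The only genuine difference is in how the inclusion $\tilde f(\wtilde U)\subseteq\wtilde V$ is checked. The paper reduces via an open cover to small cylindrical neighbourhoods, passes to local power series $f(Z)=\sum_{k\ge1}A_kZ^k$, and computes $\rho_\cA(\tilde f(z\oplus X))=|f_1(z)|$, observing that this depends only on $z$ and is already controlled by the original biholomorphism on $\DD_r(0)\times\m_\vareps$. You instead read the inclusion directly off the canonical form: $\sigma_\cA(\tilde f(z\oplus X))=\sigma_\cA(f^{(0)}(z))$, and since for each $z\in\sigma_\cA(U)$ there is some $X_0$ with $z\oplus X_0\in U$ and $f(z\oplus X_0)\in V$, this scalar value lies in $\sigma_\cA(V)$. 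Your route is a bit cleaner and avoids the local power-series bookkeeping; the paper's route is more hands-on but encodes exactly the same observation, namely that the scalar component of $\tilde f$ factors through $\sigma_\cA(U)\to\sigma_\cA(V)$.
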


\begin{proof}
	Let $g\coloneqq f^{-1}:V\to U$, let $\tilde{f}$ be the extension of $f$ to $\wtilde{U}$ and $\tilde{g}$ be the extension of $g$ to $\wtilde{V}$.
	We need to show that $\tilde{f}(\wtilde{U})\subseteq\wtilde{V}$: then likewise $\tilde{g}(\wtilde{V})\subseteq\wtilde{U}$, and since $g\circ f = \id$ on the open $U$, it follows by the identity principle of SCVs that $g\circ f = \id$ on $\wtilde{U}$.	
	Now, if $U=\bigcup_{i\in I} U_i$ is an arbitrary (open) cover, then $\wtilde{U} = \bigcup_{i\in I} \wtilde{U}_i$.	
	Thus it suffices to show the claim for bounded cylindrical neighbourhoods in $U$.
	Without loss of generality we can consider $f(Z)=\sum_{k=1}^\infty A_k Z^k\in\A\{Z\}$ and $g(Z) = \sum_{k=1}^\infty B_k Z^k\in\A\{Z\}$ with $A_1,B_1\in\A^\times$ inducing automorphisms on $\DD_r(0)\times\m_\vareps$ for $r\leq\min\{R_f,R_g\}$ and a sufficiently small $\vareps>0$.
	We need to show that then $f(\DD_r(0)\times\m)\subseteq\DD_r\times\m$.
	But if $f_1$ denotes the unital component of $f$, then $\forall z\oplus X\in\DD_r(0)\times\m: \rho_\A(f(z\oplus X)) = \abs{f_1(z)}<r$ since $f(\DD_r(0)\times\m_\vareps)\subseteq \DD_r(0)\times\m_\vareps$.
\end{proof}

\begin{lemma}[Analyticity and Nilpotents]\label{nilanal} 
	Let $(\cA,\fm) \xrightarrow{\varphi} (\cB,\fn)$ be a morphism of local $\CC$-algebras, $U\subseteq\A$ open and path-connected, and $f\in\O_\varphi(U)$.
	\begin{enumerate}
		\item For $Z_0\in U$ and $X\in\m=\nil\A$ the limit of the generalized derivative
		\[
		f'_{(X)}(Z_0) \coloneqq \lim_{\substack{H\to X \\ H\in\A^\times}} \frac{f(Z_0+H)-f(Z_0)}{\varphi(H)}
		\]
		exists and gives rise to a function $\check{f}:\m\times U\to\B$, $(X,Z)\mapsto f'_{(X)}(Z)$, polynomial in $X$ and $\varphi$-analytic in $Z$.
		
		\item The function $g:U\times U\to\B$,
		\begin{equation}
		\begin{aligned}
		g(Z,W) &\coloneqq
		\begin{cases}
		\frac{f(W)-f(Z)}{\varphi(W-Z)}, &\text{if } W-Z\in\A^\times\cap U\\
		f'_{(W-Z)}(Z), &\text{if } W-Z\in\m\cap U
		\end{cases}
		\defeq
		\begin{cases}
		\frac{f(W)-f(Z)}{\varphi(W-Z)}, &\text{if } W-Z\in\A^\times\cap U\\
		\check{f}(W-Z,Z), &\text{if } W-Z\in\m\cap U
		\end{cases}
		\end{aligned}
		\end{equation}
		is $\varphi$-holomorphic both in $Z$ and $W$.
	\end{enumerate}
\end{lemma}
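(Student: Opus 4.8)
The plan is to reduce the statement to the separated canonical form of $\varphi$-holomorphic functions established in \cref{Canform} (1), and carry out the analysis coordinatewise in the scalar variable. Write $\nu \coloneqq \rh(\varphi)$. By \cref{canform}, locally (and in fact on $\wtilde U$) we have $f(z\oplus X) = \sum_{k=0}^{\nu-1} \frac{f^{(k)}(z)}{k!}\varphi(X)^k$ with $f^{(k)}$ $\CC$-holomorphic $\sigma_\cA(U)$-valued functions in the Banach space $\cB$. For part (1), fix $Z_0 = z_0 \oplus Y_0 \in U$ and $X = x \oplus Y \in \fm$ (so the scalar part $x$ of $X$ is $0$, as $X$ is nilpotent; write simply $X \in \fm$). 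A unit $H$ approaching $X$ has the form $H = h \oplus K$ with $h \in \CC^\times \to 0$ and $K \in \fm \to X$. The idea is to expand $f(Z_0 + H) - f(Z_0)$ using the power series of $f$ around $z_0$ in the scalar direction: since the scalar part of $Z_0 + H$ is $z_0 + h$, we get $f(Z_0 + H) = \sum_{k=0}^{\nu-1}\frac{f^{(k)}(z_0+h)}{k!}\varphi(Y_0 + K)^k$. Subtracting $f(Z_0)$, dividing by $\varphi(H)$, and letting $h \to 0$, $K \to X$, the singular part $\varphi(H)^{-1} = h^{-1}(1 - (-\varphi(K)/h))^{-1} = h^{-1}\sum_{j=0}^{\nu-1}(-\varphi(K)/h)^j$ must be controlled; the key cancellation is that the $O(1)$ terms in $h$ of the numerator vanish in the limit in a way that leaves a finite polynomial expression in $\varphi(X)$. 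Concretely, I expect $f'_{(X)}(Z_0)$ to come out as a finite sum involving $f^{(k+1)}(z_0)$ and $\varphi(X)^k$, manifestly polynomial in $X$ (degree $< \nu$) and $\varphi$-analytic in $Z_0$ via the analyticity of the $f^{(k)}$. This establishes that $\check f(X,Z) \coloneqq f'_{(X)}(Z)$ is well-defined and has the claimed regularity.

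\textbf{For part (2)}, the function $g(Z,W)$ agrees with $\frac{f(W)-f(Z)}{\varphi(W-Z)}$ on the open dense set $\{(Z,W) : W - Z \in \cA^\times\}$ and with $\check f(W-Z,Z)$ elsewhere. On the open set where $W - Z \in \cA^\times$, $g$ is visibly $\varphi$-holomorphic in each of $Z$ and $W$ separately (quotient of $\varphi$-holomorphic functions by a unit-valued $\varphi$-holomorphic function, using \cref{rules} and the fact that inversion is $\cA$-holomorphic on $\cA^\times$). The content is therefore to show $\varphi$-holomorphy across the ``diagonal stratum'' $W - Z \in \fm$. The cleanest route: using the separated form, write everything in coordinates $Z = z \oplus Y_1$, $W = w \oplus Y_2$ with $z, w \in \CC$, $Y_1, Y_2 \in \fm$. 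Then $W - Z \in \cA^\times \iff w \neq z$, and on that region
\[
g(Z,W) = \frac{\sum_{k=0}^{\nu-1}\frac{f^{(k)}(w)}{k!}\varphi(Y_2)^k - \sum_{k=0}^{\nu-1}\frac{f^{(k)}(z)}{k!}\varphi(Y_1)^k}{(w-z) + \varphi(Y_2 - Y_1)}.
\]
Expanding $((w-z)+\varphi(Y_2-Y_1))^{-1} = (w-z)^{-1}\sum_{j=0}^{\nu-1}\bigl(-\varphi(Y_2-Y_1)/(w-z)\bigr)^j$ and collecting terms, the apparent pole at $w = z$ is cancelled because the divided difference $\frac{f^{(k)}(w) - f^{(k)}(z)}{w - z}$ extends holomorphically across $w = z$ (classical one-variable fact, since $f^{(k)}$ is $\CC$-holomorphic with values in the Banach space $\cB$). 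This yields an explicit expression for $g$ as a finite $\cB$-linear combination of products of: one-variable divided differences of the $f^{(k)}$ (holomorphic in $(z,w)$), integer powers of $\varphi(Y_1), \varphi(Y_2)$, and — crucially — the only possibly problematic factor $(w-z)^{-j}$, which I must argue always appears paired with a divided difference carrying at least $j$ ``extra'' factors of $(w-z)$ in the numerator. I would verify this by an Abel-type summation-by-parts rearrangement of the double sum, after which $g$ is a polynomial in $\varphi(Y_1),\varphi(Y_2)$ (total degree $< \nu$) with coefficients holomorphic in $(z,w)$ on all of $\sigma_\cA(U)\times\sigma_\cA(U)$, hence $\varphi$-holomorphic in each of $Z, W$ by the converse direction of \cref{Canform} (3) applied in each variable (the other being a parameter).

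\textbf{The main obstacle} I anticipate is the bookkeeping in part (2): showing rigorously that after expanding the geometric series for $((w-z) + \varphi(Y_2-Y_1))^{-1}$ and the two finite sums defining $f(W)$ and $f(Z)$, every surviving term with a factor $(w-z)^{-j}$ genuinely pairs with a $j$-th order divided difference $\frac{f^{(k)}(w) - f^{(k)}(z) - \cdots}{(w-z)^j}$ (a higher divided difference, holomorphic by iterating the one-variable fact), leaving no actual singularity. This is a purely algebraic identity in the variables $w, z$ and the commuting nilpotents $\varphi(Y_1), \varphi(Y_2)$, but the combinatorics of which powers of $\varphi(Y_2 - Y_1) = \varphi(Y_2) - \varphi(Y_1)$ get distributed (via the binomial theorem, since these commute) requires care. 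A slicker alternative, which I would try first, is to avoid coordinates entirely: observe that $g(Z,W) = \int_0^1 f'_{(W-Z)}\bigl(Z + t(W-Z)\bigr)\,\d t$ when $W - Z \in \fm$ — an ``integral form of the mean value'' valid because along the segment $Z + t(W-Z)$ the increment stays nilpotent — and that the same integral formula $g(Z,W) = \int_0^1 f'\bigl(Z + t(W-Z)\bigr)\varphi(W-Z)/\varphi(W-Z)\,\d t$ holds on the unit region; then $\varphi$-holomorphy of $g$ in $Z$ and $W$ follows from differentiation under the integral sign together with part (1), once one checks the integrand is jointly continuous across the stratum, which \cref{Canform} makes transparent. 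I would present whichever of these two arguments turns out cleaner, likely the integral one, falling back on the explicit expansion only where the integral representation needs justification.
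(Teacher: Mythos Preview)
Your plan works, but it is considerably more laborious than the paper's route, and your integral formula contains an error.

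The paper avoids the scalar/nilpotent decomposition of \cref{Canform} entirely. Since $f$ is already known to be $\varphi$-analytic (\cref{analyt}), one has \emph{directly} $f(Z_0+H)=\sum_{k\geq 0}\frac{f^{(k)}(Z_0)}{k!}\varphi(H)^k$ for $H$ in a spectral ball around $0$; hence the difference quotient is $\sum_{k\geq 1}\frac{f^{(k)}(Z_0)}{k!}\varphi(H)^{k-1}$, and since every $X\in\fm$ has $\rho_\cA(X)=0<R$ this evaluates at $H=X$ to the finite sum $\sum_{k=0}^{\nu-1}\frac{f^{(k+1)}(Z_0)}{(k+1)!}\varphi(X)^k$. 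That is part (1) in one stroke, with no geometric-series inversion of $\varphi(H)$ and no tracking of separate $h,K$ limits. For part (2) the paper reuses the same object: the function $\check f_{\mathrm{loc}}(W-Z,Z)\coloneqq\sum_{k\geq 0}\frac{f^{(k+1)}(Z)}{(k+1)!}\varphi(W-Z)^k$ is $\varphi$-holomorphic in both $W$ and $Z$ on a product of spectral cylinders and equals $\frac{f(W)-f(Z)}{\varphi(W-Z)}$ wherever $W-Z\in\cA^\times$; so it furnishes the extension across the stratum without any pole-cancellation combinatorics. Your coordinate expansion would eventually reproduce this, but only after the Abel-type rearrangement you flag as the main obstacle, which the paper never needs.

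Your integral alternative is almost right and, once fixed, is essentially a reparametrisation of the paper's series. The error: on the nilpotent stratum the integrand must be the \emph{ordinary} derivative $f'(Z+t(W-Z))$, not $f'_{(W-Z)}(Z+t(W-Z))$. With $f'$ one computes $\int_0^1 f'(Z+t(W-Z))\,\d t=\sum_{k\geq 0}\frac{f^{(k+1)}(Z)}{(k+1)!}\varphi(W-Z)^k=\check f(W-Z,Z)$, matching the definition of $g$; with $f'_{(W-Z)}$ in the integrand the coefficients come out wrong (already for $\nu=2$). With that correction, the integral representation $g(Z,W)=\int_0^1 f'(Z+t(W-Z))\,\d t$ is a valid unified formula on a convex neighbourhood of the diagonal, and differentiation under the integral sign finishes the job.
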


\begin{proof}
	To (1): First notice that the definition makes sense because $\A^\times$ is dense in $\A$.
	Without loss of generality we can assume $U=\wtilde{U}$ cylindrical. 
	Fix $Z_0\in U$ and let $R\coloneqq R_f(Z_0)$ be the radius of convergence of $f$ at $Z_0$.
	By $\varphi$-analyticity we have $R>0$ and $\forall W\in\BB^\sp_\A(Z_0,R)$:
	\[
	f(W) = \sum_{k=0}^\infty \frac{f^{(k)}(Z_0)}{k!} \varphi(W-Z_0)^k \defeq \sum_{k=0}^\infty\frac{f^{(k)}(Z_0)}{k!}\varphi(H)^k
	\]
	for all $H\in\BB^\sp_\A(0,R)$, where $H\coloneqq W-Z_0$.
	Hence $\forall H\in\BB^\sp_\A(0,R)$:
	\[
	\begin{aligned}
	\frac{f(W)-f(Z_0)}{\varphi(W-Z_0)} &= \frac{f(Z_0+H)-f(Z_0)}{\varphi(H)} = \sum_{k=1}^\infty\frac{f^{(k)}(Z_0)}{k!}\varphi(H)^{k-1} \eqqcolon \check{f}_\loc(H,Z_0) \defeq \sum_{k=0}^\infty\frac{f^{(k+1)}(Z_0)}{(k+1)!}\varphi(W-Z_0)^k \defeq \\
	&\defeq \check{f}_\loc(W-Z_0,Z_0).
	\end{aligned}
	\]
	Since the power series of a $\varphi$-analytic function around a point converge on the biggest open disk cylinder that fits in, there exist sufficiently small open subcylinders $V_1\subseteq U$ for $W$, $V_0\subseteq\BB^\sp_\A(0,R)$, $V_0\ni 0$, for $H$, and $V_2\subseteq U$, $V_2\ni Z_0$, for $Z$ with Minkowski difference $V_0\supseteq V_1-V_2\supseteq\m$, such that $\check{f}_\loc:V_0\times V_2\to\B$ is well-defined and $\varphi$-holomorphic in $H$ and $Z$, and respectively $\check{f}_\loc(W-Z,Z)$ is well-defined on $V_1\times V_2$ and $\varphi$-holomorphic in both $Z$ and $W$.
	Notice that $\check{f}_\loc$ implicitly depends on $Z_0$, that is, on the small neighbourhood of definition $V_2$ of $Z$ (hence the notation).
	On the other hand, if $\nu\in\NN$ is any integer with the property that $\forall X\in\m: X^\nu = 0$, then we also have $\forall Z\in U\ \forall H\in\BB^\sp_\A(0,R(Z))\cap\A^\times$:
	\[
	\begin{aligned}
	\lim_{H\to X}\frac{f(Z+H)-f(Z)}{\varphi(H)} &= \lim_{H\to X} \sum_{k=1}^\infty \frac{f^{(k)}(Z)}{k!} \varphi(H)^{k-1} = 
	\sum_{k=0}^\nu \frac{f^{(k+1)}(Z)}{(k+1)!} \varphi(X)^k \eqqcolon \check{f}(X,Z),
	\end{aligned}
	\]
	since $\rho(X)=0<R$, which in turn is globally defined in $Z$ over $\m\times U$.
	Thus any $\check{f}_\loc$, when restricted to $\m=\nil\A$ in the first argument, extends globally in the second argument to $U$.
	In other words, $\check{f}:\m\times U\to\B$ is ambiently always a restriction of some $\check{f}_\loc$ $\varphi$-holomorphic in the \textit{first} (both) argument(s).
	
	To (2): First we show that for any $Z_0\in U$ the function $g(Z_0,W)$ is $\varphi$-holomorphic in $W$.
	It suffices to prove this in a neighbourhood of $Z_0+\m$.
	This follows directly from the proof of (1): $\check{f}_\loc(W-Z,Z)$ exists $\varphi$-holomorphically on an open product $V_1\times V_2\ni (W,Z)$ with $V_2\ni Z_0$ and $V_1\supseteq Z_0+\m$ and coincides with $\frac{f(W)-f(Z_0)}{\varphi(W-Z_0)}$ outside of $\{W-Z_0\in\m\}$. 
	Next, fix $W_0\in U$ instead. 
	The task at hand is similar, but slightly different in the way of choosing $V_1$ and $V_2$. 
	We need to show that 
	\[
	\begin{aligned}
	g(Z,W_0) &\defeq
	\begin{cases}
	\frac{f(W_0)-f(Z)}{\varphi(W_0-Z)}, &\text{if } W_0-Z\in\A^\times\cap U\\
	\check{f}(W_0-Z,Z), &\text{if } W_0-Z\in\m\cap U
	\end{cases}
	\defeq
	\begin{cases}
	\frac{f(W_0)-f(Z)}{\varphi(W_0-Z)}, &\text{if } W_0-Z\in\A^\times\cap U\\
	\sum_{k=0}^\infty\frac{f^{(k+1)}(Z)}{(k+1)!} \varphi(W_0-Z)^k, &\text{if } W_0-Z\in\m\cap U
	\end{cases}
	\end{aligned}
	\]
	is $\varphi$-holomorphic in $Z\in U$.
	Again, it suffices to show this for $Z$ in a small (cylindrical) neighbourhood of $W_0+\m$.
	For any $W_0\in U$ there exist small (cylindrical) neighbhourhoods $V_1\subseteq U$, $V_1\ni W_0$, and $V_2\subseteq U$, $V_2\supseteq V_1$, such that $\forall W\in V_1\ \forall Z\in V_2$:
	\[
	\check{f}_\loc(W-Z,Z)\defeq\sum_{k=0}^\infty\frac{f^{(k+1)}(Z)}{(k+1)!}\varphi(W-Z)^k
	\]
	is well-defined and $\varphi$-holomorphic in both $W$ and $Z$ and by the proof of (1) coincides with $\frac{f(W)-f(Z)}{\varphi(W-Z)}$ for $(W,Z)\in (V_1\times V_2) \setminus \{W-Z\in\m\}$. 
	Therefore $g(Z,W_0)$ is $\varphi$-holomorphic for $Z\in V_2\supseteq V_1\supseteq W_0+\m$.
\end{proof}


\begin{proposition}[Morera over $\A$]\label{morera}
	Let $U\subseteq\A$ open and path-connected and let $f\in\Cc^0(U,\B)$.
	\begin{enumerate}
		\item If for all simple rectifiable loops $\gamma$ in $U$
		\[
		\oint_\gamma f(Z)\d Z = 0,
		\]
		then $f\in\O_\varphi(U)$.
		
		\item If for all $\triangle\subseteq U$
		\[
		\oint_{\pd\triangle} f(Z)\d Z = 0,
		\]
		then $f\in\O_\varphi(U)$.
		
		\item If for all $\square\subseteq U$
		\[
		\oint_{\pd\square} f(Z)\d Z = 0,
		\]
		then $f\in\O_\varphi(U)$.
	\end{enumerate}
\end{proposition}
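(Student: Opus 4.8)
All three parts follow the same two-step pattern. First one shows that, under the stated hypothesis, $f$ admits — at least locally — a $\varphi$-primitive $F$, i.e. an $F$ which is $\varphi$-holomorphic on its domain with $F'=f$; this is exactly the content of the Pre-Morera Lemma \cref{premorera}. Second, one invokes analyticity to conclude that the derivative $F'=f$ is again $\varphi$-holomorphic; this is where the genuine input of the theory enters, through \cref{analyt}.

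For the first step, observe that the hypothesis of (1) formally implies those of both (2) and (3), since the boundary $\pd\triangle$ of a (nondegenerate) triangle and the boundary $\pd\square$ of a rectangle are simple rectifiable loops (degenerate triangles contribute $0$ trivially, as the integral is over a retraced segment). Hence it suffices to treat (2) and (3). In case (2), fix $p\in U$ and choose an open ball $\BB\coloneqq\BB_\rho(p)\subseteq U$; since $\cA$ is a finite-dimensional complex vector space such a ball exists and is convex, hence star-convex with center $p$. Every triangle $\triangle\subseteq\BB$ with vertex at $p$ is a triangle contained in $U$, so $\oint_{\pd\triangle}f(Z)\,\d Z=0$ by hypothesis, and \cref{premorera}(2) produces the $\varphi$-primitive $F(Z)\coloneqq\int_p^Z f(W)\,\d W$ on $\BB$. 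In case (3), since $U$ is open and path-connected and $\oint_{\pd\square}f(Z)\,\d Z=0$ for all $\square\subseteq U$, \cref{premorera}(3) yields directly a \emph{global} $\varphi$-primitive $F$ on $U$ via the stairway integral. In every case we obtain an open set $V$ (a ball in cases (1)--(2), all of $U$ in case (3)) and an $F\in\O_\varphi(V)$ with $F'=f$ on $V$; in particular $f$ is $\varphi$-differentiable at each point of $V$.

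It remains to see that the derivative of a $\varphi$-holomorphic function is again $\varphi$-holomorphic, applied to $F$. By \cref{analyt}, $F$ is $\varphi$-analytic, so near any $p\in V$ we have $F(Z)=\sum_{\ell=0}^\infty B_\ell\,\varphi(Z-p)^\ell$ with $B_\ell\in\B$, the series being locally normally convergent on a spectral polycylinder of positive radius by \cref{convergencepolycylinder}. Since $\varphi$ is $\varphi$-differentiable with derivative $1_\B$ by \cref{rules}(4), the Leibniz rule gives $\tfrac{\d}{\d Z}\varphi(Z-p)^\ell=\ell\,\varphi(Z-p)^{\ell-1}$, and local normal convergence permits term-by-term $\varphi$-differentiation; hence $f(Z)=F'(Z)=\sum_{\ell\ge 1}\ell B_\ell\,\varphi(Z-p)^{\ell-1}$ is again a power series of the form $\sum_{k\ge 0}C_k\,\varphi(Z-p)^k$ convergent on the same polycylinder, so $f$ is $\varphi$-analytic, and therefore $\varphi$-holomorphic, near $p$. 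As $p\in V$ and the point of $U$ around which we localized were arbitrary, $f\in\O_\varphi(U)$.

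\emph{The main obstacle} is concentrated entirely in this last step: the reduction to the Pre-Morera lemma is a soft localization argument, whereas the implication ``$\varphi$-holomorphic primitive $\Rightarrow$ $\varphi$-holomorphic derivative'' genuinely uses the analyticity results of \cref{subs:anal} and would fail for the bare notion of $\varphi$-differentiability in their absence. One could equivalently replace the term-by-term differentiation by differentiating the Cauchy Integral Formula \cref{cif} under the integral sign, exactly as in the proof of \cref{analyt}(1); the logical dependence on analyticity is the same.
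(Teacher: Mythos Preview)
Your proof is correct and follows essentially the same approach as the paper: obtain a (local) $\varphi$-primitive via \cref{premorera}, then use \cref{analyt} to conclude that its derivative $f$ is $\varphi$-analytic and hence $\varphi$-holomorphic. The only organizational difference is that for part~(1) you reduce to (2)/(3) via the observation that triangle and rectangle boundaries are simple rectifiable loops, whereas the paper invokes \cref{premorera}(1) directly; your route is arguably cleaner here since the hypothesis of \cref{premorera}(1) asks for \emph{all} rectifiable loops rather than only simple ones.
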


\begin{proof}
	To (1): by \cref{premorera} (1), $f$ has a primitive $F$, which is analytic by \cref{analyt}, hence so is $f$.
	
	To (2): Being finite-dimensional, $\A$ is a locally convex TVS. Therefore, $U$ can be covered by (star-)convex open sets, over which $f$ has local primitives by \cref{premorera}.
	
	To (3): by \cref{premorera} (3).
\end{proof}

In the same spirit we obtain a slight variation of \cref{closed} (2):

\begin{lemma}
	Let $U\subseteq\A$ be open and let $f\in\Cc^1(U,\B)$ such that $\omega\coloneqq f(Z)\d Z$ is $\d$-closed.
	Then $f\in\cO_\varphi(U)$.
\end{lemma}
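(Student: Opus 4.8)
The plan is to run the argument of the proof of \cref{closed} while exploiting the type decomposition of $\CC$-valued $2$-forms; the hypothesis ``$\Cc^1$ and $\d$-closed'' will turn out to package together exactly the two ingredients (holomorphy in each variable and the generalized Cauchy--Riemann--Scheffers system) needed to invoke \cref{delop}. Fix a $\CC$-basis $\{a_1 = 1_\A, a_2, \dots, a_n\}$ of $\A$, write $Z = z^1 a_1 + \cdots + z^n a_n$, and decompose $\d = \pd + \pdbar$. Since $\d Z = \sum_{j=1}^n a_j\,\d z^j$, the $\B$-valued form $\omega = f(Z)\,\d Z = \sum_{j=1}^n (f\varphi(a_j))\,\d z^j$ has pure type $(1,0)$; consequently $\d\omega = \pd\omega + \pdbar\omega$ with $\pd\omega$ of type $(2,0)$ and $\pdbar\omega$ of type $(1,1)$, and there is no $(0,2)$-part. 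Expanding in a $\CC$-basis of $\B$ reduces the type decomposition of $\B$-valued forms to the scalar one, so $\d\omega = 0$ forces $\pd\omega = 0$ and $\pdbar\omega = 0$ separately.

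First I would extract holomorphy from the $(1,1)$-part. We have $\pdbar\omega = \sum_{i,j}\pdv{f}{\bar z^i}\,\varphi(a_j)\,\d\bar z^i\wedge\d z^j$, and the $2$-forms $\d\bar z^i\wedge\d z^j$ ($1\le i,j\le n$) are $\CC$-linearly independent; taking $j=1$, where $\varphi(a_1) = 1_\B$, yields $\pdv{f}{\bar z^i} \equiv 0$ on $U$ for all $i$. Since $f\in\Cc^1(U,\B)$, its real differential has continuous entries, and the vanishing of all Wirtinger $\pdbar$-derivatives means $Df = \sum_i \pdv{f}{z^i}\,\d z^i$ is $\CC$-linear at every point of $U$; hence $f$ is totally $\CC$-differentiable on $U$.

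Next I would use the $(2,0)$-part. The computation in the proof of \cref{closed}(1) shows that $\pd\omega = 0$ is equivalent to the vanishing $\d_{ij}f \equiv 0$ on $U$ for all $1\le i<j\le n$. Now $f$ is totally $\CC$-differentiable and $\varphi\colon\A\to\B$ is a morphism of unital $\CC$-algebras, so \cref{delop}(2) applies at every point of $U$ and gives that $f$ is $\varphi$-differentiable throughout $U$; by definition this means $f\in\cO_\varphi(U)$.

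The substance of the proof is concentrated in the first reduction: recognizing that $\d$-closedness of $f(Z)\,\d Z$ is not merely the $\pd$-closedness appearing in \cref{closed} but also silently carries $\pdbar f = 0$, and that this extra information is precisely what upgrades the bare $\Cc^1$-regularity to the total $\CC$-differentiability required to feed into \cref{delop}. Once that is in hand, the remainder is a verbatim rerun of the proof of \cref{closed}(2).
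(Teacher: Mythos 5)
Your proof is correct, but it takes a genuinely different route from the paper's. The paper's own proof is a one-liner through the integration machinery: $\d$-closedness gives local exactness by Poincar\'e's lemma, so $f$ has a local $\varphi$-primitive $F\in\cO_\varphi$; by \cref{analyt} $F$ is $\varphi$-analytic, hence so is its derivative $f$. You instead stay entirely at the level of the PDE characterization: splitting $\d\omega=0$ by type, the $(1,1)$-component forces $\pdv{f}{\bar z^i}=0$ (via the coefficient of $\d\bar z^i\wedge\d z^1$ and $\varphi(1_\A)=1_\B$), which together with $\Cc^1$-regularity upgrades $f$ to total $\CC$-differentiability, while the $(2,0)$-component gives $\d_{ij}f=0$, and \cref{delop}(2) closes the argument. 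Your version is more elementary in that it avoids \cref{analyt} (and hence the whole Cauchy-integral apparatus behind it), and it has the expository merit of isolating exactly why full $\d$-closedness, rather than the $\pd$-closedness of \cref{closed}(2), is the right hypothesis when only $\Cc^1$-regularity is assumed: the $\pdbar$-part is what supplies the missing total $\CC$-differentiability. The paper's version is shorter given the machinery already in place and yields analyticity of $f$ directly as a by-product.
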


\begin{proof}
	$\omega$ $\d$-closed $\Rightarrow$ $\omega$ locally exact, i.e. $f$ locally $\varphi$-integrable $\Rightarrow$ $f$ locally $\varphi$-analytic, i.e. $f\in\cO_\varphi(U)$.
\end{proof}

\begin{proposition}[Weierstraß Convergence Theorems over $\A$]
	Let $\cA \xrightarrow{\varphi} \cB$ be a general morphism of $\CC$-algebras, let $U\subseteq\A$ be open and path-connected, and let $(f_n)_{n\in\NN}\subset\O_\varphi(U)$.
	\begin{enumerate}
		\item If $f_n\to f$ locally uniformly\footnote{since $\A$ is locally compact, locally uniform convergence not only implies compact convergence, but is also equivalent to it;}, then $f\in\O_\varphi(U)$ and $f'_n\to f'$ locally uniformly.
		
		\item If $\sum_{k=1}^n f_k\to f$ locally normally\footnote{for the same reason, locally normal convergence not only implies compactly normal convergence, but is equivalent to it;}, then $f\in\O_\varphi(U)$ and $\sum_{k=1}^n f'_k\to f'$ also locally normally.
	\end{enumerate}
\end{proposition}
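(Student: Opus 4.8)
The plan is to run the classical Weierstraß argument, using Morera's theorem (\cref{morera}) in place of ``the limit is holomorphic'' and the Cauchy Integral Formula for derivatives (\cref{analyt}) in place of ``the derivatives converge''; no genuinely new idea is needed beyond the $\varphi$-relative setup.

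I would first prove (1). Since $f_n\to f$ locally uniformly and each $f_n$ is continuous, $f\in\Cc^0(U,\B)$. For any solid triangle $\triangle\subseteq U$ the boundary $\pd\triangle$ is compact, so $f_n\to f$ uniformly on $\pd\triangle$; combining $\oint_{\pd\triangle}f_n(Z)\,\d Z=0$ (valid by \cref{GCIT}, since $\KK=\CC$ and $f_n\in\O_\varphi(U)$) with the estimate
\[
\Bigl\lVert\oint_{\pd\triangle}\bigl(f_n(Z)-f(Z)\bigr)\,\d Z\Bigr\rVert_\B\le\norm{f_n-f}_{\B,\pd\triangle}\,L_\B(\pd\triangle)
\]
from \cref{intineq} gives $\oint_{\pd\triangle}f(Z)\,\d Z=0$ for every $\triangle\subseteq U$. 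Hence $f\in\O_\varphi(U)$ by \cref{morera}~(2). For the derivatives, fix $Z_0\in U$; as in the proof of \cref{analyt}~(2) there are a polydisk $\Delta\coloneqq\Delta_r(Z_0)\subseteq U$ and a $Z_0$-admissible loop $\gamma\in\Cc^1_\pw(\SS^1,\Delta)$ with $\Ind_\varphi(\gamma,-)$ equal to the unit of $\B$ on the connected component $C\ni Z_0$ of $\Adm_\varphi(\gamma)$, so \cref{analyt}~(1) yields, for all $Z$ in an open neighbourhood $V\subseteq C$ of $Z_0$,
\[
f_n'(Z)=\frac{1}{2\pi i}\oint_\gamma\frac{f_n(W)}{\varphi(W-Z)^2}\,\d W,\qquad f'(Z)=\frac{1}{2\pi i}\oint_\gamma\frac{f(W)}{\varphi(W-Z)^2}\,\d W.
\]
Shrinking $V$ to a compact neighbourhood $\wbar{V}\subseteq C$ of $Z_0$, continuity of inversion in $\B^\times$ and compactness of $\gamma\times\wbar{V}\subseteq\Adm_\varphi(\gamma)$ make $M\coloneqq\sup_{W\in\gamma,\,Z\in\wbar{V}}\norm{\varphi(W-Z)^{-1}}_\B^2$ finite, whence by \cref{intineq}
\[
\sup_{Z\in\wbar{V}}\norm{f_n'(Z)-f'(Z)}_\B\le\frac{M\,L_\B(\gamma)}{2\pi}\,\norm{f_n-f}_{\B,\gamma}\xrightarrow{n\to\infty}0,
\]
since $\norm{f_n-f}_{\B,\gamma}\to0$ by uniform convergence on the compact set $\gamma$. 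As $Z_0$ was arbitrary, $f_n'\to f'$ locally uniformly (the same computation gives $f_n^{(k)}\to f^{(k)}$, which is not needed here).

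For (2) I would apply (1) to the partial sums $s_n\coloneqq\sum_{k=1}^n f_k$: local normal convergence of $\sum f_k$ implies local uniform convergence of $s_n\to f$, so $f\in\O_\varphi(U)$ and $s_n'=\sum_{k=1}^n f_k'\to f'$ locally uniformly. To upgrade this to local normal convergence, fix a compact $K\subseteq U$, cover it by finitely many neighbourhoods $V_1,\dots,V_p$ of the above type with associated loops $\gamma_1,\dots,\gamma_p$ whose supports together lie in a compact $K'\subseteq U$, and apply Cauchy's inequality (\cref{cauchyineq}): for each $j$ there is a constant $c_j$ with $\sup_{V_j}\norm{f_k'}_\B\le c_j\norm{f_k}_{\B,\gamma_j}\le c_j\norm{f_k}_{\B,K'}$ for every $k$. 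Summing over $k$ first and then over the finite cover gives $\sum_k\sup_K\norm{f_k'}_\B\le\bigl(\max_j c_j\bigr)\sum_{j=1}^p\sum_k\norm{f_k}_{\B,K'}<\infty$, because $\sum_k\norm{f_k}_{\B,K'}<\infty$ by normal convergence on $K'$. Hence $\sum f_k'\to f'$ locally normally.

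The main obstacle is not a deep one but must be handled carefully: because $\Ind_\varphi$ is $\B$-valued rather than $\ZZ$-valued and $\varphi$ need not be injective, one cannot simply divide by the index or assume $\varphi(W-Z)\in\B^\times$ off the diagonal. Both difficulties are neutralised by working inside the component $C$ where $\Ind_\varphi(\gamma,-)$ equals the unit of $\B$ — so that the index factor in the Cauchy formula disappears — and by invoking the admissibility bookkeeping of \cref{subs:adm}, which is precisely designed to guarantee $\varphi(W-Z)\in\B^\times$ where needed, together with compactness to make the bound on $\norm{\varphi(W-Z)^{-1}}_\B$ uniform. Once this is in place, the estimates are word-for-word the one-complex-variable ones.
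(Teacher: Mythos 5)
Your proof is correct and takes essentially the same route as the paper's: part (1) is the classical Goursat--Morera-plus-Cauchy-formula argument that the paper leaves implicit (``follows completely analogously to the complex plane''), and part (2) rests, exactly as in the paper, on the Cauchy inequality $\sup_{V}\norm{f_k'}_\cB\le c\,\norm{f_k}_{\cB,\gamma}$ over a neighbourhood on which $\Ind_\varphi(\gamma,\cdot)=1_\cB$, summed using normal convergence on the compact support of $\gamma$. The only cosmetic difference is that you obtain the constant $c$ by compactness and continuity of inversion in $\cB^\times$, whereas the paper computes it explicitly as $r/(r-\vareps)$ via \cref{norminverse} after translating $Z_0$ to $0$ and taking a scalar circle.
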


\begin{proof}
	To (1): follows completely analogously to the case of $\A$ being the complex plane itself.
	
	To (2): Fix $Z_0\in U$ and $\gamma\in\Cc^1_\pw(\SS^1,U)$ a $Z_0$-admissible loop with $\Ind_\varphi(\gamma,Z_0)=1$.
	Without loss of generality we can assume that $\norm{\cdot}_\B$ is unital.
	By translation with $-Z_0$ we can also assume without loss of generality that $Z_0=0$, which allows us to take $\gamma$ to be a scalar curve.
	Note, however, that for an arbitrary $Z_0$ the $\Ind_\varphi=1$-condition may prevent us in general from choosing $\gamma$ to be a scalar curve, since $\cA$ need not be a connected algebra. 
	Now, let for instance $\gamma(t)\coloneqq r e^{2\pi i t} 1_\A$, $t\in [0,1]$, for a sufficiently small $r>0$, in particular $L_\B(\gamma) = 2\pi r$.
	Fix $0<\vareps< r$.
	As before, there exists a neighbourhood $V\ni Z_0=0$ of $\gamma$-admissible points such that $\forall Z\in V: \Ind_\varphi(\gamma,Z)=1$ and $\norm{\varphi(Z)}<\vareps$ by continuity of $\varphi$.
	By \cref{norminverse} we have $\forall Z\in V$:
	\[
	\norm{\frac{1}{\gamma(t)-\varphi(Z)}}_\cB \leq \frac{1}{r-\norm{\varphi(Z)}_\cB}<\frac{1}{r-\vareps}.
	\]
	Thus by \hyperref[cauchyineq]{ Cauchy's inequality over $\A$ (\ref*{cauchyineq})} we obtain $\forall Z\in V$:
	\[
	\norm{f'_n(Z)}_\cB \leq \frac{L_\B(\gamma)}{2\pi}\norm{f_n}_{\cB,\gamma} \sup\limits_{t\in I} \norm{\frac{1}{\gamma(t)-Z_0}}_\cB \leq \frac{r}{r-\vareps} \norm{f_n}_{\cB,\gamma},
	\]
	which proves the claim.
\end{proof}

\subsection{The Homological Cauchy's Integral Formula over $\cA$}\label{subs:homol}


\begin{definition}
	Let $\cA \cong \bigoplus_{k=1}^M (\cA_k,\fm_k)$ be a decomposition of $\cA$ into local Artin $\CC$-algebras, let $U\subseteq\cA$ be open and path-connected, and let $I\subset \{1,\dots,M\}$ be an index subset.
	\begin{enumerate}
		\item $I$-restricted spectral 1-homology: $B_1^\sp(U,\ZZ)_I \coloneqq \bigcap_{k\in I} (\sigma_k)_\#^{-1}(B_1(\sigma_k(U),\ZZ))$, $H_1^\sp(U,\ZZ)_I \coloneqq Z_1(U,\ZZ) / B_1^\sp(U,\ZZ)_I$.
		
		\item Spectral 1-homology associated to $\varphi$: if $\varphi:\cA \to \cB$ is a morphism of $\CC$-algebras, then $B_1^\varphi(U,\ZZ) \coloneqq B_1^\sp(U,\ZZ)_{\im\tau_\varphi}$ and $H_1^\varphi(U,\ZZ) \coloneqq H_1^\sp(U,\ZZ)_{\im\tau_\varphi}$.
	\end{enumerate}
\end{definition}

\begin{theorem}[Homological Cauchy Integral Formula over $\A$]\label{homolCIT}
	Let $U\subseteq\A$ be open and path-connected, let $f\in\O_\varphi(U)$, and $\Gamma\in Z_1(U,\ZZ)$.
	If $[\Gamma] = 0$ in $H^\varphi_1(U,\ZZ)$, then $\forall Z\in U\cap\Adm(\Gamma)$:
	\begin{equation}
	f(Z)\Ind_\varphi(\Gamma,Z)=\frac{1}{2\pi i}\int_\Gamma\frac{f(W)}{\varphi(W-Z)}\d W.
	\end{equation}
	In particular
	\begin{equation}
	\int_\Gamma f(W)\d W = 0.
	\end{equation}
\end{theorem}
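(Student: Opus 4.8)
The plan is to reduce the general case to the already-established local Cauchy Integral Formula (\cref{cif}) via the canonical factorization $\varphi = \bar\varphi \circ \Pi_\tau$ together with the reduction of $\varphi$-holomorphic functions in \cref{inclocal} and \cref{Canform}. First I would unwind the hypothesis $[\Gamma] = 0$ in $H_1^\varphi(U,\ZZ)$: by definition this means $\Gamma \in B_1^\varphi(U,\ZZ) = \bigcap_{k\in\im\tau_\varphi}(\sigma_k)_\#^{-1}(B_1(\sigma_k(U),\ZZ))$, i.e. for every relevant index $k$ the pushed-forward spectral cycle $\Gamma_k^\sp = (\sigma_k)_\#\Gamma$ is a boundary in $\sigma_k(U)\subseteq\CC$. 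This is precisely the condition under which the classical one-variable homology argument applies in each spectral plane.

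The key steps, in order: (1) Fix $Z\in U\cap\Adm(\Gamma)$; by \cref{nilanal}(2), the function $g_Z(W) \coloneqq g(W,Z)$ — equal to $(f(W)-f(Z))/\varphi(W-Z)$ off $\{W-Z\in\m\}$ and to $\check f(W-Z,Z)$ on it — lies in $\cO_\varphi(U)$. (2) Apply the Homological Cauchy–Goursat Integral Theorem (\cref{GCIT}(1)) to $g_Z$; but that theorem as stated requires $[\Gamma]=0$ in $H_1(U,\ZZ)$, which is stronger than our hypothesis, so instead I would pass to the spectral picture: write $\varphi = \bar\varphi\circ\Pi_\tau$, use \cref{inclocal} to factor $f = \bar f\circ\Pi_\tau$ with $\bar f = \oplus_\ell \bar f_\ell$ and each $\bar f_\ell$ being $\bar\varphi_\ell$-holomorphic for a morphism of \emph{local} $\CC$-algebras $\bar\varphi_\ell\colon(\cA_{\tau(\ell)},\fm_{\tau(\ell)})\to(\cB_\ell,\fn_\ell)$, and correspondingly decompose $g_Z$ along the summands. (3) For each $\ell$, invoke \cref{Canform}: $\bar f_\ell$ and hence the $\ell$-th component of $g_Z$ extends $\bar\varphi_\ell$-holomorphically to a cylinder $\wtilde{U}_{\tau(\ell)} = \sigma_{\tau(\ell)}(U)\times\fm_{\tau(\ell)}$ and has the canonical form $\sum_{j<\nu_\ell} (h^{(j)}(z_{\tau(\ell)})/j!)\,\bar\varphi_\ell(X_{\tau(\ell)})^j$ with scalar-holomorphic coefficients. (4) Now the integral $\int_\Gamma g_Z(W)\,\d W$ decomposes, coordinate by coordinate in the basis of $\cB_\ell$, into integrals of the scalar-holomorphic functions $h^{(j)}$ over the spectral cycle $\Gamma_{\tau(\ell)}^\sp$ in $\CC$, weighted by monomials in the (constant along $\Gamma$) nilpotent directions — here one uses that $\d W = \varphi(\gamma'(t))\,\d t$ and that $\varphi$ kills or shifts the nilpotent parts in a controlled polynomial way. (5) Since $\Gamma_{\tau(\ell)}^\sp$ is null-homologous in $\sigma_{\tau(\ell)}(U)$ by hypothesis, the classical homology version of Cauchy's theorem in one complex variable (Stokes/Artin's argument, as in Rudin or Ahlfors) gives $\oint_{\Gamma_{\tau(\ell)}^\sp} h^{(j)} = 0$ for every $j$; assembling back up yields $\int_\Gamma g_Z(W)\,\d W = 0$. (6) Finally, unpack the definition of $g_Z$: on the locus $W-Z\in\cA^\times$ this is $(f(W)-f(Z))/\varphi(W-Z)$, so
\[
0 = \int_\Gamma \frac{f(W)-f(Z)}{\varphi(W-Z)}\,\d W = \int_\Gamma \frac{f(W)}{\varphi(W-Z)}\,\d W - f(Z)\int_\Gamma\frac{\d W}{\varphi(W-Z)},
\]
and by the definition of $\Ind_\varphi$ the last integral is $2\pi i\,\Ind_\varphi(\Gamma,Z)$, giving the displayed formula. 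The ``in particular'' clause then follows: choose $Z$ in the unbounded component, where \cref{indexcalc}(4) forces $\Ind_\varphi(\Gamma,Z)=0$, so that Cauchy's formula applied to the $\varphi$-holomorphic function $W\mapsto f(W)\varphi(W-Z)$ (which is $\varphi$-holomorphic by \cref{rules}) reproduces $0$; alternatively, and more cleanly, apply the already-proven identity to the $\varphi$-holomorphic function $\tilde f(W)\coloneqq f(W)\cdot\varphi(W-Z_0)$ for a fixed admissible $Z_0$, or simply note that $\int_\Gamma f(W)\,\d W = \int_\Gamma \tilde f(W)/\varphi(W-Z_0)\,\d W$ with $\tilde f$ vanishing appropriately — I would present it as: apply \cref{GCIT}(1)-style reasoning directly to $f$ itself in the spectral planes, where $f$'s canonical form reduces $\int_\Gamma f\,\d W$ to boundary integrals of scalar holomorphic functions over null-homologous cycles.

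The main obstacle is step (2)–(5): carefully justifying that the weaker hypothesis $[\Gamma]=0$ in $H_1^\varphi(U,\ZZ)$ — rather than in $H_1(U,\ZZ)$ — suffices, which is exactly the improvement over Rizza's result advertised in the introduction. The point is that $\varphi$-holomorphic functions, by \cref{Canform}, are polynomials in the nilpotent directions with spectral-plane-holomorphic coefficients, so their contour integrals genuinely only ``see'' the spectral projections $\Gamma_{\tau(\ell)}^\sp$ of the cycle; hence null-homology of those projections in $\sigma_{\tau(\ell)}(U)$ is all that is needed. Making this rigorous requires being careful that the extension to $\wtilde U$ (\cref{Canform}(2),(4)) is compatible with integration over $\Gamma$ — i.e. that $\Gamma$, viewed inside $\wtilde U$, bounds a $2$-chain built from the $2$-chains filling the $\Gamma_k^\sp$ crossed with the nilpotent fibres — and that Stokes applies to the (real-analytic, by the final remark after \cref{closed}) $\cB$-valued $1$-form $f(W)\,\d W/\varphi(W-Z)$ on that chain, using $\d$-closedness from \cref{closed}. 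Everything else is bookkeeping along the direct-sum decomposition and the canonical factorization, both already set up in the excerpt.
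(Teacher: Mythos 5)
Your proposal is correct in substance, but it establishes the central identity $\int_\Gamma g(Z,W)\,\d W=0$ by a genuinely different route from the paper. Both arguments share the same skeleton: reduce to a local morphism via the canonical factorization, build the two-case difference-quotient function $g$ from \cref{nilanal}, extend everything to the cylinder $\wtilde{U}$, and unpack $g$ at the end. The paper then follows Dixon's scheme: it treats $h(Z)=\frac{1}{2\pi i}\int_\Gamma g(Z,W)\,\d W$ as a function of $Z$, uses the hypothesis $[\Gamma^\sp]=0$ only to glue $h$ with the outer representation $\frac{1}{2\pi i}\int_\Gamma f(W)\,\d W/\varphi(W-Z)$ on $\wtilde{U}^c$ into a $\varphi$-entire function decaying along $\CC$, and kills it with \cref{liouville}. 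You instead fix $Z$ and argue in $W$: after extension to $\wtilde{U}$ the cycle $\Gamma$ is null-homologous in $\wtilde{U}$ itself, since $\wtilde{U}=\sigma_\cA(U)\times\fm$ deformation retracts onto $\sigma_\cA(U)$ and $[\Gamma^\sp]=0$ there by hypothesis, so \cref{GCIT}(1) applied to $g(Z,-)\in\cO_\varphi(\wtilde{U})$ gives the vanishing directly; once this is noticed, the detour through the canonical form of \cref{Canform} is not even needed. Your version makes it more transparent \emph{why} the weaker spectral homology hypothesis suffices, while the paper's Liouville argument sidesteps any discussion of the homology of $\wtilde{U}$. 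Two points to tighten: in step (4) the nilpotent components of $\Gamma$ are \emph{not} constant along the cycle, so the integral does not literally split into scalar contour integrals weighted by fixed monomials --- you must, as you do in your closing paragraph, first invoke $\d$-closedness of $g(Z,W)\,\d W$ from \cref{closed} and homology in $\wtilde{U}$ to replace $\Gamma$ by $\Gamma^\sp$. And of your three derivations of $\int_\Gamma f(W)\,\d W=0$, only the one applying the main identity to $\wtilde{f}(W)\coloneqq f(W)\varphi(W-Z_0)$ at an admissible $Z_0$ is clean; the ``unbounded component'' variant is circular as stated.
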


\begin{proof}
	We first prove the claim for a local morphism $(\A,\m)\xrightarrow{\varphi} (\B,\m)$, which then easily translates to a general $\A\xrightarrow{\varphi}\B$.
	We adapt the well-known proof of Dixon to our setting.
	Extend $f:U\to\B$ to $f:\wtilde{U}\to\B$ and define $g:\wtilde{U}\times\wtilde{U}\to\B$,
	\[
	g(Z,W)\coloneqq
	\begin{cases}
	\frac{f(W)-f(Z)}{\varphi(W-Z)}, &\text{if } W-Z\in\wtilde{U}\cap\A^\times\\
	\check{f}(W-Z,Z), &\text{if } W-Z\in\wtilde{U}\cap\m.
	\end{cases} 
	\]
	We note that $\wtilde{U}\supseteq\fF(\Gamma)\defeq\Gamma^\sp\times\m$ and
	\[
	\wtilde{U}\cap\m\defeq
	\begin{cases}
	\m, &\text{if } \sigma_\cA(U)\ni 0\\
	\emptyset, &\text{otherwise}.
	\end{cases}
	\]
	By \cref{nilanal} $g$ is $\varphi$-holomorphic in $Z$ and $W$.
	Now put
	\[
	h(Z)\coloneqq \frac{1}{2\pi i} \int_\Gamma g(Z,W)\d W.
	\]
	Then $h\in\O_\varphi(\wtilde{U})$ as a parameter-integral over the compact $\Gamma$.
	We have $\forall Z\in\wtilde{U}\cap\Adm(\Gamma)$:
	\[
	h(Z) = \frac{1}{2\pi i}\int_{\Gamma}\frac{f(W)-f(Z)}{\varphi(W-Z)}\d W = \frac{1}{2\pi i} \int_{\Gamma} \frac{f(W)}{\varphi(W-Z)}\d W-f(Z) \ind_\CC(\Gamma^\sp,z),
	\]
	where recall that $z \defeq \sigma_\cA(Z)$ and $\ind_\CC(\Gamma^\sp,z) = \Ind_\varphi(\Gamma,Z)$.
	We are going to show that $h=0$.
	By hypothesis, we have $\forall z\in\sigma_\cA(U)^c: \ind_\CC(\Gamma^\sp,z) = 0$.
	Therefore $h\in\O_{\varphi}(\wtilde{U})$ extends over $\wtilde{U}^c = \sigma_\cA(U)^c\times\m \subseteq \Adm(\Gamma)$ to a $\varphi$-entire function via
	\[
	h(Z)\coloneqq\frac{1}{2\pi i} \int_{\Gamma}\frac{f(W)}{\varphi(W-Z)}\d W,\ Z\in \wtilde{U}^c.
	\]
	Now consider $h|_\CC$. 
	Since $\Gamma\simeq\Gamma^\sp$ in $\wtilde{U}$, we obtain for $z,w\in\CC$:
	\[
	h(z) = \frac{1}{2\pi i} \int_{\Gamma^\sp}\frac{f(w)}{w-z}\d w\to 0\ \text{as } \abs{z}\to 0
	\]
	by the usual estimate, where notice that $h(z)$ is simply a ($\CC$-)entire function taking values in $\B$. 
	Thus by \hyperref[liouville]{Liouville (\cref*{liouville})} we have $h=\const=0$.
	In other words, $\forall Z\in\wtilde{U}\cap\Adm(\Gamma)$:
	\[
	0 = h(Z) = \frac{1}{2\pi i}\int_\Gamma\frac{f(W)}{\varphi(W-Z)}\d W - f(Z)\Ind_\varphi(\Gamma,Z) 
	\]
	as desired.
	Finally, using \cref{morphind}, \cref{indphi}, and the definition of $B_1^\varphi(U,\ZZ)$, the result easily generalizes to an arbitrary $\cA\xrightarrow{\varphi}\cB$ in an analogous manner as in the proof of \cref{cif}.
\end{proof}

\begin{remarks}\ 
	\begin{enumerate}[topsep=-\parskip]
		\item Passing to $\wtilde{U}$ in the proof allows to contain $\fF(\Gamma)$ entirely in the initial domain of definition of $h$, which makes it easier to exhibit the holomorphic continuation of $h$ to $\wtilde{U}^c$.
		If we didn't extend $f$ to $\wtilde{U}$, we would have had to prove that
		\[
		h(Z)\coloneqq
		\begin{cases}
		\frac{1}{2\pi i}\int_{\Gamma}\frac{f(W)}{\varphi(W-Z)}\d W, &\text{if } Z\in \wtilde{U}^c\cap\Adm(\Gamma)\\
		0, &\text{if } Z\in \wtilde{U}^c\cap\fF(\Gamma)
		\end{cases}
		\]
		gives the analytic continuation of $h$, for example by using the somewhat cumbersome $\check{f}_\loc$-yoga.
		
		\item A similar result has been shown by Giovanni Battista Rizza (1952) in \cite{Rizza}.
		As of writing this, we have not been able to obtain his paper, however the Zentralblatt review indicates that his proof uses much stronger topological assumptions.
		Namely, in our language his main statement reads as follows: let $\A=(\A,\m)$ be local, $U\subseteq\A$ open and path-connected, $f\in\O_\A(U)$, and $\Gamma\in Z_1(U,\ZZ)$ with $[\Gamma]=0$ in $H_1(U,\ZZ)$. 
		Then $\exists N\in\NN\ \forall Z\in U\cap\Adm(\Gamma)$:
		\[
		f(Z)2\pi i N = \int_\Gamma \frac{f(W)}{W-Z}\d W.
		\]
		In comparison, we only require $[\Gamma^\sp] = 0$ in $H_1(\sigma_\cA(U),\ZZ)$.
		
		\item In order to illustrate the difference between the condition $[\Gamma]=0$ in $H_1(U,\ZZ)$ and the condition $\forall 1\leq k\leq N: [\Gamma^\sp_k] = 0$ in $H_1(\sigma_k(U),\ZZ)$, we give the following example\footnote{see \cite{ProjHomol}}: consider the set $U\coloneqq(\CC^2\setminus\{z_1^3+z_2^2=0\})\cap B_r(0)$, where $B_r(0)\subseteq\CC^2$ is an open ball of radius $r>0$ around the origin.
		$U$ is known to be diffeomorphic to $(\SS^3\setminus K)\times (0,r)$, where $K$ denotes the trefoil knot.
		Then $H_1(U,\ZZ) = \ZZ$, but $H_1(p_1(U),\ZZ) = H_1(p_2(U),\ZZ) = 0$, where $p_{1,2}:\CC^2\to\CC$ are the canonical projections, since $p_{1,2}(U)$ are open disks in $\CC$ of radius $r$.
	\end{enumerate}
\end{remarks}

It is now evident that the right choice of index-based 1-homology in the theory of $\varphi$-holomorphic functions is given by ``spectral'' $\varphi$-1-Homology $H^\varphi_1(U,\ZZ)$:

\begin{corollary}
	Let $U\subseteq\A$ be open and path-connected. We have a well-defined $\ZZ$-$\B$-bilinear pairing
	\begin{equation}
	\beta: H^\varphi_1(U,\ZZ) \times \O_{\varphi}(U) \to \B,\ ([\Gamma],f) \mapsto \int_\Gamma f(Z) \d Z
	\end{equation}
	between $\varphi$-1-homology and $\varphi$-holomorphic functions. \qed
\end{corollary}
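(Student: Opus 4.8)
The plan is to obtain this statement as a near-immediate consequence of the Homological Cauchy Integral Formula \cref{homolCIT}; the only substantive point is that the integration pairing descends to the quotient $H^\varphi_1(U,\ZZ) = Z_1(U,\ZZ)/B_1^\varphi(U,\ZZ)$.

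First I would fix $f\in\O_\varphi(U)$ and record that, being $\varphi$-holomorphic, $f$ is continuous and locally $\varphi$-integrable (Poincar\'e's lemma, or equivalently the existence of local $\varphi$-primitives discussed in the remarks following \cref{closed} and in \cref{GCIT}), so that $\int_\Gamma f(Z)\,\d Z$ is well-defined for every --- not necessarily $\Cc^1_\pw$-regular --- $1$-cycle $\Gamma = \sum_j n_j\gamma_j\in Z_1(U,\ZZ)$, and the assignment $\Gamma\mapsto \int_\Gamma f(Z)\,\d Z = \sum_j n_j\oint_{\gamma_j} f(Z)\,\d Z$ is $\ZZ$-linear on $Z_1(U,\ZZ)$. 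Next, if $[\Gamma] = 0$ in $H^\varphi_1(U,\ZZ)$, i.e. $\Gamma\in B_1^\varphi(U,\ZZ)$, then the ``in particular'' clause of \cref{homolCIT} gives $\int_\Gamma f(Z)\,\d Z = 0$. Hence the above $\ZZ$-linear map $Z_1(U,\ZZ)\to\B$ vanishes on $B_1^\varphi(U,\ZZ)$ and therefore factors through a $\ZZ$-linear map $H^\varphi_1(U,\ZZ)\to\B$; this establishes simultaneously that $\beta$ is well-defined and that it is $\ZZ$-linear in its first argument, $\ZZ$-linearity of the quotient projection $Z_1(U,\ZZ)\twoheadrightarrow H^\varphi_1(U,\ZZ)$ being automatic.

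It remains to check $\B$-linearity in the second argument. By \cref{rules}(1), $\O_\varphi(U)$ is a $\B$-module under pointwise multiplication by constants of $\B$: if $b\in\B$ and $f,f_1,f_2\in\O_\varphi(U)$, then $bf,\,f_1+f_2\in\O_\varphi(U)$. Since left multiplication by $b$ is a bounded linear operator on the finite-dimensional Banach algebra $\B$, it commutes with the integral, whence $\int_\Gamma (bf)(Z)\,\d Z = b\int_\Gamma f(Z)\,\d Z$ and $\int_\Gamma (f_1+f_2)(Z)\,\d Z = \int_\Gamma f_1(Z)\,\d Z + \int_\Gamma f_2(Z)\,\d Z$; by the previous paragraph these identities pass to $H^\varphi_1(U,\ZZ)$. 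Combined, this exhibits $\beta$ as a well-defined $\ZZ$-$\B$-bilinear pairing. I do not anticipate a genuine obstacle here: the entire content sits in \cref{homolCIT}, and the only mildly delicate point --- legitimacy of $\int_\Gamma$ over a merely continuous cycle --- is already subsumed in the formulation of that theorem and in the availability of local $\varphi$-primitives.
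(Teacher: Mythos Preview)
Your proposal is correct and follows exactly the approach the paper intends: the corollary is marked \qed with no proof given, as it is immediate from the ``in particular'' clause of \cref{homolCIT}, which is precisely what you invoke to show the pairing descends to the quotient $H^\varphi_1(U,\ZZ)$. Your added verification of $\B$-linearity in the second argument and the remark on integration over continuous cycles via local $\varphi$-primitives are both appropriate elaborations of points the paper leaves implicit.
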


\begin{proposition}[Homological Cauchy Differentiation Formula over $\A$]
	\label{homolcauchydiff}
	Let $U\subseteq\A$ be open and path-connected and let $f\in\O_\varphi(U)$.
	Then for every open polydisk $\Delta\coloneqq\Delta_r(Z_0)\subseteq U$ with center $Z_0\in U$ and every $Z_0$-admissible 1-cycle $\Gamma\in B^\sp_1(\Delta,\ZZ)$ there exists an open neighbourhood $V\ni Z_0$, $V\subseteq\Delta$, such that $\forall Z\in V$:
	\begin{equation}\label{homolcifder}
	f^{(k)}(Z) \Ind_\varphi(\Gamma,Z) = \frac{k!}{2\pi i} \int_\Gamma \frac{f(W)}{\varphi(W-Z)^{k+1}}\d W,\ k\in\NN_0.
	\end{equation}
\end{proposition}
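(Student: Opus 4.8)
The plan is to run exactly the argument used for \hyperref[analyt]{Proposition~\ref*{analyt}~(1)}, with the Cauchy Integral Formula \cref{cif} replaced by the \hyperref[homolCIT]{Homological Cauchy Integral Formula (Theorem~\ref*{homolCIT})} and with the admissible loop $\gamma$ replaced by the $1$-cycle $\Gamma$. First I would set $\fI_\Gamma(Z)\coloneqq\Ind_\varphi(\Gamma,Z)$ and put $V\coloneqq\Delta\cap\Adm_\varphi(\Gamma)$; this is an open subset of $\Delta$ containing $Z_0$, since $\Adm_\varphi(\Gamma)$ is open and $\Gamma$ is $Z_0$-admissible. Because $\Gamma\in B^\sp_1(\Delta,\ZZ)$ and $\Delta\subseteq U$, the class $[\Gamma]$ vanishes in $H^\varphi_1(U,\ZZ)$ (the spectral boundary condition is preserved under $\sigma_k(\Delta)\subseteq\sigma_k(U)$), so \cref{homolCIT} applies and yields, for every $Z\in V$,
\[
f(Z)\,\fI_\Gamma(Z)=\frac{1}{2\pi i}\int_\Gamma\frac{f(W)}{\varphi(W-Z)}\,\d W,
\]
which is the case $k=0$. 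Note that for $W$ on the (compact) support of $\Gamma$ and $Z\in V$ one has $\varphi(W-Z)=\varphi(W)-\varphi(Z)\in\cB^\times$ by the definition of $\Adm_\varphi(\Gamma)$ together with \hyperref[scalarcom]{Diagram~\ref*{scalarcom}}, so the kernels $1/\varphi(W-Z)^{k+1}$ are all well-defined and, for fixed $W$, are $\varphi$-holomorphic in $Z$ on $V$, with $k$ successive $\varphi$-derivatives producing the factor $k!/\varphi(W-Z)^{k+1}$ by the chain and Leibniz rules of \cref{rules} and the $\cB$-holomorphy of inversion on $\cB^\times$.

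Then I would induct on $k$. The right-hand side above is a parameter integral over the compact set $\Gamma$ with integrand smooth (indeed $\varphi$-holomorphic) in $Z$ on $V$, so one may $\varphi$-differentiate under the integral sign, exactly as in the proof that $\Ind_\varphi(\Gamma,-)$ is locally constant. Differentiating the displayed identity in $Z$ and using that $\fI_\Gamma=\Ind_\varphi(\Gamma,-)$ is locally constant on $\Adm_\varphi(\Gamma)$, so that $\fI_\Gamma'\equiv 0$ on $V$, the Leibniz rule gives $f'(Z)\fI_\Gamma(Z)=\frac{1}{2\pi i}\int_\Gamma f(W)/\varphi(W-Z)^2\,\d W$ on $V$. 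Assuming the formula holds for some $k\in\NN_0$, one more differentiation of both sides (again the $\fI_\Gamma'$ term drops out) turns the left side into $f^{(k+1)}(Z)\fI_\Gamma(Z)$ and the right side into $\frac{k!}{2\pi i}\int_\Gamma (k+1)f(W)/\varphi(W-Z)^{k+2}\,\d W=\frac{(k+1)!}{2\pi i}\int_\Gamma f(W)/\varphi(W-Z)^{k+2}\,\d W$, closing the induction; the same $V$ works for all $k$ simultaneously.

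I do not expect a genuine obstacle: this is the homological twin of \cref{analyt}~(1), and its proof is obtained by the substitutions above, the computation never using locality of $\varphi$. The only points needing a line of care are (i) that $\Gamma\in B^\sp_1(\Delta,\ZZ)$ really places us in the hypothesis of \cref{homolCIT}, i.e. that $[\Gamma]=0$ in the $\varphi$-spectral $1$-homology of $\Delta$ and hence of $U$; (ii) the legitimacy of differentiating under the integral sign, which is the same parameter-integral argument as in the ``Index is locally constant'' lemma; and (iii) the vanishing of the derivatives of $\fI_\Gamma$ on $V$, which is precisely local constancy of $\Ind_\varphi(\Gamma,-)$. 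If one prefers to keep the exposition uniform with the proofs of \cref{cif} and \cref{homolCIT}, one may additionally reduce to the local, respectively direct-sum-of-local, case via the canonical factorization $\varphi=\bar\varphi\circ\Pi_\tau$ of \cref{MorFact} and the induced factorization of $f$ from \cref{inclocal}, but this step is optional.
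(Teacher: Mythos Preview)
Your proposal is correct and is precisely the approach the paper takes: the paper's proof reads in full ``Same as the proof of \cref{analyt}'', and you have spelled out exactly that argument with the loop $\gamma$ and \cref{cif} replaced by the $1$-cycle $\Gamma$ and \cref{homolCIT}.
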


\begin{proof}
	Same as the proof of \cref{analyt}.
\end{proof}

\begin{corollary}[Cauchy's Inequality over $\A$ for 1-Cycles]
	Let $U\subseteq\A$ be open and path-connected, $Z_0\in U$, $f\in\O_\varphi(U)$, and let $\Gamma\coloneqq\sum_{j=1}^m n_j \gamma_j\in B^\sp_1(U,\ZZ)$, $\gamma_i\neq\gamma_j\ \forall i\neq j$, be a $Z_0$-admissible 1-cycle with $\Ind_\varphi(\Gamma,Z_0)=1$. 
	Then $\forall k\in\NN_0:$
	\begin{equation}
	\norm{f^{(k)}(Z_0)}_\cB \leq \frac{k!}{2\pi}\norm{f}_{\cB,\Gamma} L_\B(\Gamma)\max_{1\leq j\leq m} \norm{\frac{1}{\varphi(\gamma_j(t))-\varphi(Z_0)}}_{\cB,I}^{k+1},
	\end{equation}
	where $L_\B(\Gamma)\coloneqq\sum_{j=1}^m \abs{n_j} L_\B(\gamma_j)$ is the total length of the 1-cycle $\Gamma$.
\end{corollary}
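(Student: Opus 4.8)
The plan is to obtain the $k$-th derivative of $f$ at $Z_0$ as a single contour integral and then estimate it loop by loop via \cref{intineq}. First I would invoke \cref{homolCIT}: since $\Gamma\in B^\sp_1(U,\ZZ)$ we have $[\Gamma]=0$ in $H^\varphi_1(U,\ZZ)$, so for every $Z\in U\cap\Adm(\Gamma)$
\[
f(Z)\,\Ind_\varphi(\Gamma,Z)=\frac{1}{2\pi i}\int_\Gamma\frac{f(W)}{\varphi(W-Z)}\,\d W .
\]
As $Z_0$ is $\Gamma$-admissible and $\Ind_\varphi(\Gamma,-)$ is locally constant with value $1$ at $Z_0$, there is an open neighbourhood $V\ni Z_0$ inside $U\cap\Adm(\Gamma)$ on which $\Ind_\varphi(\Gamma,-)\equiv 1$, so $f$ itself is represented by this integral on $V$. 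Exactly as in the proof of \cref{analyt} (equivalently \cref{homolcauchydiff}), for fixed $W\in\Gamma$ the integrand is $\varphi$-holomorphic in the parameter $Z$ away from $\fF(\Gamma)$, so one may $\varphi$-differentiate under the parameter integral over the compact support of $\Gamma$, using $\tfrac{\d}{\d Z}\varphi(W-Z)^{-1}=\varphi(W-Z)^{-2}$ and induction on $k$, to get
\[
f^{(k)}(Z_0)=\frac{k!}{2\pi i}\int_\Gamma\frac{f(W)}{\varphi(W-Z_0)^{k+1}}\,\d W,\qquad k\in\NN_0 .
\]

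Next I would expand $\int_\Gamma=\sum_{j=1}^m n_j\oint_{\gamma_j}$, apply the triangle inequality, and bound each loop integral with \cref{intineq}:
\[
\norm{f^{(k)}(Z_0)}_\cB\le\frac{k!}{2\pi}\sum_{j=1}^m\abs{n_j}\,\norm{\frac{f(W)}{\varphi(W-Z_0)^{k+1}}}_{\cB,\gamma_j}L_\B(\gamma_j).
\]
On $\gamma_j$ one uses $\RR$-linearity of $\varphi$, so that $\varphi(\gamma_j(t)-Z_0)=\varphi(\gamma_j(t))-\varphi(Z_0)$, together with submultiplicativity of $\norm{\cdot}_\cB$, to estimate the integrand pointwise by $\norm{f}_{\cB,\Gamma}\,\norm{1/(\varphi(\gamma_j(t))-\varphi(Z_0))}_{\cB,I}^{\,k+1}$. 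Factoring the largest of these suprema out of the finite sum and recalling $L_\B(\Gamma)=\sum_{j=1}^m\abs{n_j}L_\B(\gamma_j)$ then yields precisely the asserted inequality. (One tacitly assumes the $\gamma_j$ to be $\Cc^1_\pw$, or at least rectifiable, so that the lengths $L_\B(\gamma_j)$ are finite, which is the same regularity already needed for \cref{intineq}.)

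I do not expect a genuine obstacle here. The only step that calls for a word of justification is differentiation under the integral sign, and that is literally the argument already used for \cref{analyt} and \cref{homolcauchydiff}: it rests on the $\varphi$-holomorphy of $W\mapsto f(W)/\varphi(W-Z)$ in $Z$ over $\cA\setminus\fF(\Gamma)$ and on the local constancy of $\Ind_\varphi(\Gamma,-)$ near $Z_0$, which also ensures the reproducing formula holds on a full neighbourhood of $Z_0$ rather than just at one point. Everything after that is the triangle inequality, the length estimate \cref{intineq}, and submultiplicativity of the norm on $\cB$, so the write-up should be very short.
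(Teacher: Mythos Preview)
Your proposal is correct and follows essentially the same route as the paper: the paper simply cites \cref{homolcauchydiff} to get $f^{(k)}(Z_0)=\frac{k!}{2\pi i}\int_\Gamma f(W)/\varphi(W-Z_0)^{k+1}\,\d W$ (which you re-derive but also identify as that proposition), then expands $\int_\Gamma=\sum_j n_j\oint_{\gamma_j}$, applies submultiplicativity pointwise under the integral, bounds $\norm{f(\gamma_j(t))}_\cB\le\norm{f}_{\cB,\Gamma}$, and factors out the maximum over $j$ of the inverse-norm supremum together with the total length.
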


\begin{proof}
	By \cref{homolcauchydiff} we have
	\[
	\begin{aligned}
	\norm{f^{(k)}(Z_0)}_\cB &\leq \frac{k!}{2\pi}\sum_{j=1}^m\abs{n_j}\int_0^1 \norm{f(\gamma_j(t))}_\cB \norm{\varphi(\gamma_j'(t))}_\cB \norm{\frac{1}{\varphi(\gamma_j(t)-Z_0)}}_\cB^{k+1}\d t\\
	&\leq \frac{k!}{2\pi} \norm{f}_{\cB,\Gamma} \bigg(\sum_{j=1}^m \abs{n_j} L_\B(\gamma_j)\bigg)\max_{1\leq j\leq m}\norm{\frac{1}{\varphi(\gamma_j(t))-\varphi(Z_0)}}_{\cB,I}^{k+1},
	\end{aligned}
	\]
	since $\forall t\in I: \norm{f(\gamma_j(t))}_\cB\leq\norm{f}_{\cB,\Gamma}$.
\end{proof}

	
	\section*{Acknowledgements}
	
	I would like to thank Prof. Emil Horozov for giving me the opportunity and freedom to work and present on this topic, for his moral support and the numerous conversations concerning different fields of mathematics.
	I want to thank Prof. Azniv Kasparian for her patience, for the countless discussions on algebra, topology, and geometry, as well as for proof-reading various parts and suggesting several improvements of the present manuscript.
	Finally, I wish to thank Prof. Stefan Ivanov for his moral support and the various conversations on differential-geometric issues directly or indirectly related to this topic.
	All mistakes are mine and mine only.
	
	
	\begin{refsegment}
		\mklocalfilter{nocitelocal}
		\nocite{*}
		\printbibliography[keyword={complex analysis},filter=nocitelocal,title={References on Standard Complex Analysis Stuff}]	
	\end{refsegment}

	\begin{refsegment}
		\mklocalfilter{nocitelocal}
		\nocite{*}
		\printbibliography[keyword={A-holomorphy},filter=nocitelocal,title={References on $\cA$-differentiability}]	
	\end{refsegment}

	\vspace{-\baselineskip} 

	\printbibliography[notkeyword={complex analysis},notkeyword={A-holomorphy},segment=0,title={Other References}]
	
\end{document}